\pdfoutput=1
\documentclass{scrartcl}
\usepackage{amsmath,amsthm,amssymb}

\usepackage{txfonts}
\usepackage[T1]{fontenc}
\usepackage{lmodern}
\usepackage{array,breqn,bm,ifpdf,authblk,relsize,extpfeil,accents,pdflscape}
\usepackage[scr=boondoxo]{mathalpha}
\usepackage{eucal}
\usepackage{longtable}
\usepackage[abbrev,nobysame,lite]{amsrefs}
\usepackage[inline,shortlabels]{enumitem}
\usepackage{microtype}
\usepackage{tkz-graph}
\usepackage{hyperref}

\usepackage{tocbasic}
\addtocontents{toc}{\protect\renewcommand*\protect\addvspace[1]{}}

\addtokomafont{sectionentry}{\normalfont\small}
\DeclareTOCStyleEntry[
  linefill=\dotfill,
  beforeskip=200pt
]{tocline}{section}

\allowdisplaybreaks[1]

\definecolor{brightmaroon}{rgb}{0.76, 0.13, 0.28}
\definecolor{darkolivegreen}{rgb}{0.33, 0.42, 0.18}

\makeatletter
\DeclareOldFontCommand{\rm}{\normalfont\rmfamily}{\mathrm}
\DeclareOldFontCommand{\sf}{\normalfont\sffamily}{\mathsf}
\DeclareOldFontCommand{\bf}{\normalfont\bfseries}{\mathbf}
\DeclareOldFontCommand{\it}{\normalfont\itshape}{\mathit}

\DeclareFontFamily{U}{BOONDOX-calo}{\skewchar\font=45 }
\DeclareFontShape{U}{BOONDOX-calo}{m}{n}{
  <-> s*[1.05] BOONDOX-r-calo}{}
\DeclareFontShape{U}{BOONDOX-calo}{b}{n}{
  <-> s*[1.05] BOONDOX-b-calo}{}
\DeclareMathAlphabet{\mathcalboondox}{U}{BOONDOX-calo}{m}{n}
\SetMathAlphabet{\mathcalboondox}{bold}{U}{BOONDOX-calo}{b}{n}
\DeclareMathAlphabet{\mathbcalboondox}{U}{BOONDOX-calo}{b}{n}

\makeatletter
\newcommand*{\da@rightarrow}{\mathchar"0\hexnumber@\symAMSa 4B }
\newcommand*{\da@leftarrow}{\mathchar"0\hexnumber@\symAMSa 4C }
\newcommand*{\xdashrightarrow}[2][]{%
  \mathrel{%
    \mathpalette{\da@xarrow{#1}{#2}{}\da@rightarrow{\,}{}}{\!\!}%
  }%
}
\newcommand{\xdashleftarrow}[2][]{%
  \mathrel{%
    \mathpalette{\da@xarrow{#1}{#2}\da@leftarrow{}{}{\,}}{}%
  }%
}
\newcommand*{\da@xarrow}[7]{%
  \sbox0{$\ifx#7\scriptstyle\scriptscriptstyle\else\scriptstyle\fi#5#1#6\m@th$}%
  \sbox2{$\ifx#7\scriptstyle\scriptscriptstyle\else\scriptstyle\fi#5#2#6\m@th$}%
  \sbox4{$#7\dabar@\m@th$}%
  \dimen@=\wd0 %
  \ifdim\wd2 >\dimen@
    \dimen@=\wd2 %
  \fi
  \count@=2 %
  \def\da@bars{\dabar@\dabar@}%
  \@whiledim\count@\wd4<\dimen@\do{%
    \advance\count@\@ne
    \expandafter\def\expandafter\da@bars\expandafter{%
      \da@bars
      \dabar@ 
    }%
  }%
  \mathrel{#3}%
  \mathrel{%
    \mathop{\da@bars}\limits
    \ifx\\#1\\%
    \else
      _{\copy0}%
    \fi
    \ifx\\#2\\%
    \else
      ^{\copy2}%
    \fi
  }%
  \mathrel{#4}%
}
\makeatother


\newtheorem{thm}{Theorem}[section]
\newtheorem{lemma}[thm]{Lemma}
\newtheorem{claim}[thm]{Claim}
\newtheorem{prop}[thm]{Proposition}
\newtheorem{cor}[thm]{Corollary}

\newtheorem{thmabc}{Theorem}
\newtheorem{propabc}[thmabc]{Proposition}

\theoremstyle{definition}
\newtheorem{defn}[thm]{Definition}
\newtheorem{ex}[thm]{Example}
\newtheorem{rem}[thm]{Remark}


\numberwithin{equation}{section}
\allowdisplaybreaks[1]

\renewcommand{\le}{\leqslant}
\renewcommand{\ge}{\geqslant}

\def\emph{}
\DeclareTextFontCommand{\bfemph}{\bfseries}
\DeclareTextFontCommand{\itemph}{\itshape}
\def\emph{\bfemph}

\newcommand{\mon}[1]{\operatorname{mon}({#1})}

\newcommand{\DVR}{\textsf{\textup{DVR}}}

\newcommand{\PID}{\textsf{\textup{PID}}}


\newextarrow{\xbigtoto}{{20}{20}{20}{20}}
{\bigRelbar\bigRelbar{\bigtwoarrowsleft\rightarrow\rightarrow}}

\makeatletter
\def\blankfootnote{\xdef\@thefnmark{}\@footnotetext}

\newcommand*{\textlabel}[2]{%
  \edef\@currentlabel{#1}
  \phantomsection
  #1\label{#2}
}
\makeatother

\newcommand{\parto}{\dashrightarrow}
\newcommand{\xto}{\xrightarrow}
\newcommand{\xparto}{\xdashrightarrow}

\newcommand{\precdot}{\prec\mathrel{\mkern-5mu}\mathrel{\cdot}}

\newcommand{\lte}{\preccurlyeq}
\newcommand{\notlte}{\npreccurlyeq}
\newcommand{\lt}{\prec}
\newcommand{\cover}{\precdot}

\newcommand{\last}{\mathsf{last}}

\newcommand{\olte}{\sqsubseteq}

\newcommand{\join}{\vee}

\DeclareMathOperator{\dist}{d}
\DeclareMathOperator{\ask}{ask}
\DeclareMathOperator{\cc}{cc}

\DeclareMathOperator{\diag}{diag}
\DeclareMathOperator{\GL}{GL}
\DeclareMathOperator{\Coker}{Coker}
\DeclareMathOperator{\Ker}{Ker}

\DeclareMathOperator{\Mat}{M}
\DeclareMathOperator{\rank}{rk}
\DeclareMathOperator{\trace}{trace}

\DeclareMathOperator{\Nbh}{N}
\DeclareMathOperator{\nbhcnt}{d}

\DeclareMathOperator{\CG}{K}
\DeclareMathOperator{\DG}{\Delta}

\newcommand{\WOhat}{\widehat{\textup{WO}}}

\newcommand{\ord}{\nu}

\newcommand{\Cycle}{\mathrm{C}}

\DeclareMathOperator{\Real}{Re}

\DeclareMathOperator{\outdeg}{outdeg}
\DeclareMathOperator{\source}{s}
\DeclareMathOperator{\target}{t}

\newcommand{\card}[1]{\lvert#1\rvert}
\DeclarePairedDelimiter{\abs}{\lvert}{\rvert}
\DeclarePairedDelimiter{\norm}{\lVert}{\rVert}
\newcommand{\pth}[1]{\check{#1}}

\newcommand{\Ideal}{\mathfrak{I}}

\newcommand{\llb}{\ensuremath{[\![ }}
\newcommand{\rrb}{\ensuremath{]\!] }}

\newcommand{\QQ}{\mathbf{Q}}
\newcommand{\FF}{\mathbf{F}}
\newcommand{\GG}{\mathbf{G}}
\newcommand{\NN}{\mathbf{N}}
\newcommand{\RR}{\mathbf{R}}
\newcommand{\ZZ}{\mathbf{Z}}
\newcommand{\CC}{\mathbf{C}}

\newcommand{\sA}{\mathsf{A}}
\newcommand{\sC}{\mathsf{C}}

\DeclareMathOperator{\dd}{d}

\newcommand{\fO}{\ensuremath{\mathfrak{O}}}
\newcommand{\fP}{\ensuremath{\mathfrak{P}}}

\newcommand{\udef}{\ensuremath{\bot}}
\DeclareMathOperator{\Dom}{\mathcal{D}}

\newcommand{\per}{\mathrm{per}}
\newcommand{\tra}{\mathrm{tra}}

\DeclareMathOperator{\odlen}{\mathscr{o\!l}}

\newcommand{\Cone}{\mathscr{C}}

\DeclareMathOperator{\concnt}{k}

\newcommand{\Eta}{\ensuremath{\mathsf H}}
\newcommand{\Tau}{\ensuremath{\mathsf T}}
\newcommand{\Zeta}{\ensuremath{\mathsf Z}}

\newcommand{\increl}{\mathrel{\imath}}

\DeclareMathOperator{\isolated}{iso}
\DeclareMathOperator{\empties}{emp}

\DeclareMathOperator{\Selectors}{Sel}
\DeclareMathOperator{\Animations}{Ani}
\DeclareMathOperator{\Nilpotents}{Nil}
\DeclareMathOperator{\Oddperiods}{Odd}
\DeclareMathOperator{\Fixed}{Fix}
\DeclareMathOperator{\Parfun}{Par}

\DeclareMathOperator{\Adj}{\mathscr{A\!d\!j}}
\DeclareMathOperator{\Inc}{\mathscr{I\!n\!c}}

\newcommand{\pred}{\mathsf{pred}}
  
\newcommand{\Minors}{\ensuremath{\mathfrak{I}}}

\newcommand{\std}[1]{\ensuremath{\mathsf{b}_{{#1}}}}

\newcommand{\Iverson}[1]{\ensuremath{\bm{[}{#1}\bm{]}}}

\newcommand{\subs}{\gets}

\newcommand{\Flags}{\mathcal{F}}


\ifpdf
  \pdfcompresslevel=9
  \hypersetup{pdftitle={Ask zeta functions of joins of graphs},
    pdfauthor={Tobias Rossmann, Christopher Voll}
  }
\fi

\overfullrule=8pt

\title{Ask zeta functions of joins of graphs}

\author{Tobias Rossmann and Christopher Voll}

\date{}

\pagestyle{headings}

\begin{document}
\thispagestyle{empty}

\maketitle

\thispagestyle{empty}
\vspace*{-2em}
\begin{abstract}
  \small
  In previous work~\cite{cico}, we studied rational generating functions
  (``ask zeta functions'') associated with graphs and hypergraphs.
  These functions encode average sizes of kernels of generic matrices with
  support constraints determined by the graph or hypergraph in question,
  with applications to the enumeration of linear orbits and conjugacy classes
  of unipotent groups.

  In the present article,
  we turn to the effect of a natural
  graph-theoretic operation on associated ask zeta functions.
  Specifically, we show that two instances of rational functions,
  $W^-_\Gamma(X,T)$ and $W^\sharp_\Gamma(X,T)$, associated with a
  graph $\Gamma$ are both well-behaved under taking
  joins of graphs.  In the former case, this has
  applications to zeta functions enumerating conjugacy classes
  associated with so-called graphical groups.
\end{abstract}

\blankfootnote{\noindent{\itshape 2020 Mathematics Subject Classification.}
  11M41, 
  05C50, 
  15B33, 
  05A15, 
  11S80  
  20D15, 
  20E45. 

  \noindent{\itshape Keywords.}
  Ask zeta functions, graphs, hypergraphs, joins of
  graphs, $p$-adic integration, graphical groups.
}

\RedeclareSectionCommand[tocbeforeskip=1ex plus 1pt minus 1pt]{section}

\setcounter{tocdepth}{1}
\tableofcontents

\section{Introduction}
\label{s:intro}

Ask zeta functions encode average kernel sizes within families of
linear maps over finite quotients of infinite rings.  Matrices of
linear forms lead, via specialisation of variables, to associated ask
zeta functions. Such zeta functions first arose by linearising the
enumeration of linear orbits and conjugacy classes of groups. Their
rich algebraic and combinatorial structure established them as objects
of independent interest.

Combinatorial incidence structures such as graphs and hypergraphs lead
to matrices of linear forms by ``linearising'' their adjacency and
incidence matrices. In \cite{cico}, we initiated the study of such
(hyper)graphical ask zeta functions over compact discrete valuation
rings.
In particular, for each graph $\Gamma$, we showed that there exist
\itemph{rational} generating functions
$W^+_\Gamma(X,T)$ and $W^-_\Gamma(X,T)$ that capture average kernel
sizes associated with the symmetric and  antisymmetric linearised
adjacency matrix of~$\Gamma$, respectively.

In the present paper, we develop new tools for studying the functions
$W^\pm_\Gamma$, allowing us to contribute to two contemporary research
themes in the area: rigidity of zeta functions and effects of
operations on zeta functions. Rigidity is an umbrella term for the
phenomenon that certain algebraic or combinatorial operations on
matrices of linear forms leave associated ask zeta functions
unchanged.
Here we show specifically that for reflexive graphs
$\Gamma$, three a priori quite different ask zeta functions
$W^+_\Gamma$, $W^-_\Gamma$, and $W_{\Adj(\Gamma)}$ all coincide. The
effects of natural— and seemingly innocuous—operations on zeta
functions associated with algebraic structures are generally
mysterious and poorly understand.
We show that $\Gamma\leadsto W^-_\Gamma$ is well-behaved under joins for
\itemph{all} loopless graphs (answering a question from \cite{cico}) and that
$\Gamma \leadsto W^+_\Gamma$ is well-behaved under joins for
arbitrary \itemph{reflexive} graphs: in both cases we give concise
formulae for the zeta functions of the joins in terms of the zeta
functions of the graphs being joined, vastly generalising results
from~\cite{cico}. Our results have applications to class-counting zeta
functions of so-called graphical groups.

\subsection{Ask zeta functions derived from matrices of linear forms}
\label{ss:intro/ask}

We briefly recall selected concepts and results pertaining to ask zeta
functions.  For further details and references, see \S\ref{s:ask_bg}.

\paragraph{From matrices of linear forms to ask zeta functions.}
Let $\fO$ be a compact discrete valuation ring (\DVR) with maximal ideal
$\fP$.
Examples include the ring $\ZZ_p$ of $p$-adic integers for a prime $p$ and the
ring $\FF_q\llb z \rrb$ of formal power series over a finite field $\FF_q$.
Let $A = A(X_1,\dotsc,X_\ell) \in \Mat_{d\times e}(\fO[X_1,\dotsc,X_\ell])$ be
a matrix of linear forms.
The \emph{(algebraic) ask zeta function} of $A$ over $\fO$ is the formal power
series $\Zeta^{\ask}_{A/\fO}(T) = \sum_{k=0}^\infty \alpha_k T^k \in \QQ\llb
T\rrb$ defined as follows.
Given $k\ge 0$, by specialising variables, each $x \in (\fO/\fP^k)^\ell$ gives
rise to a module homomorphism $A(x)\colon (\fO/\fP^k)^n \to (\fO/\fP^k)^m$
(acting by right multiplication on rows).
The coefficient $\alpha_k \in \QQ$ is the \underline{a}verage \underline{s}ize
of the \underline{k}ernel among the maps $A(x)$ as $x$ ranges over
$(\fO/\fP^k)^\ell$.
If $\fO$ has characteristic zero, then $\Zeta^{\ask}_{A/\fO}(T) \in \QQ(T)$
by \cite[Thm~1.4]{ask}.
Ask zeta functions were introduced in \cite{ask} as linearisations of zeta
functions enumerating linear orbits and conjugacy classes of suitable groups.

\paragraph{Global templates for ask zeta functions and arithmetic questions.}
Given a matrix of linear forms $A$ over $\ZZ$, we may consider the ask
zeta function $\Zeta^{\ask}_{A/\fO}(T)$ for each compact \DVR{} $\fO$.
It is then natural to ask how these ask zeta functions vary with
$\fO$.  Let $q$ be the size of the residue field of $\fO$.  Excluding
finitely many exceptional residue characteristics, general machinery
from $p$-adic integration \cite[Thm~4.11--4.12]{ask} provides a
``Denef formula'' for $\Zeta^{\ask}_{A/\fO}(T)$ in terms of
rational functions in $q$ and $T$ and the numbers of rational points
of certain schemes over $\ZZ$ over the residue field of $\fO$.  The
presence of numbers of rational points on schemes turns out to be
unavoidable in a precise sense, see~\cite{density}.  In general, one
can therefore expect the study of ask zeta functions
$\Zeta^{\ask}_{A/\fO}(T)$ for a fixed $\ZZ$-defined $A$ and varying
$\fO$ to involve difficult arithmetic problems.

\subsection{Matrices of linear forms from graphs and hypergraphs}
\label{ss:intro/matrices}

Two types of matrices of linear forms are of particular interest in the present
paper.

\paragraph{Matrices of linear forms from hypergraphs.}
By a \emph{hypergraph}, we mean a triple $\Eta = (V,E,\increl)$ where $V$ and
$E$ are finite sets of \emph{vertices} and \emph{hyperedges}, respectively,
and $\increl\,\, \subset V\times E$ is the \emph{incidence relation} of~$\Eta$.
Hypergraphs are also referred to as \emph{incidence structures}
in the literature.
The relation $\increl$ can be equivalently described by the \emph{support
  function} $\norm{\cdot} = \lVert\cdot\rVert_\Eta \colon E \to 2^V$ given by
$\lVert e\rVert = \{ v \in V : v \increl e\}$
as in \cite{cico}.

Let $\Eta = (V,E,\increl)$ be a hypergraph with distinct vertices
$v_1,\dotsc,v_n$ and distinct hyperedges $e_1,\dotsc,e_m$.
Up to isomorphism (suitably defined, see \S\ref{ss:hypergraphs}), $\Eta$ is
completely determined by the $n\times m$ \emph{incidence matrix} whose
$(i,j)$-entry is $1$ if $v_i \increl e_j$ and $0$ otherwise.
Let $\sA_\Eta = [X_{ij}]$ be the $n\times m$ matrix of linear forms
over $\ZZ$ such that $X_{ij} = 0$ if and only if $v_i \!\not\increl e_j$ and
such that the nonzero $X_{ij}$ are algebraically independent over $\ZZ$.
The matrix $\sA_\Eta$ is well-defined up to suitable equivalence, see
\S\ref{s:ask_bg}.
Note that $\sA_\Eta(1,\dotsc,1)$ is the incidence matrix of $\Eta$ as defined
above.

\begin{ex}
  \label{ex:looped_P4_model}
  Consider the hypergraph $\Eta$ with vertices $v_1,\dotsc,v_4$, hyperedges
  $e_1,\dotsc,e_4$, and associated incidence matrix
  \[
    \begin{bmatrix} 1 & 1 & 0 & 0 \\ 1 & 1& 1& 0 \\ 0 & 1 & 1 & 1 \\ 0 & 0 &
      1 & 1
    \end{bmatrix}.
  \]
  Hence, $\norm{e_1} = \{v_1,v_2\}$, $\norm{e_2} = \{v_1,v_2,v_3\}$,
  $\norm{e_3} = \{v_2,v_3,v_4\}$, $\norm{e_4} = \{v_3,v_4\}$, and
  \[
    \sA_\Eta = \begin{bmatrix} X_{11} & X_{12} & 0 & 0 \\ X_{21} & X_{22} &
      X_{23} & 0 \\ 0 & X_{32} & X_{33} & X_{34} \\ 0 & 0 & X_{43} & X_{44}
      \end{bmatrix}.
  \]
\end{ex}

\paragraph{Matrices of linear forms from graphs.}
Let $\Gamma$ be a graph with distinct vertices $v_1,\dotsc,v_n$ and adjacency
relation $\sim$.
In this article, unless otherwise indicated, graphs are allowed to contain
loops (i.e.\ $v_i \sim v_i$ is possible) but no parallel edges.
Let $\Gamma$ have $m$ edges.
We define matrices of linear forms $\sA_\Gamma^+$ and $\sA_\Gamma^-$ over
$\ZZ$ defined as follows.
Let $X_{ij}$ denote the $(i,j)$-entry of $\sA_\Gamma^\pm$.
We impose the following conditions:
\begin{itemize}
\item $X_{ij} = 0$ if and only if $v_i \not\sim v_j$,
\item $X_{ji} = \pm X_{ij}$ whenever $i\not= j$, and
\item The nonzero $X_{ij}$ with $i \le j$ are algebraically independent over $\ZZ$.
\end{itemize}

\begin{ex}
  \label{ex:looped_P4}
  For the graph $\Gamma$ given by
  \begin{center}
    \begin{tikzpicture}
      \tikzstyle{Grey Vertex}=[fill=lightgray, draw=black, shape=circle, scale=0.8]
      \node [style=Grey Vertex] (1) at (0,0) {$v_1$};
      \node [style=Grey Vertex] (2) at (1.5,0) {$v_2$};
      \node [style=Grey Vertex] (3) at (3,0) {$v_3$};
      \node [style=Grey Vertex] (4) at (4.5,0) {$v_4$};

      \draw (1) to (2) to (3) to (4);
      \draw (1) to[out=45, in=135, looseness=10] (1);
      \draw (2) to[out=45, in=135, looseness=10] (2);
      \draw (3) to[out=45, in=135, looseness=10] (3);
      \draw (4) to[out=45, in=135, looseness=10] (4);
      \end{tikzpicture}
    \end{center}
    we obtain
  \[
    \sA^\pm_\Gamma = \begin{bmatrix} X_{11} & X_{12} & 0 & 0\\
      \pm X_{12} & X_{22} & X_{23} & 0 \\
      0 & \pm X_{23} & X_{33} & X_{34} \\
      0 & 0 & \pm X_{34} & X_{44}
    \end{bmatrix}.
   \]
\end{ex}

As in the case of hypergraphs, the matrix $\sA^\pm_\Gamma$ does not merely
depend on $\Gamma$ but also on our choice of a total order on its vertices.
Again, different choices yield equivalent matrices of linear forms
(see \S\ref{s:ask_bg}).
The matrix $\sA^+_\Gamma(1,\dotsc,1)$ is the usual adjacency matrix of
$\Gamma$ relative to the given order of the vertices.

\subsection{Background: the Uniformity Theorem}
\label{ss:intro/uniformity}

Belkale and Brosnan~\cite{BB03} showed that counting $\FF_q$-rational points
on the degeneracy loci of the matrices $\sA^+_\Gamma$ is, in a precise sense,
as hard as counting $\FF_q$-rational points on arbitrary schemes.
Average sizes of kernels are expressible in terms of the numbers of
matrices of given rank (see \cite[\S 2.1]{ask}).
This notwithstanding, the following result shows that at least for the
matrices $\sA_\Eta$ and $\sA^\pm_\Gamma$, such hard geometric problems average
out when passing to ask zeta functions.

\begin{thm}[{Uniformity Theorem \cite[Thm~A]{cico}}]
  \label{thm:uniformity}
  \quad
  \begin{enumerate}[(i)]
  \item\label{thm:uniformity1}
    Let $\Eta$ be a hypergraph.
    Then there exists $W_\Eta(X,T)\in \QQ(X,T)$ such that for each compact
    \DVR{} $\fO$ with residue field size $q$, we have
    $\Zeta^{\ask}_{\sA_\Eta/\fO}(T) = W_\Eta(q,T)$.
  \item\label{thm:uniformity2}
    Let $\Gamma$ be a graph.
    Then there exists $W^+_\Gamma(X,T)\in \QQ(X,T)$ such that for each compact
    \DVR{} $\fO$ with \underline{odd} residue field size $q$, we have 
    $\Zeta^{\ask}_{\sA^+_\Gamma/\fO}(T) = W^+_\Gamma(q,T)$.
  \item\label{thm:uniformity3} 
    Let $\Gamma$ be a graph.
    Then there exists $W^-_\Gamma(X,T)\in \QQ(X,T)$ such that for each compact
    \DVR{} $\fO$ with (arbitrary) residue field size $q$, we have 
    $\Zeta^{\ask}_{\sA^-_\Gamma/\fO}(T) = W^-_\Gamma(q,T)$.
  \end{enumerate}
\end{thm}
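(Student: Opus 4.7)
The plan is to use the $p$-adic integration approach referenced in the introduction. By the general machinery of \cite[Thms~4.11--4.12]{ask}, any ask zeta function of a matrix of linear forms $A$ with coefficients in a compact \DVR{} $\fO$ of characteristic zero is a rational function in $T$; moreover, for $\ZZ$-defined $A$ and for residue characteristic outside a finite exceptional set, there is a Denef-type formula expressing $\Zeta^{\ask}_{A/\fO}(T)$ in terms of rational functions in $q$ and $T$ together with counts $\#Y(\FF_q)$ of $\FF_q$-points of certain $\ZZ$-schemes $Y$ built from the ideals of minors of $A$. To prove Theorem~\ref{thm:uniformity} I would show that for each of the three families $\sA_\Eta$, $\sA^-_\Gamma$, $\sA^+_\Gamma$ the relevant point counts are \itemph{polynomial} functions of $q$, and that this uniformity persists across all residue characteristics (respectively, across all odd residue characteristics in case~(ii)).

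First I would dispose of (i) and (iii). The nonzero entries of $\sA_\Eta$ are algebraically independent variables, one per incident pair, so each ideal of minors is generated by polynomials whose supports are governed purely by the hypergraph via the support function $\norm{\cdot}_\Eta$. I would construct a principalization of the relevant ideals by a sequence of blow-ups indexed by the combinatorics of $\Eta$, yielding strata whose $\FF_q$-point counts are manifestly polynomial in $q$ and independent of $\fO$; substituting these counts into the Denef formula produces the desired $W_\Eta(X,T)\in\QQ(X,T)$. Since the nonzero super-diagonal entries of $\sA^-_\Gamma$ are likewise algebraically independent — the antisymmetric relation $X_{ji}=-X_{ij}$ being a $\ZZ$-linear change of variables valid in every characteristic, and the diagonal entries of $\sA^-_\Gamma$ being $0$ or independent variables exactly as in the hypergraph case — the same argument applies verbatim to (iii), with no residue-characteristic obstruction whatsoever.

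The main obstacle is (ii). Under the symmetric relation $X_{ji}=X_{ij}$, pairing a diagonal variable $X_{ii}$ with an off-diagonal variable $X_{ij}$ produces a quadratic term $X_{ij}^2$ in the $2\times 2$ principal minor of $\sA^+_\Gamma$ spanned by rows and columns $i,j$, so the defining equations of the minor loci are no longer multilinear in each variable and the clean multi-projective combinatorics of (i) and (iii) breaks down. In odd residue characteristic one can still recover combinatorial control by completing the square: after diagonalizing the symmetric bilinear form encoded by $\sA^+_\Gamma$ via a Gram--Schmidt type reduction on successive principal submatrices, the minor ideals reduce back to the toric/combinatorial setting above, with polynomial $q$-counts preserved. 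In characteristic~$2$, completing the square is unavailable and symmetric forms degenerate — alternating matrices are already symmetric, the diagonal is no longer a free choice — so the combinatorial uniformity genuinely breaks down, which is why the theorem excludes even~$q$. Executing the square-completion step cleanly, so that the resulting principalization is defined over $\ZZ[1/2]$ and yields polynomial point counts uniform in all odd~$q$, is the most delicate piece of the argument; I would carry it out by induction on $|V(\Gamma)|$ using the standard block-diagonalization formula for symmetric matrices in odd characteristic, peeling off one vertex at a time and reducing to the cases already handled in (i) and (iii).
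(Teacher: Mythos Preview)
Your proposal has a genuine gap, centred on the claim that part~(iii) follows \itemph{verbatim} from part~(i). The assertion that ``the antisymmetric relation $X_{ji}=-X_{ij}$ [is] a $\ZZ$-linear change of variables'' so that the hypergraph argument carries over unchanged to $\sA^-_\Gamma$ is not correct. In $\sA_\Eta$ each variable occupies exactly one entry, so every minor is a sum of squarefree monomials with unit coefficients. In $\sA^-_\Gamma$ each edge variable occurs in two entries, and minors are genuinely richer: already the $4\times 4$ determinant of $\sA^-_{\CG_4}$ is the square of the Pfaffian $X_{12}X_{34}-X_{13}X_{24}+X_{14}X_{23}$, whose zero locus is not a union of coordinate subspaces, so no toric/combinatorial principalisation of the kind you sketch can exist. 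Your plan for~(ii) is likewise unsupported: the Belkale--Brosnan result recalled in \S\ref{ss:intro/uniformity} shows that the degeneracy loci of $\sA^+_\Gamma$ are in general \itemph{not} polynomial-count over $\FF_q$, so an argument via polynomial point counts on strata of $\sA^+_\Gamma$ cannot succeed, and the proposed Gram--Schmidt step introduces divisions by pivot entries without any indication of how the resulting case analysis terminates.

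The paper's route is entirely different and hinges on passing to the $\circ$-dual matrices $\sC_\Eta$ and $\sC^\pm_\Gamma$ of \S\ref{s:ask_bg}--\S\ref{s:(hyper)graphs}, which you do not mention. The crucial non-trivial input---Proposition~\ref{prop:selectors} for hypergraphs and Theorem~\ref{thm:animations} for graphs---is that the ideals of minors of these \itemph{dual} matrices are monomial (over $\ZZ$ for $\sC_\Eta$ and $\sC^-_\Gamma$, over $\ZZ[\tfrac 1 2]$ for $\sC^+_\Gamma$), with explicit generators $\mon\phi$ and $\pm 2^{\odlen(\alpha)}\mon\alpha$ parameterised by \itemph{selectors} and \itemph{animations}. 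Establishing this in the graph case occupies all of \S\S\ref{s:towards_minors}--\ref{s:animations}: one reduces to connected square subhypergraphs of the incidence hypergraph, classifies these via the Unicyclicity Lemma~\ref{lem:unicyclic}, and computes the minors case by case. Once monomiality is known, uniformity is immediate from Proposition~\ref{prop:general_monomial_integral}. The odd-characteristic restriction in~(ii) arises simply because the minors of $\sC^+_\Gamma$ carry factors of~$2$; no diagonalisation of symmetric forms is involved.
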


\begin{rem}
  \quad
  \begin{enumerate}
  \item
    In \cite{cico},
    Theorem~\ref{thm:uniformity}\ref{thm:uniformity3} was only spelled out in
    case $\Gamma$ is loopless.
    However, in \cite[\S 6]{cico}, both parts
    \ref{thm:uniformity2} and \ref{thm:uniformity3} of
    Theorem~\ref{thm:uniformity} were proved simultaneously and, in particular,
    the proof via \cite[Thm~6.4(iii)]{cico} also applies when $\Gamma$ has
    a loop.
  \item
    The first author's package \textsf{Zeta}~\cite{Zeta} for SageMath~\cite{SageMath}
    implements algorithms for computing $W^\pm_\Gamma$ and $W_\Eta$.
    These algorithms are practical for small (hyper)graphs, say on at most $7$ vertices.
  \item
    The present article provides us with a new and self-contained proof of
    Theorem~\ref{thm:uniformity}; see \S\ref{s:selectors} and
    \S\ref{ss:minors_yield_animations}.
  \end{enumerate}
\end{rem}

The present article contributes to our understanding of the operation
$\Gamma \leadsto W^-_\Gamma$.
In particular, we will see that for two very natural classes of graphs, namely
loopless and reflexive ones, taking joins has a remarkably tame algebraic
effect on $W^-_\Gamma$.

\subsection{Result: joins of loopless graphs}
\label{ss:intro/loopless_join}

Let $\Gamma_1$ and $\Gamma_2$ be graphs and
let $\Gamma_1\oplus \Gamma_2$ denote their disjoint union.
The \emph{join} $\Gamma_1 \join \Gamma_2$ of $\Gamma_1$ and $\Gamma_2$ is
obtained from $\Gamma_1\oplus \Gamma_2$ by adding edges connecting each vertex
of $\Gamma_1$ to each vertex of $\Gamma_2$.
Note that a join of loopless graphs is again loopless.

\begin{thmabc}[Loopless joins]
  \label{thm:join}
    Let $\Gamma_1$ and $\Gamma_2$ be loopless graphs on $n_1$ and $n_2$
    vertices, respectively.
    Write $z_i = X^{-n_i}$.
    Then
    {\small
    \begin{multline}
      W^-_{\Gamma_1\join \Gamma_2}(X,T)\\
      =
      \frac{z_1 z_2 XT \!-\! 1
      + W^-_{\Gamma_1}(X,\!z_2T)(1\!-\!z_2T)(1\!-\!z_2XT)
      + W^-_{\Gamma_2}(X,\!z_1T)(1\!-\!z_1T)(1\!-\!z_1XT)}
      {(1-T)(1-XT)}.
      \label{eq:join}
    \end{multline}}
\end{thmabc}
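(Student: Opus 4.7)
The strategy is to analyse the $p$-adic integral representation of $\Zeta^{\ask}_{\sA^-_{\Gamma_1\join\Gamma_2}/\fO}$ using the block decomposition
\[
\sA^-_{\Gamma_1\join\Gamma_2} = \begin{bmatrix} \sA^-_{\Gamma_1} & B \\ -B^{\top} & \sA^-_{\Gamma_2} \end{bmatrix},
\]
where $B$ is an $n_1 \times n_2$ matrix of algebraically independent variables indexed by the edges added in the join. The coupling between $\Gamma_1$ and $\Gamma_2$ sits entirely in the $B$-block, and the plan is to unpick this coupling by conditioning on the valuations of the specialised $B$-entries.

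Fixing a compact \DVR{} $\fO$ with uniformiser $\pi$ and residue field of size $q$, I would write the ask zeta function as a $p$-adic integral over specialisations $(a_1,a_2,b)$ of the variables and stratify by the minimum valuation $w\in \NN\cup\{\infty\}$ occurring among the entries of the specialised $B$. When $w=\infty$ (so $b=0$) the matrix is block-diagonal and yields a boundary term built from $\sA^-_{\Gamma_1}\oplus \sA^-_{\Gamma_2}$ alone. When $w=0$ the specialised $B$ has a unit entry, and simultaneous elementary row and column operations over $\fO$ (which preserve antisymmetry thanks to the looplessness hypothesis) can be used to decouple the two diagonal blocks; integrating out the remaining entries of $B$ then produces a factorised contribution involving the ask zeta functions of $\sA^-_{\Gamma_1}$ and $\sA^-_{\Gamma_2}$ separately, but evaluated at the rescaled arguments $z_2 T$ and $z_1 T$ respectively. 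When $0 < w < \infty$, writing $b = \pi^w b'$ reproduces a geometric series in $T$, accounting for the universal denominator $(1-T)(1-XT)$.

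The rescaling $T \mapsto z_j T = X^{-n_j} T$ arises naturally from this stratification: the $n_j$ independent rows of the $B$-block on the $\Gamma_j$ side contribute a factor $q^{-n_j}$ to the kernel statistics per level of the $p$-adic filtration, and this $q^{-n_j}$ is precisely what rescales $T$ on the opposite side. Assembling the three strata and extracting the common denominator $(1-T)(1-XT)$ should yield the cleaner equivalent identity
\[
W^-_{\Gamma_1\join\Gamma_2}(X,T)(1-T)(1-XT) = z_1z_2XT - 1 + V_1(X, z_2 T) + V_2(X, z_1 T),
\]
where $V_i(X,T) := W^-_{\Gamma_i}(X,T)(1-T)(1-XT)$, which is equivalent to~\eqref{eq:join}.

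The principal obstacle is the $w=0$ analysis. One must show not merely that a unit entry of $B$ permits \emph{some} factorisation, but that the specific integral obtained after the elementary operations and the subsequent integration over the remaining $B$-entries genuinely equals the product of two shifted ask zeta functions, with no residual cross-interaction between the two sides. The symmetric row/column operations inevitably modify each $\sA^-_{\Gamma_i}$ by an additive Schur-complement-type perturbation depending on the other side, so the key technical step is to verify that, upon integration over the still-independent $B$-variables, these perturbations average out and leave the ask zeta functions of the \emph{unperturbed} $\sA^-_{\Gamma_i}$ at the rescaled arguments.
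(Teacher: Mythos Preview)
Your proposal takes a genuinely different route from the paper, and the obstacle you identify at the end is real and unresolved.

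The paper does not work with $\sA^-_\Gamma$ directly. Instead it uses the integral representation via the $\circ$-dual matrix $\sC^-_\Gamma \in \Mat_{m\times n}(\ZZ[X_V])$, so the integration variables are indexed by \itemph{vertices}, not edges. The paper's stratification is therefore by which vertex coordinates are units: it splits $(\fO V)^\times \times \fP$ into the three pieces $(\fO V_1)^\times \times (\fO V_2)^\times \times \fP$, $(\fO V_1)^\times \times \fP V_2 \times \fP$, and $\fP V_1 \times (\fO V_2)^\times \times \fP$. The first piece is elementary. For the others, the paper develops a combinatorial parameterisation of the nonzero minors of $\sC^-_\Gamma$ by \itemph{nilpotent animations} (partial functions on $V$ sending each vertex to a neighbour), shows that every nilpotent animation of $\Gamma_1\join\Gamma_2$ is dominated (in a suitable monomial order) by a ``$u$-centred'' one built from an animation of $\Gamma_i$ alone, and then recognises the resulting integrand as arising from $\sC^-_{\Gamma_i}$ with $n_j$ generic rows appended. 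This last step is handled by an independent result on the effect of adding generic rows on ask zeta functions. No Schur complements appear.

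Your approach stratifies by edge variables instead, specifically by the minimum valuation among the $B$-entries. The difficulty you flag is genuine: when $b_{ij}\in\fO^\times$, the antisymmetric Schur complement with respect to the $2\times 2$ block $\left[\begin{smallmatrix}0 & b_{ij}\\ -b_{ij} & 0\end{smallmatrix}\right]$ perturbs the $\Gamma_1$-diagonal block by a rank-$\le 2$ antisymmetric matrix of the form $b_{ij}^{-1}\bigl(A_1[i,{:}']^\top B[{:}',j]^\top - B[{:}',j]\,A_1[i,{:}']\bigr)$. This perturbation involves the $\Gamma_1$-edge variables $A_1[i,k]$ (edges of $\Gamma_1$ incident to vertex $i$), not only $B$-entries, so integrating over the remaining $B$-entries does not obviously remove it. The perturbed block is no longer of the form $\sA^-_{\Gamma_1'}$ for any graph $\Gamma_1'$, and you have not explained why its ask zeta function should agree with that of the unperturbed $\sA^-_{\Gamma_1}$ at a shifted argument. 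Absent a mechanism for this (and none is apparent), the proposal is a sketch with its central step missing rather than a proof.
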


\begin{rem}[Precursors to Theorem~\ref{thm:join}]
  \quad \begin{enumerate} \item In the special case that $\Gamma_1$
  and $\Gamma_2$ are \itemph{cographs}, i.e.\ loopless graphs which do
  not contain a path on four vertices as induced subgraphs,
  Theorem~\ref{thm:join} was first proved in \cite[Prop.~8.4]{cico}.
  That proof relied on the Cograph Modelling Theorem
  (Theorem~\ref{thm:cmt} below) from \cite{cico}.
  Theorem~\ref{thm:join} thus provides a positive answer
  to \cite[Question 10.1]{cico}.

    Removing the assumption that $\Gamma_1$ and $\Gamma_2$ be cographs
    constitutes a significant extension of the scope of this result.
    For instance, it is well known that asymptotically, the number of
    (isomorphism classes of) loopless graphs on $n$ vertices grows like
    $\gamma_n := 2^{n \choose 2} / n!$; see \cite[\S 9.1]{HP73}.
    Ravelomanana and Thimonier~\cite[Thm~4]{RT01} showed that asymptotically,
    the number of (isomorphism classes of) cographs on $n$ vertices grows like
    $\beta_n := C \alpha^n / n^{3/2}$, where $C = 0.206\dotso$ and $\alpha =
    3.560\dotso$.  In particular, $\beta_n$ grows at
    most exponentially while $\gamma_n$ grows super-exponentially as
    $n\to \infty$.  \item By \cite[Prop.\ 8.5]{csp},
    equation \eqref{eq:join} holds modulo $T^2$.  Given a graph
    $\Gamma$, the coefficient of $T$ of $W_\Gamma^-(X,T)$ encodes the
    average size of the kernel of the matrices $\sA^-_\Gamma$ over
    finite fields.  \end{enumerate}
\end{rem}

\begin{rem}
  \quad
  \begin{enumerate}
  \item
    The conclusion of Theorem~\ref{thm:join} does not generally hold
    unless $\Gamma_1$ and $\Gamma_2$ are \itemph{both} loopless.
    Taking $\Gamma_1$ to be an isolated vertex and $\Gamma_2$ to be a loop at
    one vertex provides a counterexample.
    On the other hand, when $\Gamma_1$ and $\Gamma_2$ contain loops at all
    vertices, then $W^-_{\Gamma_1 \join \Gamma_2}$ is again expressible in
    terms of $W^-_{\gamma_1}$ and $W^-_{\Gamma_2}$ albeit in terms of a
    formula other than \eqref{eq:join}; see Proposition~\ref{prop:Wsharp_join}.
  \item
    Taking $\Gamma_1$ to be a singleton and $\Gamma_2$ to be a path on $2$
    vertices (so that $\Gamma_1 \join \Gamma_2$ is a triangle) shows that the
    conclusion of Theorem~\ref{thm:join} does not carry over to
    $W^+_{\Gamma_1\join \Gamma_2}(X,T)$.
    (See \cite[Table~1]{cico}.)
  \end{enumerate}
\end{rem}

\paragraph[Renormalising.]{Enter $W^\flat_\Gamma$: renormalising $W^-_\Gamma$.}

Theorem~\ref{thm:join} may be rephrased as follows.  For a graph
$\Gamma$ on $n$ vertices, let $W^\flat_\Gamma = W^\flat_\Gamma(X,T) =
W^-_\Gamma(X,X^n T)$.  Of course, $W^-_\Gamma$ and $W^\flat_\Gamma$
determine one another for given $n$.  While the coefficients of
$W^-_\Gamma(X,T)$ as a series in $T$ encode average sizes of kernels
of specialisations of $\sA^-_\Gamma(X)$ over finite quotients of
\DVR{}s, the coefficients of $W^\flat_\Gamma(X,T)$ count pairs $(v,x)$ with $v
\sA^-_\Gamma(x) = 0$ over such rings.

Let $\Gamma_1$ and $\Gamma_2$ be graphs on $n_1$ and $n_2$ vertices, respectively.
If $\Gamma_1$ and $\Gamma_2$ are both loopless, then Theorem~\ref{thm:join}
takes the following form:
\begin{multline}
  W^\flat_{\Gamma_1\join \Gamma_2}(X,T) = \\ \frac{XT - 1 +
  W^\flat_{\Gamma_1} \cdot (1-X^{n_1}T)(1-X^{n_1+1}T) +
  W^\flat_{\Gamma_2} \cdot
  (1-X^{n_2}T)(1-X^{n_2+1}T)}{(1-X^{n_1+n_2}T)(1-X^{n_1+n_2+1}T)}.  \label{eq:flat_join}
\end{multline}

\paragraph{Join powers.}

Noting that taking joins of graphs is an associative operation (up to
isomorphism), let $\Gamma^{\join k}
= \Gamma \join \dotsb \join \Gamma$, with $k$ copies of~$\Gamma$. We
let $\Gamma^{\join 0}$ denote the graph without vertices, the identity
element with respect to $\join$.  Theorem~\ref{thm:join} provides us
with infinitely many \itemph{explicit} formulae for the functions
$W^\flat_\Gamma$ (hence also $W^-_\Gamma$):

\begin{cor}
  \label{cor:join_power}
  Let $\Gamma$ be a loopless graph on $n$ vertices.
  Let $k \ge 0$.
  Then
  \[
    W^\flat_{\Gamma^{\join k}}(X,T) = \frac{ (k-1)(XT-1) + k W^\flat_\Gamma\cdot
      (1-X^nT)(1-X^{n+1}T)}
    {(1-X^{kn}T)(1-X^{kn+1}T)}.
  \]
\end{cor}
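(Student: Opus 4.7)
The plan is to prove the corollary by induction on $k$, using the flattened form~\eqref{eq:flat_join} of Theorem~\ref{thm:join}. Since a join of loopless graphs is loopless (as noted just before the theorem) and $\Gamma^{\join k}$ has exactly $kn$ vertices, \eqref{eq:flat_join} applies at every stage of the induction.

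For the base case $k = 0$, the graph $\Gamma^{\join 0}$ has no vertices, and inspection of the definitions gives $W^\flat_{\emptyset}(X,T) = 1/(1-T)$; substituting $k=0$ into the claimed formula yields $-(XT-1)/\bigl((1-T)(1-XT)\bigr) = 1/(1-T)$, matching. The case $k = 1$ is immediate: both sides equal $W^\flat_\Gamma$.

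For the inductive step, use associativity of $\join$ (up to isomorphism) to write $\Gamma^{\join(k+1)} = \Gamma^{\join k} \join \Gamma$, and apply \eqref{eq:flat_join} with $n_1 = kn$ and $n_2 = n$. The key observation is that the factor $(1-X^{kn}T)(1-X^{kn+1}T)$ that multiplies $W^\flat_{\Gamma^{\join k}}$ in the numerator of \eqref{eq:flat_join} is exactly the denominator in the inductive hypothesis. Hence after substitution the $W^\flat_{\Gamma^{\join k}}$-term reduces to the polynomial $(k-1)(XT-1) + k\, W^\flat_\Gamma \cdot (1-X^nT)(1-X^{n+1}T)$, and collecting the three numerator contributions gives
\[
k(XT-1) + (k+1)\, W^\flat_\Gamma \cdot (1-X^nT)(1-X^{n+1}T)
\]
over the denominator $(1-X^{(k+1)n}T)(1-X^{(k+1)n+1}T)$, as claimed.

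The whole argument is routine; the only substantive point is the telescoping cancellation between the denominator of the inductive hypothesis and the numerator factor in \eqref{eq:flat_join}, which is what makes iterated joins yield such a clean closed form. Consequently, there is no real obstacle once Theorem~\ref{thm:join} is in hand — the corollary is essentially a formal consequence of the associativity of $\join$ together with the particular algebraic shape of \eqref{eq:flat_join}.
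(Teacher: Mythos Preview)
Your proof is correct and follows the same approach as the paper, namely induction on $k$ using \eqref{eq:flat_join} and associativity of the join; you simply supply more detail (explicit base cases and the telescoping computation) than the paper's one-line proof.
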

\begin{proof}
  Induction on $k$ using $\Gamma^{\join (k+1)}\approx \Gamma \join
  \Gamma^{\join k}$ and \eqref{eq:flat_join}.
\end{proof}

\begin{ex}
  Let $\Gamma$ be the following graph:
  \begin{center}
    \begin{tikzpicture}
      \tikzstyle{Grey Vertex}=[fill=lightgray, draw=black, shape=circle, scale=0.4]

      \node [style=Grey Vertex] (1) at (0,1) {};
      \node [style=Grey Vertex] (3) at (-1,0) {};
      \node [style=Grey Vertex] (4) at (1,0) {};
      \node [style=Grey Vertex] (2) at (0,-1) {};

      \draw (1) to (2);
      \draw (1) to (3);
      \draw (1) to (4);
      \draw (2) to (3);
      \draw (2) to (4);
      \end{tikzpicture}
    \end{center}
    By \cite[Thm~8.18]{cico} or \cite[Table~1]{cico}, we have
    $W^-_\Gamma = \frac{(1-X^{-1}T)(1-X^{-2}T)}{(1-T)^2(1-XT)}$
    and thus
    $W^\flat_\Gamma = W^-_\Gamma(X,X^4T) = \frac{(1-X^2T)(1-X^3
      T)}{(1-X^4T)^2(1-X^5T)}$.
    Using Corollary~\ref{cor:join_power}, we find that
    \[
      W^\flat_{\Gamma^{\join k}}(X,T) =
      \frac
      {
        X^5 T^2
        + (k-1) (X^4T + XT)
        - k (X^3T + X^2T)
        + 1}
      {(1-X^4T)(1-X^{4k}T)(1-X^{4k+1}T)}
    \]
    and thus
    \begin{align*}
      W^-_{\Gamma^{\join k}}(X,T)
      & = W^\flat_\Gamma(X,X^{-4k}T) \\
      & =
            \frac
      {
        X^{5-8k} T^2
        + (k-1) (X^{4-4k}T + X^{1-4k}T)
        - k (X^{3-4k}T + X^{2-4k}T)
        + 1}
      {(1-X^{4-4k}T)(1-T)(1-XT)}.
    \end{align*}
\end{ex}

\subsection{Related work: disjoint unions and Hadamard products}
\label{ss:hadamard}

Theorem~\ref{thm:join} shows that one natural graph-theoretic operation,
taking joins, has a transparent effect on the rational functions $W^-_\Gamma$
attached to loopless graphs.
There are of course many other natural binary operations on (loopless) graphs
that one may consider.
To our knowledge, only one of these has been investigated so far: disjoint
unions.

Recall that given formal power series $F(T) = \sum_{k=0}^\infty a_k T^k$ and
$G(T) = \sum_{k=0}^\infty b_k T^k$ with coefficients in a field $k$, their
\emph{Hadamard product} is $F(T) *_T G(T) = \sum_{k=0}^\infty a_k b_kT^k$.
It is well known that if $F(T), G(T)\in k(T)$, then $F(T)*_T G(T) \in
k(T)$; see e.g.\ \cite[Ch.\ 1]{BR88}.
It is easy to see that for each graph $\Gamma$,
we have $W^\pm_{\Gamma_1\oplus\Gamma_2}(X,T) =
W^\pm_{\Gamma_1}(X,T) *_T W^\pm_{\Gamma_2}(X,T)$; cf.\ \cite[\S 8.2]{cico}.
Here, the Hadamard product is taken over $k = \QQ(X)$.
We note that
$W^\flat_{\Gamma_1 \oplus \Gamma_2} = W^\flat_{\Gamma_1} *_T W^\flat_{\Gamma_2}$
follows easily; cf.\ \cite[Lem.\ 5.9]{CMR24a}.

In general, explicitly expressing Hadamard products of rational generating
functions as (explicit sums of) rational functions appears to be difficult.
A growing body of research has provided algebro-combinatorial tools
for studying Hadamard products of ask zeta functions (in particular those
associated with graphs), orbit-counting, and class-counting zeta functions in
fortunate cases, see \cite[\S 2.3]{ask}, \cite[\S 5.2]{cico}, and
\cite{CMR24b} (see also \cite{CMR24a}).
We record some consequences of these results in \S\ref{s:Wsharp}.

\subsection{Result: the Reflexive Graph Modelling Theorem}
\label{ss:intro/RGMT}

Let $\Gamma$ be a graph with vertex set $V$ and adjacency relation $\sim$ on $V$.
Anticipating a definition from \S\ref{ss:graphs}, we write
$\Adj(\Gamma)$ for the \emph{adjacency hypergraph} $(V,V,\sim)$ of $\Gamma$.
Hence, $\Adj(\Gamma)$ is the hypergraph obtained by viewing an adjacency
matrix of $\Gamma$ as incidence matrix of a hypergraph.
Recall that $\Gamma$ is \emph{reflexive} if $v \sim v$ for each $v\in V$.
The following is our second main result.

\begin{thmabc}[Reflexive Graph Modelling Theorem]
  \label{thm:rgmt}
  Let $\Gamma$ be a reflexive graph.
  Then
  \[
    W^+_\Gamma = W^-_\Gamma = W_{\Adj(\Gamma)}.
  \]
\end{thmabc}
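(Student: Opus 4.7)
The plan is to identify the three ask zeta functions via the selector/animation framework developed in \S\ref{s:selectors}--\S\ref{ss:minors_yield_animations}. That framework assigns to each matrix of linear forms an abstract combinatorial invariant whose isomorphism type determines the ask zeta function, and it is the backbone of the new proof of Theorem~\ref{thm:uniformity} given later in the paper. The key structural observation is that, for a reflexive graph $\Gamma$, each diagonal entry of $\sA^\pm_\Gamma$ is an independent variable $X_{ii}$; this ``diagonal freedom'' should be exactly what renders the constraint $X_{ji} = \pm X_{ij}$ invisible to the selector. Consequently, one expects the selectors of $\sA^+_\Gamma$, $\sA^-_\Gamma$, and $\sA_{\Adj(\Gamma)}$ to coincide, from which the two equalities in the theorem follow.

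Heuristically, the collapse is driven by a change of variables. Whenever $2 \in \fO^\times$, the substitution
\[
  X_{ij} = Y_{ij}+Z_{ij}, \quad X_{ji} = Y_{ij}-Z_{ij} \qquad (i<j,\; v_i\sim v_j),
\]
with $X_{ii}$ untouched, is an $\fO$-linear automorphism that identifies
$\sA_{\Adj(\Gamma)}(X) = \sA^+_\Gamma(X_{ii},Y_{ij}) + N(Z_{ij})$, where $N(Z)$ is skew-symmetric with zero diagonal. A parallel substitution, swapping the symmetric and antisymmetric roles, turns $\sA_{\Adj(\Gamma)}$ into $\sA^-_\Gamma$ plus a symmetric, zero-diagonal perturbation. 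Since $\sA^\pm_\Gamma$ carries a free variable on each diagonal entry, the perturbation can morally be absorbed by unit column operations driven by the diagonal---at least over the locus where these entries are units. This suggests that the three ask zeta functions agree and pinpoints precisely where combinatorial bookkeeping will be required.

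The main obstacle is that this column-operation heuristic is not pointwise: already for $\Gamma$ the reflexive $K_2$, a direct computation shows that the $Z$-average of $\lvert\ker(\sA^+_\Gamma(Y)+N(Z))\rvert$ differs from $\lvert\ker \sA^+_\Gamma(Y)\rvert$, with equality emerging only after the subsequent averaging over $Y$. A direct treatment via the Denef-type formula of \cite[Thm~4.11--4.12]{ask} would therefore demand a delicate stratification along the locus where diagonal entries become non-units. The selector framework should absorb exactly this bookkeeping into a finite combinatorial identification, reducing the proof to verifying that the selectors (or, equivalently, the animations) of $\sA^+_\Gamma$, $\sA^-_\Gamma$, and $\sA_{\Adj(\Gamma)}$ literally coincide when $\Gamma$ is reflexive. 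Once this identification is carried out, Theorem~\ref{thm:uniformity} ensures that the resulting equality of rational functions in $q$ and $T$ lifts from the individual compact \DVR{}s to the universal $W$'s, handling both the odd- and even-characteristic contributions to $W_{\Adj(\Gamma)}$ and $W^-_\Gamma$.
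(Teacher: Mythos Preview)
Your overall direction—leveraging the selector/animation parameterisation of minors of the $\circ$-duals—is exactly the paper's route, and your instinct that the diagonal loops are the mechanism is correct. However, the final reduction you state has a genuine gap. The three parameterising sets do \emph{not} literally coincide: by Proposition~\ref{prop:selectors} and Theorem~\ref{thm:animations}, the ideals $\Minors_k \Adj(\Gamma)$, $\frac12\Minors_k^+\Gamma$, and $\Minors_k^-\Gamma$ are generated by $\mon\alpha$ as $\alpha$ ranges over $\Animations_k(\Gamma)$, $\Oddperiods_k(\Gamma)$, and $\Fixed_k(\Gamma)$ respectively, and for a reflexive graph these sets are in general strictly nested. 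On the reflexive triangle, for instance, the $3$-cycle lies in $\Oddperiods_3$ but not $\Fixed_3$, while a transposition of two adjacent vertices lies in $\Animations_2$ but not $\Oddperiods_2$. What you actually need is that the three monomial \emph{ideals} agree, and this is not automatic from the framework alone.

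The missing step is the cycle-breaking argument that the paper supplies as Lemma~\ref{lem:reflexive_cycle_breaking}: given any $\alpha\in\Animations(\Gamma)$ and a periodic orbit $\{u_1,\dotsc,u_r\}$ of length $r>1$, reflexivity lets one redefine $u_i^\alpha := u_i$ for each $i$, producing $\alpha'\in\Animations(\Gamma)$ with $\mon{\alpha'}=\mon\alpha$ and strictly fewer nontrivial orbits; iterating yields $\beta\in\Fixed(\Gamma)$ with $\mon\beta=\mon\alpha$. This is precisely where the diagonal freedom you identified enters, but as a monomial-preserving combinatorial move on animations rather than as a column operation on matrices. Combined with the rank check $\rank\sC^\pm_\Gamma=\rank\sC_{\Adj(\Gamma)}=n$ (Corollary~\ref{cor:reflexive_rank}), which ensures the integrals in Proposition~\ref{prop:proj_circ_int} have the same shape, one obtains identical integrands and hence equal $W$'s. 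Your change-of-variables heuristic, while suggestive, plays no role in the actual argument.
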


\begin{ex}
  Let $\Eta$ be as in Example~\ref{ex:looped_P4_model} and
  $\Gamma$ be as in Example~\ref{ex:looped_P4}.
  Then Theorem~\ref{thm:rgmt} shows that
  $W^\pm_\Gamma = W_{\Eta}$.
  Using \textsf{Zeta}~\cite{Zeta}, we find this common rational function
  to be
  \[
    \frac{
      1 + 2 X^{-1}T - X^{-2}T^2 - 6X^{-2}T + 6 X^{-4}T^2 + X^{-4}T - 2X^{-5}T^2 - X^{-6}T^3}
    {(1 -X^{-1}T)^2(1 - T)^2}.
  \]
\end{ex}

We offer three perspectives on Theorem~\ref{thm:rgmt}. The first
portrays it as a new ``modelling theorem'', the second as a new
rigidity result, and the third---developed in
Section~\ref{ss:intro/Wsharp}---views it in the context of reflexive
joins.

\paragraph{A new modelling theorem.}
First, Theorem~\ref{thm:rgmt} is a new ``modelling theorem'' that
belongs to the same genre as the Cograph Modelling
Theorem \cite[Thm~D]{cico} (Theorem~\ref{thm:cmt} below).
Indeed, it asserts that for each graph $\Gamma$ of a certain type (namely,
each reflexive graph), there exists a modelling hypergraph $\Eta =
\Adj(\Gamma)$ such that $W^\pm_\Gamma = W_\Eta$.
Such a result allows us to leverage what is known about the
rational functions $W_\Eta$; see \S\ref{ss:intro/master} below.

\paragraph{Rigidity of ask zeta functions.}
Second, we may view Theorem~\ref{thm:rgmt} as a new instance of
rigidity phenomena that have been previously explored in the study of
ask zeta functions.  The first such result
is \cite[Cor. 5.10]{ask}.  It asserts that for any $d > 1$ and each
compact \DVR{} $\fO$ with residue field size $q$, the ask zeta
functions associated with the generic $d \times d$ matrix $A_d$ and
the generic traceless $d\times d$ matrix $T_d$ over $\fO$ coincide;
this common ask zeta function is~$(1-q^{-d}T)/(1-T)^2$.  That is,
imposing the linear relation $\trace(A_d) = 0$ on the entries of $A_d$
has no effect on associated ask zeta functions.  This result was
significantly extended in \cite[Thm~A]{board}, which showed that
imposing suitably \itemph{admissible} systems of linear equations
involving the entries of generic rectangular, symmetric, or
antisymmetric matrices has no effects on ask zeta functions.

Theorem~\ref{thm:rgmt} is a new result of this form.
Indeed, given a reflexive graph $\Gamma$, the matrix $\sA^\pm_\Gamma =
[a_{ij}]$ is obtained from $\sA_{\Adj(\Gamma)}$ by imposing the linear
relations $a_{ij} = \pm a_{ji}$ for $i \not= j$.
Crucially, these relations are \itemph{not} among those covered by \cite[Thm~A]{board}.

Viewing $\sA_{\Gamma}^\pm$ as being obtained from $\sA_{\Adj(\Gamma)}$
by imposing off-diagonal (anti)symmetry relations, it is natural ask
whether the same conclusion holds for more general classes of matrices
of linear forms.
Indeed, even slight generalisations of the matrices $\sA_{\Adj(\Gamma)}$
yield examples for which natural analogues of the  conclusions of
Theorem~\ref{thm:rgmt} no longer hold.

\begin{ex}
  \label{ex:ABC}
  Consider the following matrices of linear forms:
  \[
    A = \begin{bmatrix} U_1 & X & 0 \\ X & U_2 & X \\ 0 & X & U_3 \end{bmatrix}, \quad
    B = \begin{bmatrix} U_1 & X & 0 \\ Y & U_2 & X \\ 0 & Y & U_3 \end{bmatrix}, \quad 
    C = \begin{bmatrix} U_1 & X & 0 \\ Y & U_2 & X \\ 0 & Z & U_3
    \end{bmatrix}.
  \]

  Using \textsf{Zeta}~\cite{Zeta}, we find that for almost all residue
  characteristics of $\fO$, the zeta functions $\Zeta^{\ask}_{A/\fO}$,
  $\Zeta^{\ask}_{B/\fO}$, and $\Zeta^{\ask}_{C/\fO}$ are all distinct.
  They also all differ from $\Zeta^{\ask}_{\sA_\Gamma^\pm/\fO}$,
  where~$\Gamma$ is the ``looped path''
  \begin{center}
    \begin{tikzpicture}
      \tikzstyle{Grey Vertex}=[fill=lightgray, draw=black, shape=circle, scale=0.8]
      \node [style=Grey Vertex] (1) at (0,0) {};
      \node [style=Grey Vertex] (2) at (1.5,0) {};
      \node [style=Grey Vertex] (3) at (3,0) {};

      \draw (1) to (2) to (3);
      \draw (1) to[out=45, in=135, looseness=10] (1);
      \draw (2) to[out=45, in=135, looseness=10] (2);
      \draw (3) to[out=45, in=135, looseness=10] (3);
    \end{tikzpicture}
    \hspace*{-2em} .
    \end{center}
    Using the notation from the next subsection, a formula for the latter zeta
    function is recorded in Table~\ref{tab:graphs4} in the row corresponding
    to the (simple) path on three vertices.
\end{ex}

\subsection[A new graph invariant and reflexive joins]{A new graph invariant
  ($W^\sharp_\Gamma$) and reflexive joins}
\label{ss:intro/Wsharp}

Given a graph $\Gamma$, let $\hat\Gamma$ denote its reflexive closure,
obtained by adding all missing loops.
For our third perspective on Theorem~\ref{thm:rgmt}, observe that
$\Gamma \mapsto \hat\Gamma$ yields a bijection between loopless and reflexive graphs.
Theorem~\ref{thm:rgmt} therefore suggests the study of the following rational functions
attached to loopless graphs.

\begin{defn}
  \label{d:Wsharp}
  For a graph $\Gamma$,
  let $W^\sharp_\Gamma(X,T)$ denote the common value of
  \[
    W^+_{\hat\Gamma}(X,T) = W^-_{\hat\Gamma}(X,T) = W_{\Adj(\hat\Gamma)}(X,T).
  \]
\end{defn}

For a list of all $W^\sharp_\Gamma$ as $\Gamma$ ranges over graphs on at most
four vertices, see Table~\ref{tab:graphs4}.
The~$W^\sharp_\Gamma$ appear to be remarkably well-behaved.
In particular, using Theorem~\ref{thm:rgmt} and results from \cite{cico}, we
obtain the following reflexive counterpart of Theorem~\ref{thm:join}.

\begin{propabc}[Reflexive joins and disjoint unions]
  \label{prop:Wsharp_join}
  Let $\Gamma_1$ and $\Gamma_2$ be (loopless) graphs on $n_1$ and $n_2$
  vertices, respectively.
  Write $z_i = X^{-n_i}$.
  Then $W^\sharp_{\Gamma_1\oplus \Gamma_2} = W^\sharp_{\Gamma_1} *_T
  W^\sharp_{\Gamma_2}$
  and
  {\small
    \[
    W^\sharp_{\Gamma_1 \join \Gamma_2}(X,T) =
    \frac{z_1 z_2 T  - 1
      + W^\sharp_{\Gamma_1}\!(X,z_2T)(1-z_2 T)^2
      + W^\sharp_{\Gamma_2}\!(X,z_1T)(1-z_1 T)^2
    }
    {(1-T)^2}.
  \]}
\end{propabc}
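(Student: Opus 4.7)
The plan is to deduce the proposition from the three-fold identification $W^\sharp_\Gamma = W^-_{\hat\Gamma} = W_{\Adj(\hat\Gamma)}$ furnished by Definition~\ref{d:Wsharp} and Theorem~\ref{thm:rgmt}, together with the observation that the reflexive closure $\Gamma \mapsto \hat\Gamma$ commutes with both graph operations in play: $\widehat{\Gamma_1 \oplus \Gamma_2} = \hat\Gamma_1 \oplus \hat\Gamma_2$ and $\widehat{\Gamma_1 \join \Gamma_2} = \hat\Gamma_1 \join \hat\Gamma_2$, both immediate from the definitions. Via the identity $W^\sharp_{(\cdot)} = W^-_{\widehat{(\cdot)}}$, the two assertions of the proposition reduce to formulae for $W^-$ evaluated on $\hat\Gamma_1 \oplus \hat\Gamma_2$ and on $\hat\Gamma_1 \join \hat\Gamma_2$, respectively.

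For the disjoint-union part I would apply the Hadamard-product identity $W^-_{\Delta_1 \oplus \Delta_2} = W^-_{\Delta_1} *_T W^-_{\Delta_2}$ recorded in \S\ref{ss:hadamard}, which holds for arbitrary (possibly looped) graphs $\Delta_i$; specialising to $\Delta_i = \hat\Gamma_i$ gives the first claim at once. For the join part, the substantive work, I would pass via Theorem~\ref{thm:rgmt} to $W_{\Adj(\hat\Gamma_1 \join \hat\Gamma_2)}$ and observe that this hypergraph has an incidence matrix of block form
$$\begin{bmatrix} \Adj(\hat\Gamma_1) & J \\ J & \Adj(\hat\Gamma_2) \end{bmatrix},$$
with the all-ones cross-blocks $J$ recording the new join edges. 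The claim reduces to a computation of $W_\Eta$ for block-hypergraphs of this shape, to be carried out using the $p$-adic integration framework and the structural identities for $W_\Eta$ developed in~\cite{cico}: integrate out the cross variables filling the $J$-blocks, recombine the resulting boundary contributions with those coming from the diagonal blocks, and simplify.

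The main obstacle will be this final algebraic reconciliation. In particular, one must explain why the factors $(1-z_iXT)$ appearing in the loopless join formula \eqref{eq:join} collapse to $(1-z_iT)$ in the reflexive setting. Heuristically, the independent diagonal variables $X_{ii}$ present on a fully reflexive graph absorb the degree-$X$ twist that distinguished the loopless antisymmetric computation, causing both denominator factors to degenerate to $(1-T)$. Tracking this collapse carefully through the block integration, and verifying that the resulting rational expression matches the closed form claimed, is the heart of the proof.
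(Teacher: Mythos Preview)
Your overall strategy matches the paper's: use that reflexive closure commutes with $\oplus$ and $\join$, then translate each claim into a statement about previously known operations. Your treatment of the disjoint-union part is fine (the paper uses the hypergraph Hadamard identity from Proposition~\ref{prop:Hadamard_Eta} rather than the graph version, but these are equivalent here).

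Where you take a detour is the join part. The block hypergraph you write down, with adjacency matrices on the diagonal and all-ones blocks $J$ off the diagonal, is precisely the \itemph{complete union} $\Adj(\hat\Gamma_1) \circledast \Adj(\hat\Gamma_2)$ in the sense of \cite[\S 3.1]{cico}; this identity is the one observation you are missing. Once you see it, no new integration is required: Proposition~\ref{prop:freep_Eta} (i.e.\ \cite[Prop.~5.18]{cico}) already gives a closed formula for $W_{\Eta_1 \circledast \Eta_2}$ in terms of $W_{\Eta_i}$, the vertex counts $n_i$, and the hyperedge counts $m_i$. Since $\Adj(\hat\Gamma_i)$ has $n_i$ vertices and $n_i$ hyperedges, one simply sets $m_i = n_i$, so that $y_i z_i = X^{n_i} \cdot X^{-n_i} = 1$. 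The factors $(1 - y_1 z_1 z_2 T)$ and $(1 - y_2 z_1 z_2 T)$ then become $(1 - z_2 T)$ and $(1 - z_1 T)$, and the denominator factor $(1 - y_1 y_2 z_1 z_2 T)$ collapses to $(1 - T)$, yielding exactly the displayed formula. The ``algebraic reconciliation'' you anticipate as the main obstacle is thus a one-line substitution, not a genuine computation; the heuristic about diagonal variables absorbing a twist is unnecessary.
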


\subsection{Group-theoretic context: graphical groups and their conjugacy classes}
\label{ss:intro/cc_and_baer}

As shown in \cite{ask,ask2}, ask zeta functions arise naturally in the
enumeration of linear orbits and conjugacy classes of unipotent
groups.  We recall the connection between ask and class-counting
zeta functions in the special case of Baer group schemes.

\paragraph{Class-counting zeta functions of Baer group schemes.}
Let $\concnt(H)$ denote the number of conjugacy classes of a group $H$.
Let $\GG$ be a group scheme of finite type over $\fO$.
Inspired by~\cite{dS05}, the \emph{class-counting zeta function} of $\GG$ is
$\Zeta^{\cc}_{\GG}(T) = \sum_{k=0}^\infty \concnt(\GG(\fO/\fP^k))T^k$.

Let $A = A(X_1,\dotsc,X_\ell) \in \Mat_d(\ZZ[X_1,\dotsc,X_\ell])$ be an
antisymmetric matrix of linear forms.
We identify linear forms in $\ZZ[X_1,\dotsc,X_\ell]$ and elements of
$\ZZ^\ell$ with $X_i$ corresponding to the $i$th standard basis vector of
$\ZZ^\ell$.
With this identification, $A$ is equivalently described by the alternating
bilinear product 
$\diamond\colon \ZZ^d\times \ZZ^d \to \ZZ^\ell$ given by $x\diamond y = x A y^\top$
($x,y\in \ZZ^d$).
Using a geometric variant from \cite[\S 2.4]{SV14} of the classical Baer
correspondence, the \emph{Baer group scheme}~$\GG_\diamond$ attached to
$\diamond$, and hence to $A$, was defined in \cite[\S 2.4]{cico}.
The following was proved in \cite{cico} in the special case that the map
$x\mapsto A(x)$ on $\ZZ^\ell$ is injective---the same arguments apply without
this assumption.

\begin{prop}[{Cf.\ \cite[Prop.\ 1.1]{cico}}]
  \label{prop:baer_cc_via_ask}
  Let $A\in\Mat_d(\ZZ[X_1,\dotsc,X_\ell])$ be an antisymmetric matrix of
  linear forms.
  Let $\diamond\colon \ZZ^d\times \ZZ^d\to \ZZ^\ell$ be the alternating
  bilinear product attached to $A$ as above.
  Let $\fO$ be a compact \DVR{} of arbitrary characteristic.
  Let $q$ be the size of the residue field of $\fO$.
  Then $\Zeta^{\cc}_{\GG_\diamond \otimes \fO}(T) =
  \Zeta^{\ask}_{A/\fO}(q^\ell T)$.
\end{prop}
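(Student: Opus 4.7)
The plan is to re-run the proof of \cite[Prop.~1.1]{cico}, checking that the injectivity hypothesis on $x \mapsto A(x)$ plays no essential role. Fix $k \ge 0$, write $R = \fO/\fP^k$, $V = R^d$, $V' = R^\ell$, and let $\diamond$ also denote the reduction of the alternating form to $V \times V \to V'$. By the geometric Baer correspondence of \cite[\S 2.4]{SV14} (see also \cite[\S 2.4]{cico}), $\GG_\diamond(R)$ may be identified with the class-$2$ nilpotent group whose underlying set is $V \times V'$, with $V'$ central and with commutator $[(v_1,z_1),(v_2,z_2)] = (0,\, v_1\diamond v_2)$. Conjugation by $(w,u)$ therefore sends $(v,z)$ to $(v,\,z + w\diamond v)$, so the conjugacy classes with first coordinate a given $v \in V$ are in bijection with the cosets of $\operatorname{im}(B_v)$ in $V'$, where $B_v\colon V\to V'$ is the $R$-linear map $w \mapsto w \diamond v$. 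Using $|V'|/|\operatorname{im}(B_v)| = |V'|\cdot|\ker B_v|/|V|$ yields
\[
  \concnt(\GG_\diamond(R)) \;=\; \frac{|V'|}{|V|}\sum_{v\in V}|\ker B_v|.
\]

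The next step is to recognise this sum as a sum of kernel sizes of the specialised matrices $A(x)$ by Fourier inversion on the finite abelian groups $V$ and $V'$. Fix a non-trivial additive character $\psi\colon R \to \CC^\times$; since $R = \fO/\fP^k$ is a finite principal local (hence Frobenius) ring, the standard inner product pairings on $V$ and on $V'$ are self-dual under $\psi$. Combined with the fundamental identity $\langle x,\, v\diamond w\rangle = v A(x) w^\top$ for $v,w \in V$ and $x \in V'$, a routine application of character orthogonality first on $V'$ and then on $V$ converts $\sum_{v,w \in V}\Iverson{v\diamond w = 0}$ into $\frac{|V|}{|V'|}\sum_{x\in V'}|\ker A(x)|$, where antisymmetry of $A(x)$ is used to identify its left and right kernels. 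Combining with the display above gives $\concnt(\GG_\diamond(R)) = \sum_{x\in V'}|\ker A(x)|$. Since the coefficient of $T^k$ in $\Zeta^{\ask}_{A/\fO}(T)$ is by definition $|V'|^{-1}\sum_{x\in V'}|\ker A(x)| = q^{-k\ell}\concnt(\GG_\diamond(R))$, summation over $k$ yields $\Zeta^{\cc}_{\GG_\diamond\otimes\fO}(T) = \Zeta^{\ask}_{A/\fO}(q^\ell T)$.

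I do not expect a substantial obstacle. The only place where injectivity of $x\mapsto A(x)$ could conceivably intervene is in identifying $V'$ with the centre or with the derived subgroup of $\GG_\diamond(R)$, but the argument above treats $V'$ purely as a fixed abelian group into which commutators land, never demanding that $\diamond$ be surjective or that its adjoint $V' \to \operatorname{End}(V)$ be injective. The character-theoretic identities used are standard Pontryagin duality for finite abelian groups and are insensitive to any degeneracy of $A$; if $A(x) = 0$ for some $x \ne 0$, the full kernel $|\ker A(x)| = |V|$ is simply summed on both sides.
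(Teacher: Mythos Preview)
Your proposal is correct and is precisely the approach the paper indicates: the paper does not give an independent proof but simply states that the argument from \cite[Prop.~1.1]{cico} goes through without the injectivity hypothesis, which is exactly what you carry out. Your closing remark that injectivity would only matter if one tried to identify $V'$ with the centre or derived subgroup, and that the conjugacy-class count never uses this, pinpoints the one place the reader might worry and disposes of it cleanly.
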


\paragraph{Graphical groups and their class-counting zeta functions.}
Let $\Gamma$ be a loopless graph.
The \emph{graphical group scheme} $\GG_\Gamma$ (over $\ZZ$) associated
with~$\Gamma$ was defined in \cite[\S 3.4]{cico}.
For a group-theoretic description, see \cite[\S 1.1]{csp}.
Using our notation from \S\S\ref{ss:intro/ask}--\ref{ss:intro/matrices},
$\GG_\Gamma$ is the Baer group scheme associated with the alternating bilinear
product attached to the antisymmetric matrix of linear forms $\sA^-_\Gamma$.
(We require $\Gamma$ to be loopless for $\sA^-_\Gamma$ to be antisymmetric.)
The group $\GG_\Gamma(\ZZ)$ is the maximal nilpotent quotient of class at most
two of the right-angled Artin group associated with the (loopless) complement
of $\Gamma$.
For each odd prime $p$, we have $\GG_\Gamma(\FF_p) \approx
\GG_\Gamma(\ZZ)/\GG_\Gamma(\ZZ)^p$.
The following consequence of Proposition~\ref{prop:baer_cc_via_ask} provides a
group-theoretic motivation for studying the functions $W^-_\Gamma$.

\begin{cor}[{Cf.\ \cite[Prop.\ 3.9]{cico}}]
  Let $\Gamma$ be a loopless graph with $m$ edges.
  Let $\fO$ be a compact \DVR{} with residue field of size $q$.
  Then $\Zeta^{\cc}_{\GG_\Gamma \otimes \fO}(T) =
  W^-_\Gamma(q,q^mT)$.
  \qed
\end{cor}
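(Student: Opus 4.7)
The proof is essentially a direct concatenation of two results already recorded in the excerpt: the Baer-scheme formula of Proposition~\ref{prop:baer_cc_via_ask} and the uniformity statement of Theorem~\ref{thm:uniformity}\ref{thm:uniformity3}. The plan is to verify that $\GG_\Gamma$ falls squarely into the setting of Proposition~\ref{prop:baer_cc_via_ask}, identify the relevant variable count $\ell$, and then substitute.

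First I would recall from \S\ref{ss:intro/cc_and_baer} that, by definition, $\GG_\Gamma$ is the Baer group scheme $\GG_\diamond$ where $\diamond\colon \ZZ^n\times \ZZ^n\to \ZZ^\ell$ is the alternating bilinear product corresponding to the antisymmetric matrix of linear forms $\sA^-_\Gamma$. Here $n$ is the number of vertices of $\Gamma$, and—crucially—since $\Gamma$ is loopless, the diagonal entries of $\sA^-_\Gamma$ vanish and the entries $X_{ij}$ with $i<j$ give exactly one algebraically independent variable per edge of $\Gamma$, while the entries below the diagonal are determined by antisymmetry. Thus the number of independent variables equals the number of edges, i.e.\ $\ell = m$.

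Next I would apply Proposition~\ref{prop:baer_cc_via_ask} with $A = \sA^-_\Gamma$, which yields
\[
\Zeta^{\cc}_{\GG_\Gamma\otimes\fO}(T) = \Zeta^{\ask}_{\sA^-_\Gamma/\fO}(q^m T).
\]
Finally, invoking Theorem~\ref{thm:uniformity}\ref{thm:uniformity3}—valid for arbitrary residue characteristic, which matches the hypothesis of Proposition~\ref{prop:baer_cc_via_ask}—we substitute $\Zeta^{\ask}_{\sA^-_\Gamma/\fO}(S) = W^-_\Gamma(q,S)$ with $S = q^m T$ to obtain $W^-_\Gamma(q,q^m T)$, as required.

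There is no genuine obstacle here; the only point needing mild care is the identification $\ell = m$, which relies on loopless-ness to eliminate diagonal variables and on antisymmetry to reduce the count to edges rather than ordered pairs. Everything else is a formal citation of the two results already in the excerpt.
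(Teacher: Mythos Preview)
Your proposal is correct and matches the paper's intended argument exactly: the corollary is stated with a bare \qed, indicating it is immediate from Proposition~\ref{prop:baer_cc_via_ask} combined with Theorem~\ref{thm:uniformity}\ref{thm:uniformity3}, with the identification $\ell = m$ coming from the construction of $\sA^-_\Gamma$ for loopless $\Gamma$.
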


In this way, our results from \S\ref{ss:intro/loopless_join} have immediate
group-theoretic consequences in that they provide us with formulae for
class-counting zeta functions of graphical groups.

\subsection{Related work: from hypergraphs to cographs and back again}
\label{ss:intro/master}

Theorem~\ref{thm:uniformity}\ref{thm:uniformity2}--\ref{thm:uniformity3}
notwithstanding, it appears to be very difficult to produce explicit examples 
of the rational functions $W^\pm_\Gamma(X,T)$ for interesting families of graphs.
On the other hand, we do have a very precise formula for $W_\Eta(X,T)$.
As in \cite{cico}, for a finite set $V$, let $\WOhat(V)$ denote the poset of
(possibly empty) flags of (possibly empty) subsets of $V$.

\begin{thm}[{Cf.\ \cite[Thm~C]{cico}}]
  \label{thm:hypergraph_master}
  Let $\Eta = (V,E,\increl)$ be a hypergraph.
  For $U\subset V$, define $\pth{U} = \{ e\in E : \exists u \in U, u \increl
  e\}$.
  Then
  \begin{equation}\label{eq:hypergraph_master}
    W_{\Eta}(X,T) = \sum_{y \in \WOhat(V)}(1-X^{-1})^{\card{\sup(y)}}
    \prod_{U\in y} \frac{X^{\card{U}-\card{\pth{U}}}T}{1-X^{\card{U}-\card{\pth{U}}}T}.
  \end{equation}
\end{thm}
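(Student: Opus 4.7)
The plan is to realise $\Zeta^{\ask}_{\sA_\Eta/\fO}(T)$ as a $p$-adic integral and to evaluate it by stratifying the vertex-space $\fO^V$ according to the valuation profile of $v = (v_i)_{i\in V}$. Using the standard dictionary recalled in \S\ref{s:ask_bg}, I would first rewrite $W_\Eta(q,T)$ as a generating series counting pairs $(v,x) \in (\fO/\fP^k)^V \times (\fO/\fP^k)^{\increl}$ with $v\,\sA_\Eta(x) \equiv 0 \pmod{\fP^k}$, and then express this via a $p$-adic integral over $\fO^V \times \fO^{\increl}$. The crucial simplification is that each nonzero entry of $\sA_\Eta$ is an independent variable appearing in exactly one column, so the system $v\,\sA_\Eta(x) = 0$ decouples into one linear equation per hyperedge: $\sum_{i \in \|e\|_\Eta} v_i\, x_{ie} = 0$ for $e \in E$.

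Next, I would stratify $\fO^V$ by the joint valuation type of $v$. To each $v$ one associates a flag $y(v) = (U_1 \subsetneq \cdots \subsetneq U_r) \in \WOhat(V)$ together with a strictly increasing sequence of valuations $a_1 < \cdots < a_r$, where $i \in U_j \setminus U_{j-1}$ iff $\iota(v_i) = a_j$, and $v_i = 0$ iff $i \notin U_r$. The leading-digit units at each of the $r$ levels contribute the combinatorial factor $(1 - X^{-1})^{\card{\sup(y)}}$, while the valuation gaps $a_j - a_{j-1} \ge 1$ will be summed to produce the geometric-series factors appearing in~\eqref{eq:hypergraph_master}.

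Third, and most delicately, I would evaluate the $x$-integral on each stratum. The equation for hyperedge $e$ first imposes a nontrivial constraint at the level $j$ with $e \in \pth{U_j} \setminus \pth{U_{j-1}}$, namely when the first vertex of $\|e\|_\Eta$ becomes nonzero. A rank argument for the coefficient matrix of the linear system in the $x_{ie}$'s then shows that, after summing the geometric series in the gap $a_j - a_{j-1}$, the $j$th level contributes exactly $\dfrac{X^{\card{U_j}-\card{\pth{U_j}}}\,T}{1-X^{\card{U_j}-\card{\pth{U_j}}}\,T}$. Assembling these factors across levels and summing over flags $y \in \WOhat(V)$ recovers~\eqref{eq:hypergraph_master}.

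The main obstacle is this third step. One must verify that, although the stratification naturally involves the \emph{new} vertex levels $U_j \setminus U_{j-1}$ and the new edges $\pth{U_j} \setminus \pth{U_{j-1}}$, the exponents of $X$ assembled from each stratum depend only on the \emph{cumulative} sets $U_j$ and $\pth{U_j}$. This telescoping is what makes $\WOhat(V)$ (rather than, say, ordered partitions of $V$) the natural indexing poset, and it reflects a rank identity for the submatrix of $\sA_\Eta$ with rows indexed by $U_j$ and columns indexed by $\pth{U_j}$.
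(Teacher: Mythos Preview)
Your proposal outlines a direct, self-contained proof via $p$-adic integration and valuation-stratification of $\fO^V$. The paper does something entirely different and much shorter: it simply invokes \cite[Thm~C]{cico}, which already gives
\[
  W_{\Eta}(X,T) = \sum_{y \in \WOhat(V)}(1-X^{-1})^{\card{\sup(y)}}
  \prod_{U\in y} \frac{X^{\card{U}-\sum_{I\cap U \neq \emptyset}\mu_I}\,T}{1-X^{\card{U}-\sum_{I\cap U \neq \emptyset}\mu_I}\,T},
\]
where $\mu_I$ counts hyperedges with support exactly $I$, and then observes the one-line identity $\sum_{I\cap U\neq\emptyset}\mu_I = \card{\pth{U}}$. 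So the theorem as stated is a cosmetic reformulation of a result already proved in \cite{cico}, not a new argument.

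Your strategy is essentially a reconstruction of (a variant of) the original proof of \cite[Thm~C]{cico}. The decoupling of the system into one equation per hyperedge is correct, and your identification of the telescoping as the crux is accurate: after the change of variables to valuation gaps, the increments $(\card{U_j}-\card{U_{j-1}}) - (\card{\pth{U_j}}-\card{\pth{U_{j-1}}})$ must reassemble into the cumulative exponents $\card{U_j}-\card{\pth{U_j}}$, and one must also account correctly for flags in $\WOhat(V)$ that contain $\emptyset$ (these contribute an extra $\frac{T}{1-T}$ factor, which in the integral picture corresponds to whether $a_1=0$ or $a_1>0$). What your route buys is self-containment; what the paper's route buys is brevity, since the hard work was already done in \cite{cico}.
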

\begin{proof}
  For $I \subset V$, define
  $\mu_I = \#\bigl\{ e \in E : I = \{ v\in V : v\increl e\}\bigr\}$.
  By \cite[Thm~C]{cico},
  \[
    W_{\Eta}(X,T) = \sum_{y \in \WOhat(V)}(1-X^{-1})^{\card{\sup(y)}}
    \prod_{U\in y} \frac{X^{\card{U}-\sum_{I\cap U \neq
          \emptyset}\mu_I}T}{1-X^{\card{U}-\sum_{I\cap U \neq
          \emptyset}\mu_I}T}.
  \]
  The claim follows since for each $U\subset V$, we have
  $\sum\limits_{I \cap U \not= \emptyset} \mu_I = \card{\pth{U}}$.
\end{proof}

The number of summands in \eqref{eq:hypergraph_master} grows
super-exponentially with $\card V$; see \cite[(1.4)]{cico}.  While
Theorem~\ref{thm:hypergraph_master} is therefore of limited use when
it comes to explicitly computing $W_\Eta(X,T)$, it turns out to be a
powerful theoretical tool; see \cite[Thms~E--F]{cico}.  The following
result from \cite{cico} constitutes a bridge between incidence
structures in hypergraphs and adjacency structures in graphs.

\begin{thm}[{Cograph Modelling Theorem \cite[Thm~D]{cico}}]
  \label{thm:cmt}
  Let $\Gamma$ be a cograph.
  Then there exists an explicit hypergraph $\Eta$ such that $W^-_\Gamma(X,T) =
  W_\Eta(X,T)$.
\end{thm}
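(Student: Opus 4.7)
The plan is to prove the theorem by structural induction on the cotree of $\Gamma$. By Seinsche's theorem, every cograph is either a single vertex, or a disjoint union $\Gamma_1 \oplus \Gamma_2$, or a join $\Gamma_1 \join \Gamma_2$, of strictly smaller cographs. At each node of the cotree I would construct an explicit modelling hypergraph from the ones already produced for the children, so that the resulting $\Eta$ can be read off directly from the cotree of~$\Gamma$.

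For the base case, the single-vertex hypergraph $(\{v\}, \emptyset, \emptyset)$ models the isolated-vertex cograph: both sides yield $W = 1/(1-XT)$ by direct computation (the former because $\sA^-_\Gamma = [0]$ has trivial parameter space, the latter via Theorem~\ref{thm:hypergraph_master}). For the disjoint-union step, if $\Eta_i$ models $\Gamma_i$, I would take the hypergraph disjoint union $\Eta_1 \sqcup \Eta_2$. The verification has two ingredients: firstly, $W^-_{\Gamma_1\oplus\Gamma_2} = W^-_{\Gamma_1} *_T W^-_{\Gamma_2}$, recorded in \S\ref{ss:hadamard}; secondly, the analogous Hadamard identity $W_{\Eta_1\sqcup\Eta_2} = W_{\Eta_1} *_T W_{\Eta_2}$, which can be read off from Theorem~\ref{thm:hypergraph_master} by noting that a flag in $\WOhat(V_1\sqcup V_2)$ decomposes uniquely into a compatible pair of flags in $\WOhat(V_i)$, and that $\card U - \card{\pth U}$ is additive across $V_1$ and $V_2$ when there are no crossing hyperedges.

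The join case is the main obstacle. Given modelling hypergraphs $\Eta_i = (V_i, E_i, \increl_i)$ for $\Gamma_i$ on $n_i$ vertices, I would construct a hypergraph on the vertex set $V_1\sqcup V_2$ whose hyperedge set extends $E_1 \sqcup E_2$ by adjoining a family of ``crossing'' hyperedges that force the correct contribution from the all-to-all bipartite edges of $\Gamma_1\join\Gamma_2$. A natural first guess, guided by the shape of~\eqref{eq:join} (which features shifts $z_i = X^{-n_i}$ suggesting vertex-set-sized degree corrections), is to include, for each $v\in V_i$, a hyperedge whose support is $\{v\}\cup V_{3-i}$, or a dual construction adding hyperedges whose support is a full $V_i$. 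The correct recipe is determined by insisting that the exponents $\card U - \card{\pth U}$ in Theorem~\ref{thm:hypergraph_master} acquire precisely the shifts by $n_{3-i}$ needed to convert $W^-_{\Gamma_i}(X,T)$ into $W^-_{\Gamma_i}(X,z_{3-i}T)$ after partitioning the summation over $\WOhat(V_1\sqcup V_2)$ according to how each flag's sets intersect $V_1$ and $V_2$.

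The hardest step will be the final verification: splitting the sum from Theorem~\ref{thm:hypergraph_master} over $\WOhat(V_1\sqcup V_2)$ into contributions indexed by the ``profile'' of how each layer of the flag meets $V_1$ and $V_2$ (both empty, one empty, or both nonempty), applying the inductive hypothesis to the ``inside'' flags on each $V_i$, and telescoping the remaining layers that straddle both sides into the prefactors $(1-z_jT)(1-z_jXT)$ and the correction $z_1z_2XT - 1$ appearing in \eqref{eq:join}. The combinatorial book-keeping tracking precisely which hyperedges are ``hit'' by each flag layer, and thus how $\card{\pth U}$ increments across the flag, is where the bulk of the technical work lies.
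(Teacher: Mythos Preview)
Your structural induction and base case match the paper's new proof in \S\ref{ss:proof_cmt}. The disjoint-union step is also essentially the same, though the paper simply invokes the already-established Hadamard identity for hypergraphs (Proposition~\ref{prop:Hadamard_Eta} from \cite{cico}) rather than re-deriving it from Theorem~\ref{thm:hypergraph_master}.

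The join step is where your proposal diverges from the paper and where it has a genuine gap. The paper does not construct the modelling hypergraph by adding crossing hyperedges and then verifying via direct flag-sum manipulation. Instead, it invokes two further results from \cite{cico}: the \itemph{complete union} $\Eta_1 \circledast \Eta_2$ (which enlarges each existing hyperedge of $\Eta_i$ by adjoining all vertices of $\Eta_{3-i}$, rather than adding new hyperedges) together with its known effect on $W_\Eta$ (Proposition~\ref{prop:freep_Eta}), and the operation $\Eta \mapsto \Eta^{\mathbf 1}$ of adding a single full hyperedge (Proposition~\ref{prop:01_Eta}). The model for $\Gamma_1 \join \Gamma_2$ is $(\Eta_1 \circledast \Eta_2)^{\mathbf 1}$, and one checks purely algebraically that the resulting formula matches Theorem~\ref{thm:join}. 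Crucially, the paper carries along the invariant that the model of a cograph on $n$ vertices has exactly $n$ vertices and $n-1$ hyperedges: this is what makes the exponents in Proposition~\ref{prop:freep_Eta} collapse correctly (one gets $y_i z_i = X$), and it is why the paper takes $(\Eta_1\oplus\Eta_2)^{\mathbf 0}$ rather than $\Eta_1\oplus\Eta_2$ at the disjoint-union step. Your proposal mentions neither this invariant nor the complete-union operation; your candidate constructions (adding $n_1+n_2$ new hyperedges of the form $\{v\}\cup V_{3-i}$, or hyperedges supported on a full $V_i$) would not preserve it, and the flag-sum bookkeeping you anticipate would amount to reproving Proposition~\ref{prop:freep_Eta} from scratch without having first pinned down the right hypergraph.
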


Hence, if $\Gamma$ is a cograph, then
Theorem~\ref{thm:hypergraph_master} and its many consequences
in \cite{cico} apply to $W^-_\Gamma(X,T)$.  Following \cite{cico}, we
refer to $\Eta$ as in Theorem~\ref{thm:cmt} as a
\emph{model} or a \emph{modelling hypergraph} of $\Gamma$.
Theorem~\ref{thm:rgmt} shows that if $\Gamma$ is a reflexive graph,
then $W^-_\Gamma(X,T) = W_{\Eta}(X,T)$ for $\Eta = \Adj(\Gamma)$.  In
contrast, the explicit modelling hypergraph in the proof of
Theorem~\ref{thm:cmt} in \cite{cico} is constructed recursively in terms of
decompositions of a cograph $\Gamma$ into joins and disjoint unions of
subgraphs.  We note that by combining Theorem~\ref{thm:join} and
results from \cite[\S 5]{cico}, we obtain a new simple new proof of
Theorem~\ref{thm:cmt} in \S\ref{ss:proof_cmt}.

\subsection{Methodology}
\label{ss:intro/methodology}

All main results in the present paper rely on a new proof of
Theorem~\ref{thm:uniformity}.

\paragraph{The Uniformity Theorem: behind the scenes.}
Given a matrix of linear forms $A$ over a compact \DVR{} $\fO$ with maximal
ideal $\fP$,
\cite[\S 4]{ask} provides formulae for $\Zeta^{\ask}_{A/\fO}(T)$
in terms of $\fP$-adic integrals.
Using ideas from \cite{Vol10}, these integrals are expressible in terms of the
ideals of minors of $A$ itself or of one of its \itemph{Knuth duals} in the
sense of \cite{ask2}.
In the cases of the matrices $\sA_\Eta$ (resp.\ $\sA^\pm_\Gamma$) from
\S\ref{ss:intro/matrices}, we record descriptions of such $\fP$-adic
integrals representing the associated ask zeta functions
in Proposition~\ref{prop:ask_Eta_integral} (resp.\
Proposition~\ref{prop:ask_Gamma_integral}) below. 
For the integral notation that we use, see
\eqref{eq:Eta_integral}--\eqref{eq:Gamma_integral}.
In these integrals, $\Minors_i \Eta$ (resp.\ $\Minors_i \Gamma^\pm$) denotes the
ideal generated by the $i\times i$ minors of a certain matrix of linear forms
$\sC_\Eta$ (resp.\ $\sC_\Gamma^\pm$).
In the language of \cite{ask2}, $\sC_\Eta$ (resp.\ $\sC_\Gamma^\pm$) is the
$\circ$-dual of $\sA_\Eta$ (resp.\ $\sA_\Gamma^\pm$).

The proof of Theorem~\ref{thm:uniformity} (parts
\ref{thm:uniformity2}--\ref{thm:uniformity3}, in particular)
in \cite{cico} was based on an analysis of integrals as
in \eqref{eq:Eta_integral}--\eqref{eq:Gamma_integral}
using toric geometry and an elaborate recursion.
Working directly with ideals of minors of matrices of linear forms can quickly
become daunting and the recursive approach from \cite{cico} allowed us to
completely avoid investigating any minors.
In essence, for a given graph $\Gamma$, the proof of
Theorem~\ref{thm:uniformity}\ref{thm:uniformity2}--\ref{thm:uniformity3}
expressed each integral \eqref{eq:Gamma_integral} as an unspecified finite
sum of \itemph{monomial} $\fP$-adic integrals.
For each of the latter integrals, conclusions analogous to
Theorem~\ref{thm:uniformity} are well known to hold;
see Proposition~\ref{prop:general_monomial_integral} below.
The proof of Theorem~\ref{thm:cmt} relied on very similar core ingredients.

\paragraph{Key new tool: an explicit combinatorial parameterisation of
  minors}

The technical innovation of the present article is an
explicit combinatorial parameterisation of the nonzero minors in the integrals
\eqref{eq:Eta_integral}--\eqref{eq:Gamma_integral}.
Our parameterisation will be obtained in Proposition~\ref{prop:selectors} for
hypergraphs and in Theorem~\ref{thm:animations} for graphs, the latter case
being much more involved.
Our parameterisation shows that (assuming invertibility of~$2$
in the study of $\sA^+_\Gamma$) each of the ideals of minors in
\eqref{eq:Eta_integral}--\eqref{eq:Gamma_integral} is generated by monomials.
As a first application of our parameterisation, we obtain a new proof of
Theorem~\ref{thm:uniformity} using the well-known uniform rationality of
monomial $\fP$-adic integrals (Proposition~\ref{prop:general_monomial_integral}). 

Crucially, we prove significantly more than monomiality of the aforementioned
ideals of minors.
Namely, we show that up to signs (and multiplication by powers of $2$ in the case
of $\sA^+_\Gamma$), the nonzero minors in question
are explicit monomials derived from combinatorial
gadgets attached to hypergraphs and graphs that we call \itemph{selectors} and
\itemph{animations}, respectively.
Here, a \emph{selector} of a hypergraph $\Eta = (V,E,\increl)$ is a partial
function $\phi$ defined on some subset of $E$ such that, whenever it is defined, $\phi$
sends a hyperedge $e$ to one of its incident vertices.
Similarly, an \emph{animation} of a graph is a partial function on the vertex
set which, whenever defined, sends a vertex to one of its neighbours. For formal definitions, see \S\ref{s:selectors} and
\S\ref{ss:animations}.

By studying algebraic and combinatorial features of animations, we
obtain our two main results, Theorem~\ref{thm:join} and
Theorem~\ref{thm:rgmt}.  When studying the rational functions
$W^-_\Gamma(X,T)$ attached to loopless graph, our parameterisation of
minors leads us to consider \itemph{nilpotent} animations, within an
ambient monoid of partial functions.  These have a rich algebraic and
combinatorial structure which forms the basis of our proof of
Theorem~\ref{thm:join}.

\subsection{Overview}

In \S\ref{s:ask_bg}, we collect basic material on ask zeta functions.
In \S\ref{s:(hyper)graphs}, we review facts on ask zeta functions
attached to graphs and hypergraphs.
Ask zeta functions associated with hypergraphs are the subject of
\S\ref{s:selectors}, culminating in a new proof of
Theorem~\ref{thm:uniformity}\ref{thm:uniformity1} by means of selectors.
As indicated above, the case of graphs is considerably more complicated.
In \S\ref{s:towards_minors}, we lay the foundation for our analysis of the
rational functions $W^\pm_\Gamma$ by means of animations in
\S\ref{s:animations}.
As a by-product, we obtain a new proof of
Theorem~\ref{thm:uniformity}\ref{thm:uniformity2}--\ref{thm:uniformity3}.
Drawing upon the machinery that we developed, the Reflexive Graph
Modelling Theorem (Theorem~\ref{thm:rgmt}) follows quite easily
in \S\ref{s:RGMT}.
The next sections lay the groundwork for the proof of Theorem~\ref{thm:join}
in \S\ref{s:plumbing}.
Section~\ref{s:nilpotent} is devoted entirely to nilpotent animations.
These play a crucial role in our study of $W^-_\Gamma$ for loopless $\Gamma$.
In \S\ref{s:plumbing}, we relate the nilpotent animations of a join $\Gamma_1
\join \Gamma_2$ of two loopless graphs to those of the $\Gamma_i$.
At first glance, \S\ref{s:generic_rows} might be mistaken for a non sequitur:
in it, we investigate the effect of adding generic rows to matrices of linear
forms on associated ask zeta functions.
This investigation will play a small but pivotal role in our proof of
Theorem~\ref{thm:join} in \S\ref{s:proof_join}.
Finally, in \S\ref{s:Wsharp}, we have a closer look at the rational functions
$W^\sharp_\Gamma$ from Definition~\ref{d:Wsharp}.
In particular, we prove Proposition~\ref{prop:Wsharp_join}, we derive
an explicit formula in the spirit of Theorem~\ref{thm:hypergraph_master} for
$W^\sharp_\Gamma$ (Proposition~\ref{prop:Wsharp_formula}), we deduce key
analytic properties (Proposition~\ref{prop:Wsharp_poles}, and we collect
several examples of these rational functions.

\subsection{Notation}
\label{ss:notation}

\paragraph{Sets, functions, and logic.}
Maps usually act on the right.
We write $A \sqcup B$ for the disjoint union of the sets $A$ and $B$.
We write $A \subset B$ to indicate that $A$ is a not necessarily proper subset
of $B$.
For a property $P$, we write $\Iverson P$ for the \emph{Iverson bracket}
\[
  \Iverson{P} = \begin{cases} 1, & \text{if $P$ is true}, \\
    0, &\text{otherwise}.
    \end{cases}
\]

\paragraph{Rings and modules.}
All rings are assumed to be associative, commutative, and unital.
Let $R$ be a ring.
By an \emph{$R$-algebra}, we mean a ring $S$ endowed with a ring map $R\to
S$. Let $U$ be a set.
Let $RU$ denote the free $R$-module on $U$ with basis $(\std u)_{u\in U}$.
For $x\in R U$, we define $x_u \in R$ ($u\in U$) via
$x = \sum_{u\in U} x_u \std u$.
For a subset $A\subset R$ (not necessarily a subring or an ideal), we
occasionally write $A R = \{ x\in R U : x_u \in A \text{ for all } u\in U\}$.

We write $X_U = (X_u)_{u\in U}$ for a chosen set of algebraically independent
variables over~$R$.
Each $a \in \ZZ U$ gives rise to the Laurent monomial
$X_U^a = \prod_{u\in U} X_u^{a_u}$.
By a \emph{monomial ideal} $I$ of $R[X_U]$, we mean an ideal of the form
$I = \langle X_U^a : a \in A\rangle$ for some $A\subseteq \NN_0 U$.
It is well known that if $I$ is a monomial ideal as before, then the set $A$
can be chosen to be finite.
By a \emph{linear form} in $R[X_U]$, we mean a polynomial of the form
$\sum_{u\in U} c_u X_u$ ($c_u \in U$).
If $S$ is an $R$-algebra, then
by sending $x \in SU$ to the map
which evaluates polynomials in $X_U$ at $x$, we obtain a canonical bijection
between $SU$ and the set of $R$-algebra homomorphisms $R[X_U] \to S$.

\paragraph{Discrete valuation rings.}
Throughout, $\fO$ denotes a compact \DVR{} with
maximal ideal~$\fP$ and residue field $\fO/\fP$ of size $q$ and characteristic
$p$.
For a nonzero finitely generated $\fO$-module $M$, we write $M^\times = M
\setminus \fP M$;
we also set $\{0\}^\times = \{0\}$.
Let $\pi\in\fP\setminus \fP^2$ denote a fixed uniformiser.
Let $K$ be the field of fractions of $\fO$.
Let $\abs\cdot$ be the absolute value on $K$ with $\abs\pi = q^{-1}$ and
let $\norm\cdot$ denote the associated maximum norm.
We write $\mu$ for the Haar measure on $\fO$ with total volume $1$.
We use the same symbol for the product measure on a free $\fO$-module of
finite rank.
We let $\ord = \ord_K$ denote the normalised (additive) valuation on $K$ with
$\ord(\pi) = 1$.
For a finite set $U$ and $x\in K U$ with $\prod_{u\in U}x_u \not= 0$, we write
$\ord(x) = \sum_{u\in U} \ord(x_u) \std u\in \ZZ U$.

\paragraph{Further notation.}
\small
\vspace*{-.5em}
\begin{longtable}{r|l|c}
  Notation\phantom{$1_1$} & comment & reference \\
  \hline
  $\Minors_k(M)$ & ideal generated by $k\times k$ minors of $M$ &
                                                                     \S\ref{s:ask_bg}
  \\
  $\Zeta^{\ask}_{A/\fO}$, $\zeta^{\ask}_{A/\fO}$ & ask zeta functions &
  \S\ref{s:ask_bg}
  \\
  $\Eta = (V,E,\increl)$  & hypergraph & \S\ref{ss:hypergraphs} \\
  $\norm{e}, \norm{e}_\Eta$ & support of the hyperedge $e$ &
                                                             \S\ref{ss:intro/matrices} \\
  $\Eta[V'\mid E']$ & subhypergraph & \S\ref{ss:hypergraphs} \\
  $\oplus$, $\join$ & disjoint union, join / complete union
                                    & \S\ref{ss:hypergraphs},
                                      \S\ref{ss:graphs}, \S\ref{ss:proof_cmt}\\
  $\sA_\Eta, \sC_\Eta$ & linearised incidence matrix, its $\circ$-dual & \S\ref{ss:ask_hypergraph} \\
  $\Minors_k \Eta$ & $\Minors_k(\sC_\Eta)$ & \S\ref{ss:ask_hypergraph} \\
  $\int_W \Eta(s)$ & integral expression for $\zeta^{\ask}_{\sA_\Eta/\fO}(s)$
                                    & \eqref{eq:Eta_integral}, Proposition~\ref{prop:ask_Eta_integral} \\
  $\sA^\pm_\Gamma, \sC^\pm_\Gamma$ & linearised adjacency matrix, its
                                     $\circ$-dual & \S\ref{ss:ask_graph} \\
  $\Minors_k^\pm \Gamma$, $\frac 1 2 \Minors^+_\Gamma$ &
                                                         $\Minors_k(\sC^\pm_\Gamma)$
                                                         over $\ZZ$ or $\ZZ[1/2]$
                                                         &\S\ref{ss:ask_graph}
  \\
  $\int_W \Gamma^\pm(s)$ & integral expression for $\zeta^{\ask}_{\sA^\pm_\Gamma/\fO}(s)$
                                    & \eqref{eq:Gamma_integral}, Proposition~\ref{prop:ask_Gamma_integral} \\
  $U_\udef$ & $U \sqcup \{\udef\}$ & \S\ref{s:selectors} \\
  $\Dom(\phi)$ & domain of definition of $\phi$ & \S\ref{s:selectors} \\
  $Y^{\phi^*}$ & preimage of $Y$ under $\phi$ & \S\ref{s:selectors} \\
  $\deg(\phi)$ & $\card{\Dom(\phi)}$ & \S\ref{s:selectors} \\
  $\mon\phi$ & monomial associated with $\phi$ & \S\ref{s:selectors} \\
  $\phi\restriction U'$ & restriction of $\phi$ to $U'$ & \S\ref{s:selectors} \\
  $\phi[x\gets y]$ & redefining $\phi$ & \S\ref{s:selectors} \\
  $\sim$ & adjacency in a graph & \S\ref{ss:graphs}\\
  $\hat\Gamma$ & reflexive closure & \S\ref{ss:intro/Wsharp} \\
  $W^\pm_\Gamma$, $W_\Eta$ & ask zeta function associated with (hyper)graph
                                    & Theorem~\ref{thm:uniformity} \\
  $W^\sharp_\Gamma$ & common value of $W^+_{\hat \Gamma} = W^-_{\hat \Gamma} =
                      W_{\Adj(\hat\Gamma)}$ & Definition~\ref{d:Wsharp}\\
  $\Selectors(\Eta)$ & selectors & \S\ref{s:selectors} \\
  $\Adj(\Gamma)$ & adjacency hypergraph & \S\ref{ss:intro/RGMT},
                                          \S\ref{ss:graphs} \\
  $\Inc(\Gamma)$ & incidence hypergraph & \S\ref{ss:graphs} \\
  $m^\pm_\Gamma[V'\mid E']$ & minor of $\sC^\pm_\Gamma$ &
                                                          \S\ref{ss:hypergraph_decompositions}, \S\ref{ss:animations_yield_minors} \\
  $\odlen(\phi)$ & number of $\phi$-orbits of odd length $ > 1$ &
                                                                  \S\ref{ss:nilpotency} \\
  $\Animations(\Gamma)$ & animations & \S\ref{ss:animations} \\
  $\Nilpotents(\Gamma)$, $\Fixed(\Gamma)$, $\Oddperiods(\Gamma)$ & special sets of animations &
                                                                         \S\ref{ss:animations} \\
  $\lte_\alpha$ & preorder derived from a partial function &
                                                             \S\ref{ss:animations_order_vertices} \\
  $\last_\alpha(v)$ & unique $\lte_\alpha$-maximal element above $v$ &
                                                                       \S\ref{ss:animations_order_vertices} \\
  $\lte_u$ & partial order relative to distinguished vertex & \S\ref{ss:ordering_monomials}\\
  $\simeq$ & equivalence of matrices & \S\ref{ss:matrix_shenanigans} \\
  $\lambda(A)$, $\lambda_i(A)$ & elementary divisors & \S\ref{ss:edt}
\end{longtable}

\normalsize

\section{Background on ask zeta functions}
\label{s:ask_bg}

The following is a brief introduction to ask zeta function attached to
matrices of linear forms.
In terms of generality, this perspective lies between \cite{ask}, which
considers modules of matrices, and \cite{ask2}, which considers so-called
module representations.
Let $R$ be a ring.
Let $U$ be a finite set.
Recall that $RU$ denotes the free $R$-module with basis $(\std u)_{u\in U}$.

\paragraph{Equivalence.}
There is a natural action of $\GL(R U)\times \GL_n(R) \times \GL_m(R)$ on the
$R$-module of linear forms within $\Mat_{n\times m}(R[X_U])$: the second and
third factor act by matrix multiplication on the left and right, respectively,
and $\GL(R U)$ acts by changing coordinates of linear forms.
Two matrices of linear forms $A(X_U)$ and $B(X_U)$ in
$\Mat_{n \times m}(R[X_U])$ are \emph{equivalent} if they lie in
the same orbit under this action.
Equivalence in this sense corresponds to \itemph{isotopy} of module
representations in \cite{ask2}.

\paragraph{Ask zeta functions.}
Let $A(X_U)\in \Mat_{n\times m}(R[X_U])$ be a matrix of linear forms.
Let $S$ be an $R$-algebra.
Given $x\in S U$, we view the specialisation $A(x) \in \Mat_{n\times m}(S)$
as a linear map $S^n \to S^m$ acting by right multiplication on rows.
If $S$ is finite as a set, we write
\[\ask_S(A(X_U)) = \frac 1 {\card{S U}} \sum_{x\in S U}\card{\Ker(A(x))}
\]
for the \underline{a}verage \underline{s}ize of the \underline{k}ernel of
these maps.

Let $\fO$ be a compact \DVR{} endowed with an $R$-algebra structure.
Recall that $\fP$ denotes the maximal ideal of $\fO$.
The \emph{(algebraic) ask zeta function} of $A(X_U)$ over $\fO$ is the formal
power series
\[
  \Zeta^{\ask}_{A/\fO}(T) =
  \Zeta^{\ask}_{A(X_U)/\fO}(T) = \sum_{k=0}^\infty
  \ask_{\fO/\fP^k}(A(X_U)) T^k\in \QQ[\![T]\!].
\]
If $A(X_U)$ and $B(X_U)$ are equivalent matrices of linear forms over $R$,
then $\Zeta_{A(X_U)/\fO}(T) = \Zeta_{B(X_U)/\fO}(T)$ for each $\fO$ as above.
We note that if $\fO$ has characteristic zero, then $\Zeta_{A(X_U)/\fO}(T) \in
\QQ(T)$; see \cite[Thm~4.10]{ask}.
As explained in \cite{ask}, ask zeta functions arise in the enumeration of linear
orbits and conjugacy classes of unipotent groups.

Writing $q = \card{\fO/\fP}$ for the residue field size of $\fO$,
we write $\zeta^{\ask}_{A(X_U)/\fO}(s) = \Zeta^{\ask}_{A(X_U)/\fO}(q^{-s})$
for the \emph{(analytic) ask zeta function} of $A(X_U)$ over $\fO$.
The series $\zeta^{\ask}_{A(X_U)/\fO}(s)$ converges for $\Real(s) > n$.
Moreover, $\zeta^{\ask}_{A(X_U)/\fO}(s)$ and $\Zeta^{\ask}_{A(X_U)/\fO}(T)$
determine one another so referring to both as ``the'' ask zeta function of
$A(X_U)$ constitutes only a minor abuse of terminology.

\paragraph{Duals, minors, and integrals.}
We will study the zeta functions $\zeta^{\ask}_{A(X_U)/\fO}(s)$ of interest to
us by means of suitable $\fP$-adic integrals.
Ignoring the harmless effect of replacing $A(X_U)$ by its transpose (see
\cite[Lem.\ 2.4]{ask}), using the ``Knuth duality'' operations from
\cite{ask2} and the machinery from \cite{ask},
given $A(X_U)$, we obtain three (in general very different) formulae
for $\zeta^{\ask}_{A(X_U)/\fO}(s)$ by means of $\fP$-adic integrals.
In addition, each of these formulae admits an affine and a projective
version.
Our choice here is the projective form of the integral attached to the
$\circ$-dual of $A(X_U)$.
To describe this explicitly, let us first order the elements of $U$ and write
$U = \{u_1,\dotsc,u_\ell\}$ where $\ell = \card U$.
Let $V = \{v_1,\dotsc,v_n\}$ be a set of cardinality $n$.
We may characterise $A(X_U)$ and a matrix $C(X_V) \in \Mat_{\ell\times
  m}(R[X_V])$ via
\begin{equation}
  \label{eq:A_circ_C}
  A(X_U)_{ij} = \sum_{k=1}^\ell \alpha_{ijk} X_{u_k},
  \qquad
  C(X_V)_{kj} = \sum_{i=1}^n \alpha_{ijk} X_{v_i},
\end{equation}
where $\alpha_{ijk} \in R$ .
We refer to $C(X_V)$ as a \emph{$\circ$-dual} of $A(X_U)$.
Our use of the indefinite article reflects the fact that $C(X_V)$ depends on
the chosen total orders.
Note that by construction, $A(X_U)$ is a $\circ$-dual of $C(X_V)$.
(The abstract version of the $\circ$-operation in~\cite{ask2} is
genuinely idempotent.)

Given a matrix $M$ over a ring $S$ and $k\ge 0$, we write $\Minors_k(M)$ or
$\Minors_k(M;S)$ for the ideal of $S$ generated by the $k\times k$ minors of
$M$.
We record the following observation.
\begin{lemma}
  \label{lem:minor_specialisation}
  Let $A(X_U) \in \Mat_{n\times m}(R[X_U])$.
  Let $S$ be an $R$-algebra and $x\in S U$.
  Let $i \ge 0$.
  Then $\Minors_i(A(x); S)$ is the ideal of $S$ generated by the image
  of $\Minors_i(A(X_U);R[X_U])$ under the specialisation map
  $R[X_U] \to S$ determined by $x$.
  \qed
\end{lemma}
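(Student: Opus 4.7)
The plan is to reduce the statement to the elementary fact that the determinant of an $i \times i$ submatrix is a \emph{universal} polynomial in its entries, hence commutes with any ring homomorphism applied entrywise.

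First, I would fix notation for the specialisation: let $\operatorname{ev}_x \colon R[X_U] \to S$ denote the $R$-algebra homomorphism determined by $x \in SU$, sending $X_u \mapsto x_u$. Applying $\operatorname{ev}_x$ entrywise to $A(X_U) \in \Mat_{n\times m}(R[X_U])$ yields $A(x) \in \Mat_{n\times m}(S)$ by the very definition of how $A$ is evaluated at $x$.

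Next, I would fix index sets $I \subset \{1,\dotsc,n\}$ and $J \subset \{1,\dotsc,m\}$ of size $i$ and consider the $i \times i$ submatrix $A(X_U)_{I,J}$. By the Leibniz formula, its determinant is an $R$-polynomial expression in its entries, and any $R$-algebra homomorphism preserves sums and products; therefore
\[
  \operatorname{ev}_x\bigl(\det A(X_U)_{I,J}\bigr) \;=\; \det A(x)_{I,J}.
\]
By definition, $\Minors_i(A(X_U); R[X_U])$ is generated as an ideal of $R[X_U]$ by the $\det A(X_U)_{I,J}$ as $(I,J)$ ranges over pairs of $i$-subsets, and $\Minors_i(A(x);S)$ is generated as an ideal of $S$ by the $\det A(x)_{I,J}$.

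Finally, I would conclude on both sides. The inclusion $\Minors_i(A(x);S) \subseteq \operatorname{ev}_x(\Minors_i(A(X_U);R[X_U])) \cdot S$ is immediate, since each generator $\det A(x)_{I,J}$ lies in the image. Conversely, any element of $\Minors_i(A(X_U);R[X_U])$ is an $R[X_U]$-linear combination $\sum_{I,J} g_{I,J}(X_U) \det A(X_U)_{I,J}$; applying $\operatorname{ev}_x$ (which is a ring homomorphism, hence in particular $R[X_U] \to S$ sends such combinations to $S$-linear combinations of the $\det A(x)_{I,J}$) places its image inside $\Minors_i(A(x);S)$. This gives the opposite inclusion and completes the proof. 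There is no substantive obstacle here; the lemma is a naturality statement, and the only thing worth emphasising is that ``the ideal generated by the image'' is needed because $\operatorname{ev}_x(R[X_U])$ is in general a proper subring of $S$ and the image of an ideal under a ring map need not be an ideal in the codomain.
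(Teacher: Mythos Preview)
Your proof is correct. The paper itself gives no proof at all for this lemma---it is stated with a bare \qedsymbol{} as a routine observation---so your argument via the Leibniz formula and naturality of determinants under ring homomorphisms is exactly the standard verification the paper leaves implicit.
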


Let us return to our $\circ$-dual $C(X_V)$ of $A(X_U)$.
Let $I_k = \Ideal_k\bigl(C(X_V); R[X_V]\bigr)$.
Note that $I_k$ is generated by homogeneous elements of degree $k$
and that $I_0 = \langle 1\rangle = R[X_V]$.

\begin{prop}[{Cf.\ \cite[\S\S 4.3--4.4]{ask}}]
  \label{prop:proj_circ_int}
  Let $\fO$ be a compact \DVR{} endowed with an $R$-algebra
  structure.
  Let $r$ be the rank of the image of $C(X_V)$ in $\Mat_{\ell \times
    m}(K[X_V])$ over $K(X_V)$.
  Then for all $s\in \CC$ with $\Real(s) > d$, we have
  \[
    (1-q^{-s}) \zeta^{\ask}_{A(X_U)/\fO}(s) = 1 + (1-q^{-1})^{-1}
    \int\limits_{(\fO V)^\times \times\, \fP}
    \abs{z}^{s - n + r- 1} \prod_{i=1}^r
    \frac{\norm{I_{i-1}(x)}}{\norm{I_i(x) \cup z I_{i-1}(x)}}
    \dd\!\mu(x,z).
  \]
\end{prop}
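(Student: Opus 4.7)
The plan is to derive this projective integral formula by first writing $\zeta^{\ask}_{A(X_U)/\fO}(s)$ as an affine $\fP$-adic integral via the standard double-counting trick, then rewriting the integrand in terms of the $\circ$-dual $C(X_V)$, and finally passing to the projective form by the usual radial decomposition.

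\textbf{Step 1: Affine integral representation via duality.} Starting from
\[
  \sum_{x \in (\fO/\fP^k) U} \card{\Ker(A(x))}
  = \#\bigl\{ (y,x) \in (\fO/\fP^k)V \times (\fO/\fP^k)U : y A(x) = 0\bigr\},
\]
I would use \eqref{eq:A_circ_C} to note that $y A(x) = x \cdot C(y)^\top$ (as row vectors) for all $x \in \fO U$ and $y \in \fO V$, so that for fixed $y$ the condition $y A(x) = 0$ on $x$ is determined purely by the linear map $C(y)\colon \fO^\ell \to \fO^m$. Counting the $x$ with $y A(x) = 0$ modulo $\fP^k$ via the elementary divisor theorem (applied to $C(y)$ over $\fO$) and assembling over $k$ leads, as in \cite[\S 4.3--4.4]{ask}, to an affine $\fP$-adic integral of Igusa type on $\fO V$ whose integrand is built from the norms $\norm{I_i(y)}$, where $I_i(y)$ is the specialisation of $I_i = \Minors_i(C(X_V);R[X_V])$ at $X_V = y$; compatibility of specialisation with minor ideals is Lemma~\ref{lem:minor_specialisation}.

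\textbf{Step 2: Radial (projective) decomposition.} Next I would split $\fO V \setminus \{0\}$ via the measure-preserving identification of $\fO V \setminus \{0\}$ with a disjoint union of shells $\pi^k (\fO V)^\times$ for $k \ge 0$. Writing $y = z \hat y$ with $\hat y \in (\fO V)^\times$ and $z \in \fO$, the homogeneity of $I_i$ (it is generated in degree $i$) gives $\norm{I_i(z\hat y)} = \abs z^i \norm{I_i(\hat y)}$. Introducing the additional variable $z$ explicitly and changing variables accordingly converts the affine integral over $\fO V$ into an integral over $(\fO V)^\times \times \fO$; the factor $(1-q^{-1})^{-1}$ absorbs the Jacobian of the radial decomposition, and the $k=0$ contribution accounts for the constant $1$ in front of the integral on the right-hand side, so the remaining integration ranges over $(\fO V)^\times \times \fP$. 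The factor $(1-q^{-s})$ on the left arises from summing the geometric series in $q^{-s}$ that results from this $k$-decomposition. The rank $r$ of $C$ over $K(X_V)$ appears because only the ideals $I_1,\ldots,I_r$ contribute nontrivially (with $I_0 = \langle 1 \rangle$), and the exponent $s - n + r - 1$ tracks the difference between the total degree of the Jacobian, the degree shift from projectivising $\fO V$, and the cumulative degree shift in passing from $\prod_i \norm{I_i(y)}^{\pm 1}$ to the telescoped form $\prod_i \norm{I_{i-1}(x)}/\norm{I_i(x) \cup z I_{i-1}(x)}$.

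\textbf{Step 3: Obtaining the stated integrand.} The telescoped form in the proposition is precisely what the Denef-style manipulation produces when one rewrites $\min\bigl(\ord(I_i(y)), \ord(I_{i-1}(y)) + \ord(z)\bigr)$ as the valuation of the ideal $I_i(x) \cup z I_{i-1}(x)$ after substituting $y = z\hat y$ with $\hat y =: x$ (and using homogeneity to pull out the powers of $\abs z$). This step is entirely formal and follows the pattern of \cite[\S 4.4]{ask} once the variables have been introduced.

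\textbf{Main obstacle.} The conceptual content (Step 1) and the substitutions (Steps 2--3) are standard once \cite[\S 4.3--4.4]{ask} is invoked; the genuine work lies in the careful bookkeeping of exponents and normalising constants, in particular verifying that the shift $s - n + r - 1$ and the prefactor $(1-q^{-s})(1-q^{-1})^{-1}$ come out exactly as stated after combining the radial decomposition with the telescoping trick and confining the remaining integration to $(\fO V)^\times \times \fP$. I would also need to check that the region of convergence $\Real(s) > n$ (presumably the intended lower bound, since $n$ is the number of rows of $A$) is inherited from the known convergence of the affine integral, which follows because the integrand is bounded uniformly by $\abs z^{\Real(s) - n + r - 1}$ times a bounded factor on $(\fO V)^\times$.
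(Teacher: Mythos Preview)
The paper does not supply its own proof of this proposition; it is stated with the attribution ``Cf.\ \cite[\S\S 4.3--4.4]{ask}'' and used as a black box. Your outline matches the standard derivation from that reference: double-count pairs $(y,x)$ with $yA(x)=0$, pass to the $\circ$-dual via \eqref{eq:A_circ_C}, invoke elementary divisors of $C(y)$ to express the count, then projectivise by the radial decomposition $y=z\hat y$ using homogeneity of the $I_i$. One small correction: with the conventions of \eqref{eq:A_circ_C} the identity is $yA(x)=xC(y)$, not $xC(y)^\top$ (both sides are row vectors of length $m$; compute the $j$th entry as $\sum_i y_i\sum_k\alpha_{ijk}x_k=\sum_k x_k C(y)_{kj}$). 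You also correctly flag that the hypothesis ``$\Real(s)>d$'' in the statement should read ``$\Real(s)>n$'', consistent with the convergence remark following the definition of $\zeta^{\ask}_{A(X_U)/\fO}(s)$.
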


Here and in the following, for an ideal $I$ of $R[X_V]$ and $x\in SV$, we
write $I(x)$ for the ideal of $S$ generated by all $f(x)$ as $f(X_V)$ ranges
over $I$.
In the context of Proposition~\ref{prop:proj_circ_int},
by Lemma~\ref{lem:minor_specialisation}, we have
$I_i(x) = \Minors_i(C(x);S)$.

In our applications of Proposition~\ref{prop:proj_circ_int}, the ring $R$ is
of the form $R = \ZZ[1/N]$.
In that case, $r$ is simply the rank of $A(X_U)$ over $\QQ(X_U)$;
in particular, $r$ does not depend on $\fO$.

It is a folklore result in $\fP$-adic integration that zeta functions 
given by $\fP$-adic integrals defined in terms of monomial ideals are
\itemph{uniform} in the sense that for some rational function $W(X,T)$, these
integrals are of the form $W(q,q^{-s})$ as $\fO$ ranges over (suitable)
compact \DVR{}s.
The following makes this precise for ask zeta functions.

\begin{prop}
  \label{prop:general_monomial_integral}
  Let the notation be as in Proposition~\ref{prop:proj_circ_int}.
  Suppose that each of $I_1,\dotsc,I_r$ is a monomial ideal, say $I_k =
  \langle X_V^a : a \in A_k\rangle$ for a finite set $A_k \subset \NN_0 V$.
  Then there exists a rational function $W(X,T)\in \QQ(X,T)$
  (explicitly expressible in terms of $n$, $r$, and $A_1,\dotsc,A_k$)
  such that for all compact \DVR{}s $\fO$ endowed with an $R$-algebra
  structure, we have $\Zeta^{\ask}_{A(X_U)/\fO}(T) = W(q,T)$.
\end{prop}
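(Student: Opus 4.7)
The strategy is to start from the integral representation furnished by Proposition~\ref{prop:proj_circ_int} and evaluate it explicitly via a polyhedral decomposition of the integration domain, making the monomiality hypothesis do the entire work.

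First, I would unpack the integrand. Write each monomial ideal as $I_i = \langle X_V^a : a \in A_i\rangle$ with $A_i \subset \NN_0 V$ finite, and set $A_0 = \{0\}$ so that $I_0 = R[X_V]$. For any $x \in \fO V$, Lemma~\ref{lem:minor_specialisation} gives $I_i(x) = \langle x^a : a \in A_i\rangle$, so
\[
  \norm{I_i(x)} = \max_{a \in A_i} \abs{x^a}
  \qquad \text{and} \qquad
  \norm{I_i(x) \cup z I_{i-1}(x)} = \max\bigl(\norm{I_i(x)}, \abs z\cdot \norm{I_{i-1}(x)}\bigr).
\]
Writing $\alpha_v = \ord(x_v) \in \NN_0$ for $v \in V$ and $k = \ord(z) \geq 1$, these become $\norm{I_i(x)} = q^{-m_i(\alpha)}$ where $m_i(\alpha) := \min_{a\in A_i}\langle\alpha,a\rangle$, and the mixed norm is $q^{-\min(m_i(\alpha),\, k+m_{i-1}(\alpha))}$.

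Second, I would slice the domain into level sets of the valuations. Decompose $(\fO V)^\times \times \fP$ into the cells $C_{\alpha,k} = \{(x,z) : \ord(x_v) = \alpha_v \text{ for all } v \in V,\ \ord(z)=k\}$, indexed by pairs $(\alpha,k) \in \NN_0 V \times \NN_{\geq 1}$ with $\min_{v}\alpha_v = 0$ (the condition $x \in (\fO V)^\times$). On each $C_{\alpha,k}$ the integrand is the constant
\[
  q^{-k(s-n+r-1)} \prod_{i=1}^{r} q^{-\bigl(m_{i-1}(\alpha) - \min(m_i(\alpha),\, k + m_{i-1}(\alpha))\bigr)},
\]
and $\mu(C_{\alpha,k}) = (1-q^{-1})^{n+1}\, q^{-\abs\alpha - k}$, where $\abs\alpha = \sum_v \alpha_v$. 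Thus the integral in Proposition~\ref{prop:proj_circ_int} is a series in $q^{-s}$ whose coefficients are rational in $q$, provided we can evaluate the resulting sum over $(\alpha,k)$.

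Third, to carry out the sum I would further refine the index set into finitely many relatively open rational polyhedral cones in $\RR^V \times \RR$: on each cone the finite minima $m_i(\alpha) = \min_{a \in A_i}\langle\alpha,a\rangle$ are attained by a fixed $a \in A_i$, and each $\min(m_i(\alpha),\, k + m_{i-1}(\alpha))$ is attained by a fixed alternative. Such a refinement exists because all minima are taken over finite sets of linear forms with integer coefficients. On each piece the exponent in the above display is a single integral linear form in $(\alpha,k,s)$, and the product with the measure becomes a pure monomial in $q$ and $T = q^{-s}$ whose exponents depend linearly on $(\alpha,k)$. Summing a monomial of this shape over the lattice points in a rational polyhedral cone is a standard Hilbert-series computation yielding a rational function in $q$ and $T$, with denominator a product of factors $1 - q^a T^b$ determined by the generators of the cone. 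A finite sum of such rational functions produces the required $W(X,T) \in \QQ(X,T)$. Independence of $\fO$ is automatic: the polyhedral cones and the linear exponents are fixed once the $A_i$ and $r$ are, and $q$ is the only feature of $\fO$ entering the final expression.

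The only genuine step of substance is the polyhedral refinement in the third stage; once it is in place, everything reduces to summing geometric series over rational simplicial cones, which is classical in $p$-adic integration. The condition $\min_v \alpha_v = 0$ imposing $x \in (\fO V)^\times$ is a harmless inequality that can be incorporated either by inclusion–exclusion against the full cone $\NN_0 V$ or by directly taking it as one of the defining inequalities of the polyhedral pieces.
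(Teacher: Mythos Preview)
Your proposal is correct and follows essentially the same strategy as the paper: both reduce to the standard uniform rationality of monomial $\fP$-adic integrals via a polyhedral cone decomposition on which the integrand becomes a single monomial in $q$ and $T$. The only difference is packaging---the paper outsources this step entirely by citing \cite[Prop.~3.9]{topzeta} applied to the affine form \cite[Eqn~(4.6)]{ask} of the integral (which avoids the constraint $\min_v \alpha_v = 0$), whereas you sketch the argument directly for the projective form and handle that constraint by inclusion--exclusion.
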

\begin{proof}
  Apply \cite[Prop.\ 3.9]{topzeta} to the affine version~\cite[Eqn~(4.6)]{ask}
  of the integral in Proposition~\ref{prop:proj_circ_int}.
\end{proof}

\section{(Hyper)graphs and their ask zeta functions}
\label{s:(hyper)graphs}

\subsection{Hypergraph basics}
\label{ss:hypergraphs}

Two hypergraphs $\Eta = (V,E,\increl)$ and $\Eta' = (V',E',\increl')$ are
\emph{isomorphic} if there exist bijections $V \xto\psi V'$ and
$E \xto\phi E'$ such that for all $v\in V$ and $e\in E$, we have $v \increl e$
if and only if $v^\phi \increl' e^\psi$.

\paragraph{Incidence matrices.}

Let $\Eta$ have $m$ hyperedges and $n$ vertices.
Write $E = \{e_1,\dotsc,e_m\}$ and $V =\{v_1,\dotsc,v_n\}$.
Equivalently, we endow $E$ and $V$ with (arbitrary) total orders $\lte$ and
$\olte$, respectively, given by $e_1 \lte \dotsb \lte e_m$ and $v_1 \olte
\dotsb \olte v_n$.
Having made these choices, the associated \emph{incidence matrix} of $\Eta$ is
the $(0,1)$-matrix $A_\Eta\in \Mat_{n\times m}(\ZZ)$ given by
$(A_\Eta)_{ij} = \Iverson{v_i \increl e_j}$
(Iverson bracket notation, see \S\ref{ss:notation}).

\paragraph{Disjoint unions.}

Given hypergraphs $\Eta =
(V,E,\increl)$ and $\Eta' = (V',E',\increl')$, their disjoint union is
$\Eta \oplus \Eta' = (V \sqcup V', E \sqcup E', \increl
\sqcup \increl')$.
Given total orders on $V$ and $V'$ (resp.\ $E$ and $E'$),
we obtain a total order on $V \sqcup V'$ (resp.\ $E \sqcup E'$) in which the
elements of $V$ (resp.\ $E$) precede those of $V'$ (resp.\ $E'$).
With respect to these orders, we then have
$\sA_{\Eta \oplus \Eta'} =
\left[\begin{smallmatrix}\sA_\Eta & 0 \\ 0 & \sA_{\Eta'}\end{smallmatrix}\right]$.

\paragraph{Subhypergraphs.}

Let $\Eta = (V,E,\increl)$ be a hypergraph.
A \emph{subhypergraph} of $\Eta$ is a hypergraph $\Eta' = (V',E',\increl')$
with $V\subset V'$, $E\subset E'$, and $\increl' \, \subset \, \increl$.
Given subsets $V' \subset V$ and $E'\subset E$, the associated \emph{induced
  subhypergraph} of $\Eta$ is
\[
  \Eta[V' \mid E'] = \bigl(V', E', \increl \cap (V'\times E')\bigr).
\]
Subhypergraphs of (incidence hypergraphs of) graphs will play an important
role throughout this article;
see \S\ref{ss:graphs}.

Order the vertices and hyperedges of $\Eta$ as above to define the incidence
matrix $A_\Eta$.
Then the incidence matrix of $\Eta[V'\mid E']$
relative to the induced total orders on $V'$ and $E'$ is the
submatrix of $\sA_\Eta$ obtained by selecting rows from $V'$ and columns from
$E'$, respectively.

\subsection[Ask zeta functions associated with hypergraphs]{Ask zeta functions
  associated with hypergraphs: $\zeta^{\ask}_{\sA_\Eta/\fO}$}
\label{ss:ask_hypergraph}

Let $\Eta = (V,E,\increl)$ be a hypergraph with $m$ hyperedges and $n$
vertices.
Write $E = \{e_1,\dotsc,e_m\}$ and $V =\{v_1,\dotsc,v_n\}$.
Let $F = \Flags(\Eta) = \{ (v,e) \in V\times E : v\increl e\}$ be the set of
\emph{flags} of $\Eta$.
Let $\sA_\Eta = \sA_\Eta(X_F) \in \Mat_{n\times m}(\ZZ[X_F])$ be the matrix of
linear forms with
\[
  \sA_\Eta(X_F)_{ij} = \begin{cases}
    X_{(v_i,e_j)},&\text{if } v_i \increl e_j,\\
    0, & \text{otherwise.}
    \end{cases}
\]
Up to equivalence (see \S\ref{s:ask_bg}), $\sA_\Eta(X_F)$ only depends on
$\Eta$ and not on the chosen total orders.
Note that $\sA_\Eta(X_F)$ is obtained from the incidence matrix $A_\Eta$ of
$\Eta$ (w.r.t.\ the given total orders on $V$ and $E$) by replacing nonzero
entries by distinct variables.
We refer to $\sA_\Eta(X_F)$ as the \emph{linearised incidence matrix} of
$\Eta$.

\begin{rem}
  The zeta function we denote by $\zeta^{\ask}_{\sA_\Eta/\fO}(s)$ here coincides
  with $\Zeta^{\ask}_{\eta^\fO}(q^{-s})$ from \cite[\S 3.2]{cico},
  where $\eta$ denotes the \itemph{incidence representation} of
  $\Eta$.
\end{rem}

Write $f = \card F$.
By ordering $F$ lexicographically relative to the chosen total orders on $V$
and $E$, we may identify $F$ and $\{1,\dotsc,f\}$.
Let $\sC_\Eta = \sC_\Eta(X_V)\in \Mat_{f\times m}(\ZZ[X_V])$ be the matrix
such that for $(v,e)\in F$ and $j\in \{1,\dotsc m\}$, we have
\[
  \sC_\Eta(X_V)_{(v,e)j} = \begin{cases}
    X_v,&\text{if } e = e_j,\\
    0, & \text{otherwise.}
    \end{cases}
\]

It is easy to see (cf.\ \cite[\S 3.2]{cico}) that $\sC_\Eta$ is a $\circ$-dual
of $\sA_\Eta(X_F)$.
Let $\Minors_k \Eta = \Minors_k(\sC_\Eta) \subset \ZZ[X_V]$
denote the ideal of $\ZZ[X_V]$ generated by the $k\times k$ minors of $\sC_\Eta$.
In contrast to $\sC_\Eta$, the ideal $\Ideal_k \Eta$ only depends on
$\Eta$ and not on the arbitrary choices of total orders on $V$, $E$, and $F$
used to define $\sC_\Eta$.
Write $r_\Eta = \rank_{\QQ(X_V)}(\sC_\Eta)$;
we will derive a simple description of this number in
Proposition~\ref{prop:rank_C_Eta} below. 
For $W \subset \fO V\times \fO$, we define
\begin{equation}
  \label{eq:Eta_integral}
  \int\limits_W \Eta(s) :=
  \int\limits_{W}
  \abs{z}^{s - n + r_\Eta - 1}
  \prod_{i=1}^{r_\Eta}
  \frac{\norm{\Minors _{i-1} \Eta(x)}}
  {\norm{\Minors_i \Eta(x) \cup z \Minors_{i-1} \Eta(x)}}
  \dd\!\mu(x,z),
\end{equation}
where $\mu$ denotes the normalised Haar measure on $\fO V\times \fO$.
Proposition~\ref{prop:proj_circ_int} (with $R = \ZZ$) then yields the following.

\begin{prop}
  \label{prop:ask_Eta_integral}
  For each compact \DVR{} $\fO$
  and $s \in \CC$ with $\Real(s) > n$,
  we have
  \[
    (1-q^{-s}) \zeta^{\ask}_{\sA_\Eta/\fO}(s) =
    1 + (1 - q^{-1})^{-1} \int\limits_{(\fO V)^\times\times \fP} \!\!\!\! \Eta(s).
    \pushQED{\qed}
    \qedhere
    \popQED
  \]
\end{prop}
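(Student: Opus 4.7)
The plan is to deduce this proposition as a direct specialisation of Proposition~\ref{prop:proj_circ_int} applied to $A(X_U) = \sA_\Eta(X_F)$ over the base ring $R = \ZZ$, with the $\circ$-dual taken to be $C(X_V) = \sC_\Eta(X_V)$. The integral notation $\int_W \Eta(s)$ in \eqref{eq:Eta_integral} is designed precisely so that the right-hand side of the integral formula in Proposition~\ref{prop:proj_circ_int} matches; the task is therefore essentially bookkeeping to verify that all quantities ($U$, $V$, the rank, the ideals of minors) correspond correctly.

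First, I would observe that $\sA_\Eta(X_F)$ is a matrix of linear forms in the variables $X_F$ indexed by the flag set $F = \Flags(\Eta)$, while $\sC_\Eta(X_V)$ is indexed by $X_V$. I would verify explicitly (using the lexicographic ordering on $F$ fixed in the paragraph preceding the proposition) that the coefficient array $(\alpha_{ijk})$ of $\sA_\Eta$ from \eqref{eq:A_circ_C} does indeed reassemble to $\sC_\Eta$; this is essentially done already in \cite{cico}, but I would sketch it: each variable $X_{(v,e)}$ appears exactly once in $\sA_\Eta$, namely at position $(v,e)$, contributing a $1$ in the $((v,e),e)$-slot of $\sC_\Eta$ on the $X_v$-coordinate, which matches the definition.

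Second, I would identify the numerical data. In the notation of Proposition~\ref{prop:proj_circ_int}, the number of rows of $\sA_\Eta$ is $n = \card V$, and the rank of $\sC_\Eta$ over $\QQ(X_V)$ is, by definition, $r_\Eta$. Since $R = \ZZ \subset \fO$, the inequality $\Real(s) > n$ used here is at least as strong as the hypothesis $\Real(s) > d$ of Proposition~\ref{prop:proj_circ_int} (with $d$ there being $n$). The ideals $I_k = \Minors_k(\sC_\Eta;\ZZ[X_V])$ are, by definition, exactly $\Minors_k\Eta$, and $I_0 = \ZZ[X_V]$, so $\norm{I_0(x)} = 1$, which accounts for the implicit numerator $1$ at the $i = 1$ factor of the product.

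Finally, substituting these identifications into the formula of Proposition~\ref{prop:proj_circ_int} gives
\[
(1-q^{-s})\zeta^{\ask}_{\sA_\Eta/\fO}(s) = 1 + (1-q^{-1})^{-1}\!\int\limits_{(\fO V)^\times \times \fP} \!\!\abs{z}^{s-n+r_\Eta - 1} \prod_{i=1}^{r_\Eta} \frac{\norm{\Minors_{i-1}\Eta(x)}}{\norm{\Minors_i\Eta(x)\cup z\Minors_{i-1}\Eta(x)}}\,\dd\!\mu(x,z),
\]
which is precisely $1 + (1-q^{-1})^{-1}\int_{(\fO V)^\times\times\fP}\Eta(s)$ by \eqref{eq:Eta_integral}. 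There is no real obstacle here: the only thing that requires a moment's care is confirming that the projective $\circ$-dual form of the integral from \cite[\S 4.3--4.4]{ask} has been invoked in the correct orientation (i.e.\ for $\sA_\Eta$ rather than its transpose), which amounts to matching our convention that linear maps act on rows by right multiplication with the conventions of \cite{ask}. Everything else is a transcription.
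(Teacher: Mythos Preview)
Your proposal is correct and takes essentially the same approach as the paper: the paper simply states that Proposition~\ref{prop:proj_circ_int} with $R = \ZZ$ yields the result immediately (hence the $\qed$ in the statement), and your write-up is a careful unpacking of exactly that specialisation. The only minor remark is that in this case the number of rows of $\sA_\Eta$ is exactly $n$, so the convergence hypotheses match on the nose rather than one merely implying the other.
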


\subsection{Graph basics}
\label{ss:graphs}

By a \emph{graph}, we mean a pair $\Gamma = (V,E)$ where $V$ is a finite set
and $E$ is a set of subsets of $V$, each of which has cardinality $1$ or $2$.
As usual, we refer to the elements of $V$ and $E$ as the \emph{vertices} and
\emph{edges} of $\Gamma$, respectively.
We explicitly allow loops (i.e.\ edges $e$ with $\card{e} = 1$) but no
parallel edges.
Let $\sim \,=\, \sim_\Gamma\, \subset V\times V$ be the (symmetric) adjacency
relation of $\Gamma$.
Hence, $v\sim v'$ if and only if $\{v,v'\} \in E$.
By a \emph{subgraph} of $\Gamma$, we mean a graph $\Gamma' = (V',E')$ with
$V'\subset V$ and $E'\subset E$.

\paragraph{Hypergraphs from graphs.}
Let $\increl \,=\, \increl_\Gamma\, \subset V\times E$ be the incidence
relation of $\Gamma$.
(Hence, $v\increl e$ if and only if $v\in e$.)
Every graph gives rise to two hypergraphs that will be of interest to us:
the \emph{incidence hypergraph} $\Inc(\Gamma) = (V,E, \increl_\Gamma)$ and
the \emph{adjacency hypergraph} $\Adj(\Gamma) = (V,V,\sim_\Gamma)$.
The former of these simply amounts to viewing a graph $\Gamma$ as a hypergraph
whose hyperedges are the edges of $\Gamma$ with the evident incidence
relation.

\paragraph{Disjoint unions and joins.}
Given graphs $\Gamma$ and $\Gamma'$, we let $\Gamma \oplus \Gamma'$
denote their disjoint union.
We have $\Inc(\Gamma \oplus \Gamma') = \Inc(\Gamma) \oplus \Inc(\Gamma')$ and
$\Adj(\Gamma \oplus \Gamma') = \Adj(\Gamma) \oplus \Adj(\Gamma')$.
The \emph{join} $\Gamma \join \Gamma'$ of $\Gamma$ and $\Gamma'$ is
obtained from $\Gamma \oplus \Gamma'$ by adding an edge connecting each
vertex of $\Gamma$ to each vertex of $\Gamma'$.

\paragraph{Adjacency and incidence matrices.}
Let $\Gamma$ have $n$ vertices, say $V = \{ v_1,\dotsc,v_n\}$.
As usual, the \emph{adjacency matrix} $A_\Gamma \in \Mat_n(\ZZ)$ of $\Gamma$
(relative to the chosen total order on $V$) is the $(0,1)$-matrix given by
$(A_\Gamma)_{ij} = \Iverson{v_i \sim v_j}$.
Note that, using the same total order on $V$ throughout, the adjacency matrix
$A_\Gamma$ of $\Gamma$ coincides with the incidence matrix $A_{\Adj(\Gamma)}$
of the adjacency hypergraph of $\Gamma$.

\subsection[Two ask zeta functions associated with graphs]{Two ask zeta
  functions associated with graphs: $\zeta^{\ask}_{\sA^+_\Gamma/\fO}$ and
  $\zeta^{\ask}_{\sA^-_\Gamma/\fO}$}
\label{ss:ask_graph}

Let $\Gamma = (V,E)$ be a graph with $m$ edges and $n$ vertices.
As in the previous section, we write $V = \{v_1,\dotsc,v_n\}$ which reflects a
choice of a total order $\lte$ on $V$ with $v_1\lte \dotsb \lte v_n$.
Each edge $e \in E$ is of the form $\{v_i,v_j\}$ with
$i\le j$ and $v_i\sim v_j$.
We obtain a total order on $E$, which we again denote by $\lte$, by mapping
$e$ to $(i,j)$ and by ordering the resulting pairs of numbers lexicographically.
Let $e_1 \lte \dotsb\lte e_m$ be the distinct edges of $\Gamma$.
Using this order, we identify $E$ and $\{1, \dotsc, m\}$.

Following (and, in fact, slightly generalising) \cite{cico}, we now define
matrices $\sA^+_\Gamma(X_E)$ and $\sA^-_\Gamma(X_E)$  in $\Mat_{n\times n}(\ZZ[X_E])$.
Namely, $\sA^\pm_\Gamma$ is the matrix given by the following conditions:
\begin{itemize}
\item
  For $1 \le i\le j\le n$, we have
  \[
    \sA^\pm_\Gamma(X_E)_{ij} = \begin{cases}
      X_e, & \text{if }e := \{v_i,v_j\} \in E, \\
      0, & \text{otherwise.}
      \end{cases}
  \]
\item
  For $1 \le i < j \le n$, we have
  $\sA^\pm_\Gamma(X_E)_{ij} = \pm \sA^\pm_\Gamma(X_E)_{ji}$.
\end{itemize}

We refer to $\sA^+_\Gamma$ and $\sA^-_\Gamma$ as the \emph{linearised
  adjacency matrices} of $\Gamma$.
Note that $\sA^+_\Gamma$ is symmetric and if $\Gamma$ is loopless, then
$\sA^-_\Gamma$ is antisymmetric.
The matrix $\sA^-_\Gamma + (\sA^-_\Gamma)^\top$ is diagonal and the 
$(i,i)$-entry of $\sA^\pm_\Gamma$ is $X_{\{v_i\}}$ if $v_i \sim v_i$ in
$\Gamma$ and zero otherwise.

\begin{rem}
  The zeta function we denote by $\zeta^{\ask}_{\sA^\pm_\Gamma/\fO}(s)$ here coincides
  with $\Zeta^{\ask}_{\gamma_\pm^\fO}(q^{-s})$ from \cite{cico}, where
  $\gamma_\pm$ denotes the \itemph{(positive or negative) adjacency representation} of
  $\Gamma$ \cite[\S 3.2]{cico}.
  We note that $\Gamma$ was assumed to be loopless in the definition of
  $\gamma_-$ in \cite{cico}.
\end{rem}

Let $\sC_\Gamma^\pm = \sC_\Gamma^\pm(X_V)\in \Mat_{m \times n}(\ZZ[X_V])$ be
the matrix defined as follows:
given an edge $e = \{v_i,v_j\}$ with $i\le j$ and $k\in \{1,\dotsc,n\}$,
we define
\[
  (\sC^\pm_\Gamma)_{ek} =
  \begin{cases}
    \phantom\pm X_i, & \text{if } k = j, \\
    \pm X_j, &\text{if } k = i \text{ and } i\not= j, \\
    \phantom\pm 0, &\text{otherwise}.
  \end{cases}
\]
It is easy to see (cf.\ \cite[\S 3.3]{cico}) that $\sC^\pm_\Gamma(X_V)$ is a
$\circ$-dual of $\sA^\pm_\Gamma(X_F)$.

Let  $\Minors^\pm_k \Gamma = \Minors_k(\sC_\Gamma^\pm)$, an ideal of
$\ZZ[X_V]$.
We also write $\frac 1 2 \Minors_k^+ \Gamma$ for the ideal of
$(\ZZ[\frac 1 2])[X_V]$ generated by $\Minors^+_k \Gamma$.
As for hypergraphs, all of these ideals only depend on $\Gamma$ and the
chosen ``type'', $+$ or $-$, not on the total order on the vertices of
$\Gamma$ used to define~$\sC^\pm_\Gamma$.

Write $r_\Gamma[\pm] = \rank_{\QQ(X_V)}(\sC^\pm_\Gamma)$;
we will obtain graph-theoretic interpretations of these numbers in
Proposition~\ref{prop:rank_C-} and Proposition~\ref{prop:rank_C+}.
For $W\subset \fO V\times \fO$, we define
\begin{equation}
  \label{eq:Gamma_integral}
  \int\limits_W \Gamma^\pm(s) :=
  \int\limits_W
  \abs{z}^{s - n + r_\Gamma[\pm] - 1}
  \prod_{i=1}^{r_\Gamma[\pm]}
  \frac{\norm{\Minors^\pm _{i-1} \Gamma (x)}}
  {\norm{\Minors^\pm_i \Gamma(x) \cup z \Minors^\pm_{i-1} \Gamma(x)}}
  \dd\!\mu(x,z),
\end{equation}
where $\mu$ is the normalised Haar measure on $\fO V\times \fO$.
Proposition~\ref{prop:proj_circ_int} then yields the following.

\begin{prop}
  \label{prop:ask_Gamma_integral}
  For each compact \DVR{} $\fO$ and $s \in \CC$ with $\Real(s) > n$, we have
  \[
    (1-q^{-s}) \zeta^{\ask}_{\sA^\pm_\Gamma/\fO}(s) =
    1 + (1 - q^{-1})^{-1} \int\limits_{(\fO V)^\times\times\fP} \!\!\!\! \Gamma^\pm(s).
    \pushQED{\qed}
    \qedhere
    \popQED
  \]
\end{prop}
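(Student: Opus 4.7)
The plan is to derive this as a direct specialisation of Proposition~\ref{prop:proj_circ_int}, exactly mirroring the proof of Proposition~\ref{prop:ask_Eta_integral} for hypergraphs. The setup leading up to the statement has already assembled all the required inputs: a matrix of linear forms $\sA^\pm_\Gamma(X_E)$ over $R = \ZZ$, a candidate $\circ$-dual $\sC^\pm_\Gamma(X_V)$, the generic rank $r_\Gamma[\pm]$, and the ideals $\Minors^\pm_k \Gamma$ of $k\times k$ minors of $\sC^\pm_\Gamma$. So the proof is essentially a verification that these data plug correctly into the general template.

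First I would confirm the $\circ$-dual relationship: writing the entries of $\sA^\pm_\Gamma(X_E)$ and $\sC^\pm_\Gamma(X_V)$ in the form \eqref{eq:A_circ_C}, one checks that both matrices are assembled from the common tensor $\alpha_{ij,e}\in\{0,\pm 1\}$ obtained by reading off which variable $X_e$ occupies position $(i,j)$ of $\sA^\pm_\Gamma$ (with the appropriate sign when $i>j$ for the ``$-$'' case, and $\alpha_{ij,\{v_i\}}=\Iverson{i=j, v_i\sim v_i}$ on the diagonal). This was already indicated as a routine check in the paragraph introducing $\sC^\pm_\Gamma$ (cf.\ \cite[\S 3.3]{cico}).

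Second, I would identify the remaining data. The ideals denoted $I_k$ in Proposition~\ref{prop:proj_circ_int} are by definition the minor ideals $\Minors_k(\sC^\pm_\Gamma;\ZZ[X_V]) = \Minors^\pm_k\Gamma$, and the rank $r$ of the image of $\sC^\pm_\Gamma$ over $K(X_V)$ equals $\rank_{\QQ(X_V)}(\sC^\pm_\Gamma) = r_\Gamma[\pm]$, since $R = \ZZ\subset\QQ\subset K$ is flat. Substituting these into the formula of Proposition~\ref{prop:proj_circ_int} reproduces the integrand $\Gamma^\pm(s)$ of \eqref{eq:Gamma_integral} verbatim, and the domain $(\fO V)^\times \times \fP$ and Haar measure match.

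There is no real obstacle here beyond bookkeeping; the only point deserving a remark is that Proposition~\ref{prop:proj_circ_int} a priori depends on choices of total orders on $U = E$ and $V$, but the equivalence discussed in \S\ref{s:ask_bg} together with invariance of $\zeta^{\ask}_{\sA^\pm_\Gamma/\fO}(s)$ under such equivalence (and the independence of $\Minors^\pm_k\Gamma$ from these choices, noted in the text) shows that the statement is well-posed.
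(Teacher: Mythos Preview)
Your proposal is correct and matches the paper's approach: the paper simply writes ``Proposition~\ref{prop:proj_circ_int} then yields the following'' and places a \qedsymbol{} in the statement, treating the result as an immediate consequence once the $\circ$-dual $\sC^\pm_\Gamma$, the rank $r_\Gamma[\pm]$, and the ideals $\Minors^\pm_k\Gamma$ have been set up. Your write-up spells out the bookkeeping in more detail than the paper does, but the underlying argument is identical.
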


\begin{rem}
  \label{rem:odd_integral}
  If $\fO$ has odd residue characteristic in the $+$-case of
  Proposition~\ref{prop:ask_Gamma_integral}, then we may replace each
  $\Minors_i^+\Gamma$ by $\frac 1 2 \Minors^+_i \Gamma$ in the definition of
  $\int_W \Gamma^+(s)$.
\end{rem}

\section{Selectors of hypergraphs}
\label{s:selectors}

Let $\Eta$ be a hypergraph.
We derive an explicit combinatorial parameterisation of the
minors of $\sC_\Eta$ (see \S\ref{ss:ask_hypergraph}) in terms of partial functions that
we call \itemph{selectors}.
Apart from immediately providing us with a new proof of
Theorem~\ref{thm:uniformity}\ref{thm:uniformity1}, the results of this section
will play a key role in our proof of Theorem~\ref{thm:rgmt} in \S\ref{s:RGMT}.

\paragraph{Partial functions.}

Given a set $U$, we let $U_\udef$ denote the set obtained from $U$ by
adjoining an additional element~$\udef$.
We write $\phi\colon U \parto V$ and $U\xparto{\phi}V$ to
indicate a partial function $\phi$ from $U$ to $V$.
The \emph{domain of definition} $\Dom(\phi)$ of $\phi$ consists of those
$u\in U$ for which $u^\phi$ is defined.
We tacitly identify partial functions $\phi\colon U\parto V$ and those total
functions $U_\udef \to V_\udef$ which send $\udef$ to $\udef$.
For $Y\subset V_\udef$, write
$Y^{\phi^*} = \{ u \in U_\udef : u^\alpha \in Y\}$.
In particular, $\Dom(\phi) = \{ u \in U : u^\phi \not= \udef\} = V^{\phi^*}$.
We write $\deg(\phi) = \card{\Dom(\phi)}$ for the \emph{degree} of $\phi$.
Given $U\xparto{\phi} V$, let
\[
  \mon\phi =
  \prod\limits_{u\in \Dom(\phi)} X_{u^\phi} \in \ZZ[X_V].
\]
Note that $\deg(\phi)$ is the (total) degree of $\mon\phi$ as a monomial in
$\ZZ[X_V]$.

We will often find the need to modify or extend partial functions.
Let $U\xparto\phi V$ be a partial function. Let $u_1,\dotsc,u_r\in U$ be
distinct elements and let $v_1,\dotsc,v_r\in V$ be arbitrary.
We define \[\phi[u_1\subs v_1,\dotsc,u_r\subs v_r]\] to be the partial
function $U\parto V$ given by
\[
u\mapsto \begin{cases}
  v_i, & \text{if } u = u_i,\\
  u^\phi, & \text{otherwise}.
\end{cases}
\]
Let $U'\subset U$.
We identify partial functions $U'\parto V$ and those partial functions
$U\parto V$ that are undefined outside of $U'$.
Given $U\xparto\phi V$, we let $\phi\restriction U'$ denote the partial function
\[
  u\mapsto \begin{cases} u^\phi, & \text{if } u\in U',\\
    \udef,& \text{otherwise.}\end{cases}
\]

\paragraph{Selectors.}
Let $\Eta = (V,E,\increl)$ be a hypergraph.
By a \emph{(vertex) selector} of $\Eta$, we mean a partial function
$E\xparto\phi V$ such that $e^\phi \increl e$ for all $e\in \Dom(\phi)$.
In other words, a selector $\phi$ of~$\Eta$ consists of a subset $E' =
\Dom(\phi)$ of $E$ together with a choice of a vertex $e^\phi$ incident with
$e \in E'$.
Let $\Selectors(\Eta)$ be the set of all selectors of $\Eta$ and
$\Selectors_k(\Eta) = \{\phi\in\Selectors(\Eta) : \deg(\phi) = k \}$.

\paragraph{Minors.}
Using Propositions~\ref{prop:general_monomial_integral}
and \ref{prop:ask_Eta_integral}, the following
description of the minors of $\sC_\Eta$ provides a new proof
of Theorem~\ref{thm:uniformity}\ref{thm:uniformity1}, quite different from the
one in \cite[\S 4.4]{cico}.

\begin{prop}
  \label{prop:selectors}
  We have
  $\Minors_k \Eta = \Bigl\langle \mon\phi :
  \phi\in\Selectors_k(\Eta) \Bigr\rangle$.
  More precisely, the nonzero minors of $\sC_\Eta$ are precisely of the
  form $\pm \mon\phi$ for $\phi\in\Selectors(\Eta)$.
\end{prop}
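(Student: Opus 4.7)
The plan is to exploit the very sparse structure of the matrix $\sC_\Eta$: each of its rows contains \emph{exactly one} nonzero entry. Indeed, by construction, the row indexed by a flag $(v,e)\in F$ has entry $X_v$ in the column labelled $e$ and zeros elsewhere. A $k\times k$ submatrix of $\sC_\Eta$ is determined by a choice of $k$ distinct flags $(v_1,e'_1),\dotsc,(v_k,e'_k)$ (the rows) together with a choice of $k$ distinct hyperedges $e_{j_1},\dotsc,e_{j_k}$ (the columns).

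First, I would analyse when such a submatrix $M$ has a nonzero determinant. Since each row of $M$ has at most one nonzero entry, the Leibniz expansion of $\det(M)$ collapses to at most a single term: it is nonzero precisely when the map sending $r\mapsto$ (the unique column in which row $r$ is nonzero) gives a bijection from $\{1,\dotsc,k\}$ to the chosen column set. Equivalently, the edges $e'_1,\dotsc,e'_k$ must be pairwise distinct and coincide, as a set, with $\{e_{j_1},\dotsc,e_{j_k}\}$. When this happens, $\det(M) = \pm X_{v_1}X_{v_2}\dotsb X_{v_k}$, the sign coming from the corresponding permutation.

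Next, I would set up the bijective translation to selectors. From such a choice of distinct flags with distinct edges, define $\phi\colon E\parto V$ by $e'_r\mapsto v_r$ and leave $\phi$ undefined elsewhere. Since each $(v_r,e'_r)$ is a flag, $v_r\increl e'_r$, so $\phi\in\Selectors_k(\Eta)$; moreover $\mon\phi = X_{v_1}\dotsb X_{v_k}$, matching $\det(M)$ up to sign. Conversely, every $\phi\in\Selectors_k(\Eta)$ arises this way: setting rows $(e^\phi,e)$ and columns $e$ as $e$ runs through $\Dom(\phi)$ yields a diagonal submatrix with determinant exactly $\mon\phi$. This establishes the ``more precise'' statement that the nonzero minors of $\sC_\Eta$ are precisely the $\pm \mon\phi$.

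The ideal-theoretic statement $\Minors_k\Eta = \bigl\langle \mon\phi : \phi\in\Selectors_k(\Eta)\bigr\rangle$ follows immediately, since $\Minors_k\Eta$ is by definition generated by the $k\times k$ minors, and the zero minors contribute nothing. There is no serious obstacle here: the only care needed is in bookkeeping the correspondence between the row/column choices defining a submatrix and the data of a selector, ensuring distinctness of the chosen edges is respected on both sides.
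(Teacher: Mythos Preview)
Your proposal is correct and follows essentially the same approach as the paper: both arguments exploit that each row of $\sC_\Eta$ has a single nonzero entry, conclude that a $k\times k$ minor is nonzero only when the chosen flags have pairwise distinct hyperedges coinciding with the chosen column set, and then set up the bijection with selectors of degree~$k$. One cosmetic remark: the submatrix you build from a selector need not be literally diagonal in the fixed row/column ordering of $\sC_\Eta$, only a monomial (permuted diagonal) matrix, so its determinant is $\pm\mon\phi$ rather than necessarily $+\mon\phi$; this does not affect the argument.
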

\begin{proof}
  Let $\Eta$ have $m$ hyperedges and $n$ vertices.
  Write $E = \{ e_1,\dotsc,e_m \}$ and $V = \{ v_1,\dotsc,v_n \}$.
  Define and order $F = \Flags(\Eta)$ as in \S\ref{ss:ask_hypergraph}.
  Given $F'\subset F$ and $E'\subset E$ with $\card{F'} = \card{E'} = k$,
  let $\sC_\Eta[E'\mid F']$ be the submatrix of $\sC_\Eta$ with rows indexed by $F'$
  and columns indexed by $E'$.
  Write $m_\Eta[E'\mid F'] = \det(\sC_\Eta[E' \mid F'])$ for the associated minor.

  We first show that given $F'$ and $E'$ as above, either $m_\Eta[E' \mid F'] = 0$
  or $m_\Eta[E' \mid F'] = \pm \mon \phi$ for some $\phi \in \Selectors_k(\Eta)$.
  Let $(u_1,f_1),\dotsc,(u_k,f_k)$ be the distinct elements of $F'$.
  If $f_i = f_j$ for $i\not= j$, then the rows of $\sC_\Eta[E' \mid F']$ indexed by
  $(u_i,f_i)$ and $(u_j,f_j)$ are linearly dependent over $\QQ(X_V)$ whence
  $m_\Eta[E' \mid F'] = 0$.
  We may thus assume that $E'' := \{ f_1,\dotsc,f_k\}$ has cardinality $k$.
  Next, if $f_i \notin E'$ for some $i$, then the row of $\sC_\Eta[E' \mid F']$ indexed by
  $(u_i,f_i)$ is zero so that again $m_\Eta[E' \mid F'] = 0$.
  Therefore, we may assume that $E' = E''$.
  In this case, we clearly have $m_\Eta[E' \mid F'] = \pm \prod_{i=1}^k X_{u_i}$.
  Define a selector $E\xparto\phi V$ via $\Dom(\phi) = E'$ and $f_i^\phi =
  u_i$ for $i = 1,\dotsc,k$.
  Then $m_\Eta[E' \mid F'] = \pm \mon\phi$.

  Conversely, given $\phi\in \Selectors(\Eta)$,
  let $E' = \Dom(\phi)$ and
  $F' = \{ (e^\phi,e) : e\in E'\}$ so that
  $m_\Eta[E' \mid F'] = \pm \mon\phi$.
\end{proof}

Our proof of Proposition~\ref{prop:selectors} will act as a template for the
much more involved case of the matrices $\sC^\pm_\Gamma$ in \S\ref{s:animations}.

\begin{cor}
  \label{cor:rank_C_Eta_selectors}
  Let $\Eta = (V,E,\increl)$ be a hypergraph.
  Then
  \[
    \rank_{\QQ(X_V)}(\sC_\Eta) = \max(k \ge 0 : \Selectors_k(\Eta) \not=
    \emptyset) = \max(\deg(\phi) : \phi\in \Selectors(\Eta)).
    \tag*{\qed}
  \]
\end{cor}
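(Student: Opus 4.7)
The plan is to deduce this corollary almost immediately from Proposition~\ref{prop:selectors}. The equality of the two expressions on the right is tautological: by definition, $\Selectors_k(\Eta)$ is the set of selectors of degree $k$, so $\{k \ge 0 : \Selectors_k(\Eta) \neq \emptyset\}$ is exactly the set of values attained by $\deg$ on $\Selectors(\Eta)$, and the two maxima therefore coincide. (Both are well-defined since the trivial partial function belongs to $\Selectors_0(\Eta)$, so these sets are nonempty and bounded above by $\card E$.)

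For the equality with $\rank_{\QQ(X_V)}(\sC_\Eta)$, I would use the standard fact that the rank of a matrix over a field equals the largest $k$ for which that matrix admits a nonzero $k\times k$ minor. Applied here, this says
\[
\rank_{\QQ(X_V)}(\sC_\Eta) = \max\{ k \ge 0 : \Minors_k \Eta \neq 0\}.
\]
By Proposition~\ref{prop:selectors}, every nonzero $k\times k$ minor of $\sC_\Eta$ is of the form $\pm \mon\phi$ for some $\phi \in \Selectors_k(\Eta)$, and conversely every such $\mon\phi$ arises as a minor. Since each $\mon\phi$ is a nonzero monomial in $\ZZ[X_V]$ and hence nonzero in $\QQ(X_V)$, the ideal $\Minors_k \Eta$ is nonzero if and only if $\Selectors_k(\Eta)$ is nonempty. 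Combining these two observations gives the desired equality.

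There is no real obstacle here; the content sits entirely in Proposition~\ref{prop:selectors}, and the corollary is a direct translation of the minor characterisation into a rank statement. The only minor care needed is to note that $\mon\phi \neq 0$ in $\QQ(X_V)$ (which is clear since it is a product of distinct indeterminates, possibly with repetitions, but in any event a nonzero monomial) so that the presence of a selector of degree $k$ genuinely witnesses a nonzero minor rather than merely a formal expression.
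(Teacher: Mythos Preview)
Your proposal is correct and matches the paper's approach: the corollary is stated with a \qed and no proof, indicating it follows immediately from Proposition~\ref{prop:selectors} exactly as you describe.
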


\begin{proof}[New proof of Theorem~\ref{thm:uniformity}\ref{thm:uniformity1}]
  Combine
  Proposition~\ref{prop:general_monomial_integral},
  Proposition~\ref{prop:ask_Eta_integral}, and
  Proposition~\ref{prop:selectors}.
\end{proof}

\section{Towards minors: paths and cycles in hypergraphs}
\label{s:towards_minors}

Our next goal, to be achieved in \S\ref{s:animations}, is to provide a
combinatorial parameterisation of the minors of the matrices
$\sC^\pm_\Gamma$ associated with a graph $\Gamma = (V,E)$
in \S\ref{ss:ask_graph}.  The columns and rows of $\sC^\pm_\Gamma$
correspond to elements of $V$ and $E$, respectively.  Given subsets
$V'\subset V$ and $E'\subset E$ with $\card{V'} = \card{E'} = k$, we
therefore obtain an associated $k\times k$ minor
$m_\Gamma^{\pm}[V'\mid E']$ of $\sC^\pm_\Gamma$.  It turns out that
these minors can be conveniently studied in terms of the
subhypergraphs $\Eta[V'\mid E']$ of the incidence hypergraph $\Eta =
\Inc(\Gamma)$ of $\Gamma$.

\subsection{Degeneracy and connectivity of hypergraphs}{

Let $\Eta = (V,E,\increl)$ be a hypergraph.
Two hyperedges $e,e'\in E$ are \emph{parallel} if $\norm{e} = \norm{e'}$.
If $\norm{e} \not= \norm{e'}$ whenever $e\not= e'$ for $e,e'\in E$, then
$\Eta$ is \emph{simple}.
A \emph{loop} of $\Eta$ is a hyperedge $e\in E$ with $\#\norm e = 1$.
Following \cite[\S 1.1]{Bre13}, we call
$\isolated(\Eta) = \{ v\in V: v \!\not\increl e \text{ for all } e\in E\}$ and
$\empties(\Eta) = \{ e\in E : \norm e = \emptyset\}$
the sets of \emph{isolated} vertices and \emph{empty} hyperedges of~$\Eta$,
respectively.
We call $\Eta$ \emph{nondegenerate} if $\isolated(\Eta) = \empties(\Eta) =
\emptyset$.

The following notion of connectivity of hypergraphs is adapted from
\cite[\S 1.2]{Bre13}.
Given vertices $u,v\in V$, a \emph{walk} of length $r \ge 0$ from $u$ to $v$
is a sequence
\[
  u = u_1,\, e_1,\, u_2,\, e_2,\, \dotsc,\, u_r, \, e_{r},\, u_{r+1} = v
\]
with $u_1,\dotsc,u_{r+1} \in V$ and $e_1,\dotsc,e_r \in E$ such that always
$u_i \increl e_i$ and $u_{i+1} \increl e_i$.

We say that vertices $u,v\in V$ of $\Eta$ are \emph{connected} if there
exists a walk from $u$ to~$v$.
This defines an equivalence relation on $V$.
Let $V_1,\dotsc,V_c$ be the distinct equivalence classes.
Write $E_j = \{ e\in E : \norm e \cap V_j \not= \emptyset\}$ and define
$\Eta_j = \Eta[V_j\mid E_j]$ to be the associated induced subhypergraph.
Let $\Eta_0 = (\emptyset, \empties(\Eta),\emptyset)$.
It is then easy to see that $\Eta = \Eta_0 \oplus \dotsb \oplus \Eta_c$.
We refer to the subhypergraphs $\Eta_1,\dotsc,\Eta_c$ as the
\emph{connected components} of $\Eta$.
We say that $\Eta$ is \emph{connected} if $\empties(\Eta) = \emptyset$ and $c
= 1$.
(Note that using this definition, every hypergraph without vertices is
disconnected.)
If $\Gamma$ is a graph, then
$\Inc(\Gamma)$ is connected if and only if $\Gamma$ is connected in the usual
graph-theoretic sense.

\begin{lemma}
  A hypergraph $\Eta = (V,E,\increl)$ is connected if and only if $V \not= \emptyset$,
  $\empties(\Eta) = \emptyset$, and whenever $\Eta = \Eta_1\oplus \Eta_2$ for
  hypergraphs $\Eta_1$ and $\Eta_2$, one of the summands is
  $(\emptyset,\emptyset,\emptyset)$.
\end{lemma}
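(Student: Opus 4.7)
The plan is to prove both directions by unpacking the definitions carefully, using the canonical decomposition $\Eta = \Eta_0 \oplus \Eta_1 \oplus \dotsb \oplus \Eta_c$ into empty hyperedges and connected components recalled in the preceding paragraphs.

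For the forward direction, suppose $\Eta$ is connected. By the definition given in the text, this directly yields $V \neq \emptyset$ (the parenthetical note rules out $V = \emptyset$) and $\empties(\Eta) = \emptyset$. Now suppose $\Eta = \Eta_1 \oplus \Eta_2$ with $\Eta_i = (V_i, E_i, \increl_i)$. The key observation is that if $V_i = \emptyset$ then $E_i$ must also be empty: otherwise any $e \in E_i$ would satisfy $\norm{e}_\Eta = \emptyset$, contradicting $\empties(\Eta) = \emptyset$. So if one summand has an empty vertex set, it equals $(\emptyset,\emptyset,\emptyset)$ as required. Otherwise both $V_1$ and $V_2$ are nonempty. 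Pick $u \in V_1$ and $v \in V_2$; since every hyperedge of $\Eta$ lies in either $E_1$ or $E_2$ and hence is incident only to vertices in $V_1$ or only to vertices in $V_2$, every walk starting at $u$ stays in $V_1$, so $u$ and $v$ are not connected, contradicting the assumption on $\Eta$.

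For the backward direction, assume $V \neq \emptyset$, $\empties(\Eta) = \emptyset$, and that $\Eta$ admits no nontrivial decomposition. Applying the canonical decomposition, $\Eta_0 = (\emptyset, \empties(\Eta), \emptyset) = (\emptyset,\emptyset,\emptyset)$ by the hypothesis, so $\Eta = \Eta_1 \oplus \dotsb \oplus \Eta_c$. Since $V\neq\emptyset$, we have $c \ge 1$. If $c \geq 2$, we can write $\Eta = \Eta_1 \oplus (\Eta_2 \oplus \dotsb \oplus \Eta_c)$; each $\Eta_i$ with $i\ge 1$ has a nonempty vertex set (being built from a $\sim$-equivalence class of $V$), so neither factor is $(\emptyset,\emptyset,\emptyset)$, violating indecomposability. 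Hence $c = 1$, and together with $\empties(\Eta) = \emptyset$ this is exactly the definition of connectedness.

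The only genuine subtlety, and what I expect to be the main bookkeeping point rather than obstacle, is keeping track of the role of empty hyperedges: they are the one way a summand $\Eta_i$ could have $V_i = \emptyset$ while still being nontrivial, and they are exactly excluded both by the hypothesis $\empties(\Eta) = \emptyset$ and by the structure of $\Eta_0$ in the canonical decomposition. No further machinery is needed.
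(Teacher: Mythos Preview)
Your proof is correct and follows essentially the same approach as the paper's: both directions use the same ideas (walks cannot cross between summands for the forward direction, the canonical decomposition $\Eta = \Eta_0 \oplus \Eta_1 \oplus \dotsb \oplus \Eta_c$ for the converse). Your write-up is slightly more explicit about why $V_i = \emptyset$ forces $E_i = \emptyset$ and why walks stay within a summand, but the structure and key observations are identical.
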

\begin{proof}
  Suppose that $\Eta$ is connected.
  As connectivity yields a (single) equivalence class on~$V$,
  we have $V \not =\emptyset$.
  Suppose that $\Eta = \Eta_1 \oplus \Eta_2$
  for $\Eta_j = (V_j,E_j,\increl_j)$.
  If $v_j \in V_j$ for $j=1,2$, then $v_1$ and $v_2$ cannot be connected in
  $\Eta$.
  Since $\Eta$ is connected, we may assume that, without loss of
  generality, $V_1 = \emptyset$.
  As $\empties(\Eta) = \emptyset$, we then obtain
  $E_1 = \emptyset = \,\increl_1$.

  Conversely, suppose that $\Eta$ satisfies the conditions stated.
  Write $\Eta = \Eta_0 \oplus \dotsb \oplus \Eta_c$ as in the paragraph preceding
  this lemma.
  Since $V\not= \emptyset$, we have $c \ge 1$ and since $\empties(\Eta) =
  \emptyset$, we have $\Eta_0 = (\emptyset,\emptyset,\emptyset)$.
  Hence, $\Eta = \Eta_1\oplus \dotsb \oplus \Eta_c$ whence $c = 1$ by
  assumption.
\end{proof}

The following lemma simply asserts that the determinant of a
block diagonal square matrix (with entries in some ring) can only be nonzero
if all diagonal blocks are squares.

\begin{lemma}
  \label{lem:A_Eta_factor}
  Let $\Eta = \Eta_1 \oplus \dotsb \oplus \Eta_r$, where each $\Eta_j =
  (V_j,E_j, \increl_j)$ is a hypergraph.
  Write $m_j = \card{E_j}$ and $n_j = \card{V_j}$.
  Suppose that $\sum_{j=1}^r m_j = \sum_{j=1}^r n_j$.
  \begin{enumerate}[(i)]
  \item
  \label{lem:A_Eta_factor1}
    If $m_j \not= n_j$ for some $j$, then $\det(\sA_\Eta) = 0$.
  \item
  \label{lem:A_Eta_factor2}
    If $m_j = n_j$ for all $j$, then
    $\det(\sA_\Eta) = \pm \prod_{j=1}^r \det(\sA_{\Eta_j})$. 
  \end{enumerate}
\end{lemma}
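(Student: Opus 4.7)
The plan is to exploit the fact that, after an appropriate reordering of vertices and hyperedges, the matrix $\sA_\Eta$ is literally block diagonal with diagonal blocks $\sA_{\Eta_1},\dotsc,\sA_{\Eta_r}$ (of respective sizes $n_j\times m_j$) and all off-block entries equal to zero. This follows immediately from the definition of the linearised incidence matrix together with the fact that for $v\in V_i$ and $e\in E_j$ with $i\neq j$, one has $v\not\increl e$, so the corresponding entry of $\sA_\Eta$ vanishes. Any alternative choice of total orders on $V$ and $E$ (as used to define $\sA_\Eta$) differs from this block-diagonal one by a simultaneous row and column permutation, so its determinant differs at most by a global sign. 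This is why the statement in (ii) carries a $\pm$.

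For part (i), I would argue by counting supports. Assume without loss of generality that $n_j>m_j$ for some index $j$ (since $\sum n_j=\sum m_j$, if any block is non-square we may pick one with strictly more rows than columns). In the block-diagonal form of $\sA_\Eta$, all nonzero entries of the $n_j$ rows indexed by $V_j$ are contained in the $m_j$ columns indexed by $E_j$. These $n_j$ rows therefore lie in a coordinate subspace of the ambient column space having dimension at most $m_j<n_j$, and so they are linearly dependent over $\QQ(X_V)$. Hence $\det(\sA_\Eta)=0$. Equivalently, by the Leibniz expansion: any permutation $\sigma$ contributing a nonzero term would have to inject $V_j$ into $E_j$, which is impossible.

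For part (ii), once $m_j=n_j$ for every $j$ and the matrix has been brought into genuine block-diagonal form, the classical multiplicativity of the determinant on block-diagonal matrices gives $\det(\sA_\Eta)=\prod_{j=1}^r\det(\sA_{\Eta_j})$, and incorporating the sign of the permutation used to reach block-diagonal form produces the stated $\pm$.

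The only potential snag is a bookkeeping one: handling summands $\Eta_j$ that are ``trivial'' in the sense of containing only isolated vertices or only empty hyperedges. In that situation either $n_j=0$ or $m_j=0$, and the argument of (i) still applies when $n_j\neq m_j$, while in (ii) the corresponding factor $\det(\sA_{\Eta_j})$ is simply the empty determinant $1$, consistent with the formula. No genuine obstacle arises.
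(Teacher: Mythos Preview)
Your proposal is correct and follows essentially the same approach as the paper: both arguments reduce to the block-diagonal structure of $\sA_\Eta$, observe that if some $m_j\neq n_j$ then (by the sum constraint) some block has $n_j>m_j$ and hence linearly dependent rows over $\QQ(X_V)$, and invoke multiplicativity of the determinant on block-diagonal matrices for~(ii). The paper is terser (it declares that only~(i) merits a proof), but your additional remarks on the reordering sign and the trivial-summand edge cases are accurate and do no harm.
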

\begin{proof}
  Only \ref{lem:A_Eta_factor1} merits a proof.
  If $m_j \not= n_j$ for some $j$
  and $\sum_{j=1}^r m_j = \sum_{j=1}^r n_j$,
  then $n_j > m_j$ for some $j$.
  The rows of $\sA_{\Eta_j} \in \Mat_{n_j \times
    m_j}(\ZZ[X_{V_j}])$ are then linearly dependent over $\QQ(X_{V_j})$ whence
  $\det(\sA_{\Eta}) = 0$.
\end{proof}

\subsection{Hypergraph decompositions of graphs and nonzero minors}
\label{ss:hypergraph_decompositions}

Let $\Gamma = (V,E)$ be a graph with associated incidence hypergraph
$\Eta = \Inc(\Gamma) = (V,E,\increl)$.
The minors of $\sC^\pm_\Gamma$ are parameterised by pairs $(V',E')$ with $V'
\subset V$, $E' \subset E$, and $\card{V'} = \card{E'}$.
While such a pair $(V',E')$ may not be a subgraph of $\Gamma$, we may
always consider the associated induced subhypergraph $\Eta' = \Eta[V'\mid E']$
of $\Eta$ as in \S\ref{ss:hypergraphs}.

Let $\Gamma$ have $m$ edges and $n$ vertices.
Order and index the vertices and edges of $\Gamma$ as in~\S\ref{ss:ask_graph}.
Given $V'\subset V$ and $E'\subset E$, we
let $\sC^\pm_\Gamma[V'\mid E']$ denote the submatrix of~$\sC^\pm_\Gamma$ obtained
by selecting the rows indexed by elements of $E'$ and the columns indexed by
elements of $V'$.
Henceforth, we assume that $\card{V'} = \card{E'} = k$.
We write $m^\pm_\Gamma[V'\mid E'] = \det(\sC^\pm_\Gamma[V'\mid E'])$ for the
minor of $\sC^\pm_\Gamma$ corresponding to $(V',E')$.

\begin{lemma}
  \label{lem:degenerate}
  If the hypergraph $\Eta[V'\mid E']$ is degenerate, then $m^\pm_\Gamma[V' \mid E'] = 0$.
\end{lemma}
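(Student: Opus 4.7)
The plan is to directly inspect the matrix $\sC^\pm_\Gamma[V'\mid E']$ and exhibit a zero row or column, depending on which part of the degeneracy hypothesis is invoked. Recall from \S\ref{ss:ask_graph} that the entry of $\sC^\pm_\Gamma$ indexed by $(e,v) \in E\times V$ is a nonzero multiple of some $X_i$ when $v \increl e$ and is zero otherwise. In particular, the support pattern of $\sC^\pm_\Gamma$, restricted to rows in $E'$ and columns in $V'$, coincides with the incidence relation of the induced subhypergraph $\Eta[V'\mid E']$.

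I would split into two cases according to whether degeneracy arises from an empty hyperedge or from an isolated vertex. If $\empties(\Eta[V'\mid E']) \neq \emptyset$, pick $e \in E'$ with $\norm{e}_{\Eta[V'\mid E']} = \emptyset$. Then no $v \in V'$ satisfies $v \increl e$, so every entry in the row of $\sC^\pm_\Gamma[V'\mid E']$ indexed by $e$ is zero; hence $m^\pm_\Gamma[V'\mid E'] = 0$. Symmetrically, if $\isolated(\Eta[V'\mid E']) \neq \emptyset$, pick $v \in V'$ which is incident with no $e \in E'$; then the column indexed by $v$ is zero, so $m^\pm_\Gamma[V'\mid E'] = 0$.

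Since both steps amount to reading off a vanishing row or column from the definition of $\sC^\pm_\Gamma$, there is no substantive obstacle; the only thing to check carefully is that the signs $\pm$ play no role, which is immediate since we only need one entry of each nonzero position — its vanishing is determined purely by $\increl$.
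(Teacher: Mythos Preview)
Your proof is correct and follows exactly the paper's approach: an isolated vertex of $\Eta[V'\mid E']$ yields a zero column of $\sC^\pm_\Gamma[V'\mid E']$, and an empty hyperedge yields a zero row. The paper's own proof is a single sentence stating precisely this.
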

\begin{proof}
  An isolated vertex (resp.\ empty hyperedge) of $\Eta[V' \mid E']$ gives rise
  to a zero column (resp.\ zero row) of $\sC^\pm_\Gamma[V'\mid E']$.
\end{proof}

Let $\Eta' = \Eta[V'\mid E']$ be nondegenerate.
Let $\Eta' = \Eta'_1 \oplus \dotsb \oplus \Eta'_c$
be its decomposition into connected components.
Write $\Eta'_j = \Eta[V_j' \mid E_j']$, where
$V' = V'_1 \sqcup \dotsb \sqcup V'_c$ and $E' = E'_1\sqcup \dotsb \sqcup E'_c$.
We call a hypergraph \emph{square} if it has as many hyperedges as
vertices.

\begin{lemma}
  \label{lem:minor_factorisation}
  \quad
  \begin{enumerate}
  \item
    If some $\Eta'_j$ is nonsquare, then $m^\pm_\Gamma[V'\mid E'] = 0$.
  \item
    If each $\Eta'_j$ is square, then
    $m^\pm_\Gamma[V'\mid E'] = \pm \prod\limits_{j=1}^c m^\pm_\Gamma[V_j' \mid E_j']$.
  \end{enumerate}
\end{lemma}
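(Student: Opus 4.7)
The plan is to reduce the minor $m^\pm_\Gamma[V'\mid E']$ to a block determinant via a suitable reordering of rows and columns, and then apply an argument parallel to the one for $\sA_\Eta$ in Lemma~\ref{lem:A_Eta_factor}.

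The key structural observation is that for each edge $e\in E'_j$, its incident vertices (in $\Eta$) that lie in $V'$ must all lie in $V'_j$: indeed, since $\Eta'_j$ is a connected component of $\Eta'$, we have $\norm{e}_{\Eta'} = e\cap V'\subset V'_j$. Because the $(e,v)$-entry of $\sC^\pm_\Gamma$ is zero unless $v\in e$ (by the definition of $\sC^\pm_\Gamma$ recalled in \S\ref{ss:ask_graph}), it follows that the row of $\sC^\pm_\Gamma[V'\mid E']$ indexed by $e\in E'_j$ has all of its nonzero entries in the columns indexed by $V'_j$. Hence, after permuting rows so that those indexed by $E'_1, \dotsc, E'_c$ appear in successive blocks, and similarly permuting columns according to $V'_1,\dotsc,V'_c$, the matrix $\sC^\pm_\Gamma[V'\mid E']$ becomes block diagonal with $j$-th diagonal block equal to $\sC^\pm_\Gamma[V'_j\mid E'_j]$. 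This permutation changes the determinant only by an overall sign.

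From here the two cases are immediate and mirror Lemma~\ref{lem:A_Eta_factor}. If some $\Eta'_j$ is nonsquare, then since $\sum_j \card{V'_j} = \card{V'} = \card{E'} = \sum_j \card{E'_j}$, there must exist some $j$ with $\card{E'_j} > \card{V'_j}$. The corresponding $\card{E'_j}$ rows of our block-diagonalised matrix, regarded as vectors of length $\card{V'}$, have zero coordinates outside of the $\card{V'_j}$ columns of the $j$-th block, so they lie in a space of dimension at most $\card{V'_j} < \card{E'_j}$ and are therefore $\QQ(X_V)$-linearly dependent; the determinant vanishes. If instead all $\Eta'_j$ are square, the standard block-diagonal determinant formula yields $m^\pm_\Gamma[V'\mid E'] = \pm\prod_{j=1}^c \det\bigl(\sC^\pm_\Gamma[V'_j\mid E'_j]\bigr) = \pm \prod_{j=1}^c m^\pm_\Gamma[V'_j\mid E'_j]$, as claimed.

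The only mildly subtle point—and thus the main thing to verify carefully—is the block-diagonalisation step, where one must confirm that the signs attached to off-diagonal entries of $\sC^\pm_\Gamma$ (which depend on the global total order on $V$, not on the component decomposition) play no role: both the submatrix $\sC^\pm_\Gamma[V'_j\mid E'_j]$ and the block appearing after the row/column permutation use the same ambient entries of $\sC^\pm_\Gamma$, so they agree on the nose. Everything else is routine linear algebra.
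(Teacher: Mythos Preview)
Your proof is correct and follows essentially the same approach as the paper: both reduce the submatrix to block-diagonal form (the paper by reordering the ambient vertex and edge sets, you by permuting rows and columns of the submatrix) and then invoke the elementary linear-algebra argument behind Lemma~\ref{lem:A_Eta_factor}. Your explicit treatment of the sign issue---observing that the $j$-th diagonal block literally coincides with $\sC^\pm_\Gamma[V'_j\mid E'_j]$ because both use the same ambient entries of $\sC^\pm_\Gamma$---is if anything slightly more careful than the paper's, which relies on the standing convention that minors are only considered up to sign.
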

\begin{proof}
  By changing our total order on $V$ (resp.\ $E$) if needed,
  we may assume that each element of $V'_j$ (resp.\ $E'_j$) precedes
  every element of $V'_{j+1}$ (resp.\ $E'_{j+1}$).
  The matrix $\sC^\pm_\Gamma[V' \mid E']$ is then a block diagonal matrix with blocks
  $\sC^\pm_\Gamma[V'_j\mid E'_j]$.
  Both claims then follow from Lemma~\ref{lem:A_Eta_factor}.
  Indeed, every $a\times b$ matrix (with entries in a ring) is obtained by
  specialising $\sA_{\Eta_{a,b}}$, where, following \cite[\S 3.1]{cico},
  $\Eta_{a,b}$ is the block hypergraph with $a$
  vertices and $b$ hyperedges such that every vertex is incident with every
  hyperedge.
\end{proof}

\begin{rem}
  \label{rem:snapping_edges}
  Even though $\Gamma$ is assumed to be simple as a graph---equivalently,
  $\Inc(\Gamma)$ is assumed simple as a hypergraph---$\Eta'$ need not be simple.
  In particular, suppose that $e = \{u,v\}$ and $f = \{u,w\}$ 
  with $v\not= w$ are edges of $\Gamma$ that both belong to $E'$.
  Further suppose that $u\in V'$ but $v,w \notin V'$.
  Then $e$ and $f$ are distinct parallel loops of $\Eta[V'\mid E']$.
\end{rem}

To summarise: in studying the minors $m^\pm_\Gamma[V'\mid E']$, we
obtained a reduction to the case that $\Eta'$ is nondegenerate,
connected, and square.

\subsection{Unicyclic graphs and related hypergraphs}

Following \cite{Man69}, a graph is \emph{unicyclic} if it is
connected and contains a unique cycle.
Equivalently, a graph is unicyclic if and only if it is connected and contains
as many vertices as edges.
Unicyclic graphs are precisely those graphs obtained from a tree by
adding an edge connecting two previously non-adjacent (not necessarily
distinct!)~vertices.

In the preceding subsection, we reduced the study of the minors
$m^\pm_{\Gamma}[V'\mid E']$ to the case when $\Eta[V'\mid E']$ is connected
and nondegenerate, where $\Eta = \Inc(\Gamma)$.
The following outlines the highly restricted possible shapes of the subhypergraphs
$\Eta[V'\mid E']$.

\begin{lemma}[``Unicyclicity lemma'']
  \label{lem:unicyclic}
  Let $\Gamma = (V,E)$ be a graph
  and $\Eta = \Inc(\Gamma)$.
  Let $V'\subset V$ and $E'\subset E$ have the same cardinality.
  Let $\Eta' = \Eta[V' \mid E']$ be connected
  and nondegenerate.
  Then precisely one of the following conditions is
  satisfied.
  \begin{enumerate}[(U1)]
  \item
    \label{lem:unicyclic1}
    $(V',E')$ is a unicyclic loopless subgraph of $\Gamma$ and
    $\Eta'$ is its incidence hypergraph.
  \item
    \label{lem:unicyclic2}
    $\Eta'$ contains a unique loop $e_\circ$ and $(V',E'\setminus\{e_\circ\})$
    is a subtree of $\Gamma$.
  \end{enumerate}
\end{lemma}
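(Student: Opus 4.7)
The plan is to classify the edges of $\Eta'=\Eta[V'\mid E']$ by their support in $V'$ and then count carefully. Since $\Gamma$ is a graph, every $e\in E'$ has $\card{\norm{e}_{\Eta'}}=\card{e\cap V'}\in\{0,1,2\}$, and nondegeneracy of $\Eta'$ excludes the value~$0$. Thus every $e\in E'$ is either a loop of $\Eta'$ (support size~$1$) or a non-loop of $\Eta'$ (support size~$2$). Let $\ell$ denote the number of loops of $\Eta'$ and let $E''\subset E'$ consist of the remaining, non-loop edges, so that $\card{E''}=\card{E'}-\ell=\card{V'}-\ell$.

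Next I would exploit the elementary observation that loops do not contribute to connectivity of $\Eta'$: if $e$ is a loop at a vertex $u$, then any walk step $u_i\increl e\increl u_{i+1}$ forces $u_i=u_{i+1}=u$. Hence the subgraph $\Gamma':=(V',E'')$ of $\Gamma$—which is an honest loopless subgraph, since each $e\in E''$ must be a non-loop of $\Gamma$ with both endpoints in $V'$—is itself connected on $\card{V'}$ vertices. Any connected graph on $\card{V'}$ vertices has at least $\card{V'}-1$ edges, so $\card{V'}-\ell\ge\card{V'}-1$ and therefore $\ell\in\{0,1\}$.

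In the case $\ell=0$, no $e\in E'$ can be a loop of $\Gamma$ (such an edge would contribute a loop to $\Eta'$), and every $e\in E'$ has both endpoints in $V'$, so $(V',E')=\Gamma'$ is a connected loopless subgraph of $\Gamma$ with $\card{V'}=\card{E'}$, hence unicyclic; it is visible from the construction that $\Eta'$ equals the incidence hypergraph of this subgraph, yielding~(U1). In the case $\ell=1$, letting $e_\circ$ denote the unique loop of $\Eta'$, the pair $(V',E'\setminus\{e_\circ\})=\Gamma'$ is a connected graph on $\card{V'}$ vertices with $\card{V'}-1$ edges, that is, a subtree of $\Gamma$, yielding~(U2). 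The two alternatives are mutually exclusive, since~(U1) forces $\Eta'$ to be loop-free while~(U2) requires a loop.

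I do not anticipate a real obstacle; the only mildly delicate point is that the loop $e_\circ$ in~(U2) can arise in two structurally distinct ways---from an actual loop of $\Gamma$ contained in $E'$ (whose unique vertex then lies in $V'$ by nondegeneracy), or from a proper edge $\{u,v\}\in E'$ of $\Gamma$ with exactly one endpoint in $V'$---but both possibilities are handled uniformly: once $e_\circ$ is removed, every remaining edge of $E'$ is a non-loop of $\Eta'$ and therefore has both endpoints in $V'$, so $(V',E'\setminus\{e_\circ\})$ is a bona fide loopless subgraph of $\Gamma$ and the argument goes through without change.
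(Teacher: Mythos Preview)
Your proof is correct and follows essentially the same route as the paper's: split $E'$ into hyperedges of $\Eta'$-support size $1$ versus $2$, use that loops are irrelevant for connectivity to see that $(V',E'')$ is a connected loopless graph, and then count edges to force $\ell\in\{0,1\}$. Your explicit remark on mutual exclusivity and on the two possible origins of $e_\circ$ (a genuine loop of $\Gamma$ versus a non-loop edge with one endpoint outside $V'$) matches observations the paper also makes.
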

\begin{proof}
  We write $k = \card{V'} = \card{E'}$.
  As $\Eta'$ is nondegenerate and $\Gamma$ is a graph,
  every hyperedge of $\Eta'$ contains either one or two vertices.
  Let $E'' \subset E'$ consist of those $e\in E'$ with
  $\#\norm{e}_{\Eta'} = 2$.
  As loops are irrelevant for connectivity, the hypergraph $\Eta'' =
  \Eta[V'\mid E'']$ is still connected.
  Note that this hypergraph is the incidence hypergraph of the loopless graph
  $\Gamma'' = (V',E'')$.
  It follows that $\Gamma''$ is connected (as a graph) with $k$ vertices.
  Therefore, $\Gamma''$ contains at least $k-1$ edges.
  On the other hand, since $E'' \subset E'$, the graph $\Gamma''$
  contains at most $k$ edges.
  This leaves us with two cases.
  \begin{enumerate}
  \item
    If $E''= E'$ (i.e.\ $\Gamma''$ has $k$ edges), then $\Eta'$ is the
    incidence hypergraph of the unicyclic loopless graph $(V',E')$.
  \item
    Otherwise, $\Eta'$ contains a unique loop $e_\circ$,
    we have $E' = E'' \cup \{ e_\circ\}$,
    and $\Gamma'' = (V',E'\setminus\{e_\circ\})$ is a tree.
    The edge $e_\circ$ may or may not be a loop of $\Gamma$;
    cf.\ Remark~\ref{rem:snapping_edges}.
    \qedhere
  \end{enumerate}
\end{proof}

\section{Animations of graphs}
\label{s:animations}

Proposition~\ref{prop:selectors} not only shows that each ideal
$\Minors_k \Eta$ of minors attached to a hypergraph $\Eta$ is
monomial. It also provides explicit (monomial) generators parameterised by
combinatorial gadgets, namely selectors.  This parameterisation is not
faithful: it is easy to produce examples of distinct selectors giving
rise to the same minor.  In this section, we will derive similar
(albeit more delicate) parameterisations of monomial generators of
$\Minors^-_k \Gamma$ and $\frac 1 2 \Minors^+_k \Gamma$ by means of
what we call \itemph{animations} of $\Gamma$.  These parameterisations
are not faithful either.  In later sections, this will emerge as a
useful feature which allows us to manipulate animations by means of
combinatorial procedures.

Throughout this section, let $\Gamma = (V,E)$ be a graph with incidence
hypergraph $\Eta = \Inc(\Gamma)$; see \S\ref{ss:graphs}.

\subsection{Nilpotency, periodic, and transient points}
\label{ss:nilpotency}

Let $U$ be a finite set.
Let $\Parfun(U)$ denote the set of all partial functions $U\parto U$;
see~\S\ref{s:selectors}.
This is a monoid with respect to composition (of functions $U_\udef \to U_\udef$) with
zero element given by the nowhere defined function.
In particular, we have a natural notion of nilpotency for elements of
$\Parfun(U)$.
Namely, $\phi \in \Parfun(U)$ is \emph{nilpotent} if there exists some $n \ge
1$ such that the $n$-fold composite $\phi^n$ sends all points of $U$ to $\udef$.

Let $U\xparto\phi U$ be given.
Borrowing terminology from finite dynamical systems,
we call $u \in U$ a \emph{$\phi$-periodic} point if $u^{\phi^n} = u$ for some $n
\ge 1$; otherwise, $u$ is a \emph{$\phi$-transient} point.
Let $U^{\per} \subset U$ consist of all $\phi$-periodic points.
Then $U^\per_\udef = U^\per \sqcup\{\udef\}$ is the set of periodic points
of $\phi$ viewed as a total function $U_\udef \to U_\udef$.
Let $U^\tra\subset V$ be the set of $\phi$-transient points on $U_\udef$.
That is,
\[
  U^\tra = \bigl\{
  u \in U : u^{\phi^k}  \in U^\per_\udef \text{ for some } k\ge 1
  \bigr\}.
\]

Clearly, $\phi$ induces a permutation of $U^\per$.
By the \emph{$\phi$-orbits} on $U^\per$, we mean the orbits
of the infinite cyclic group acting on $U^\per$ via $\phi$.
Let $\odlen(\phi)$ denote the number of $\phi$-orbits of odd length $ > 1$.
We call $\phi$ \emph{odd-periodic} if
all $\phi$-periodic points have odd $\phi$-periods.

\subsection{Animations}
\label{ss:animations}

An \emph{animation} of $\Gamma$ is a partial function $V\xparto\alpha V$
such that $v^\alpha \sim v $ for all $v \in V$ with $v^\alpha \not= \udef$.
Equivalently, an animation of $\Gamma$ is a selector of the adjacency
hypergraph $\Adj(\Gamma)$ (see \S\ref{ss:graphs}).
We write $\Animations(\Gamma)$ for the set of all animations of~$\Gamma$.
Let $\Nilpotents(\Gamma)$ and $\Oddperiods(\Gamma)$ denote the set of
nilpotent and odd-periodic animations of~$\Gamma$, respectively.
Let $\Fixed(\Gamma)$ denote the set of those animations
$\alpha \in \Animations(\Gamma)$ 
such that every $\alpha$-periodic point is a fixed point of $\alpha$.
We clearly have
\[
  \Nilpotents(\Gamma) \subset \Fixed(\Gamma) \subset \Oddperiods(\Gamma)
  \subset \Animations(\Gamma).
\]
We write $\Nilpotents_k(\Gamma)$, $\Fixed_k(\Gamma)$, $\Oddperiods_k(\Gamma)$, and
$\Animations_k(\Gamma)$ for the respective subsets consisting of animations of
degree $k$.
The following result, proved over the course of this section, is the key
ingredient of all main results of the present article.

\begin{thm}
  \label{thm:animations}
  Let $\Gamma$ be a graph and $k \ge 0$.
  Then:
  \begin{enumerate}[(i)]
  \item\label{thm:animations-}
    $\Minors^-_k \Gamma = \Bigl\langle \mon\alpha :
    \alpha\in\Fixed_k(\Gamma) \Bigr\rangle$.
    More precisely, the nonzero minors of $\sC^-_\Gamma$ are precisely of the
    form $\pm \mon\alpha$ for $\alpha\in\Fixed(\Gamma)$.
  \item\label{thm:animations+}
    $\frac 1 2 \Minors^+ _k \Gamma = \Bigl\langle \mon\alpha :
    \alpha\in\Oddperiods_k(\Gamma) \Bigr\rangle$.
    More precisely, the nonzero minors of $\sC^+_\Gamma$ are precisely of the
    form $\pm 2^{\odlen(\alpha)}\mon\alpha$ for $\alpha\in\Oddperiods(\Gamma)$.
  \end{enumerate}
\end{thm}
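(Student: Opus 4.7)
The plan is to analyse each $k\times k$ minor $m^\pm_\Gamma[V'\mid E'] := \det(\sC^\pm_\Gamma[V'\mid E'])$ by dissecting the associated subhypergraph $\Eta' := \Eta[V'\mid E']$ of $\Eta := \Inc(\Gamma)$, in the same spirit as the proof of Proposition~\ref{prop:selectors} but with additional work to track the cycle-sign phenomena peculiar to graphs. First, I would invoke Lemma~\ref{lem:degenerate} and Lemma~\ref{lem:minor_factorisation} to reduce to the case where $\Eta'$ is nondegenerate, connected, and square; the Unicyclicity Lemma (Lemma~\ref{lem:unicyclic}) then leaves only two subcases, (U1) and (U2).

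In case (U2), $\Eta'$ is a tree $T$ on $V'$ together with a single loop-hyperedge $e_\circ$ attached at some $u_\circ \in V'$. After rooting $T$ at $u_\circ$, I would expand $\det(\sC^\pm_\Gamma[V'\mid E'])$ successively along the columns of the current leaves of $T$---each such column having precisely one nonzero entry, located in the row of the unique tree-edge at that leaf---so as to telescope down to the $1\times 1$ residual indexed by $(u_\circ, e_\circ)$. The outcome is $\pm\mon\alpha$, where $\alpha$ sends each non-root $v$ to its tree-parent and sends $u_\circ$ to the second endpoint $x$ of $e_\circ$ in $\Gamma$ (either $x = u_\circ$, making $u_\circ$ a fixed point, or $x \in V \setminus V'$, in which case $\alpha$ is nilpotent). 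Either way $\alpha \in \Fixed(\Gamma)$ with $\odlen(\alpha) = 0$, and the conclusion is the same for both $\sC^+_\Gamma$ and $\sC^-_\Gamma$.

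In case (U1), $(V', E')$ is a unicyclic loopless subgraph with a unique cycle $C$ of length $\ell \ge 3$. Via the Leibniz formula, $m^\pm_\Gamma[V'\mid E']$ becomes a signed sum over bijections $\sigma\colon E' \to V'$ with $\sigma(e) \in e$. The same leaf-peeling procedure from case (U2), now applied to the trees dangling off $C$, forces $\sigma$ outside $C$, so only the two cyclic orientations of $C$ remain free; they yield the same monomial $\mon\alpha$, where $\alpha$ rotates $C$ (in either direction) and sends every off-cycle vertex to its tree-parent. The key sign computation is that the two contributions always cancel for $\sC^-_\Gamma$ (regardless of $\ell$), whereas for $\sC^+_\Gamma$ they cancel when $\ell$ is even and reinforce when $\ell$ is odd, producing $\pm 2\mon\alpha$ in the latter situation; the mechanism combines the Leibniz sign $(-1)^{\ell-1}$ of the cyclic permutation with the order-dependent signs of the $\ell$ entries of $\sC^\pm_\Gamma$ traversed along $C$.

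Combining the two cases via Lemma~\ref{lem:minor_factorisation} shows that $m^\pm_\Gamma[V'\mid E']$ is nonzero precisely when every connected component of $\Eta'$ is of type (U2) or (only for $\sC^+_\Gamma$) of type (U1) with odd cycle; the associated animation $\alpha$ has $\Dom(\alpha) = V'$, its periodic orbits consist of the odd cycles of the (U1)-components together with the genuine-loop fixed points of the (U2)-components, and the minor equals $\pm 2^{\odlen(\alpha)}\mon\alpha$. Hence $\alpha \in \Fixed_k(\Gamma)$ in the $-$ case and $\alpha \in \Oddperiods_k(\Gamma)$ in the $+$ case. Conversely, given any such $\alpha$, I would recover a valid pair via $V' := \Dom(\alpha)$ and $E' := \{\{v, \alpha(v)\} : v \in V'\}$; the absence of $\alpha$-orbits of length $2$ (automatic from $\alpha \in \Oddperiods$) guarantees $\card{E'} = k$. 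I expect the main obstacle to be the sign bookkeeping in case (U1): reconciling the Leibniz sign of the cyclic permutation with the order-dependent $\pm$-signs in $\sC^\pm_\Gamma$ requires care, while all remaining steps are routine.
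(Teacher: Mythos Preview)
Your proposal is correct and, for the direction ``nonzero minor $\Rightarrow$ animation'', follows exactly the paper's route: reduce via Lemmas~\ref{lem:degenerate} and~\ref{lem:minor_factorisation}, invoke the Unicyclicity Lemma, and handle (U1) and (U2) separately; your leaf-peeling in (U2) and your Leibniz/sign analysis in (U1) are equivalent to the paper's block-triangular computations (Lemmas~\ref{lem:nonloop_plumbing}--\ref{lem:oddcycle}, together with Lemma~\ref{lem:cycle} and Corollary~\ref{cor:no_minus_cycles}).

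For the converse ``animation $\Rightarrow$ minor'' the paper takes a different tack: rather than feeding $(V',E') = (\Dom(\alpha),\{\{v,v^\alpha\}\})$ back into the Unicyclicity machinery, it computes $m^\pm_\Gamma[V'\mid E']$ directly from a block-triangular ordering dictated by $\alpha$'s own orbit decomposition (Proposition~\ref{prop:animation_to_minor}). Your approach also works, but as written it has a small gap: the forward analysis applied to $(V',E')$ outputs $\pm 2^{\odlen(\beta)}\mon\beta$ for the animation $\beta$ it \emph{reconstructs} from $(V',E')$, and you have not argued that $\mon\beta=\mon\alpha$ and $\odlen(\beta)=\odlen(\alpha)$. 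This is true---on each (U2)-component the rooted-tree orientation is forced, so $\beta=\alpha$ there, while on each (U1)-component $\beta$ and $\alpha$ can differ only by reversing the odd cycle, which changes neither $\mon$ nor $\odlen$---but it should be said. The paper's direct computation avoids this bookkeeping entirely.
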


\begin{cor}
  \label{cor:rank_C_animations}
  Let $\Gamma = (V,E)$ be a graph.
  Then
  \begin{align*}
    \rank_{\QQ(X_V)}(\sC^-_\Gamma) & =
    \max(k \ge 0 : \Fixed_k(\Gamma) \not= \emptyset) =
    \max(\deg(\alpha) : \alpha \in \Fixed(\Gamma))
                                     \text{ and}\\
    \rank_{\QQ(X_V)}(\sC^+_\Gamma) & =
    \max(k \ge 0 : \Oddperiods_k(\Gamma) \not= \emptyset). =
    \max(\deg(\alpha) : \alpha \in \Oddperiods(\Gamma)).
    \tag*{\qed}
  \end{align*}
\end{cor}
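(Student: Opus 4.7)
The plan is to deduce the corollary directly from Theorem~\ref{thm:animations} via the standard principle that the rank of a matrix $M$ over a field $K$ equals the maximum $k \geq 0$ for which $M$ admits a nonzero $k \times k$ minor. Applied to $\sC^\pm_\Gamma$ in $\Mat(\QQ(X_V))$, this reduces each rank computation to identifying the largest $k$ such that some $k \times k$ minor of $\sC^\pm_\Gamma$ does not vanish, and Theorem~\ref{thm:animations} tells us exactly which minors those are.

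For the antisymmetric case, I would invoke Theorem~\ref{thm:animations}\ref{thm:animations-} to observe that the nonzero $k \times k$ minors of $\sC^-_\Gamma$ are precisely the monomials $\pm \mon\alpha$ with $\alpha \in \Fixed_k(\Gamma)$. Since any monomial $\mon\alpha = \prod_{v \in \Dom(\alpha)} X_{v^\alpha}$ is a nonzero element of $\QQ(X_V)$, the existence of a nonzero $k \times k$ minor is equivalent to $\Fixed_k(\Gamma) \neq \emptyset$. Hence $\rank_{\QQ(X_V)}(\sC^-_\Gamma)$ equals the largest $k$ with $\Fixed_k(\Gamma) \neq \emptyset$, which by definition of degree is also $\max\{\deg(\alpha) : \alpha \in \Fixed(\Gamma)\}$.

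For the symmetric case, I would run the same argument using Theorem~\ref{thm:animations}\ref{thm:animations+}: the nonzero $k \times k$ minors of $\sC^+_\Gamma$ have the form $\pm 2^{\odlen(\alpha)} \mon\alpha$ for $\alpha \in \Oddperiods_k(\Gamma)$. The one extra point to verify is that these quantities remain nonzero in $\QQ(X_V)$, which is immediate because $\QQ$ has characteristic zero and therefore the scalar factor $2^{\odlen(\alpha)}$ is a unit. This again yields $\rank_{\QQ(X_V)}(\sC^+_\Gamma) = \max\{\deg(\alpha) : \alpha \in \Oddperiods(\Gamma)\}$.

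Since the corollary is essentially a rephrasing of Theorem~\ref{thm:animations} in the language of matrix rank, no substantive obstacle arises here; all the real work has already been carried out in establishing that theorem. The only subtlety worth flagging explicitly is the characteristic-zero observation in the symmetric case, which prevents the factors $2^{\odlen(\alpha)}$ from spuriously depressing the rank.
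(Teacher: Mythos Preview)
Your proposal is correct and matches the paper's approach: the corollary carries a \qed and no explicit argument, as it is intended to follow immediately from Theorem~\ref{thm:animations} via the standard characterisation of rank through nonvanishing minors. Your observation about the factor $2^{\odlen(\alpha)}$ being a unit in characteristic zero is the only point worth making explicit, and you have done so.
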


For group-theoretic applications, the ask zeta functions $W^-_\Gamma(X,T)$
(see Theorem~\ref{thm:uniformity}) are of particular interest when $\Gamma$ is
loopless.

\begin{cor}
  \label{cor:loopless_minus_minors}
  Let $\Gamma$ be loopless.
  Then $\Minors^-_k \Gamma = \Bigl\langle \mon\alpha :
  \alpha\in\Nilpotents_k(\Gamma) \Bigr\rangle$.
  More precisely, the nonzero minors of $\sC^-_\Gamma$ are precisely of the
  form $\pm \mon\alpha$ for $\alpha\in\Nilpotents(\Gamma)$.
\end{cor}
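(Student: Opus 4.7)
The plan is to derive this corollary as an immediate consequence of Theorem~\ref{thm:animations}\ref{thm:animations-}, by observing that the loopless hypothesis collapses the set $\Fixed(\Gamma)$ to the set $\Nilpotents(\Gamma)$.

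First, I would note that if $\Gamma$ is loopless, then no animation $\alpha \in \Animations(\Gamma)$ has a fixed point. Indeed, if $v \in \Dom(\alpha)$ satisfies $v^\alpha = v$, then by definition of an animation we would need $v \sim v$, contradicting the absence of loops in $\Gamma$. Consequently, the only way for $\alpha$ to satisfy the $\Fixed(\Gamma)$ condition (every $\alpha$-periodic point is a fixed point) is for $\alpha$ to have \emph{no} periodic points at all.

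Next, I would invoke the finiteness of $V$: since $V$ is finite and $\alpha$ is viewed as a total function $V_\udef \to V_\udef$ fixing $\udef$, every element of $V$ eventually lands either at $\udef$ (i.e.\ is $\alpha$-transient) or on a periodic orbit. Hence the absence of periodic points is equivalent to nilpotency of $\alpha$. Combined with the preceding paragraph, this gives $\Fixed(\Gamma) = \Nilpotents(\Gamma)$ (and hence $\Fixed_k(\Gamma) = \Nilpotents_k(\Gamma)$ for every $k$) whenever $\Gamma$ is loopless.

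With this equality in hand, both parts of the statement follow at once from Theorem~\ref{thm:animations}\ref{thm:animations-}: the ideal $\Minors^-_k\Gamma$ is generated by $\mon\alpha$ for $\alpha \in \Fixed_k(\Gamma) = \Nilpotents_k(\Gamma)$, and the nonzero minors of $\sC^-_\Gamma$ are precisely $\pm\mon\alpha$ for $\alpha$ in $\Fixed(\Gamma) = \Nilpotents(\Gamma)$. There is no real obstacle here; the only conceptual point is recognising that absence of loops precludes fixed points of animations and so collapses the ``fixed'' condition to full nilpotency.
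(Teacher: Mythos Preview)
Your proof is correct and follows exactly the same approach as the paper: observe that looplessness forces $\Fixed(\Gamma) = \Nilpotents(\Gamma)$, then apply Theorem~\ref{thm:animations}\ref{thm:animations-}. The paper's proof is the one-line version of your argument.
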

\begin{proof}
  If $\Gamma$ is loopless, then $\Nilpotents(\Gamma) = \Fixed(\Gamma)$.
  Now apply Theorem~\ref{thm:animations}\ref{thm:animations-}.
\end{proof}

In \S\ref{ss:animations_yield_minors} we prove that (suitable)
animations give rise to minors of $\sC_\Gamma^\pm$, proving ``half''
of Theorem~\ref{thm:animations}.  In
Sections~\ref{ss:minors_connected}--\ref{ss:minors_yield_animations}
we establish the other half by showing that all nonzero minors of
$\sC_\Gamma^\pm$ arise from suitable animations.

\subsection{Animations yield minors}
\label{ss:animations_yield_minors}

We begin by proving half of Theorem~\ref{thm:animations}:
we show that if $\alpha \in \Fixed(\Gamma)$
(resp.\ $\alpha \in \Oddperiods(\Gamma)$), then
$\pm 2^{\odlen(\alpha)} \mon\alpha$
is a minor of $\sC^-_\Gamma$ (resp.\ $\sC^+_\Gamma$).

\paragraph{Notation for submatrices and minors of $\sC^\pm_\Gamma$.}

We use the following notation (which builds upon on that
from \S\ref{ss:hypergraph_decompositions}) throughout this entire
section.  Recall that the rows and columns of $\sC^\pm_\Gamma$ are
indexed by edges and vertices of $\Gamma$, respectively.  As we are
only interested in minors up to signs, we are free to order vertices
and edges as we see fit.  If $v_1,\dotsc,v_n$ are the distinct
vertices of $\Gamma$, then, up to multiplication by $\pm 1$, each
nonloop edge $\{v_i,v_j\}$ of $\Gamma$ gives rise to a row
$X_{v_i} \std j \pm X_{v_j} \std i \in
\ZZ[X_V]^n$ of $\sC^\pm_\Gamma$,
while each loop $\{v_i\}$ gives rise to a row $X_{v_i} \std i$.
Having ordered $V$, we order $E$ (e.g.\ lexicographically as in
\S\ref{ss:ask_graph}) to define $\sC^\pm_\Gamma$.
Given subsets $V'\subset V$ and $E'\subset E$, we then obtain a
submatrix $\sC^\pm_\Gamma[V'\mid E']$ of $\sC^\pm_\Gamma$ obtained by
selecting the columns indexed using the elements of $V'$ (in their chosen
order) and the rows indexed using the elements of $E'$ (again in their chosen
order).
If $\card{V'} = \card{E'} = k$, then we write $m^\pm_\Gamma[V'\mid E'] =
\det(\sC^\pm_\Gamma[V'\mid E'])$ for the corresponding $k\times k$ minor of
$\sC^\pm_\Gamma$.

\paragraph{Partitioning vertices using an animation.}

Let $\alpha \in \Animations(\Gamma)$.  We describe a canonical
partition of the vertices of $\Gamma$ which will play a crucial role
in our subsequent construction of a minor of $\sC^\pm_\Gamma$ related
to $\mon\alpha$.

As in \S\ref{ss:nilpotency}, let $V^\per \subset V$ (resp.\ $V^\tra\subset V$)
be the set of $\alpha$-periodic (resp.\ $\alpha$-transient) points in $V$.
Let $V^\per_\udef = \bigsqcup_{\lambda \in \Lambda} \Omega_\lambda$ be the
decomposition into orbits of the infinite cyclic group
acting on $V^\per_\udef$ via $\alpha$.
We assume that $0\in \Lambda$ and $\Omega_0 = \{\udef\}$.
We write $\Lambda^+ = \Lambda\setminus\{0\}$ and
$N_\lambda = \card{\Omega_\lambda}$.
By definition, $\alpha\in \Oddperiods(\Gamma)$ if and only if each
$N_\lambda$ is odd, in which case $\odlen(\alpha) = \#\{\lambda
\in \Lambda: N_\lambda > 1\}$.
Furthermore, $\alpha\in\Fixed(\Gamma)$ if and only if $N_\lambda =
1$ for all $\lambda \in \Lambda$.
Finally, $\alpha \in \Nilpotents(\Gamma)$ if and only if $\Lambda = \{0\}$.

By repeated application of $\alpha$, every transient point $v\in
V^\tra$ is moved into precisely one of the orbits,
$\Omega_{\omega(v)}$ say.  We let $\delta(v)$ be the least positive
integer with $v^{\alpha^{\delta(v)}} \in \Omega_{\omega(v)}$.  Define
$\Omega_\lambda^{(0)} = \Omega_\lambda$ and $\Omega^{(k+1)}_\lambda
= \{ v\in V^\tra : v^\alpha \in \Omega^{(k)}_\lambda\}$.
Equivalently, for $k\ge 1$, we have $\Omega^{(k)}_\lambda = \{ v\in
V^\tra :\omega(v) = \lambda \text{ and } \delta(v) = k \}$.  Given
$\lambda\in\Lambda$, we let $\kappa(\lambda)$ be the largest $k\ge 0$
with $\Omega^{(k)}_\lambda\not= \emptyset$.  We thus obtain a natural
partition
\begin{equation}
  \label{eq:block_decomposition}
  V =
  \bigsqcup_{\lambda \in \Lambda}
  \bigsqcup_{k=0}^{\kappa(\lambda)} \Omega^{(k)}_\lambda.
\end{equation}

\paragraph{Cycle graphs and $\sC^+_\Gamma$.}

Let $\Cycle_k$ denote the cycle graph on $\{1,\dotsc,k\}$.  Various
graph-theoretic properties and invariants (e.g.\ chromatic numbers) of
$\Cycle_k$ depend on the parity of $k$.  Here, we encounter another
instance of this phenomenon: the following elementary lemma explains
the curious role of \itemph{odd}-periodic animations
in~Theorem~\ref{thm:animations}.

\begin{lemma}
  \label{lem:cycle}
  Let $k\ge 3$.
  Then
  \[
    \det(\sC^+_{\Cycle_k})
    = \begin{cases}
      \pm 2 X_1\dotsb X_k, &\text{if $k$ is odd}, \\
      0, & \text{if $k$ is even.}
    \end{cases}
  \]
\end{lemma}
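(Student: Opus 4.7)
The plan is to compute the determinant by the Leibniz formula. I would fix the natural ordering of vertices $1,\dots,k$ and of edges $e_1 = \{1,2\}, e_2 = \{2,3\}, \dots, e_{k-1} = \{k-1,k\}, e_k = \{1,k\}$. With this indexing, each row $e_i$ of $\sC^+_{\Cycle_k}$ has exactly two nonzero entries, in the two columns corresponding to the endpoints of $e_i$, and each such entry is the variable $X$ associated with the \emph{other} endpoint.

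The main observation is that the Leibniz expansion of $\det(\sC^+_{\Cycle_k}) = \sum_\sigma \operatorname{sgn}(\sigma) \prod_i (\sC^+_{\Cycle_k})_{e_i,\sigma(i)}$ receives a nonzero contribution only from permutations $\sigma$ with $\sigma(i) \in e_i$ for all $i$; equivalently, from bijections $E\to V$ selecting an incident vertex for each edge. On the cycle this forces a global choice: either $\sigma(e_i) = i$ for all $i$, or $\sigma(e_i) = i+1$ (mod $k$) for all $i$. Indeed, once $\sigma(e_1)$ is fixed as $1$ or $2$, the values of $\sigma(e_2), \sigma(e_3), \dots$ are determined inductively, and one checks that both choices yield valid bijections on~$V$.

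Next I would evaluate the two contributions. In both cases, the product of the selected entries is $X_1 X_2 \cdots X_k$: in the first permutation row $e_i$ selects column $i$ and contributes $X_{i+1}$ (indices mod~$k$), while in the second permutation row $e_i$ selects column $i+1$ and contributes $X_i$. The first permutation is the identity, so it has sign $+1$; the second is the cyclic shift $i \mapsto i+1$ (mod~$k$), which is a $k$-cycle and hence has sign $(-1)^{k-1}$. Summing yields
\[
\det(\sC^+_{\Cycle_k}) = \bigl(1 + (-1)^{k-1}\bigr) X_1 X_2 \cdots X_k,
\]
which equals $2 X_1 \cdots X_k$ when $k$ is odd and $0$ when $k$ is even.

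The only mild subtlety is bookkeeping the sign correctly and verifying that the two listed permutations are really the only candidates for non-cancellation; this is a short combinatorial observation on the cycle, not a genuine obstacle. No delicate step is expected.
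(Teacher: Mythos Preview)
Your proof is correct and essentially equivalent to the paper's. The paper writes down the same matrix and uses Laplace expansion along the first column: the two nonzero entries there yield two triangular minors, giving $\det(\sC^+_{\Cycle_k}) = (1 + (-1)^{k+1}) X_1\dotsb X_k$. Your Leibniz-formula argument identifies the same two contributing terms (the identity and the $k$-cycle), just packaged combinatorially rather than via cofactor expansion.
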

\begin{proof}
  We order the vertices $1,\dotsc,k$ and the edges as
  $\{1,2\}, \{2,3\}, \dotsc, \{k-1,k\}, \{1,k\}$.
  With respect to these orders, we then have
  \[
    \sC^+_\Gamma =
    \left[
    \begin{array}{cccccc}
      X_2 & X_1 \\
      & X_3 & X_2 \\
      && \ddots & \ddots \\
      & & &X_{k} & X_{k-1}\\
      X_k & & &&X_1
    \end{array}
    \right].
  \]
  Laplace expansion along the first column yields
  $\det(\sC^+_\Gamma) = (1 + (-1)^{k+1}) X_1\dotsb X_k$.
\end{proof}

\paragraph{Order.}

For each $\lambda \in \Lambda$, we may write
$\Omega^{(0)}_\lambda = \{ w(\lambda,1), \dotsc, w(\lambda, N_\lambda) \}$
where $w(\lambda,i)^\alpha = w(\lambda,i+1)$ for $i < N_\lambda$ and
$w(\lambda,N_\lambda)^\alpha = w(\lambda, 1)$.
We let $\lte$ be the total order on $\Omega^{(0)}_\lambda$ with
$w(\lambda,1) \lt \dotsb \lt w(\lambda,N_\lambda)$.
For $k = 1,\dotsc,\kappa(\lambda)$, we choose an
arbitrary total order $\lte$ on~$\Omega^{(k)}_\lambda$.
Finally, we choose a total order $\lte$ on $\Lambda$.
By \eqref{eq:block_decomposition}, each point in $V$ corresponds uniquely to a
triple $(\lambda,k,u)$, where $\lambda \in \Lambda$,
$k\in\{0,\dotsc,\kappa(\lambda)\}$, and $u\in \Omega^{(k)}_\lambda$.
By ordering these triples lexicographically, we obtain a total order $\lte$ on
$V$.

\paragraph{Building a minor from an animation.}

Suppose that $\alpha\in\Oddperiods(\Gamma)$ (resp.\
$\alpha \in \Fixed(\Gamma)$).  Define $V' = \Dom(\alpha)$ and $E'
= \{ \{ v, v^\alpha\} : v\in V'\}$.  We show that $\card{V'}
= \card{E'}$ and that the minor $m^+_\Gamma[V'\mid E']$ (resp.\
$m^-_\Gamma[V'\mid E']$) is given by $\pm
2^{\odlen(\alpha)} \mon\alpha$.

The map $V'\xto\Phi E'$ which sends $v$ to $\{v,v^\alpha\}$ is onto by
construction.
Suppose that $\{ u, u^\alpha\} = \{ v, v^\alpha\}$ for $u,v\in\Dom(\alpha)$.
If $u \not= v$, then $u^\alpha = v$ and $v^\alpha = u$ whence
$\{u,v\}$ is an orbit of even length, which contradicts
$\alpha\in \Oddperiods(\Gamma)$. 
Hence, $\Phi$ is bijective and $\card{V'} = \card{E'}$.
By transport of structure via $\Phi$, our existing total order on $V'$ induces
a total order on $E'$ which we arbitrarily extend to a total order on all of
$E$.

Note that $V\setminus V' = \Omega^{(1)}_0 = \{ v\in V^\tra : v^\alpha =
\udef\}$.
We may write $V' = \bigsqcup_{\lambda\in\Lambda} V'_\lambda$,
where
\[
  V'_{\lambda} = \begin{cases}
    \bigsqcup_{k=2}^{\kappa(\lambda)} \Omega^{(k)}_\lambda, & \text{if } \lambda
    = 0,\\
    \bigsqcup_{k=0}^{\kappa(\lambda)} \Omega^{(k)}_\lambda, & \text{otherwise;}
  \end{cases}
\]
note that $V'_0$ might be empty but $V'_\lambda \not= \emptyset$ for $\lambda\in\Lambda^+$.
(The case $\Lambda^+ = \emptyset$ is possible.)
Let $E'_\lambda$ denote the image of $V'_\lambda$ under the bijection $\Phi$;
hence, $E' = \bigsqcup_{\lambda\in\Lambda} E'_\lambda$.
For each $\lambda\in \Lambda$, the endpoints of every edge in
$E'_\lambda$ belong to $V'_\lambda$ except when $\lambda = 0$ and the edge
is of the form $\{ v, v^\alpha\}$ for $v\in \Omega^{(2)}_0$;
in the latter case, $v^\alpha \in \Omega^{(1)}_0$ and thus $v^\alpha
\notin V'$.
We thus see that $\sC^\pm_\Gamma[V'\mid E']$ is lower block triangular with
diagonal blocks given by the $\sC^\pm_\Gamma[V'_\lambda \mid E'_\lambda]$.
In particular, $m^\pm_\Gamma[V'\mid E'] = \prod_{\lambda\in\Lambda}
m^\pm_\Gamma[V'_\lambda \mid E'_\lambda]$.
We will further elucidate the block triangular structure in the following.

We let $D^{(i)}_\lambda$ denote the
$\card{\Omega^{(i)}_\lambda} \times \card{\Omega^{(i)}_\lambda}$ diagonal
matrix whose diagonal entries are given by the $X_{v^\alpha}$ as $v$ ranges
over $\Omega^{(i)}_\lambda$.

\paragraph{Nilpotent points ($\lambda = 0$).}
Assuming that $V'_0\not= \emptyset$, let us consider the case $\lambda = 0$.
By applying $\Phi$, the decomposition $V'_0 = \bigsqcup_{k=2}^{\kappa(0)}
\Omega^{(k)}_0$
yields a corresponding decomposition of $E'_0$.
As we observed before, if $v\in \Omega^{(2)}_0$, then $\udef \not=
v^\alpha\notin V'$.
Viewing the rows of $\sC^\pm_\Gamma[V'_0\mid E'_0]$ as elements of
$\ZZ[X_{V'_0}] E'_0$ and up to signs, the row corresponding to the edge
$\{v,v^\alpha\}$ associated with $v\in \Omega^{(2)}_0$ is therefore simply
$X_{v^\alpha} \std v$.
Next, if $v\in \Omega^{(k)}_0$ for $k\ge 3$, then $v^\alpha \lt v$ and the row corresponding
to the edge $\{v, v^\alpha\}$ is $X_{v^\alpha} \std{v} \pm X_v \std
{v^\alpha}$ (up to the sign).
The submatrix $\sC^\pm_\Gamma[V'_0\mid E'_0]$ is therefore of the form
\[
  \left[
    \begin{array}{c|c|c}
      D^{(2)}_0 &&\\\hline
      * & D^{(3)}_0 & \\\hline
          & \ddots & \ddots
    \end{array}
  \right].
\]
Hence, $m^\pm_\Gamma[V'_0\mid E'_0] = \prod\limits_{v\in V'_0} X_{v^\alpha} =
\mon{\alpha \restriction {V'_0}}$.

\paragraph{Fixed points.}
Next, let $\lambda \in \Lambda^+$ with $N_\lambda = 1$.
In other words, $\Omega_\lambda = \{ u_\lambda\}$ consists of a fixed point
$u_\lambda$ of $\alpha$ distinct from $\udef$.
Arguing analogously to before, here we find that
the submatrix $\sC^\pm_\Gamma[V'_\lambda\mid E'_\lambda]$ is of the form
\[
  \left[
    \begin{array}{c|c|c|c}
      X_{u_\lambda} &&&\\\hline
      * & D^{(1)}_\lambda &&\\\hline
      &* & D^{(2)}_\lambda & \\\hline
      &   & \ddots & \ddots
    \end{array}
  \right].
\]
The first row corresponds to the loop $\{u_\lambda\} = u_\lambda^\Phi$.
Hence, we again find that $m^\pm_\Gamma[V'_\lambda\mid E'_\lambda] =
\prod\limits_{v\in V'_\lambda} X_{v^\alpha} = \mon{\alpha \restriction
  {V'_\lambda}}$.

\paragraph{General periodic points, $+$-case.}

Let $\lambda \in \Lambda^+$ with $N_\lambda > 1$.
As $\alpha \in \Oddperiods(\Gamma)$, we know that $N_\lambda \ge 3$ is odd.
In this case, we only need to consider the ``positive minor''
$m^+_\Gamma[V'_\lambda \mid E'_\lambda]$.
(This is because we assume that $\alpha \in \Nilpotents(\Gamma)$ in the
$-$-case.
We note that Corollary~\ref{cor:no_minus_cycles} below will imply that
$m^-_\Gamma[V'_\lambda \mid E'_\lambda] = 0$ if $N_\lambda > 1$.)

Recall that $\Omega^{(0)}_\lambda = \{ w(1,\lambda),\dotsc, w(N_\lambda,\lambda)\}$.
We abbreviate $w_i = w(i,\lambda)$ and $N = N_\lambda$.
Proceeding as above, we now find that
$\sC^+_\Gamma[V'_\lambda\mid E'_\lambda]$ is of the form
\[
  \left[
    \begin{array}{c|c|c|c}
      C &&&\\\hline
      * & D^{(1)}_\lambda &&\\\hline
      &* & D^{(2)}_\lambda & \\\hline
      &   & \ddots & \ddots
    \end{array}
  \right],
\]
where $C$ is the matrix
\[
  C = \begin{bmatrix}
    X_{w_2} & X_{w_1} \\
    & X_{w_3} & X_{w_2} \\
    & & \ddots & \ddots \\
    &&& X_{w_N} & X_{w_{N-1}} \\
    X_{w_N} &&&& X_{w_1}
    \end{bmatrix}.
  \]
By Lemma~\ref{lem:cycle}, we have $\det(C) = \pm 2 X_{w_1}\dotsb X_{w_k}$
whence $m^\pm_\Gamma[V'_\lambda\mid E'_\lambda] =
\pm 2 \prod\limits_{v\in V'_\lambda} X_{v^\alpha}
= \mon{\alpha \restriction {V'_\lambda}}
$.

\paragraph{Conclusion.}

In summary, we have shown the following.

\begin{prop}
  \label{prop:animation_to_minor}
  \quad
  \begin{enumerate}
  \item
    If $\alpha \in \Fixed(\Gamma)$ (in which case $N_\lambda = 1$ for all
    $\lambda \in \Lambda$), then $m^-_\Gamma[V'\mid E'] = \pm \mon\alpha$.
  \item
    If $\alpha \in \Oddperiods(\Gamma)$ (in which case each $N_\lambda$ is
    odd), then
    $m^+_\Gamma[V'\mid E'] = \pm 2^{\odlen(\alpha)} \mon\alpha$.
    \qed
  \end{enumerate}
\end{prop}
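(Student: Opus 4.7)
The plan is to compute $m^\pm_\Gamma[V'\mid E']$ by exhibiting a block triangular structure for $\sC^\pm_\Gamma[V'\mid E']$ adapted to the orbit decomposition of $\alpha$, then evaluating the diagonal blocks one at a time. First I would verify that $\Phi\colon V'\to E'$, $v\mapsto\{v,v^\alpha\}$, is a bijection: surjectivity is by definition of $E'$, and a collision $\{u,u^\alpha\}=\{v,v^\alpha\}$ with $u\neq v$ forces $u^\alpha=v$ and $v^\alpha=u$, i.e.\ a $2$-cycle of $\alpha$, which is excluded by $\alpha\in\Oddperiods(\Gamma)$. Thus $\card{V'}=\card{E'}$ and the square minor is well-defined; I may then transport the (yet-to-be-chosen) order on $V'$ to $E'$ via $\Phi$.

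Next I would order $V$ lexicographically by triples $(\lambda,k,u)$ as in \eqref{eq:block_decomposition}, using on $\Omega^{(0)}_\lambda$ the cyclic order induced by $\alpha$ and any order elsewhere. For $v\in V'_\lambda$, the edge $\{v,v^\alpha\}$ has both endpoints in $V'_\lambda$, except in the single exceptional situation $\lambda=0$ and $v\in\Omega^{(2)}_0$, where $v^\alpha\in\Omega^{(1)}_0\subset V\setminus V'$. Consequently $\sC^\pm_\Gamma[V'\mid E']$ is block diagonal across $\lambda\in\Lambda$, so $m^\pm_\Gamma[V'\mid E']=\pm\prod_\lambda m^\pm_\Gamma[V'_\lambda\mid E'_\lambda]$ and I can analyse each slice separately.

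Within each slice, a further grading by $k=\kappa(\lambda),\kappa(\lambda)-1,\dotsc,0$ should render $\sC^\pm_\Gamma[V'_\lambda\mid E'_\lambda]$ lower block triangular. For $k\ge 1$ the diagonal block is a diagonal matrix of size $\card{\Omega^{(k)}_\lambda}$ with entries $X_{v^\alpha}$. The $k=0$ block behaves in three distinct ways: it is absent when $\lambda=0$; it equals the $1\times 1$ matrix $[X_{u_\lambda}]$ (coming from the loop $\{u_\lambda\}$) when $N_\lambda=1$; and when $N_\lambda\ge 3$ (relevant only to the $+$-side under the odd-periodic hypothesis) it is exactly the cyclic matrix $\sC^+_{\Cycle_{N_\lambda}}$.

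The main obstacle is this last case, the long-cycle block. Here I would invoke Lemma~\ref{lem:cycle} to obtain $\det(\sC^+_{\Cycle_{N_\lambda}})=\pm 2\prod_{v\in\Omega^{(0)}_\lambda}X_v$, which contributes exactly one factor of $2$ per long odd orbit; the parity assumption on $N_\lambda$ is essential, since even cycles would give determinant zero. Taking the product over all $\lambda$ and $k$, the monomial factors collect into $\prod_{v\in V'}X_{v^\alpha}=\mon\alpha$, and the $2$'s combine to $2^{\odlen(\alpha)}$. For part (i), $\alpha\in\Fixed(\Gamma)$ has no long orbits so $\odlen(\alpha)=0$ and $m^-_\Gamma[V'\mid E']=\pm\mon\alpha$; for part (ii), $m^+_\Gamma[V'\mid E']=\pm 2^{\odlen(\alpha)}\mon\alpha$, as claimed.
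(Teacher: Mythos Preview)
Your proposal is correct and mirrors the paper's argument essentially step by step: the bijection $\Phi$, the lexicographic order on $V$ via \eqref{eq:block_decomposition}, the block decomposition across $\lambda$, the further triangular decomposition across the levels $k$, the identification of the transient diagonal blocks as diagonal matrices $D^{(k)}_\lambda$, and the appeal to Lemma~\ref{lem:cycle} for the long-cycle block are all exactly as in \S\ref{ss:animations_yield_minors}. (One cosmetic slip: with your listed \emph{decreasing} order of $k$ the matrix is upper rather than lower block triangular---the paper orders $k$ increasingly---but this does not affect the determinant.)
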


It remains to show that, conversely, every minor
$m^\pm_\Gamma[V'\mid E']$ given by $V'\subset V$ and $E'\subset E$ with
$\card{V'} = \card{E'}$ is either zero, or given by
$\pm 2^{\odlen(\alpha)} \mon\alpha$ for a suitable animation, according to the
two cases in Theorem~\ref{thm:animations}.

\subsection{Minors yield animations: the connected and nondegenerate case}
\label{ss:minors_connected}

Let $E'\subset E$ and $V'\subset V$ with $\card{V'} = \card{E'}$.
Suppose that $\Eta' = \Eta[V' \mid E']$ is connected and nondegenerate
so that Lemma~\ref{lem:unicyclic} is applicable.
We show that the minor $m^\pm_\Gamma[V'\mid E']$ arises from a
judiciously chosen animation of $\Gamma$.
In showing this, we distinguish two cases reflecting the two conditions
\ref{lem:unicyclic1} and \ref{lem:unicyclic2} in Lemma~\ref{lem:unicyclic}.

\subsubsection[Case (U2): a decorated tree]{Case \ref{lem:unicyclic2}: a
  decorated tree}
\label{sss:decorated_tree}

\paragraph{Animations from rooted trees.}

Let $\Tau$ a tree on the vertex set $V$ with chosen root $r\in V$.
Let $\dist_\Tau(u,v)$ denote the distance between the vertices $u$ and $v$ in
$\Tau$, i.e.\ the length of the unique path from $u$ to $v$ in $\Tau$.
By a \emph{distance order} of $\Tau$ with respect to $r$, we mean a total
order $\lte$ on $V$ such that whenever $\dist_\Tau(r,u)$ < $\dist_\Tau(r,v)$
for $u,v\in V$, then we have $u \lt v$.
Let $V \xparto{\pred(\Tau,r)} V$ be the partial function defined on
$V\setminus\{r\}$ which sends each $v \in V\setminus\{r\}$ to its predecessor
on the unique path from $r$ to $v$.
Clearly, $\pred(\Tau,r)$ is a nilpotent animation of $\Tau$.
By construction, we have $\dist_\Tau(v^{\pred(\Tau,r)},r) < \dist_\Tau(v,r)$
for all $v\in V'\setminus\{r\}$.
By minor abuse of notation, we regard animations of subgraphs of $\Gamma$,
such as $\pred(\Tau,r)$ when $\Tau$ is a spanning tree of $\Gamma$, as
animations of $\Gamma$.

\paragraph{}

Suppose that we are in case \ref{lem:unicyclic2} of
Lemma~\ref{lem:unicyclic}.
Let $e_\circ \in E'$ be the unique loop of $\Eta'$.
By Lemma~\ref{lem:unicyclic}, $\Tau =
(V',E\setminus\{e_\circ\})$ is a subtree of $\Gamma$.
While $e_\circ$ is a loop of $\Eta'$, it may or may not be a loop of $\Eta$
(equivalently: $\Gamma$).
We consider these two cases in turn.

\paragraph{Subcase: $e_\circ$ is a nonloop of $\Eta$.}
In this case, $e_\circ$ has two distinct endpoints in $\Eta$ but only one of
these belongs to $V'$.
That is, $e_\circ = \{u, r\}$ where $u\in V'$ and $r\in V\setminus V'$.
As $\Tau$ is a tree so is then the subgraph $\Tau' = (V'\cup\{r\}, E')$ of
$\Gamma$.
The nilpotent animation $\pred(\Tau',r)$ of $\Tau'$ and $\Gamma$ turns out to
give rise to the minor $m^\pm_\Gamma[V'\mid E']$:

\begin{lemma}
  \label{lem:nonloop_plumbing}
  $m^\pm_\Gamma[V'\mid E'] = \pm \mon{\pred(\Tau',r)}$.
\end{lemma}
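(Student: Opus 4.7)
The plan is to choose convenient orderings on $V$ and $E$ under which $\sC^\pm_\Gamma[V'\mid E']$ becomes lower triangular, so that its determinant is visibly the product of its diagonal entries, up to sign. Since $m^\pm_\Gamma[V'\mid E']$ is only defined up to sign by these choices, this suffices. The key input is that $\Tau'$ is a tree rooted at $r\notin V'$ on vertex set $V'\cup\{r\}$ with exactly $\card{V'}$ edges, namely $E'$; in particular, $\alpha := \pred(\Tau',r)$ is defined on all of $V'$, and the assignment $v\mapsto \{v,v^\alpha\}$ is a bijection $V'\to E'$.

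Concretely, I would fix a distance order $\lte$ on $V'\cup\{r\}$ with respect to $r$, write $v_1\lt\dotsb\lt v_k$ for the induced order on $V'$, and order $E'$ as $e_i := \{v_i, v_i^\alpha\}$ for $i=1,\dotsc,k$. For each $i$, the row of $\sC^\pm_\Gamma[V'\mid E']$ indexed by $e_i$ has at most two nonzero entries: $\pm X_{v_i^\alpha}$ in column $v_i$ and $\pm X_{v_i}$ in column $v_i^\alpha$, with the second entry dropped when $v_i^\alpha = r$. Since $v_i^\alpha$ is the predecessor of $v_i$ on the path from $r$ in $\Tau'$, either $v_i^\alpha = r$ or $v_i^\alpha = v_j$ with $j<i$; in both cases the second nonzero column, when it exists, lies strictly to the left of the diagonal column $i$. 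Hence the submatrix is lower triangular with diagonal entries $\pm X_{v_i^\alpha}$, so its determinant equals $\pm \prod_{i=1}^k X_{v_i^\alpha} = \pm \mon{\alpha}$.

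There is no real obstacle here: the argument mirrors the triangularization already carried out in \S\ref{ss:animations_yield_minors} to compute $m^\pm_\Gamma[V'_0 \mid E'_0]$ on the nilpotent part of a general animation. The only points worth checking carefully are the bijectivity of $v\mapsto\{v,v^\alpha\}$ (immediate from the tree structure of $\Tau'$ together with the cardinality equality $\card{V'}=\card{E'}$) and the observation that the distance ordering places each parent strictly before its child, which is precisely what is needed to make the submatrix lower triangular.
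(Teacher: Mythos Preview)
Your proposal is correct and follows essentially the same approach as the paper: both use a distance order on $V'\cup\{r\}$ relative to $r$ to render $\sC^\pm_\Gamma[V'\mid E']$ lower (block) triangular with the desired diagonal, then read off the determinant. The only cosmetic difference is that you order $E'$ directly via the bijection $v_i\mapsto\{v_i,v_i^\alpha\}$ and work with $V'$ from the start, whereas the paper orders edges lexicographically, first displays the block form of $\sC^\pm_\Gamma[V'\cup\{r\}\mid E']$, and then deletes the column corresponding to $r$.
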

\begin{proof}
  Let $\lte$ be a distance order of $\Tau'$ with respect to $r$.
  We arbitrarily extend $\lte$ to a total order on $V$ (denoted using the same symbol).
  Let $V'_i$ denote the subset of $V'$ consisting of vertices of distance $i$
  from $r$ in $\Tau'$.
  Given $\lte$, we order $E$ lexicographically as in \S\ref{ss:ask_graph}.
  Write $\alpha = \pred(\Tau',r)$.
  With respect to these orders, up to changing the signs of rows,
  the submatrix $\sC^\pm_\Gamma[V'\cup\{r\} \mid E']$ is of the form
  \[
    \left[
      \begin{array}{c|c|c|c}
        * & D_1 &&\\\hline
          & * & D_2 & \\\hline
          & & \ddots & \ddots
      \end{array}
    \right],
  \]
  where the column groups consist of $1, \card{V'_1},\card{V'_2}, \dotsc$
  columns, respectively, and $D_i$ is the diagonal matrix with diagonal
  entries given by the $X_{v^\alpha}$ as $v$ ranges over $V'_i$ (in the given
  order).
  That is, if $V'_i$ consists of $v_{i1} \lt v_{i2} \lt \dotsb$,
  then $D_i = \diag(X_{v_{i1}^\alpha},X_{v_{i2}^\alpha}, \dotsc)$.
  The claim follows by deleting the first column and taking the determinant
  to obtain~$m^\pm_\Gamma[V'\mid E']$.
\end{proof}

\paragraph{Subcase: $e_\circ$ is a loop of $\Eta$.}
In this case, $e_\circ = \{ r\}$ for $r \in V'$.
Using our notation from~\S\ref{s:selectors}, we may regard
$\pred(\Tau,r)[r\subs r]$, the partial function which agrees with
$\pred(\Tau,r)$ except that it sends $r$ to itself, as an element of
$\Fixed(\Gamma)$.

\begin{lemma}
  \label{lem:loop_plumbing}
  $m^\pm_\Gamma[V'\mid E'] = \pm \mon{\pred(\Tau,r)[r\subs r]}$.
\end{lemma}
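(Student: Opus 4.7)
The plan is to imitate the argument of Lemma~\ref{lem:nonloop_plumbing}, with the single loop $e_\circ$ playing the role that was played there by the extra root vertex $r \notin V'$. Write $\alpha = \pred(\Tau,r)[r\subs r]$, so that $\alpha$ agrees with $\pred(\Tau,r)$ on $V'\setminus\{r\}$ and satisfies $r^\alpha = r$; in particular $\alpha \in \Fixed(\Gamma)$ and $\Dom(\alpha) = V'$.

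First I would choose a distance order $\lte$ on $V'$ with respect to $r$ (so $r$ is the unique $\lte$-minimal element of $V'$) and extend it arbitrarily to a total order on $V$. Let $V'_i$ denote the set of vertices of $V'$ at distance $i$ from $r$ in $\Tau$; thus $V'_0 = \{r\}$ and $V' = \bigsqcup_{i\ge 0} V'_i$. Ordering $E$ lexicographically as in \S\ref{ss:ask_graph}, the edges in $E' \setminus \{e_\circ\}$ corresponding to vertices at distance $i$ (pairs $\{v, v^\alpha\}$ with $v \in V'_i$, $i\ge 1$) come after the loop $e_\circ = \{r\}$, and within each distance layer they appear in the order inherited from $\lte$.

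Next I would read off the shape of $\sC^\pm_\Gamma[V' \mid E']$. The loop $e_\circ$ contributes a single row equal to $X_r \std r$; each nonloop edge $\{v, v^\alpha\}$ with $v \in V'_i$ ($i\ge 1$) contributes, up to sign, the row $X_{v^\alpha}\std v \pm X_v\std{v^\alpha}$, whose nonzero entries lie in the columns indexed by $v \in V'_i$ and $v^\alpha \in V'_{i-1}$. After possibly flipping signs of rows, the submatrix takes the lower block triangular form
\[
\sC^\pm_\Gamma[V' \mid E'] \simeq
\left[
\begin{array}{c|c|c|c}
X_r & & & \\\hline
* & D_1 & & \\\hline
& * & D_2 & \\\hline
& & \ddots & \ddots
\end{array}
\right],
\]
where $D_i = \diag(X_{v^\alpha} : v\in V'_i)$ with vertices listed in the chosen order. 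The top-left $1\times 1$ block $[X_r]$ equals $[X_{r^\alpha}]$ since $r^\alpha = r$.

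Finally, taking the determinant block by block gives
\[
m^\pm_\Gamma[V'\mid E'] = \pm X_{r^\alpha} \prod_{i\ge 1}\det(D_i) = \pm \prod_{v\in V'} X_{v^\alpha} = \pm\mon\alpha,
\]
as required. The only thing to verify carefully is the block triangular shape, which follows from the fact that along the unique path in $\Tau$ from $r$ to any vertex, distances strictly increase; there is no real obstacle here, as the setup is a slight simplification of Lemma~\ref{lem:nonloop_plumbing}.
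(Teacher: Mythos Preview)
Your proof is correct and follows essentially the same route as the paper: choose a distance order rooted at $r$, observe that the loop row is $X_r\std r$ and that each nonloop edge $\{v,v^\alpha\}$ contributes its two entries in the columns for $V'_i$ and $V'_{i-1}$, obtain the same block lower-triangular shape, and read off the determinant. The only cosmetic difference is that you build the fixed point $r^\alpha=r$ into $\alpha$ from the start, whereas the paper sets $\alpha=\pred(\Tau,r)$ and multiplies in the extra factor $X_r$ at the end.
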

\begin{proof}
  Similarly to the proof of Lemma~\ref{lem:nonloop_plumbing}, let $\lte$ be
  a distance order of $\Tau$ with respect to $r$ and extend this to a total
  order on $V$.
  We again order edges lexicographically.

  Write $\alpha = \pred(\Tau,r)$.
  Let $V'_i$ denote the set of all vertices in $V'$ of distance $i$ from $r$.
  Let $D_i$ be the diagonal matrix with entries $X_{v^\alpha}$ as $v$ ranges over
  $V'_i$ (in the given order).
  Then the submatrix
  $\sC^\pm_\Gamma[V'\mid E']$ is of the form
    \[
    \left[
      \begin{array}{c|c|c|c}
        X_r & & & \\\hline
        * & D_1 &&\\\hline
          & * & D_2 & \\\hline
          & & \ddots & \ddots
      \end{array}
    \right],
  \]
  where the first row corresponds to $e_\circ$.
  We thus obtain
  \[
    m^\pm_\Gamma[V' \mid E'] = \pm X_r \, \det(D_1) \det(D_2) \dotsb = \pm X_r
    \mon\alpha = \pm \mon{\alpha[r\gets r]}.
    \pushQED{\qed}
    \qedhere
  \]
\end{proof}

\subsubsection[Case (U1): a unicyclic graph]{Case \ref{lem:unicyclic1}: a unicyclic graph}
\label{sss:unicyclic}

Suppose that we are in case \ref{lem:unicyclic1} of
Lemma~\ref{lem:unicyclic}.
Then $\Gamma = (V',E')$ is a unicyclic loopless graph.
The matrices $\sC^+_\Gamma$ and
$\sC^-_\Gamma$ behave quite differently in this case.

\begin{lemma}
  If $\Lambda$ is a loopless graph with $k$ vertices and $k$ edges,
  then $\det(\sC^-_\Lambda) = 0$.
\end{lemma}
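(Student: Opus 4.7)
The plan is to produce, without appealing to the unicyclic structure, an explicit nonzero element of the right kernel of the square matrix $\sC^-_\Lambda \in \Mat_k(\ZZ[X_V])$. Since $\Lambda$ is loopless, every row of $\sC^-_\Lambda$ has support of size exactly $2$: the row indexed by an edge $e = \{v_i, v_j\}$ (with $i < j$) has entry $-X_{v_j}$ in column $v_i$, entry $X_{v_i}$ in column $v_j$, and zeros elsewhere, by the definition of $\sC^-_\Gamma$ recalled in \S\ref{ss:ask_graph}.

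The key observation is that the vector
\[
x \;:=\; (X_v)_{v\in V}^{\!\top} \;\in\; \QQ(X_V)^V
\]
always lies in the right kernel. Indeed, pairing the row associated with any edge $\{v_i,v_j\}$ against $x$ yields
\[
(-X_{v_j})\cdot X_{v_i} + X_{v_i}\cdot X_{v_j} \;=\; 0,
\]
independently of the combinatorics of $\Lambda$. Since the $X_v$ are algebraically independent, $x$ is nonzero in $\QQ(X_V)^V$; as $\sC^-_\Lambda$ is square under our hypothesis, it follows that $\det(\sC^-_\Lambda) = 0$.

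There is no substantive obstacle: this is a one-line linear-algebra verification that reflects the classical singularity of the signed incidence matrix of an undirected graph, with the scalar all-ones vector replaced by the diagonal of variables $(X_v)_{v\in V}$. I would present the proof in exactly this form, noting that the argument actually shows $\rank_{\QQ(X_V)}(\sC^-_\Lambda) \le \card V - 1$ for \emph{any} loopless graph $\Lambda$ on vertex set $V$, so that whenever $\Lambda$ has at least $\card V$ edges the corresponding square submatrix is singular — a fact that will be harvested in Corollary~\ref{cor:no_minus_cycles} to dispose of all minors coming from nontrivial $\alpha$-orbits.
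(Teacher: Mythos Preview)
Your proof is correct and is essentially identical to the paper's own argument: both exhibit the explicit nonzero right-kernel vector $(X_v)_{v\in V}^\top$ by checking that each row $X_{v_i}\std{v_j} - X_{v_j}\std{v_i}$ annihilates it. Your additional remark that this yields $\rank_{\QQ(X_V)}(\sC^-_\Lambda)\le \card V - 1$ for any loopless $\Lambda$ is a harmless and accurate strengthening.
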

\begin{proof}
  We may assume that $1,\dotsc,k$ are the vertices of $\Lambda$.
  As $\Lambda$ is loopless, every row of $\sC^-_\Lambda$ is of the form
  $X_i \std j - X_j \std i$.
  In particular, $\sC^-_\Lambda [X_1,\dotsc,X_n]^\top = 0$.
\end{proof}

\begin{cor}
  \label{cor:no_minus_cycles}
  In case \ref{lem:unicyclic1} of Lemma~\ref{lem:unicyclic}, we have
  $m^-_\Gamma[V'\mid E'] = 0$.
  \qed
\end{cor}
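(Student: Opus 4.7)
The plan is to deduce the corollary directly from the lemma immediately preceding it, reducing the minor in question to the full $\sC^-$-determinant of a suitable subgraph. First I would unpack case \ref{lem:unicyclic1} of Lemma~\ref{lem:unicyclic}: it tells us that $\Gamma' := (V',E')$ is itself a loopless subgraph of $\Gamma$, unicyclic with $\card{V'} = \card{E'} =: k$. In particular, $\Gamma'$ is a loopless graph with $k$ vertices and $k$ edges, placing it squarely in the hypothesis of the preceding lemma.

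Next I would verify the identification $\sC^-_\Gamma[V'\mid E'] = \sC^-_{\Gamma'}$, up to a permutation of rows and columns (which cannot affect the vanishing of the determinant). The point is that every edge $e \in E'$ has both of its endpoints in $V'$ because $(V',E')$ is a genuine subgraph of $\Gamma$. Consequently, the nonzero entries of the row of $\sC^-_\Gamma$ indexed by $e$ already lie entirely in columns indexed by $V'$, so restricting to these columns loses no information, and the resulting $k \times k$ matrix coincides with $\sC^-_{\Gamma'}$ after relabelling.

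Finally, applying the preceding lemma to $\Gamma'$ gives $\det(\sC^-_{\Gamma'}) = 0$, whence $m^-_\Gamma[V'\mid E'] = 0$. The substantive content --- the existence of the kernel vector $(X_v)_{v\in V'}^\top$ --- has already been carried out in the proof of the lemma; all that remains here is the easy bookkeeping identification of the submatrix with $\sC^-_{\Gamma'}$, so I do not anticipate any substantive obstacle.
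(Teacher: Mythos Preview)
Your argument is correct and is exactly the reasoning the paper has in mind: the corollary is stated with a bare \qedsymbol{} because it follows immediately from the preceding lemma once one observes that in case~\ref{lem:unicyclic1} the pair $(V',E')$ is a genuine loopless subgraph, so $\sC^-_\Gamma[V'\mid E']$ coincides (up to row and column permutations) with $\sC^-_{\Gamma'}$. Your bookkeeping identification and application of the lemma match the paper's intended one-line deduction.
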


Let now $\Gamma' = (V',E')$ be a unicyclic loopless subgraph of $\Gamma$.
Clearly, this forces $\card{V'} \ge 3$.
Let $u_1,\dotsc,u_r$ with $r \ge 3$ be distinct vertices in $V'$ with
$u_1 \sim u_2 \sim \dotsb \sim u_r \sim u_1$ in $\Gamma'$.
Let $U = \{ u_1,\dotsc,u_r \}$.
That is, $U$ is the vertex set of the unique cycle within $\Gamma'$.
Let $\Tau = \Gamma'/U$ be the tree obtained from $\Gamma'$ by contracting all
the $u_i$ to a single vertex $U$ (and deleting all edges on the cycle formed
by $U$).

We order the vertices in $V'\setminus U$ by means of a distance order of
$\Tau$ with respect to the root $U$.
We further order the elements of $U$ as written above, with each of them
preceding each element of $V'\setminus U$.
Next, we extend our order to a total order on $V$ and we order $E$
lexicographically.

We now construct an animation $V \xto\alpha V$ defined on $V'$ as follows.
As $\Gamma'$ is unicyclic, for each $v\in V'\setminus U$, there exists a
unique shortest path from \itemph{some} vertex in $U$ to $v$.
This path can be obtained from the unique path from $U$ to $v$ in $\Tau$.
For $v\in V'\setminus U$, we define $v^\alpha$ to be the predecessor of $v$ on
the aforementioned unique path to $v$.
Next, we define $u_i^\alpha = u_{i+1}$ for $i < r$ and $u_r^\alpha = u_1$.
Clearly, $\alpha$ is an animation of $\Gamma$.
Intuitively, $\alpha$ cyclically permutes the vertices in $U$ (having arbitrarily
oriented the cycle above) and it moves
vertices in $V'\setminus U$ towards $U$.

Let $V'_i$ denote the set of all vertices in $V'$ of distance $i$ from $U$.
Let $D_i$ be the diagonal matrix with entries $X_{v^\alpha}$ as $v$ ranges over
$V'_i$ (in the given order).
We then have
\[
  \sC^+_\Gamma[V'\mid E'] =
  \left[
    \begin{array}{c|c|c|c}
      C & & & \\\hline
      * & D_1 &&\\\hline
        & * & D_2 & \\\hline
        & & \ddots & \ddots
    \end{array}
  \right],
\]
where $C$ is equivalent (up to relabelling of variables)
to $\sC^+_{\Cycle_r}$.
By construction and Lemma~\ref{lem:cycle}, we obtain the following.

\begin{lemma}
  \label{lem:oddcycle}
  \quad
  \begin{enumerate}
  \item
    If the length $r$ of the unique cycle in $\Gamma'$ is even, then
    $m^+_\Gamma[V'\mid E'] = 0$.
  \item
    If $r$ is odd, then
    $m^+_\Gamma[V'\mid E'] = \pm 2 \mon{\alpha}$ and $\alpha \in
    \Oddperiods(\Gamma)$.
    \qed
  \end{enumerate}
\end{lemma}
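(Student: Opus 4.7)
The plan is to read off both claims directly from the block lower-triangular shape of $\sC^+_\Gamma[V'\mid E']$ displayed just before the lemma. Since that matrix is block lower-triangular with diagonal blocks $C, D_1, D_2, \dotsc$, its determinant factors as
\[
m^+_\Gamma[V'\mid E'] = \det(C)\cdot\prod_{i\ge 1}\det(D_i).
\]
Each $D_i$ is diagonal with entries $X_{v^\alpha}$ for $v\in V'_i$, so $\det(D_i) = \prod_{v\in V'_i} X_{v^\alpha}$. Multiplying these contributions across all $i\ge 1$ yields exactly $\prod_{v\in V'\setminus U} X_{v^\alpha} = \mon{\alpha\restriction (V'\setminus U)}$.

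It then remains to analyse $\det(C)$. By construction, $C$ is obtained from the matrix $\sC^+_{\Cycle_r}$ associated with the cycle $u_1\sim u_2\sim \dotsb \sim u_r\sim u_1$ by relabelling the variables $X_1,\dotsc,X_r$ as $X_{u_1},\dotsc,X_{u_r}$, so I can invoke Lemma~\ref{lem:cycle} verbatim. When $r$ is even, $\det(\sC^+_{\Cycle_r}) = 0$, which forces $m^+_\Gamma[V'\mid E'] = 0$ and gives the first assertion. When $r$ is odd, Lemma~\ref{lem:cycle} yields $\det(C) = \pm 2\, X_{u_1}\dotsb X_{u_r}$. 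Since the cyclic permutation $u_i\mapsto u_{i+1}$ (indices mod $r$) is a bijection of $U$, the product $X_{u_1}\dotsb X_{u_r}$ equals $\prod_{u\in U} X_{u^\alpha} = \mon{\alpha\restriction U}$. Combining this with the contribution of the $D_i$ gives $m^+_\Gamma[V'\mid E'] = \pm 2 \mon\alpha$, as required.

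Finally, to see that $\alpha\in\Oddperiods(\Gamma)$, I observe from the construction of $\alpha$ that the $\alpha$-periodic points are precisely the vertices in $U$: every vertex in $V'\setminus U$ is pushed strictly closer to $U$ by $\alpha$ (measured in $\Tau$-distance to the root $U$), and every vertex outside of $V'$ is mapped to $\udef$. The restriction of $\alpha$ to $U$ is the cyclic permutation $u_i\mapsto u_{i+1}$, which forms a single $\alpha$-orbit of length $r$. In the odd-$r$ case this is an orbit of odd length, so all periodic orbits have odd length and $\alpha\in\Oddperiods(\Gamma)$.

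There is no real obstacle here: the displayed block decomposition has already done the work of isolating the cyclic block, and Lemma~\ref{lem:cycle} supplies the dichotomy in~$r$. The only small point to be careful about is checking that the nonzero off-diagonal blocks between $C$ and $D_1$ (arising from edges connecting $U$ to vertices of $V'_1$) genuinely sit \emph{below} the diagonal in the chosen vertex/edge orders, which follows from placing $U$ before $V'\setminus U$ and ordering $V'\setminus U$ by distance from the contracted root, together with the lexicographic ordering on $E$.
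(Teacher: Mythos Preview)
Your proof is correct and follows exactly the approach the paper intends: the lemma is marked with \qed\ and the preceding sentence ``By construction and Lemma~\ref{lem:cycle}, we obtain the following'' indicates that the block lower-triangular form together with Lemma~\ref{lem:cycle} is the entire argument. You have simply spelled out the details the paper leaves implicit, including the verification that $\alpha\in\Oddperiods(\Gamma)$.
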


\subsection{Minors yield animations: the general case}
\label{ss:minors_yield_animations}

Let $\Gamma = (V,E)$ be a graph and let $\Eta = \Inc(\Gamma)$ be its
incidence hypergraph.  Let $E'\subset E$ and $V'\subset V$ with
$\card{V'} = \card{E'} = k$.  We are now ready to complete the proof
of the remaining half of Theorem~\ref{thm:animations}.

\begin{prop}
  \label{prop:minor_to_animation}
  \quad
  \begin{enumerate}
  \item
    If $m^-_\Gamma[V'\mid E'] \not= 0$, then there exists $\alpha \in
    \Fixed(\Gamma)$ with $m^-_\Gamma[V'\mid E'] = \pm \mon\alpha$.
  \item
    If $m^+_\Gamma[V'\mid E'] \not= 0$, then there exists $\alpha \in
    \Oddperiods(\Gamma)$ with $m^+_\Gamma[V'\!\mid\! E'] = \pm 2^{\odlen(\alpha)} \mon\alpha$.
  \end{enumerate}
\end{prop}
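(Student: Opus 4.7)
The plan is to prove Proposition~\ref{prop:minor_to_animation} by reducing to the connected, nondegenerate, square case already handled in \S\ref{ss:minors_connected}, and then gluing the resulting animations across connected components. Write $\Eta = \Inc(\Gamma)$ and consider $\Eta' = \Eta[V'\mid E']$. First, if $\Eta'$ is degenerate, then $m^\pm_\Gamma[V'\mid E'] = 0$ by Lemma~\ref{lem:degenerate}, so we may assume $\Eta'$ is nondegenerate. Decompose $\Eta' = \Eta'_1 \oplus \dotsb \oplus \Eta'_c$ into connected components, writing $\Eta'_j = \Eta[V'_j \mid E'_j]$. By Lemma~\ref{lem:minor_factorisation}, the minor vanishes unless every $\Eta'_j$ is square, in which case $m^\pm_\Gamma[V'\mid E'] = \pm \prod_{j=1}^c m^\pm_\Gamma[V'_j \mid E'_j]$. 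Hence it suffices to produce, for each $j$ with $m^\pm_\Gamma[V'_j\mid E'_j] \ne 0$, an animation $\alpha_j \in \Animations(\Gamma)$ with $\Dom(\alpha_j) = V'_j$ realising the individual factor, and then set $\alpha = \bigsqcup_j \alpha_j$.

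Fixing $j$, apply the Unicyclicity Lemma (Lemma~\ref{lem:unicyclic}) to $\Eta'_j$. In case \ref{lem:unicyclic2}, Lemma~\ref{lem:nonloop_plumbing} (if the unique loop of $\Eta'_j$ is a nonloop in $\Gamma$) or Lemma~\ref{lem:loop_plumbing} (if it is an actual loop of $\Gamma$) yields an animation $\alpha_j$, nilpotent or with a single genuine fixed point respectively, satisfying $m^\pm_\Gamma[V'_j\mid E'_j] = \pm \mon{\alpha_j}$; in particular $\alpha_j \in \Fixed(\Gamma)$ and $\odlen(\alpha_j) = 0$. In case \ref{lem:unicyclic1}, Corollary~\ref{cor:no_minus_cycles} shows $m^-_\Gamma[V'_j\mid E'_j] = 0$, which is consistent with the $-$-case conclusion, while Lemma~\ref{lem:oddcycle} shows that $m^+_\Gamma[V'_j\mid E'_j]$ is either zero (even cycle) or equals $\pm 2\mon{\alpha_j}$ for some $\alpha_j \in \Oddperiods(\Gamma)$ whose single nontrivial orbit has odd length, so $\odlen(\alpha_j) = 1$.

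It remains to glue. Since the $V'_j$ are pairwise disjoint and each $\alpha_j$ maps $V'_j$ into $V$ (in fact into $V'_j \cup (V \setminus V')$ in the nonloop-$e_\circ$ subcase), the disjoint union $\alpha := \bigsqcup_j \alpha_j$ is a well-defined partial function $V \parto V$ with $\Dom(\alpha) = V'$, and the adjacency condition $v^\alpha \sim v$ is inherited from each $\alpha_j$, so $\alpha \in \Animations(\Gamma)$. The $\alpha$-orbits on periodic points are precisely the disjoint union of the $\alpha_j$-orbits, whence $\alpha \in \Fixed(\Gamma)$ (resp.\ $\alpha \in \Oddperiods(\Gamma)$) in the $-$-case (resp.\ $+$-case), and $\odlen(\alpha) = \sum_j \odlen(\alpha_j)$. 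Multiplying the per-component identities gives $m^-_\Gamma[V'\mid E'] = \pm \prod_j \mon{\alpha_j} = \pm \mon\alpha$ in the $-$-case and $m^+_\Gamma[V'\mid E'] = \pm \prod_j 2^{\odlen(\alpha_j)}\mon{\alpha_j} = \pm 2^{\odlen(\alpha)}\mon\alpha$ in the $+$-case.

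The substantive work has already been done in \S\ref{ss:hypergraph_decompositions}--\S\ref{ss:minors_connected}, so the remaining task for this proposition is organisational: verifying that gluing respects the combinatorial invariants (domain size, the class $\Fixed$ versus $\Oddperiods$, and additivity of $\odlen$). The one point requiring care is the nonloop-$e_\circ$ subcase of \ref{lem:unicyclic2}, where $\alpha_j$ may send a vertex of $V'_j$ to the root $r \in V \setminus V'$; this is harmless since $r$ is then necessarily adjacent in $\Gamma$ to its preimage (as $\{u,r\}$ is an edge of $\Gamma$), so the adjacency axiom survives, and $r \notin \Dom(\alpha)$ causes no conflict with animations built on other components.
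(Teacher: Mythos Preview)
Your proof is correct and follows essentially the same route as the paper's own argument: reduce to the nondegenerate case via Lemma~\ref{lem:degenerate}, factor over connected components via Lemma~\ref{lem:minor_factorisation}, invoke the per-component results of \S\ref{ss:minors_connected}, and glue the resulting animations using $\Dom(\alpha_j)\subset V'_j$ and additivity of $\odlen$. Your explicit discussion of the nonloop-$e_\circ$ subcase (showing the root $r$ lies in $V\setminus V'$, so no domain conflict arises) is a welcome clarification that the paper leaves implicit.
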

\begin{proof}
  Suppose that $m^\pm_\Gamma[V'\mid E'] \not= 0$.
  By Lemma~\ref{lem:degenerate}, we may assume that $\Eta' := \Eta[V'\mid E']$
  is nondegenerate.
  Next, we decompose $\Eta' = \Eta'_1\oplus \dotsb \oplus \Eta'_c$ into
  connected components as in \S\ref{ss:hypergraph_decompositions}.

  Writing $\Eta'_j = \Eta'[V'_j\mid E'_j]$,
  Lemma~\ref{lem:minor_factorisation} shows that each $\Eta'_j$ is square.
  Our results in \S\ref{ss:minors_connected} show that for $j=1,\dotsc,c$, we
  may construct an explicit animation $\alpha_j \in \Fixed(\Gamma)$
  (resp.\ $\alpha\in\Oddperiods(\Gamma)$) with $m^-_\Gamma[V'_j\mid E'_j] =
  \pm \mon{\alpha_j}$
  (resp.\ $m^+_\Gamma[V'_j\mid E'_j] = \pm 2^{\odlen(\alpha_j)}
  \mon{\alpha_j}$).
  (We note that in the ``$+$-case'', we have $\odlen(\alpha_j)\in \{0,1\}$.
  Namely, $\odlen(\alpha_j) = 0$ if $\alpha_j$ is obtained via
  Lemma~\ref{lem:nonloop_plumbing} or Lemma~\ref{lem:loop_plumbing} and
  $\odlen(\alpha_j) = 1$ if $\alpha_j$ arises via
  Lemma~\ref{lem:oddcycle}.

  Using our specific constructions of the $\alpha_j$, the domain of definition
  $\Dom(\alpha_j)$ is contained in $V'_j$.
  We may thus define a partial function $V'\xto\alpha V'$ which agrees with
  $\alpha_j$ on $V'_j$ for $j=1,\dotsc,c$.
  Clearly, $\alpha \in \Fixed(\Gamma)$ (resp.\ $\alpha \in\Oddperiods(\Gamma)$)
  if all $\alpha_j$ satisfy $\alpha_j \in\Fixed(\Gamma)$
  (resp.\ $\alpha_j\in\Oddperiods(\Gamma)$).
  Moreover, $\odlen(\alpha) = \odlen(\alpha_1) + \dotsb + \odlen(\alpha_c)$.
  The claim thus follows since
  \[
    m^\pm_\Gamma[V'\mid E'] = \pm \prod_{j=1}^c m^{\pm}_{\Gamma}[V'_j\mid E'_j]
    = \pm \prod_{j=1}^c 2^{\odlen(\alpha_j)} \mon{\alpha_j} = \pm
    2^{\odlen(\alpha)}\mon\alpha.
    \pushQED{\qed}
    \qedhere
  \]
\end{proof}

\begin{proof}[Proof of Theorem~\ref{thm:animations}]
  Combine Proposition~\ref{prop:animation_to_minor}
  and Proposition~\ref{prop:minor_to_animation}.
\end{proof}

\begin{proof}[New proof of Theorem~\ref{thm:uniformity}\ref{thm:uniformity2}--\ref{thm:uniformity3}]
  Combine Proposition~\ref{prop:general_monomial_integral},
  Prop\-o\-si\-tion~\ref{prop:ask_Gamma_integral}, and
  Theorem~\ref{thm:animations}.  For
  Theorem~\ref{thm:uniformity}\ref{thm:uniformity2}, the base ring is
  $R = \ZZ[1/2]$; for
  Theorem~\ref{thm:uniformity}\ref{thm:uniformity2}, it is $R = \ZZ$.
\end{proof}

\section{The Reflexive Graph Modelling Theorem}
\label{s:RGMT}

In this section,
we prove Theorem~\ref{thm:rgmt} by showing that, assuming that $\Gamma$
is reflexive, the matrices $\sC^\pm_\Gamma$ and $\sC_{\Adj(\Gamma)}$ have the
same rank (over $\QQ(X_V)$) and the same ideals of minors (in $\ZZ[\frac 1
2][X_V]$ in the $+$-case and in $\ZZ[X_V]$ otherwise).
All of this relies on our parameterisation of minors in terms of selectors
and animations developed in \S\ref{s:selectors} and \S\ref{s:animations}.

\subsection[The ranks of three matrices]{The ranks of $\sC_\Eta$ and $\sC_\Gamma^\pm$}
\label{ss:ranks}

\begin{prop}
  \label{prop:rank_C_Eta}
  Let $\Eta = (V,E,\increl)$ be a hypergraph.
  Then
  \[
    \rank_{\QQ(X_V)}(\sC_\Eta)= \#\{ e\in E : \norm{e}_\Eta \not=
    \emptyset\}.
  \]
\end{prop}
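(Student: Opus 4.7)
The plan is to deduce the proposition directly from Corollary~\ref{cor:rank_C_Eta_selectors}, which identifies $\rank_{\QQ(X_V)}(\sC_\Eta)$ with the maximum value of $\deg(\phi)$ as $\phi$ ranges over $\Selectors(\Eta)$. It therefore suffices to show that
\[
  \max\bigl(\deg(\phi) : \phi \in \Selectors(\Eta)\bigr) = \#\{e \in E : \norm{e}_\Eta \not= \emptyset\}.
\]

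For the upper bound, I would observe that any selector $\phi \in \Selectors(\Eta)$ satisfies $e^\phi \increl e$ for each $e \in \Dom(\phi)$, which in particular requires $\norm{e}_\Eta \not= \emptyset$. Hence $\Dom(\phi) \subset \{e \in E : \norm{e}_\Eta \not= \emptyset\}$, and taking cardinalities gives the desired inequality. For the matching lower bound, I would explicitly construct a selector of the required degree: for each $e \in E$ with $\norm{e}_\Eta \not= \emptyset$, choose (arbitrarily) some $v_e \in \norm{e}_\Eta$, and define $\phi$ on $\{e\in E : \norm{e}_\Eta \not= \emptyset\}$ by $e \mapsto v_e$. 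Then $\phi \in \Selectors(\Eta)$ with $\deg(\phi) = \#\{e \in E : \norm{e}_\Eta \not= \emptyset\}$.

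Alternatively, I could argue directly from the definition of $\sC_\Eta$, bypassing the selector formalism altogether. Recall that the rows of $\sC_\Eta$ are indexed by flags $(v,e) \in F$ and that column $e_j$ has its nonzero entries (namely $X_v$) precisely in rows of the form $(v,e_j)$ with $v \increl e_j$. Thus the nonzero entries of two columns $e_j \not= e_{j'}$ occupy disjoint sets of rows, so the nonzero columns of $\sC_\Eta$ are linearly independent over $\QQ(X_V)$; their number is exactly $\#\{e \in E : \norm{e}_\Eta \not= \emptyset\}$.

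I do not anticipate any genuine difficulty: both approaches are direct consequences of structural features already established. The cleaner write-up is arguably the direct matrix argument, but the selector approach has the advantage of cohering with the methodology used elsewhere in the article.
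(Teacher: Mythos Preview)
Your first approach is exactly the paper's proof: it invokes Corollary~\ref{cor:rank_C_Eta_selectors} and then observes that the maximal selector degree is $\card{E} - \card{\empties(\Eta)}$, with the bound attained by choosing any incident vertex for each nonempty hyperedge. Your alternative direct matrix argument (disjoint column supports) is also valid and slightly more elementary, but the paper does not take that route.
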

\begin{proof}
  By Corollary~\ref{cor:rank_C_Eta_selectors},
  $\rank_{\QQ(X_V)}(\sC_\Eta) = \max(\deg(\phi) : \phi \in \Selectors(\Eta))$.
  Clearly, the degree of every selector of $\Eta$ is at most $\card{E} -
  \card{\empties(\Eta)}$ and this bound is attained.
\end{proof}

\begin{prop}
  \label{prop:rank_C-}
  Let $\Gamma = (V,E)$ be a graph with $n$ vertices.
  Let $d$ be the number of connected components of $\Gamma$ that do \underline{not}
  contain a loop.
  Then $\Fixed_{n-d}(\Gamma) \not= \emptyset$ but $\Fixed_{n-d+1}(\Gamma) = \emptyset$.
  In particular,
  $\rank_{\QQ(X_V)}(\sC^-_\Gamma) = n - d$.
\end{prop}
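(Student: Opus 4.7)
The plan is to use Corollary~\ref{cor:rank_C_animations}, which identifies $\rank_{\QQ(X_V)}(\sC^-_\Gamma)$ with the maximum degree of an animation in $\Fixed(\Gamma)$. So it suffices to establish two inequalities: $\max\{\deg(\alpha) : \alpha \in \Fixed(\Gamma)\} \le n-d$, and the existence of some $\alpha \in \Fixed(\Gamma)$ with $\deg(\alpha) = n-d$.

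For the upper bound, I would fix $\alpha \in \Fixed(\Gamma)$ and analyse its behaviour one connected component of $\Gamma$ at a time. Since any animation sends each $v\in\Dom(\alpha)$ to a neighbour of $v$, every connected component $C$ of $\Gamma$ is $\alpha$-invariant. Let $C$ be a loopless connected component. Any $\alpha$-periodic point in $C$ would, by definition of $\Fixed(\Gamma)$, be a fixed point of $\alpha$, hence a vertex $v\in C$ with $v\sim v$; but $C$ contains no loops, so $\alpha|_C$ has no periodic points. Consequently, $\alpha|_C$ cannot be total on $C$ (a total self-map of a finite set always has periodic points), giving $\card{\Dom(\alpha)\cap C} \le \card{C} - 1$. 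For a connected component $C$ containing a loop, we trivially have $\card{\Dom(\alpha)\cap C} \le \card{C}$. Summing over the components (there being $d$ loopless ones) yields $\deg(\alpha) \le n - d$.

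For the lower bound, I would construct an explicit $\alpha \in \Fixed(\Gamma)$ saturating this bound, component by component, using the rooted-tree animations from \S\ref{sss:decorated_tree}. For each component $C$ containing a loop, pick a vertex $r\in C$ with $r\sim r$ and a spanning tree $\Tau$ of $C$, and set $\alpha|_C = \pred(\Tau,r)[r\subs r]$, so that $\alpha|_C$ is defined on all of $C$ and its only periodic point is the fixed point $r$. For each loopless component $C$, pick any root $r\in C$ and a spanning tree $\Tau$, and set $\alpha|_C = \pred(\Tau,r)$, which is nilpotent (hence has no periodic points at all) and defined on $C\setminus\{r\}$. Assembling these partial functions gives a global partial function $\alpha\colon V\parto V$ that is an animation of $\Gamma$ with every periodic point a fixed point, so $\alpha\in\Fixed(\Gamma)$, and $\deg(\alpha) = n - d$ by construction. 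The two bounds together give $\Fixed_{n-d}(\Gamma)\not=\emptyset$ and $\Fixed_{n-d+1}(\Gamma)=\emptyset$, and the rank equality follows from Corollary~\ref{cor:rank_C_animations}.

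The main conceptual obstacle is the upper bound, and specifically the observation that in a loopless component the constraint ``periodic $\Rightarrow$ fixed'' forces $\alpha$ to be nilpotent there, which in turn forces $\alpha$ to be undefined somewhere in the component. Once this is in place, the construction for the lower bound is essentially mechanical.
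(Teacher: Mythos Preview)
Your proposal is correct and follows essentially the same approach as the paper: both reduce to a component-by-component analysis, use the rooted-tree animations $\pred(\Tau,r)$ and $\pred(\Tau,r)[r\subs r]$ from \S\ref{sss:decorated_tree} for the lower bound, observe that in a loopless component $\Fixed$ coincides with $\Nilpotents$ (forcing the animation to be undefined somewhere) for the upper bound, and conclude via Corollary~\ref{cor:rank_C_animations}. The paper compresses the reduction step into a single sentence (``this easily reduces to the case that $\Gamma$ is connected''), whereas you spell it out explicitly, but the content is the same.
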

\begin{proof}
  This easily reduces to the case that $\Gamma$ is connected, which we
  now assume.
  Our arguments in \S\ref{sss:decorated_tree} show that
  $\Fixed_{n-1}(\Gamma)\not= \emptyset$.
  Indeed, every choice of a spanning tree $\Tau$ of $\Gamma$ and root $r\in V$
  gives rise to a (nilpotent) animation $\pred(\Tau,r)$ of $\Gamma$.
  The domain of such an animation is $V\setminus\{r\}$ whence
  $\emptyset \not= \Nilpotents_{n-1}(\Gamma) \subset \Fixed_{n-1}(\Gamma)$.
  As every nilpotent animation of $\Gamma$ is necessarily undefined
  somewhere, we have $\Nilpotents_n(\Gamma) = \emptyset$.
  Hence, if $\Gamma$ is loopless (equivalently: $d = 1$), then
  $\Fixed_n(\Gamma) = \Nilpotents_n(\Gamma) = \emptyset$.
  On the other hand, if $r \in V$ with $r\sim r$, then $d = 0$ and
  $\pred(\Tau,r)[r \subs r] \in \Fixed_n(\Gamma)$,
  where $\Tau$ is a spanning tree of $\Gamma$ as above.
  Of course, since the domain of any animation of $\Gamma$ is a subset of $V$,
  we have $\Fixed_{n+1}(\Gamma) = \emptyset$.
  The final claim follows from Corollary~\ref{cor:rank_C_animations}.
\end{proof}

\begin{cor}
  \label{cor:rank_C-_loopless}
  Let $\Gamma$ be a loopless graph with $n$ vertices and $c$ connected components.
  Then $\Nilpotents_{n-c}(\Gamma) \not= \emptyset$ but $\Nilpotents_{n-d+1}(\Gamma) = \emptyset$.
  In particular,
  $\rank_{\QQ(X_V)}(\sC^-_\Gamma) = n - c$.
  \qed
\end{cor}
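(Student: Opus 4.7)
The plan is to deduce Corollary~\ref{cor:rank_C-_loopless} as a direct specialisation of Proposition~\ref{prop:rank_C-}. Two straightforward identifications suffice, and no genuine new argument is required.

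First, when $\Gamma$ is loopless, every connected component of $\Gamma$ is itself loopless. Consequently the integer $d$ appearing in Proposition~\ref{prop:rank_C-}---the number of connected components containing no loop---coincides with the total number $c$ of connected components.

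Second, I would verify the identity $\Fixed(\Gamma) = \Nilpotents(\Gamma)$ in the loopless setting. The inclusion $\Nilpotents(\Gamma) \subset \Fixed(\Gamma)$ is by definition (a nilpotent animation has no periodic points at all, so the condition on periodic points in the definition of $\Fixed(\Gamma)$ is satisfied vacuously). For the reverse inclusion, suppose $\alpha \in \Fixed(\Gamma)$ had an $\alpha$-periodic point $v$. Then, by the defining property of $\Fixed(\Gamma)$, $v$ would be a fixed point, i.e.\ $v^\alpha = v$. Combined with the animation condition $v^\alpha \sim v$, this would force the loop $v \sim v$ in $\Gamma$, contradicting looplessness. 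Hence $\alpha$ has no periodic points, i.e.\ $\alpha \in \Nilpotents(\Gamma)$.

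Substituting $d = c$ and $\Fixed = \Nilpotents$ into the conclusion of Proposition~\ref{prop:rank_C-} immediately yields $\Nilpotents_{n-c}(\Gamma) \neq \emptyset$, $\Nilpotents_{n-c+1}(\Gamma) = \emptyset$, and therefore $\rank_{\QQ(X_V)}(\sC^-_\Gamma) = n-c$ via Corollary~\ref{cor:rank_C_animations}. Since both identifications are essentially tautological, there is no real obstacle; the corollary is a corollary in the most literal sense.
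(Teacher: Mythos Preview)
Your proposal is correct and matches the paper's approach exactly: the corollary carries only a \qed in the paper, signalling that it is an immediate specialisation of Proposition~\ref{prop:rank_C-} via the two identifications $d = c$ and $\Fixed(\Gamma) = \Nilpotents(\Gamma)$ for loopless $\Gamma$ (the latter already noted in the proof of Corollary~\ref{cor:loopless_minus_minors}). One minor simplification: you need not invoke Corollary~\ref{cor:rank_C_animations} separately, since the rank statement is already part of the conclusion of Proposition~\ref{prop:rank_C-}.
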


\begin{rem}
  The final part of Corollary~\ref{cor:rank_C-_loopless}
  can also be deduced from \cite[Lem.\ 3.2]{csp}.
\end{rem}

\begin{prop}
  \label{prop:rank_C+}
  Let $\Gamma = (V,E)$ be a graph with $n$ vertices.
  Let $d$ be the number of connected components of $\Gamma$ that do
  \underline{not} contain an odd cycle;
  here, loops are counted amongst odd cycles.
  Then $\Oddperiods_{n-d}(\Gamma) \not= \emptyset$ but
  $\Oddperiods_{n-d+1}(\Gamma) = \emptyset$.
  In particular, $\rank_{\QQ(X_V)}(\sC^+_\Gamma) = n - d$.
\end{prop}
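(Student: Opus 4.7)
The proof mirrors that of Proposition~\ref{prop:rank_C-}, but with the role of ``contains a loop'' replaced by ``contains an odd cycle (counting loops as odd cycles of length one)'', and the role of $\Fixed$ replaced by $\Oddperiods$. The plan is to reduce to the connected case, prove both a construction result and a non-existence result, and then invoke Corollary~\ref{cor:rank_C_animations}.

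\textbf{Step 1: Reduction to the connected case.} Write $\Gamma = \Gamma_1 \oplus \dotsb \oplus \Gamma_c$ as a disjoint union of its connected components, with $\Gamma_i$ having $n_i$ vertices. Since animations of a disjoint union are precisely disjoint unions of animations of the summands, and since the odd-periodic property and degree are both additive over components, it suffices to determine $\max\{\deg(\alpha) : \alpha \in \Oddperiods(\Gamma_i)\}$ for each $i$. The claim will then follow by summing: components containing an odd cycle contribute $n_i$, components without contribute $n_i - 1$, and there are exactly $d$ of the latter.

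\textbf{Step 2: Connected $\Gamma$ with an odd cycle yields $\Oddperiods_n(\Gamma) \neq \emptyset$.} If $\Gamma$ contains a loop at some vertex $r$, take a spanning tree $\Tau$ of $\Gamma$ and consider $\pred(\Tau,r)[r \subs r]$, already used in \S\ref{sss:decorated_tree}: it lies in $\Fixed_n(\Gamma) \subset \Oddperiods_n(\Gamma)$. Otherwise, pick an odd cycle $u_1 \sim u_2 \sim \dotsb \sim u_r \sim u_1$ with $r \ge 3$ odd, and take a spanning unicyclic subgraph $\Gamma'$ of $\Gamma$ containing this cycle (obtained by picking a spanning tree of the graph $\Gamma/U$ with $U = \{u_1, \dotsc, u_r\}$ contracted and lifting it). The construction from \S\ref{sss:unicyclic} produces an animation $\alpha$ of $\Gamma$ defined everywhere on $V$ whose periodic points form a single orbit of odd length $r$; hence $\alpha \in \Oddperiods_n(\Gamma)$.

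\textbf{Step 3: Connected loopless bipartite $\Gamma$ forces $\Oddperiods_n(\Gamma) = \emptyset$.} This is the key obstacle and requires a short combinatorial argument. Suppose $\alpha \in \Oddperiods_n(\Gamma)$, so $\alpha$ is everywhere defined on $V$. Because $V$ is finite and $\alpha$ is everywhere defined, $V^{\per} \neq \emptyset$: every orbit of the iterated forward map must eventually close up. Pick any $\alpha$-orbit $\Omega \subset V^\per$; by assumption $\card{\Omega}$ is odd. If $\card{\Omega} = 1$, then its sole element $v$ satisfies $v^\alpha = v$, forcing a loop $v \sim v$, contradicting looplessness. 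If $\card{\Omega} \ge 3$, then $\Omega$ gives rise to an odd cycle in $\Gamma$, contradicting bipartiteness (no odd cycles). Either way we reach a contradiction, so $\Oddperiods_n(\Gamma) = \emptyset$. On the other hand, any spanning tree $\Tau$ with root $r$ yields the nilpotent animation $\pred(\Tau, r)$, which lies in $\Nilpotents_{n-1}(\Gamma) \subset \Oddperiods_{n-1}(\Gamma)$, showing this set is nonempty.

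\textbf{Step 4: Conclusion.} Combining Steps 2 and 3 across components gives both $\Oddperiods_{n-d}(\Gamma) \neq \emptyset$ and $\Oddperiods_{n-d+1}(\Gamma) = \emptyset$ (the latter because any animation of degree $> n-d$ would force at least one loopless, odd-cycle-free component to host an animation of degree equal to its vertex count, which Step 3 forbids). Finally, Corollary~\ref{cor:rank_C_animations} translates this into $\rank_{\QQ(X_V)}(\sC^+_\Gamma) = n - d$. The main obstacle is the argument in Step 3; everything else is a direct analogue of the reasoning already carried out for Proposition~\ref{prop:rank_C-}.
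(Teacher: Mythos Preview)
Your proof is correct and follows essentially the same approach as the paper's. The paper's argument is more terse---it simply asserts that when $\Gamma$ has no odd cycles one has $\Oddperiods_n(\Gamma) = \Nilpotents_n(\Gamma) = \emptyset$, whereas you spell out explicitly why an everywhere-defined odd-periodic animation would force either a loop or an odd cycle---but the underlying reasoning (reduce to connected components, invoke $\pred(\Tau,r)$ and the construction of \S\ref{sss:unicyclic} for existence, and rule out degree-$n$ odd-periodic animations on bipartite components) is the same.
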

\begin{proof}
  As in the proof of Proposition~\ref{prop:rank_C-}, we may assume that
  $\Gamma$ is connected and we find that
  $\emptyset \not= \Nilpotents_{n-1}(\Gamma) \subset
  \Oddperiods_{n-1}(\Gamma)$ and $\Nilpotents_n(\Gamma) = \emptyset$.
  If $\Gamma$ does not contain any cycles of odd length, then
  $d = 1$ and
  $\Oddperiods_n(\Gamma) = \Nilpotents_n(\Gamma) = \emptyset$.
  If $\Gamma$ contains a loop, then
  we can show that $\Fixed_n(\Gamma) \not= \emptyset$ as in the proof of
  Proposition~\ref{prop:rank_C-}.
  Finally, if $\Gamma$ contains a cycle of odd length which is not a loop,
  then we can construct
  $\alpha\in\Oddperiods_n(\Gamma)$ using the procedure from \S\ref{sss:unicyclic}.
  As before, $\Oddperiods_{n+1}(\Gamma) = \emptyset$ and
  the final claim follows from Corollary~\ref{cor:rank_C_animations}.
\end{proof}

\begin{cor}
  \label{cor:reflexive_rank}
  Let $\Gamma = (V,E)$ be a reflexive graph with $n$ vertices.
  Then
  \[
    \rank_{\QQ(X_V)}(\sC^+_\Gamma) = \rank_{\QQ(X_V)}(\sC^-_\Gamma) =
    \rank_{\QQ(X_V)}(\sC_{\Adj(\Gamma)}) = n.
    \pushQED{\qed}
    \qedhere
  \]
\end{cor}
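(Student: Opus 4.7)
The plan is to simply combine the three rank formulae established immediately before the corollary, noting that reflexivity eliminates the exceptional terms in each of them.

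First, I would handle $\sC^-_\Gamma$ via Proposition~\ref{prop:rank_C-}. That result gives $\rank_{\QQ(X_V)}(\sC^-_\Gamma) = n - d$, where $d$ counts the connected components of $\Gamma$ containing no loop. Since $\Gamma$ is reflexive, every vertex carries a loop, so in particular every connected component contains a loop; hence $d = 0$ and we get rank $n$. An identical argument, now invoking Proposition~\ref{prop:rank_C+}, takes care of $\sC^+_\Gamma$: loops count as odd cycles there, so each component of a reflexive $\Gamma$ contains an odd cycle, again forcing $d = 0$ and yielding rank $n$.

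For $\sC_{\Adj(\Gamma)}$ I would appeal to Proposition~\ref{prop:rank_C_Eta}. The adjacency hypergraph $\Adj(\Gamma) = (V,V,\sim)$ has hyperedge set $V$, and the support of the hyperedge $v \in V$ is $\norm{v} = \{u \in V : u \sim v\}$. Reflexivity gives $v \in \norm{v}$ for every $v$, so no hyperedge is empty; therefore $\rank_{\QQ(X_V)}(\sC_{\Adj(\Gamma)}) = \#\{v \in V : \norm{v} \neq \emptyset\} = n$.

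There is no real obstacle here: the corollary is a one-line consequence of specialising the three preceding propositions to the reflexive case. The only thing to double-check is the convention in Proposition~\ref{prop:rank_C+} that loops are counted as odd cycles, which is stated explicitly and is what makes the $\sC^+_\Gamma$ case immediate rather than requiring a separate argument about odd cycles in $\Gamma$.
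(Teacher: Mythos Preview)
Your proposal is correct and matches the paper's approach exactly: the corollary is stated with a bare \qed, and your argument spells out precisely the intended specialisation of Propositions~\ref{prop:rank_C_Eta}, \ref{prop:rank_C-}, and \ref{prop:rank_C+} to the reflexive case.
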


\subsection{Proof of Theorem~\ref{thm:rgmt}}
\label{ss:proof_RGMT}

\begin{lemma}
  \label{lem:reflexive_cycle_breaking}
  Let $\Gamma$ be a reflexive graph.
  Let $\alpha \in \Animations(\Gamma)$.
  Then there exists $\beta \in \Fixed(\Gamma)$ with $\mon\alpha = \mon\beta$.
\end{lemma}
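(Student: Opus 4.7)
The plan is to collapse each cyclic $\alpha$-orbit into fixed points while keeping $\alpha$ unchanged on its transient points, using reflexivity exactly once: to justify that sending a periodic vertex to itself is a legal animation step. Concretely, I would define $V \xparto\beta V$ by
\[
  v^\beta = \begin{cases} v, & \text{if } v \in V^\per, \\ v^\alpha, & \text{otherwise,}\end{cases}
\]
with the convention that $v^\beta = \udef$ whenever $v \notin \Dom(\alpha)$ (so in particular $\Dom(\beta) = \Dom(\alpha)$).

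First I would confirm that $\beta \in \Animations(\Gamma)$: on a transient $v \in \Dom(\alpha)$ the condition $v^\beta = v^\alpha \sim v$ is inherited from $\alpha$, while on a periodic $v$ the reflexivity hypothesis $v \sim v$ makes $v^\beta = v$ permissible. This is the only place where reflexivity enters.

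Next I would verify $\beta \in \Fixed(\Gamma)$. By construction every $v \in V^\per$ is a $\beta$-fixed point, so I only need to show that no $v \in V^\tra$ becomes $\beta$-periodic. The $\beta$-trajectory of such $v$ coincides with its $\alpha$-trajectory up until the iterate either leaves $\Dom(\alpha)$ (after which $\beta$ sends it to $\udef$ forever) or reaches some $w \in V^\per$ (after which $\beta$ fixes it at $w$ forever). Since $v$ is $\alpha$-transient, no such intermediate value equals $v$, nor does the limit value $w$; hence $v^{\beta^n} \ne v$ for every $n \ge 1$.

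Finally, for $\mon\alpha = \mon\beta$ I would decompose $\Dom(\alpha) = \bigl(V^\tra \cap \Dom(\alpha)\bigr) \sqcup V^\per$. On the transient part, the factors $X_{v^\alpha} = X_{v^\beta}$ coincide pointwise by construction. On each $\alpha$-orbit $\Omega \subset V^\per$, the $\alpha$-contribution $\prod_{v \in \Omega} X_{v^\alpha}$ equals $\prod_{u \in \Omega} X_u$ because $\alpha$ permutes $\Omega$, and this matches the $\beta$-contribution $\prod_{v \in \Omega} X_v$ since $\beta$ fixes every vertex of $\Omega$. The only step requiring any care is checking that no new $\beta$-periodic points appear beyond $V^\per$; the rest is immediate from the definitions and from reflexivity.
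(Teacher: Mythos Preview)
Your proof is correct and follows essentially the same idea as the paper's: replace the action of $\alpha$ on each periodic orbit by the identity, using reflexivity to legitimise the new self-loops and the fact that $\alpha$ permutes each orbit to see that the monomial is unchanged. The only difference is cosmetic: the paper breaks one cycle at a time and inducts on the number of orbits of size $>1$, whereas you write down the final $\beta$ directly and verify its properties in one pass; your version is marginally cleaner for avoiding the induction.
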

\begin{proof}
  Let $V^\per$ denote the set of $\alpha$-periodic points.
  Let $\mathscr l(\alpha)$ be the number of $\alpha$-orbits of size $> 1$ on
  $V^\per$.
  We proceed by induction on $\mathscr l(\alpha)$.
  If $\mathscr l(\alpha) = 0$, then $\alpha \in \Fixed(\Gamma)$ so we may take
  $\beta = \alpha$.
  Next, let $\mathscr l(\alpha) > 0$.
  Then we can find $r \ge 2$ and distinct points $u_1,\dotsc,u_r\in V$ with
  $u_i^\alpha = u_{i+1}$ for $i < r$ and $u_r^\alpha = u_1$.
  Using the notation from \S\ref{s:selectors}, define
  \[
    \alpha' = \alpha[u_1 \subs u_1, \dotsc, u_r \subs u_r].
  \]
  Since $\Gamma$ is reflexive, $\alpha' \in \Animations(\Gamma)$.
  By construction, $\mon{\alpha'} = \mon\alpha$ and $\mathscr{l}(\alpha') <
  \mathscr{l}(\alpha)$ whence the claim follows by induction.
\end{proof}

\begin{proof}[{Proof of Theorem~\ref{thm:rgmt}}]
  Let $\Gamma$ have $n$ vertices.
  Fix a compact \DVR{} $\fO$ with \itemph{odd} residue characteristic.
  Let $s\in \CC$ with $\Real(s) > n$ be arbitrary.
  By combining Proposition~\ref{prop:proj_circ_int},
  Corollary~\ref{cor:reflexive_rank}, 
  Lemma~\ref{lem:reflexive_cycle_breaking},
  Proposition~\ref{prop:selectors}, and
  Theorem~\ref{thm:animations},
  we see that
  each of
  \[
    (1-q^{-s}) W^\pm_\Gamma(q,q^{-s}) = (1-q^{-s})
    \zeta^{\ask}_{\sA^\pm_\Gamma/\fO}(s)
  \]
  and
  \[
    (1-q^{-s})W_{\Adj(\Gamma)}(q,q^{-s}) = (1-q^{-s})
    \zeta^{\ask}_{\sA_{\Adj(\Gamma)}/\fO}(s)
  \]
  is given by
  \[
    1 + (1-q^{-1})^{-1}
    \int\limits_{(\fO V)^\times \times\, \fP}
    \abs{z}^{s - 1} \prod_{i=1}^n
    \frac{\norm{I_{i-1}(x)}}{\norm{I_i(x) \cup z I_{i-1}(x)}}
    \dd\!\mu(x,z),
  \]
  where $I_k = \langle \mon\alpha : \alpha \in \Fixed_k(\Gamma)\rangle$.
  (Recall that $\Animations(\Gamma) = \Selectors(\Adj(\Gamma))$.)
\end{proof}

\section{Nilpotent animations}
\label{s:nilpotent}

Proposition~\ref{prop:ask_Gamma_integral} and
Corollary~\ref{cor:loopless_minus_minors}
suggest that by studying the nilpotent animations of a loopless graph $\Gamma$,
we might learn something about the rational functions $W^-_\Gamma(X,T)$.
In this section, we develop basic tools for working with and modifying
nilpotent animations.

\setcounter{subsection}{-1}
\subsection{Nilpotent animations and in-forests}
\label{ss:in-forests}

Nilpotent animations can be equivalently described in terms of
\itemph{in-forests}.
While this description is not logically required in the following,
we include it for the benefit of readers who appreciate helpful
pictures in graph-theoretic papers.

\paragraph{In-forests.}
The following is folklore.
Let $\Gamma = (V,E)$ be a graph.
An \emph{orientation} of~$\Gamma$ is pair $(\source,\target)$ of functions
$E\to V$ such that $e = \{ \source(e), \target(e)\}$ for all $e\in E$.
We call $\source(e)$ and $\target(e)$ the \emph{source} and \emph{target} of
$e$, respectively.
The \emph{outdegree} $\outdeg_\Gamma(v)$ of a vertex $v\in V$ is the number of edges
$e\in E$ with $\source(e) = v$.
A vertex $v$ with $\outdeg_\Gamma(v) = 0$ is a \emph{sink}.
An \emph{oriented graph} is a graph endowed with an orientation.
A \emph{forest} is an acyclic loopless graph.
An \emph{in-forest} is an oriented forest $\Phi$ with $\outdeg_\Phi(v) \le 1$ for
each vertex $v$ of $\Phi$.
An \emph{in-tree} is a connected in-forest.
Every in-tree contains a unique sink.

An in-forest structure on a given forest is equivalently described by a
choice of sinks, one from each connected component.
In detail, let $\Phi = (V,E)$ be a forest.
Let $V = V_1 \sqcup \dotsb \sqcup V_c$ be the decomposition of $V$ into the
connected components of $\Phi$.
For $i=1,\dotsc,c$, choose $s_i \in V_i$.
For each $v\in V_i\setminus\{s_i\}$, there exists a unique path from $v$ to
$s_i$ in $\Phi$.
We endow $\Phi$ with an orientation as follows:
for each $v\in V$, say $v\in V_i$, consider the unique path from $v$ to $s_i$
in $\Phi$ and orient all edges on this path towards $s_i$.
This turns $\Phi$ into an in-forest whose sinks are precisely the $s_i$.
Conversely, all in-forest structures on $\Phi$ arise in this fashion;
cf.\ \cite[Prop.\ 7.6]{cico}.

Given an arbitrary graph $\Gamma = (V,E)$,
by an \emph{in-forest in $\Gamma$},
we mean an in-forest whose underlying forest is a subgraph of $\Gamma$ with
vertex set $V$.
Hence, an in-forest in $\Gamma$ uniquely determines and is uniquely determined
by to two pieces of data:
\begin{itemize}
\item an acyclic set of edges $E'\subset E$ and
\item a choice of sinks, one from each connected component of the forest
  $(V,E')$.
\end{itemize}

\paragraph{Nilpotent animations of $\Gamma$ and in-forests in $\Gamma$.}
Let $\Gamma = (V,E)$ be a graph with $n$ vertices.
As we explain in the following, nilpotent animations of $\Gamma$ and
in-forests in $\Gamma$ are naturally in bijection.
Very briefly, given $\alpha \in \Nilpotents(\Gamma)$,
an identity $v^\alpha = w$ (where $v,w\in V$) corresponds to an oriented edge
$v\to w$ in the in-forest attached to $\alpha$.

In greater detail, suppose that $\Phi = (V,E')$
is an in-forest in $\Gamma$ with orientation $(\source, \target)$.
Define $\alpha \in \Animations(\Gamma)$ as follows.
For $v\in V$, we have $\outdeg_\Phi(v) \in \{0,1\}$.
If $\outdeg_\Phi(v) = 0$, then we let $v^\alpha = \udef$.
Otherwise, there exists a unique edge $e_v \in E'$ with $\source(e_v) = v$.
We then let $v^\alpha = \target(e_v)$.
As $\Phi$ is acyclic, it follows easily that $\alpha$ is a nilpotent animation
of $\Gamma$.

Conversely, let $\alpha \in \Nilpotents(\Gamma)$.
Define $E' = \{ \{ v,v^\alpha\} : v\in \Dom(\alpha)\}\subset E$.
Then $\Phi := (V, E')$ is a forest.
Each connected component of $\Phi$ contains a unique vertex $v$ with $v^\alpha
= \udef$.
Hence, by taking the elements of $V\setminus \Dom(\alpha)$ as our sinks, we
turn $\Phi$ into an in-forest in $\Gamma$.

The preceding two constructions yield mutually inverse bijections between
in-forests in $\Gamma$ and nilpotent animations of $\Gamma$.
If $\alpha\in\Nilpotents(\Gamma)$ has degree $k$, then the corresponding
in-forest has precisely $n-k$ connected components.
The monomial associated with an in-forest is the product of the variables
attached to the targets of its edges.

\begin{ex}
  \label{ex:in-forest}
  Consider the following graph:
  \begin{center}
    \begin{tikzpicture}
      \tikzstyle{Grey Vertex}=[fill=lightgray, draw=black, shape=circle, scale=0.6]

      \node at (-1,-0.75) {$\Gamma\colon$};
      
      \node [style=Grey Vertex] (1) at (0,0) {$v_1$};
      \node [style=Grey Vertex] (2) at (1.5,0) {$v_2$};
      \node [style=Grey Vertex] (3) at (3,0) {$v_3$};
      \node [style=Grey Vertex] (4) at (4.5,0) {$v_4$};

      \node [style=Grey Vertex] (5) at (0,-1.5) {$v_5$};
      \node [style=Grey Vertex] (6) at (1.5,-1.5) {$v_6$};
      \node [style=Grey Vertex] (7) at (3,-1.5) {$v_7$};
      \node [style=Grey Vertex] (8) at (4.5,-1.5) {$v_8$};

      \draw (1) to (2) to (3) to (4) to (8) to (7) to (6) to (5) to (1);
      \draw (2) to (6);
      \draw (3) to (7);
      \end{tikzpicture}
    \end{center}

    The following is an in-forest in $\Gamma$. Here, oriented edges
    belong to $\Phi$ while dashed ones belong to $\Gamma$ but not to $\Phi$.
        \begin{center}
    \begin{tikzpicture}
      \tikzstyle{Grey Vertex}=[fill=lightgray, draw=black, shape=circle, scale=0.6]
      \tikzstyle{Sink}=[fill=darkgray, text=white, draw=black, shape=rectangle, scale=0.9]

      \node at (-1,-0.75) {$\Phi\colon$};
      
      \node [style=Grey Vertex] (1) at (0,0) {$v_1$};
      \node [style=Grey Vertex] (2) at (1.5,0) {$v_2$};
      \node [style=Grey Vertex] (3) at (3,0) {$v_3$};
      \node [style=Sink] (4) at (4.5,0) {$v_4$};

      \node [style=Sink] (5) at (0,-1.5) {$v_5$};
      \node [style=Sink] (6) at (1.5,-1.5) {$v_6$};
      \node [style=Grey Vertex] (7) at (3,-1.5) {$v_7$};
      \node [style=Grey Vertex] (8) at (4.5,-1.5) {$v_8$};

      \draw[->,red] (1) to (2);
      \draw[->,red] (2) to (6);
      \draw[->,red] (3) to (2);
      \draw[->,red] (7) to (6);
      \draw[->,red] (8) to (4);

      \draw[dashed] (1) to (5) to (6);
      \draw[dashed] (3) to (4);
      \draw[dashed] (3) to (7) to (8);
      \end{tikzpicture}
    \end{center}

    The sinks  $v_4$, $v_5$, and $v_6$ of $\Phi$ are drawn as squares in inverted colours.
    The nilpotent animation $\alpha$ corresponding to $\Phi$ is given by
    $v_1^\alpha = v_3^\alpha = v_2$, $v_2^\alpha = v_7^\alpha = v_6$,
    $v_8^\alpha = v_4$, and $v_4^\alpha = v_5^\alpha = v_6^\alpha = \udef$.
\end{ex}

\subsection{Using animations to order vertices}
\label{ss:animations_order_vertices}

Let $\Gamma = (V,E)$ be a graph.
Let $\alpha \in \Animations(\Gamma)$.
Define a binary relation $\cover_\alpha$ on $V$ by letting $v \cover_\alpha w$
if and only if $w = v^\alpha$ for $v,w\in V$.
(Note that $v \cover_\alpha w$ forces $v^\alpha \not= \udef$.)
Let $\lte_\alpha$ be the reflexive transitive closure of $\cover_\alpha$.
Then $\lte_\alpha$ is a preorder on $V$.
Given $v,w\in V$, we have $v\lte_\alpha w$ if and only if there exists $n\ge
0$ with $v^{\alpha^n} = w$.
For $v\in V$, let $\mathcal L_\alpha(v) = \{ w \in W : w\lte_\alpha v\}$ be
the associated lower set.
We record some basic properties of $\lte_\alpha$.
Recall that given a preorder $\olte$, an element $x$ is \emph{maximal} if $x
\olte y$ implies $y \olte x$.

\begin{prop}
  \label{prop:animation_order}
  Let $\Gamma = (V,E)$ be a graph,
  $\alpha \in \Animations(\Gamma)$, and $v,w \in V$.
  Then:
  \begin{enumerate}[(i)]
  \item \label{prop:animation_order1}

    $v$ is a $\lte_\alpha$-maximal element of $V$ if and only if
    $v^\alpha = \udef$ or $v$ is an $\alpha$-periodic point.
  \item \label{prop:animation_order2}
    $v$ and $w$ are $\lte_\alpha$-comparable if and only if
    $\mathcal L_\alpha(v) \cap \mathcal L_\alpha(w) \not= \emptyset$.
  \item \label{prop:animation_order3}
    The preorder $\lte_\alpha$ is a partial order if and only if  $\alpha \in
    \Fixed(\Gamma)$.
    If $\Gamma$ is loopless, then the latter condition is equivalent to
    $\alpha \in \Nilpotents(\Gamma)$.
  \item \label{prop:animation_order4}
    If $\alpha \in \Nilpotents(\Gamma)$, then $\cover_\alpha$ is the covering
    relation associated with $\lte_\alpha$.
  \item \label{prop:animation_order5}
    Suppose that $\alpha \in \Fixed(\Gamma)$.
    Then, given $v\in V$, there exists a unique $\lte_\alpha$-maximal element
    $z\in V$ with $v \lte_\alpha z$.
  \end{enumerate}
\end{prop}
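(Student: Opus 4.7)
The plan is to work throughout from the concrete characterization $v \lte_\alpha w$ iff there exists $n \ge 0$ with $v^{\alpha^n} = w$, which is what the reflexive transitive closure of $\cover_\alpha$ unfolds to. Each of the five parts then becomes a short manipulation of iterates of $\alpha$, using finiteness of $V$ where needed.

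For \ref{prop:animation_order1} I would argue both directions directly: if $v^\alpha = \udef$, then no $w \neq v$ satisfies $v \lte_\alpha w$, so $v$ is trivially maximal; if $v$ is $\alpha$-periodic of period $N$, then every $w = v^{\alpha^n}$ with $v \lte_\alpha w$ also satisfies $w \lte_\alpha v$ via $v = w^{\alpha^{N-n}}$. Conversely, if $v^\alpha \in V$ and $v$ is not periodic, then $v \lte_\alpha v^\alpha$ but $v^\alpha \not\lte_\alpha v$ (otherwise $v^{\alpha^{n+1}} = v$ would make $v$ periodic). For \ref{prop:animation_order2}, comparability gives the intersection point $\min(v,w)$, while a common element $u$ of $\mathcal L_\alpha(v) \cap \mathcal L_\alpha(w)$ writes $v = u^{\alpha^m}$ and $w = u^{\alpha^n}$, so iterating the smaller exponent to the larger shows $v$ and $w$ are comparable.

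For \ref{prop:animation_order3}, antisymmetry fails exactly when distinct $v,w$ satisfy $w = v^{\alpha^m}$ and $v = w^{\alpha^n}$ with $m,n \ge 1$, which produces a periodic orbit of length $m+n > 1$; this is precisely what $\alpha \in \Fixed(\Gamma)$ forbids. The loopless refinement is immediate since a fixed point would force a loop $v \sim v$, so $\Fixed(\Gamma) = \Nilpotents(\Gamma)$ in that case. For \ref{prop:animation_order4} with $\alpha$ nilpotent, the key input is that $v^{\alpha^k} = v^{\alpha^\ell}$ for $k < \ell$ would make $v^{\alpha^k}$ periodic, hence impossible: if $v \lte_\alpha u \lte_\alpha w = v^\alpha$ with $u \neq v$, writing $u = v^{\alpha^m}$ ($m \ge 1$) and $w = u^{\alpha^n}$ gives $v^{\alpha^{m+n}} = v^\alpha$, forcing $n = 0$ and $u = w$; conversely, if $w = v^{\alpha^m}$ with $m \ge 2$, then $v^\alpha$ is strictly between, contradicting the covering property.

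The main step, and the one I would treat most carefully, is \ref{prop:animation_order5}, since this underpins the definition of $\last_\alpha(v)$ used later. For existence I would iterate $\alpha$ on $v$: finiteness of $V$ forces the sequence $v, v^\alpha, v^{\alpha^2}, \dotsc$ either to hit $\udef$ or to enter a periodic cycle. In the former case, the last vertex before $\udef$ is $\lte_\alpha$-maximal by \ref{prop:animation_order1}; in the latter case, since $\alpha \in \Fixed(\Gamma)$ the cycle has length one, so the sequence lands on a fixed point, again maximal by \ref{prop:animation_order1}. Uniqueness then drops out from combining \ref{prop:animation_order2} and \ref{prop:animation_order3}: if two maximal elements $z, z'$ both lie above $v$, the common element $v \in \mathcal L_\alpha(z) \cap \mathcal L_\alpha(z')$ forces them to be $\lte_\alpha$-comparable, and the partial-order property established in \ref{prop:animation_order3} upgrades this to $z = z'$.
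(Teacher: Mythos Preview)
Your proposal is correct and follows essentially the same approach as the paper's proof, which dispatches parts \ref{prop:animation_order1}, \ref{prop:animation_order3}, and \ref{prop:animation_order4} as ``Clear'' and argues \ref{prop:animation_order2} and \ref{prop:animation_order5} exactly as you do (common element in the lower sets gives comparability via the larger iterate; existence by iterating until $\udef$ or a fixed point, uniqueness via \ref{prop:animation_order2}--\ref{prop:animation_order3}). Your write-up simply supplies the details the paper omits; the only cosmetic imprecision is writing $v = w^{\alpha^{N-n}}$ in part~\ref{prop:animation_order1}, which tacitly assumes $n \le N$, but this is harmless since one may first reduce $n$ modulo the period.
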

\begin{proof}
  \begin{enumerate*}
  \item
    Clear.
  \item
    If $v \lte_\alpha w$, say, then
    $v \in \mathcal L_\alpha(v) \cap \mathcal L_\alpha(w)$.
    Conversely, suppose that
    $r \in \mathcal L_\alpha(v) \cap \mathcal L_\alpha(w)$.
    Then there are $m,n\ge 0$ with $v = r^{\alpha^m}$ and $w = r^{\alpha^n}$.
    Suppose, without loss of generality, that $m \le n$.
    Then $w = v^{\alpha^{n-n}}$ and therefore $v \lte_\alpha w$.
  \item
    Clear.
  \item
    Clear.
  \item
    Given $v$, there exists a least $n \ge 0$ such that $\alpha$ sends $z :=
    v^{\alpha^n}$ to $\udef$ or to itself.
    Then $z$ is $\lte_\alpha$-maximal with $v \lte_\alpha z$.
    The uniqueness of $z$ follows from~\ref{prop:animation_order2}--\ref{prop:animation_order3}.
  \end{enumerate*}
\end{proof}

In the setting of
Proposition~\ref{prop:animation_order}\ref{prop:animation_order5},
we write $\last_\alpha(v) = z$, where $z$ is the unique
$\lte_\alpha$-maximal element of $V$ with $v\lte_\alpha z$.
Recall from \S\ref{s:selectors} that $\Dom(\alpha)$ denotes the domain of
definition of $\alpha$.
Given a nilpotent animation $\alpha$ of $\Gamma = (V,E)$, the
$\lte_\alpha$-maximal elements are precisely the elements of $V \setminus
\Dom(\alpha)$.
We thus have the following.

\begin{lemma}
  \label{lem:nilpotent_number_maximals}
  Let $\Gamma = (V,E)$ be a graph with $n$ vertices.
  Let $\alpha \in \Nilpotents_k(\Gamma)$.
  Then the number of $\lte_\alpha$-maximal elements of $V$ is $n-k$.
  \qed
\end{lemma}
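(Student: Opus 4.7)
The plan is to read this off directly from Proposition~\ref{prop:animation_order}\ref{prop:animation_order1}, which characterises the $\lte_\alpha$-maximal elements of $V$ as precisely those $v\in V$ with $v^\alpha = \udef$ or $v$ an $\alpha$-periodic point.

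First I would note that since $\alpha\in\Nilpotents_k(\Gamma)$ is nilpotent, the only $\alpha$-periodic point on $V_\udef$ is $\udef$ itself; equivalently, no point of $V$ is $\alpha$-periodic. Applying Proposition~\ref{prop:animation_order}\ref{prop:animation_order1} in this situation therefore identifies the set of $\lte_\alpha$-maximal elements with $V\setminus\Dom(\alpha)$.

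Next I would invoke the definition of degree: $\deg(\alpha) = \card{\Dom(\alpha)} = k$ by hypothesis. Hence $\card{V\setminus\Dom(\alpha)} = n-k$, which gives the claim. There is no real obstacle here; the lemma is just a bookkeeping consequence of the preceding proposition together with the definition of $\deg(\alpha)$, and the brevity is appropriate because the substance already lies in Proposition~\ref{prop:animation_order}.
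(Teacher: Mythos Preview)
Your argument is correct and matches the paper's approach exactly: the sentence immediately preceding the lemma observes that for a nilpotent animation the $\lte_\alpha$-maximal elements are precisely those of $V\setminus\Dom(\alpha)$, and the \qed reflects that counting then gives $n-k$.
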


\subsection{New nilpotent animations from old ones,  I}
\label{ss:new_nilpotents}

Let $\Gamma = (V,E)$ be a loopless graph.
In the next section, we will carry out various types of manipulations
applied to nilpotent animations with the goal of optimising various
parameters.
The following two lemmas are basic steps of these manipulations.
The first lemma tells us precisely when redefining (or extending) a nilpotent
animation gives rise to an animation which is again nilpotent.
Recall that $\sim$ indicates adjacency in graphs.

\begin{lemma}[Redefining nilpotent animations: minor surgery]
  \label{lem:redefine_nilpotent}
  Let $\alpha \in \Nilpotents(\Gamma)$.
  Let $v,w\in V$ with $v\sim w$.
  Let $\beta = \alpha[v\subs w] \in \Animations(\Gamma)$.
  (We emphasise that we do not require that $v^\alpha \not=\udef$
  so $\beta$ might be an extension of $\alpha$.)
  Then $\beta \in \Nilpotents(\Gamma)$ if and only if $w \notlte_\alpha v$.
\end{lemma}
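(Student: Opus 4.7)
The animation $\beta$ agrees with $\alpha$ on every vertex except $v$, where $v^\beta = w$; note $v \neq w$ since $\Gamma$ is loopless and $v \sim w$. Nilpotency of $\beta$ is equivalent to the absence of a $\beta$-periodic point in $V$, and the whole argument amounts to tracking what happens to forward orbits each time they pass through the altered vertex~$v$.

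For the ``only if'' direction I argue contrapositively: suppose $w \lte_\alpha v$ and pick the minimal $n \ge 0$ with $w^{\alpha^n} = v$. Since $w \neq v$ we have $n \ge 1$, and by minimality none of $w, w^\alpha, \dotsc, w^{\alpha^{n-1}}$ equals $v$, so $\beta$ acts exactly as $\alpha$ along this initial segment and $w^{\beta^n} = w^{\alpha^n} = v$. Combined with $v^\beta = w$ this yields $v^{\beta^{n+1}} = v$, so $v$ is a $\beta$-periodic point and $\beta \notin \Nilpotents(\Gamma)$.

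For the ``if'' direction, assume $w \notlte_\alpha v$ and let $u \in V$ be arbitrary; I trace the forward $\beta$-orbit of $u$. If this orbit never visits $v$, then $\beta$ and $\alpha$ coincide along it, so it terminates at $\udef$ by nilpotency of $\alpha$. Otherwise, let $k$ be minimal with $u^{\beta^k} = v$; then $u^{\beta^{k+1}} = w$, and the hypothesis $w \notlte_\alpha v$ means $v$ never occurs in the $\alpha$-orbit of $w$, so from step $k+1$ onward $\beta$-iteration coincides with $\alpha$-iteration started at $w$ and must terminate at $\udef$ by nilpotency of~$\alpha$.

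The only point that demands care is the minimality of $n$ in the first direction: without it, the prefix of $w$'s $\alpha$-orbit could itself pass through $v$, where $\beta$ deviates from $\alpha$, and the identity $w^{\beta^n} = v$ would fail. Apart from this minor bookkeeping, the proof is a direct chase along iterates based on the single point of discrepancy between $\alpha$ and~$\beta$.
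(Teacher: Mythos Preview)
Your proof is correct and follows essentially the same approach as the paper's: both hinge on the observation that $\beta$ deviates from $\alpha$ only at $v$, and use minimality to ensure $\beta$ tracks $\alpha$ along initial orbit segments. The paper organises the argument through a chain of auxiliary claims (first reducing non-nilpotency of $\beta$ to $\beta$-periodicity of $v$, then to $w \lte_\beta v$, then to $w \lte_\alpha v$), whereas you argue each direction directly by tracing orbits, but the underlying ideas are identical.
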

\begin{proof}
  First note that since $\Gamma$ is loopless and $v\sim w$,
  we have $v \not= w$.
  We will prove the lemma through a series of auxiliary claims and steps.
  \begin{enumerate}[(a)]
  \item \label{lem:redefine_nilpotent1}
    We first claim that $\beta\notin \Nilpotents(\Gamma)$ if and only if $v$ is
    $\beta$-periodic.

    Clearly, if $v$ is $\beta$-periodic, then $\beta \notin
    \Nilpotents(\Gamma)$.
    Conversely, suppose that $\beta\notin \Nilpotents(\Gamma)$.
    Then there exists a $\beta$-periodic vertex, $u \in V$ say.
    Let $k \ge 1$ with $u^{\alpha^k} = u$.
    Since $\alpha \in \Nilpotents(\Gamma)$, the vertex $u$ is not
    $\alpha$-periodic.
    We conclude that the sequence
    \[
      u, u^\beta, u^{\beta^2},\dotsc,u^{\beta^{k-1}}, u^{\beta^k} = u
    \]
    must contain $v$.
    In particular, $v$ is $\beta$-periodic.
  \item \label{lem:redefine_nilpotent2}
    Next, we note that $v$ is $\beta$-periodic if and only if $w \lte_\beta v$.
    Indeed, as $v^\beta = w$, we see that $v$ is
    $\beta$-periodic if and only if repeated application of $\beta$ takes $w$
    to $v$ or, equivalently, 
    $w \lte_\beta v$.
  \item \label{lem:redefine_nilpotent3}
    We claim that if $w \lte_\beta v$, then also $w \lte_\alpha v$.

    Suppose that $w \lte_\beta v$.
    Let $k\ge 0$ be minimal with $v = w^{\beta^k}$.
    Since $v\not= w$, we have $k\ge 1$.
    By the minimality of $k$, each of $w, w^\beta,\dotsc,w^{\beta^{k-1}}$ is
    distinct from $v$.
    Thus, $\beta$ acts like $\alpha$ on these points whence $w\lte_\alpha v$.
  \item \label{lem:redefine_nilpotent4}
    Suppose that $w\lte_\alpha v$.
    By construction, $\beta$ acts like $\alpha$ on the points
    $w,w^\alpha,\dotsc$ preceding $v$.
    Hence, $w \lte_\beta v \lte_\beta w$ whence $\beta
    \notin\Nilpotents(\Gamma)$
    by Proposition~\ref{prop:animation_order}\ref{prop:animation_order3}.
    This proves the ``only if'' part.
  \item \label{lem:redefine_nilpotent5}
    Suppose that $w\notlte_\alpha v$.
    By step \ref{lem:redefine_nilpotent3}, we then have $w \notlte_\beta v$.
    By step \ref{lem:redefine_nilpotent2}, $v$ is then \underline{not}
    $\beta$-periodic whence $\beta\in\Nilpotents(\Gamma)$ follows from step
    \ref{lem:redefine_nilpotent1}.
    This proves the ``if part''.
    \qedhere
  \end{enumerate}
\end{proof}    

\begin{rem}
  Expressed in the language of in-forests from \S\ref{ss:in-forests},
  Lemma~\ref{lem:redefine_nilpotent} asserts the following.
  Let $\Phi$ be an in-forest in $\Gamma$.
  Let $v$ and $w$ be adjacent vertices of $\Gamma$.
  Let $\Phi'$ be the oriented graph obtained from $\Phi$ by deleting,
  if it exists,   the (necessarily unique) edge with source $v$ and by
  inserting an oriented edge $v\to w$.
  Then $\Phi'$ is an in-forest if and only if $\Phi$ does not contain a
  directed path from $w$ to $v$.
\end{rem}

Given a nilpotent animation $\alpha$ with $v^\alpha = w$, let $z =
\last_\alpha(w)$.
Suppose that $w \not= z$ and $v \sim z \sim w$.
Then the following lemma allows us to construct an explicit
$\beta \in \Nilpotents(\Gamma)$ with $v \cover_\beta z \cover_\beta w$ and
such that $\mon\alpha = \mon\beta$.

\begin{lemma}[Redefining nilpotent animations: bypass surgery I]
  \label{lem:bypass_nilpotent}
  Let $\alpha \in \Nilpotents(\Gamma)$.
  Let $v,w,y,z\in V$ with $z^\alpha = \udef$.
  Suppose that
  \[
    v \cover_\alpha w \lte_\alpha y \cover_\alpha z
  \]
  and $v \sim z \sim w$.
  (Note that then necessarily $\#\{v,y,z\} = 3$.)
  Let
  \[
    \beta = \alpha[v \subs z, \, y \subs \udef, \, z \subs w].
  \]
  Then $\beta \in \Nilpotents(\Gamma)$ and $\mon\alpha = \mon\gamma$.
\end{lemma}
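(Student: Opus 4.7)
The plan is to build $\beta$ from $\alpha$ by three successive single‑point modifications, invoking Lemma~\ref{lem:redefine_nilpotent} at each step where a new value is assigned and using the trivial observation that shrinking the domain of a nilpotent animation preserves nilpotency (orbits can only become shorter).

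First I would set $\alpha_1 = \alpha[y\subs\udef]$, which is automatically nilpotent. Next let $\alpha_2 = \alpha_1[z\subs w]$: the adjacency $z\sim w$ is given, and to apply Lemma~\ref{lem:redefine_nilpotent} I must verify $w\notlte_{\alpha_1} z$. Because $\alpha$ is nilpotent, its in‑forest structure makes the chain $w\to w^\alpha\to\dotsb\to y\to z$ the unique $\lte_\alpha$‑sequence from $w$ reaching $z$; after removing $y$ from the domain, this sequence under $\alpha_1$ terminates at $y$, so $z$ is unreachable from $w$. (Here I also need $z\notin\Dom(\alpha)$ to exclude the degenerate possibility $z = w^{\alpha^k}$ for some $0 \le k < n$, which would force $z\in\Dom(\alpha)$.) Finally set $\alpha_3 = \alpha_2[v\subs z]$: the adjacency $v\sim z$ is given, and I must check $z\notlte_{\alpha_2} v$. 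Under $\alpha_2$ the iterates of $z$ trace $z\to w\to w^\alpha\to\dotsb\to y$; none of these equals $v$, because $v\ne z$ (since $v\in\Dom(\alpha)$ but $z\notin\Dom(\alpha)$), and any equation $v = w^{\alpha^k}$ with $0\le k\le n-1$ would combine with $v\cover_\alpha w$ to yield $w\lte_\alpha v\lte_\alpha w$, whence $v=w$ by Proposition~\ref{prop:animation_order}\ref{prop:animation_order3}, contradicting looplessness. Thus Lemma~\ref{lem:redefine_nilpotent} gives $\alpha_3\in\Nilpotents(\Gamma)$, and by construction $\alpha_3 = \beta$.

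The monomial identity is then a direct bookkeeping exercise: only the vertices $v$, $y$, $z$ contribute differently in $\mon\alpha$ and $\mon\beta$. Namely, $v$ contributes $X_w$ in $\alpha$ and $X_z$ in $\beta$; $y$ contributes $X_z$ in $\alpha$ and nothing in $\beta$; $z$ contributes nothing in $\alpha$ and $X_w$ in $\beta$. The factors $X_w$ and $X_z$ therefore occur with identical multiplicities on both sides, so $\mon\alpha = \mon\beta$.

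The main obstacle is the second nilpotency check, i.e.\ ruling out $v$ on the $\alpha$‑chain from $w$ to $z$; but this reduces cleanly to antisymmetry of $\lte_\alpha$ for nilpotent animations combined with the domain/codomain distinction between $v$ and $z$. Intuitively the construction simply reroutes the directed path $v\to w\to\dotsb\to y\to z$ into $v\to z\to w\to\dotsb\to y$ in the associated in‑forest, making $y$ a new sink in place of $z$; everything else is routine verification.
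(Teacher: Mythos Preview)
Your proof is correct and follows essentially the same strategy as the paper: build $\beta$ from $\alpha$ by successive single-point modifications, invoking Lemma~\ref{lem:redefine_nilpotent} for each new assignment. The only difference is in the order of operations: the paper first sets $\alpha' = \alpha[v\subs\udef,\,y\subs\udef]$ (removing \emph{both} $v$ and $y$), so that at each subsequent step it can simply invoke Proposition~\ref{prop:animation_order}\ref{prop:animation_order5} (uniqueness of the $\lte$-maximal element above a vertex) to verify the hypothesis of Lemma~\ref{lem:redefine_nilpotent}, whereas you keep $v\to w$ in place until the final step and instead trace the $\alpha_2$-chain from $z$ directly, using antisymmetry of $\lte_\alpha$ to rule out $v$ appearing on it. Both routes work; the paper's is marginally cleaner, yours is marginally more self-contained.
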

\begin{proof}
  This follows by repeated application of Lemma~\ref{lem:redefine_nilpotent}
  as follows.
  First, let $\alpha' = \alpha[v \subs \udef, y \subs \udef]$.
  As $\alpha$ belongs to $\Nilpotents(\Gamma)$, so does $\alpha'$.
  Note that $v$, $y$, and $z$ are distinct $\lte_{\alpha'}$-maximal elements
  of $V$.
  By construction, we have $w \lte_{\alpha'} y$.
  Proposition~\ref{prop:animation_order}\ref{prop:animation_order5}
  thus shows that $w\notlte_{\alpha'} z$.
  Lemma~\ref{lem:redefine_nilpotent} therefore shows that
  $\alpha'' = \alpha'[z \subs w] \in \Nilpotents(\Gamma)$.
  Next, $v$ and $y$ are distinct $\lte_{\alpha''}$-maximal elements of $V$ and
  $z \lte_{\alpha''} y$.
  Again, Proposition~\ref{prop:animation_order}\ref{prop:animation_order5}
  shows that $z \notlte_{\alpha''} v$.
  By applying Lemma~\ref{lem:redefine_nilpotent} to $\alpha''$,
  we thus obtain $\beta = \alpha''[v \subs z] \in \Nilpotents(\Gamma)$.
\end{proof}

\begin{rem}
  In terms of in-forests, Lemma~\ref{lem:bypass_nilpotent} asserts the
  following.
  We use the same notation as in Example~\ref{ex:in-forest}.
  Suppose that the following is part of an in-forest $\Phi$ in $\Gamma$.
  
  \begin{center}
    \begin{tikzpicture}
      \tikzstyle{Grey Vertex}=[fill=lightgray, draw=black, shape=circle, scale=0.6]
      \tikzstyle{Sink}=[fill=darkgray, text=white, draw=black, shape=rectangle, scale=0.9]

      \node [style=Grey Vertex] (1) at (0,0) {$v$};
      \node [style=Grey Vertex] (2) at (1.5,0) {$w$};
      \node  (3) at (3,0) {$\dotso$};
      \node [style=Grey Vertex] (4) at (4.5,0) {$y$};
      \node [style=Sink] (5) at (6,0) {$z$};

      \draw[->,red] (1) to (2);
      \draw[->,red] (2) to (3);
      \draw[->,red] (3) to (4);
      \draw[->,red] (4) to (5);
      \end{tikzpicture}
    \end{center}
    Suppose that $v \sim z \sim w$.
    Then the oriented graph obtained from $\Phi$ by rerouting as follows is an
    in-forest in $\Gamma$ with the same associated monomial as $\Phi$.

    \begin{center}
    \begin{tikzpicture}
      \tikzstyle{Grey Vertex}=[fill=lightgray, draw=black, shape=circle, scale=0.6]
      \tikzstyle{Sink}=[fill=darkgray, text=white, draw=black, shape=rectangle, scale=0.9]

      \node [style=Grey Vertex] (1) at (0,0) {$v$};
      \node [style=Grey Vertex] (2) at (1.5,0) {$w$};
      \node  (3) at (3,0) {$\dotso$};
      \node [style=Sink] (4) at (4.5,0) {$y$};
      \node [style=Grey Vertex] (5) at (6,0) {$z$};

      \draw[dashed] (1) to (2);
      \draw[->,red] (2) to (3);
      \draw[->,red] (3) to (4);
      \draw[dashed] (4) to (5);

      \draw[->,red] (1) to[out=30, in=150] (5);
      \draw[->,red] (5) to[out=-135, in=-45] (2);
      \end{tikzpicture}
    \end{center}
\end{rem}

\subsection{Ordering monomials relative to a point}
\label{ss:ordering_monomials}

In addition to the partial orders $\lte_\alpha$ associated with nilpotent
animations $\alpha$ from \S\ref{ss:animations_order_vertices}, we also
consider partial orders $\lte_u$ defined by a choice of a distinguished
vertex~$u$.

Let $V$ be a finite set.
Recall that $\fO$ denotes a compact \DVR{} as in~\S\ref{ss:notation}.
Following \cite[\S 4.2]{cico},
for a subset $\mathscr{S} \subset \RR V$, we write
$\mathscr{S}(\fO) = \{ x\in \fO V : \ord(x) \in \mathscr{S}\}$.
We let $\cdot$ denote the inner product $x\cdot y = \sum\limits_{v\in V} x_v y_v$ on
$\RR V$.
Recall that the \emph{dual cone} of $\mathscr S\subset \RR V$ is
\[
  \mathscr{S}^\vee = \left\{
    x \in \RR V : x \cdot y \ge 0 \text{ for all }
    y\in \mathscr{S}
    \right\}.
\]

Let $u \in V$.
Define
\[
  \Cone_u V = \left\{
    x \in \RR_{\ge 0} V : x_u \le x_v \text{ for all } v\in V
  \right\}
\]
We define a binary relation $\lte_u$ on $\ZZ V$ by letting $a \lte_u b$ if and only
if $b - a \in (\Cone_u V)^\vee$.

\begin{prop}
  \label{prop:vertex_order}
  \quad
  \begin{enumerate}[(i)]
  \item
    \label{prop:vertex_order1}
    $\lte_u$ is a partial order on $\ZZ V$.
  \item
    \label{prop:vertex_order2}
    Let $a,b\in \ZZ V$ with $a \lte_u b$.
    Let $x\in \Cone_u V(\fO)$.
    Then $x^b / x^a \in \fO$.
  \item
    \label{prop:vertex_order3}
    Let $a,b \in \ZZ V$ with $a_v \le b_v$ for all $v\in V\setminus\{u\}$ and
    $a_u \le b_u + \sum\limits_{v\in V\setminus\{u\}}(b_v - a_v)$.

    Then $a \lte_u b$.
  \end{enumerate}
\end{prop}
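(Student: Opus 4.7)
My plan is to handle the three parts in order, with (iii) providing the computational core that feeds back into understanding why $(\Cone_u V)^\vee$ is a reasonable object at all.

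For part \ref{prop:vertex_order1}, reflexivity and transitivity of $\lte_u$ are immediate from the fact that $(\Cone_u V)^\vee$ is a convex cone containing $0$ and closed under addition. The only nontrivial point is antisymmetry, which amounts to showing $(\Cone_u V)^\vee$ is pointed. For this I would verify that $\Cone_u V$ spans $\RR V$ as a vector space: concretely, I would identify the nonnegative generators of $\Cone_u V$ as $\mathbf{1} := \sum_{v\in V}\std v$ together with $\std v$ for $v\in V\setminus\{u\}$. Indeed, any $y\in\Cone_u V$ admits the manifestly nonnegative decomposition
\[
  y = y_u \mathbf{1} + \sum_{v\in V\setminus\{u\}}(y_v - y_u)\std v,
\]
and these generators span $\RR V$ since $\std u = \mathbf{1} - \sum_{v\neq u}\std v$. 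Pointedness of the dual follows, hence antisymmetry.

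For part \ref{prop:vertex_order2}, the point is simply that $x\in\Cone_u V(\fO)$ means $\ord(x)\in\Cone_u V$ (in particular each $x_v$ is nonzero, so $x^b/x^a$ is a well-defined element of $K$), and then
\[
  \ord\!\left(\frac{x^b}{x^a}\right) = \sum_{v\in V}(b_v-a_v)\,\ord(x_v) = (b-a)\cdot \ord(x) \ge 0
\]
by the defining property of $(\Cone_u V)^\vee$ applied to $b-a$ and $\ord(x)$.

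For part \ref{prop:vertex_order3}, write $c = b-a$. The two hypotheses translate to $c_v\ge 0$ for $v\in V\setminus\{u\}$ and $\sum_{v\in V} c_v \ge 0$, i.e.\ $c\cdot\std v\ge 0$ for $v\neq u$ and $c\cdot\mathbf{1}\ge 0$. Using the decomposition of an arbitrary $y\in\Cone_u V$ from part (i), I get
\[
  c\cdot y = y_u\,(c\cdot\mathbf{1}) + \sum_{v\in V\setminus\{u\}}(y_v - y_u)\,(c\cdot\std v) \ge 0,
\]
since every factor in each term is nonnegative. Hence $b-a = c\in(\Cone_u V)^\vee$, i.e.\ $a\lte_u b$. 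No step is genuinely an obstacle; the proof is essentially a direct verification, with part (i) the only place requiring a moment's care (checking pointedness through an explicit generating set for $\Cone_u V$), and this same generating set is exactly what makes part (iii) a one-line calculation.
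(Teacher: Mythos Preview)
Your proof is correct and essentially matches the paper's: both identify $\std v$ (for $v\neq u$) and $\mathbf{1}=\sum_{v\in V}\std v$ as the key elements of $\Cone_u V$, use them to establish antisymmetry in (i), and verify the dual-cone condition directly in (iii). The only cosmetic difference is that you package (i) in cone-theoretic language (full-dimensional cone $\Rightarrow$ pointed dual) and then reuse the explicit generator decomposition of $y\in\Cone_u V$ to compute $c\cdot y$ in (iii), whereas the paper tests against $\std v$ and $\mathbf{1}$ by hand in (i) and proves (iii) via the short inequality chain $(a_u-b_u)x_u \le \sum_{v\neq u}(b_v-a_v)x_u \le \sum_{v\neq u}(b_v-a_v)x_v$.
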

\begin{proof}
  \quad
  \begin{enumerate}
  \item
    Only the antisymmetry of $\lte_u$ needs a justification.
    Suppose that $a\in \ZZ V$ with $a,-a \in (\Cone_u V)^\vee$.
    Since $\std v \in \Cone_u V$ for $v \in V\setminus\{u\}$, we have
    $a_v = 0$ for all $v \in V \setminus\{u\}$.
    Next, $z := \sum_{v\in V} \std v\in \Cone_u V$ and thus $a_u = a \cdot z =
    0$.
  \item
    For every $c\in \ZZ V$, we have $\ord(x^c) = \ord(x) \cdot c$.
    Hence, if $a \lte_u b$, then $\ord(x^{b-a}) = \ord(x) \cdot (b-a) \ge 0$ whence
    $\ord(x^a) \le \ord(x^b)$.
  \item
    Let $x\in \Cone_u V$ be arbitrary.
    Then
    \[
      (a_u - b_u)x_u \le
      \sum_{v\in V\setminus\{u\}} (b_v - a_v) x_u \le
      \sum_{v\in V\setminus\{u\}} (b_v - a_v) x_v
    \]
    whence $(b-a) \cdot x \ge 0$.
    Thus, $b-a \in (\Cone_u V)^\vee$.
    \qedhere
  \end{enumerate}
\end{proof}

We also write $\lte_u$ for the partial order on Laurent monomials in $X_V$
given by $X^a \lte_u X^b$ if and only if $a \lte_u b$.

\begin{prop}
  \label{prop:ordering_monomials}
  Let $u \in V$.
  \begin{enumerate}[(i)]
  \item
    \label{prop:ordering_monomials1}
    Let $w \in V$.
    Let $m$ be an arbitrary Laurent monomial in $X_V$.
    Then $X_u  m \lte_u X_w m$.
    In particular, $X_u^{\phantom 1}X_w^{-1} m \lte_u m$.
  \item 
    \label{prop:ordering_monomials2}
      Let $\fO$ be a compact \DVR{}.
      Let $x\in \fO V$ with $\ord(x_u) \le \ord(x_v)$
      for all $v\in V$ and such that $\prod\limits_{v\in V} x_v \not= 0$.
      Let $e, f \in \NN_0 V$ with $X_V^e \lte_u X_V^f$.
      Then $x^e \mid x^f$.
  \end{enumerate}
\end{prop}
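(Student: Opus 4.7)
The proof is a direct application of Proposition~\ref{prop:vertex_order}, so the plan is to reduce each part to the machinery established there.

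For part~\ref{prop:ordering_monomials1}, I first observe that the relation $\lte_u$ on $\ZZ V$ is translation-invariant, since $a \lte_u b$ is defined via membership of $b - a$ in the dual cone $(\Cone_u V)^\vee$. Writing $m = X_V^c$, the claim $X_u m \lte_u X_w m$ is therefore equivalent to $\std u \lte_u \std w$. When $w = u$ this is trivial; when $w \neq u$, I verify the sufficient condition of Proposition~\ref{prop:vertex_order}\ref{prop:vertex_order3} with $a = \std u$ and $b = \std w$. The coordinatewise inequality $a_v = 0 \le b_v$ holds for every $v \in V\setminus\{u\}$, and at $u$ we have $a_u = 1$ while $b_u + \sum_{v \neq u}(b_v - a_v) = 0 + (b_w - a_w) = 1$, so $a_u \le b_u + \sum_{v \neq u}(b_v - a_v)$. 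The second assertion $X_u X_w^{-1} m \lte_u m$ is then immediate by translating the first by $-\std w$.

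For part~\ref{prop:ordering_monomials2}, I will first argue that $x \in \Cone_u V(\fO)$. The hypothesis $x \in \fO V$ together with $\prod_{v\in V} x_v \neq 0$ makes $\ord(x)$ a well-defined element of $\NN_0 V$, and the assumption $\ord(x_u) \le \ord(x_v)$ for all $v \in V$ places $\ord(x) \in \Cone_u V$. Applying Proposition~\ref{prop:vertex_order}\ref{prop:vertex_order2} with $(a,b) = (e,f) \in \ZZ V$ (for which $a \lte_u b$ is assumed) yields $x^f / x^e \in \fO$. Since $x^e$ is a nonzero element of $\fO$ (as each $x_v$ is nonzero), this rearranges to the desired divisibility $x^e \mid x^f$ in $\fO$.

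The main ``obstacle'' is essentially bookkeeping: both parts reduce immediately to statements already proved about the order $\lte_u$ once one unpacks the definitions. The only small point of care is verifying the sufficient condition in Proposition~\ref{prop:vertex_order}\ref{prop:vertex_order3} correctly in the unit-vector case used in part~\ref{prop:ordering_monomials1}, where the contribution from the index $w$ precisely compensates the deficit at the index $u$.
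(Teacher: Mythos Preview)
Your proof is correct and follows essentially the same approach as the paper's: both parts reduce directly to Proposition~\ref{prop:vertex_order}, with part~\ref{prop:ordering_monomials1} verified via the sufficient condition~\ref{prop:vertex_order3} and part~\ref{prop:ordering_monomials2} via~\ref{prop:vertex_order2}. The only cosmetic difference is that you invoke translation-invariance to reduce to $a=\std u$, $b=\std w$, whereas the paper carries the exponent of $m$ through the verification; the computations are otherwise identical.
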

\begin{proof}
  \quad
  \begin{enumerate}
  \item
    We may assume that $u \not= w$.
    Write $m = X_V^e$ for $e\in \ZZ V$.
    Let $a = \std u + e$ and $b = \std w + e$.
    Then
    \begin{itemize}
    \item
      $a_v = b_v$ for all $v \in V \setminus \{ u,w \}$,
    \item
      $a_u = e_u + 1$,
    \item
      $b_u = e_u$,
    \item
      $b_w = a_w + 1$, and
    \item
      $a_u = e_u + 1 = b_u + \sum\limits_{v\in V\setminus\{u\}} (b_v - a_v)$.
    \end{itemize}
    By Proposition~\ref{prop:vertex_order}\ref{prop:vertex_order3},
    $a\lte_u b$ and thus
    $X_u m = X_V^a \lte_u X_V^b = X_w m$.
    The final claim follows by replacing $m$ by $X_w^{-1} m$.
  \item
    We have $\ord(x) \in \Cone_u V(\fO)$ and $x^e,x^f\in \fO \setminus\{0\}$.
    The claim thus follows from Proposition~\ref{prop:vertex_order}\ref{prop:vertex_order2}.
    \qedhere
  \end{enumerate}
\end{proof}

\subsection{New nilpotent animations from old ones, II}
\label{ss:new_nilpotents}

Let $\Gamma = (V,E)$ be a loopless graph.
We record two lemmas in the spirit of
Lemmas~\ref{lem:redefine_nilpotent}--\ref{lem:bypass_nilpotent}, but with the
relation $\lte_u$ in place of equality of monomials.

\begin{lemma}[Redefining nilpotent animations: rerouting]
  \label{lem:reroute_nilpotent}
  Let $\alpha \in \Nilpotents(\Gamma)$.
  Let $u,v,z\in V$ with $u \lte_\alpha v \lt_\alpha z$ and $z^\alpha = \udef$.
  Suppose that $u \sim z$.
  Let $\beta = \alpha[v \subs \udef, z\subs u]$.
  Then $\beta \in \Nilpotents(\Gamma)$, $\deg(\beta) = \deg(\alpha)$, and
  $\mon\beta \lte_u \mon\alpha$.
\end{lemma}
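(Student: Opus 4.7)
The plan is to realise $\beta$ as a two-step modification of $\alpha$: first set $\alpha' := \alpha[v \subs \udef]$, and then $\beta = \alpha'[z \subs u]$. The first step preserves nilpotency, because any $\alpha'$-cycle would have to avoid $v$ (whose $\alpha'$-image is $\udef$) and would therefore also be an $\alpha$-cycle, contradicting $\alpha \in \Nilpotents(\Gamma)$. For the second step I plan to invoke Lemma~\ref{lem:redefine_nilpotent}, using the adjacency $u \sim z$ (equivalently $z \sim u$).

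The heart of the argument is verifying the side condition $u \notlte_{\alpha'} z$ required by Lemma~\ref{lem:redefine_nilpotent}. Since $\alpha$ is nilpotent, Proposition~\ref{prop:animation_order}\ref{prop:animation_order3}--\ref{prop:animation_order4} makes the forward $\alpha$-orbit of $u$ a uniquely determined finite chain, and the assumption $u \lte_\alpha v \lt_\alpha z$ puts $v$ strictly before $z$ on that chain. The animation $\alpha'$ agrees with $\alpha$ off $v$, so the $\alpha'$-orbit of $u$ follows the $\alpha$-orbit verbatim until it reaches $v$ and then terminates at $\udef$; in particular, $z$ never appears in it, so $u \notlte_{\alpha'} z$. (I would also remark that $u \neq z$, because otherwise $u \lte_\alpha v \lt_\alpha z = u$ would violate antisymmetry of $\lte_\alpha$ via Proposition~\ref{prop:animation_order}\ref{prop:animation_order3}, so the rewiring $z \subs u$ is genuine.) Lemma~\ref{lem:redefine_nilpotent} then yields $\beta \in \Nilpotents(\Gamma)$.

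The remaining two conclusions are bookkeeping. Since $v \lt_\alpha z$ forces $v^\alpha \neq \udef$, we have $v \in \Dom(\alpha)$, while by hypothesis $z \notin \Dom(\alpha)$; conversely $v \notin \Dom(\beta)$ and $z \in \Dom(\beta)$, and $\alpha$ and $\beta$ agree on $V \setminus \{v,z\}$. Hence $\Dom(\beta) = (\Dom(\alpha) \setminus \{v\}) \cup \{z\}$, giving $\deg(\beta) = \deg(\alpha)$. The same bookkeeping shows that $\mon\beta$ is obtained from $\mon\alpha$ by replacing the factor $X_{v^\alpha}$ by $X_u$, i.e.\ $\mon\beta = (X_u / X_{v^\alpha})\mon\alpha$, so the claim $\mon\beta \lte_u \mon\alpha$ follows immediately from Proposition~\ref{prop:ordering_monomials}\ref{prop:ordering_monomials1} applied with $m = \mon\alpha$ and $w = v^\alpha$. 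The only place where I expect to need any care is the verification of the side condition $u \notlte_{\alpha'} z$; everything else is a direct application of previously established machinery.
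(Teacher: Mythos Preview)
Your proposal is correct and follows essentially the same approach as the paper: set $\alpha' = \alpha[v \subs \udef]$, verify the side condition $u \notlte_{\alpha'} z$, apply Lemma~\ref{lem:redefine_nilpotent}, and finish with Proposition~\ref{prop:ordering_monomials}\ref{prop:ordering_monomials1}. The only cosmetic difference is in how the side condition is established: the paper observes that $v$ and $z$ are distinct $\lte_{\alpha'}$-maximal elements and invokes the uniqueness statement Proposition~\ref{prop:animation_order}\ref{prop:animation_order5} (since $u \lte_{\alpha'} v$), whereas you trace the forward $\alpha'$-orbit of $u$ directly; these are two phrasings of the same observation.
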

\begin{proof}
  Let $\alpha' = \alpha[v \subs \udef]$.
  Then $u \lte_{\alpha'} v$
  and since $v$ and $z$ are distinct $\lte_{\alpha'}$-maximal elements of $V$,
  Proposition~\ref{prop:animation_order}\ref{prop:animation_order5}
  implies that $u \notlte_{\alpha'} z$.
  Lemma~\ref{lem:redefine_nilpotent} thus shows that
  $\beta = \alpha'[z \subs u] = \alpha[v\subs \udef, z \subs u] \in
  \Nilpotents(\Gamma)$ and $\deg(\alpha) = \deg(\beta)$.
  By Proposition~\ref{prop:ordering_monomials}\ref{prop:ordering_monomials1},
  $\mon\beta  = X_u^{\phantom 1} X_{v^\alpha}^{-1}\mon\alpha \lte_u \mon\alpha$.
\end{proof}

\begin{rem}
  In terms of in-forests, Lemma~\ref{lem:reroute_nilpotent} asserts the
  following.
  Suppose that the following is part of an in-forest $\Phi$ in $\Gamma$ and
  that $u\sim z$.
  
  \begin{center}
    \begin{tikzpicture}
      \tikzstyle{Grey Vertex}=[fill=lightgray, draw=black, shape=circle, scale=0.6]
      \tikzstyle{Sink}=[fill=darkgray, text=white, draw=black, shape=rectangle, scale=0.9]

      \node [style=Grey Vertex] (1) at (0,0) {$u$};
      \node (2) at (1.5,0) {$\dotso$};
      \node [style=Grey Vertex] (3) at (3,0) {$v$};
      \node [style=Grey Vertex] (4) at (4.5,0) {$\phantom x$};
      \node  (5) at (6,0) {$\dotso$};
      \node [style=Sink] (6) at (7.5,0) {$z$};

      \draw[->, red] (1) to (2);
      \draw[->, red] (2) to (3);
      \draw[->, red] (3) to (4);
      \draw[->, red] (4) to (5);
      \draw[->, red] (5) to (6);
      \end{tikzpicture}
    \end{center}
    Then the oriented graph $\Phi'$ obtained from $\Phi$ by rerouting as
    follows is an in-forest in $\Gamma$ whose associated monomial is less than
    or equal w.r.t.\ $u$ than the monomial of $\Phi$ and of the same degree.

    \begin{center}
    \begin{tikzpicture}
      \tikzstyle{Grey Vertex}=[fill=lightgray, draw=black, shape=circle, scale=0.6]
      \tikzstyle{Sink}=[fill=darkgray, text=white, draw=black, shape=rectangle, scale=0.9]

      \node [style=Grey Vertex] (1) at (0,0) {$u$};
      \node (2) at (1.5,0) {$\dotso$};
      \node [style=Sink] (3) at (3,0) {$v$};
      \node [style=Grey Vertex] (4) at (4.5,0) {$\phantom x$};
      \node  (5) at (6,0) {$\dotso$};
      \node [style=Grey Vertex] (6) at (7.5,0) {$z$};

      \draw[->, red] (1) to (2);
      \draw[->, red] (2) to (3);
      \draw[dashed] (3) to (4);
      \draw[->, red] (4) to (5);
      \draw[->, red] (5) to (6);
      \draw[->,red] (6) to[out=150, in=30] (1);
    \end{tikzpicture}
    \end{center}
\end{rem}

\begin{lemma}[Redefining nilpotent animations: bypass surgery II]
  \label{lem:unit_bypass_nilpotent}
  Let $\alpha \in \Nilpotents(\Gamma)$.
  Let $u,i,v,z \in V$ with $i^\alpha = v$ and $z^\alpha = \udef$.
  Suppose that
  \[
    u \lte_\alpha i \cover_\alpha v \lt_\alpha z
  \]
  and $u \sim v \sim z$.
  (Note that then necessarily $\#\{i,v,z\} = 3$.)
  Let
  \[
    \beta = \alpha[i \subs \udef, v \subs u, z \subs v].
  \]
  Then $\beta \in \Nilpotents(\Gamma)$, $\deg(\beta) = \deg(\alpha)$, and
  $\mon\beta \lte_u \mon\alpha$.
\end{lemma}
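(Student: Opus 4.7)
The plan is to follow the template of Lemmas~\ref{lem:reroute_nilpotent} and~\ref{lem:bypass_nilpotent}, producing $\beta$ from $\alpha$ through a sequence of three elementary redefinitions and using Lemma~\ref{lem:redefine_nilpotent} at each stage to certify that nilpotency is preserved. Concretely, I will set
\[
  \alpha_1 = \alpha[i \subs \udef], \qquad \alpha_2 = \alpha_1[v \subs u], \qquad \alpha_3 = \alpha_2[z \subs v],
\]
so that $\alpha_3 = \beta$; the passage from $\alpha$ to $\alpha_1$ merely restricts the domain, so $\alpha_1 \in \Nilpotents(\Gamma)$ is automatic.

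The heart of the argument is the verification of the two side conditions of Lemma~\ref{lem:redefine_nilpotent}. For the passage to $\alpha_2$, the adjacency $u \sim v$ is given and I need $u \notlte_{\alpha_1} v$. Here I invoke that $\lte_\alpha$ is a genuine partial order (Proposition~\ref{prop:animation_order}\ref{prop:animation_order3}) to conclude that the unique $\alpha$-trajectory $u, u^\alpha, u^{\alpha^2}, \dotsc$ passes through $i$ immediately before reaching $v$; undefining $i$ severs this trajectory, so $u$ no longer reaches $v$ under $\alpha_1$. For the passage to $\alpha_3$, the adjacency $v \sim z$ is given and I need $v \notlte_{\alpha_2} z$. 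The $\alpha_2$-trajectory from $v$ reads $v, u, u^\alpha, \dotsc, i, \udef$, and every intermediate point lies $\lte_\alpha$-below $i$ and hence $\lte_\alpha$-below $v$; presence of $z$ on this list would force $z \lte_\alpha v$ and, together with $v \lt_\alpha z$, contradict the antisymmetry of $\lte_\alpha$.

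For the degree, the three steps contribute $-1$, $0$, and $+1$ respectively, so $\deg(\beta) = \deg(\alpha)$. For the monomial, $\Dom(\beta)$ and $\Dom(\alpha)$ differ only in that $i$ is removed and $z$ is added, while the values at the surviving vertices agree apart from $v^\beta = u$ in place of $v^\alpha$. A direct bookkeeping therefore yields
\[
  \mon\beta \;=\; X_u\, X_{v^\alpha}^{-1}\, \mon\alpha,
\]
after which a single application of Proposition~\ref{prop:ordering_monomials}\ref{prop:ordering_monomials1} with $w = v^\alpha$ gives $\mon\beta \lte_u \mon\alpha$.

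The main obstacle is the stage-three nilpotency check, which requires correctly tracking how the redefinition $v \subs u$ alters the trajectory issuing from $v$ and invoking antisymmetry of $\lte_\alpha$ in the right direction; once this is handled, the rest of the argument is either structurally identical to the preceding bypass-style lemmas or routine bookkeeping.
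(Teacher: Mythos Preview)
Your proof is correct and follows essentially the same approach as the paper: both build $\beta$ from $\alpha$ by repeated application of Lemma~\ref{lem:redefine_nilpotent} and finish with the identity $\mon\beta = X_u X_{v^\alpha}^{-1}\mon\alpha$ together with Proposition~\ref{prop:ordering_monomials}\ref{prop:ordering_monomials1}. The only difference is cosmetic: the paper first undefines \emph{both} $i$ and $v$ (setting $\alpha' = \alpha[i\subs\udef,\,v\subs\udef]$) so that $i$, $v$, $z$ become distinct $\lte_{\alpha'}$-maximal elements, and then invokes Proposition~\ref{prop:animation_order}\ref{prop:animation_order5} to certify the two $\notlte$ conditions, whereas you undefine only $i$ and argue directly by trajectory-tracking and antisymmetry of $\lte_\alpha$.
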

\begin{proof}
  Like Lemma~\ref{lem:bypass_nilpotent},
  this also follows by repeated application of Lemma~\ref{lem:redefine_nilpotent}.
  First, let $\alpha' = \alpha[i \subs \udef, v \subs \udef]$.
  Then $i$, $v$, and $z$ are distinct $\lte_{\alpha'}$-maximal elements of
  $V$.
  As in the proof of Lemma~\ref{lem:bypass_nilpotent}, we find that
  $u\notlte_{\alpha'} v$ whence $\alpha'' = \alpha'[v \subs u] \in
  \Nilpotents(\Gamma)$.
  Since $i$ and $z$ are distinct $\lte_{\alpha''}$-maximal elements with $v
  \lte_{\alpha''} u \lte_{\alpha''} i$, we obtain $\beta = \alpha''[z \subs v] \in
  \Nilpotents(\Gamma)$.
  Clearly, $\deg(\alpha) = \deg(\beta)$.
  Finally,
  by Proposition~\ref{prop:ordering_monomials}\ref{prop:ordering_monomials1},
  \[
    \mon\beta = \frac{X_u X_v}{X_{i^\alpha} X_{v^\alpha}}\mon\alpha =
    \frac{X_u}{X_{v^\alpha}}\mon\alpha\lte_u\mon\alpha.
    \qedhere
  \]
\end{proof}

\begin{rem}
  In terms of in-forests, Lemma~\ref{lem:bypass_nilpotent} asserts the
  following.
  Suppose that the following is part of an in-forest $\Phi$ in $\Gamma$ and
  that $u\sim v \sim z$.
  
  \begin{center}
    \begin{tikzpicture}
      \tikzstyle{Grey Vertex}=[fill=lightgray, draw=black, shape=circle, scale=0.6]
      \tikzstyle{Sink}=[fill=darkgray, text=white, draw=black, shape=rectangle, scale=0.9]

      \node [style=Grey Vertex] (1) at (0,0) {$u$};
      \node (2) at (1.5,0) {$\dotso$};
      \node [style=Grey Vertex] (3) at (3,0) {$i$};
      \node [style=Grey Vertex] (4) at (4.5,0) {$v$};
      \node [style=Grey Vertex] (5) at (6,0) {$\phantom x$};
      \node  (6) at (7.5,0) {$\dotso$};
      \node [style=Sink] (7) at (9,0) {$z$};

      \draw[->, red] (1) to (2);
      \draw[->, red] (2) to (3);
      \draw[->, red] (3) to (4);
      \draw[->, red] (4) to (5);
      \draw[->, red] (5) to (6);
      \draw[->, red] (6) to (7);
      \end{tikzpicture}
    \end{center}
    Then the oriented graph $\Phi'$ obtained from $\Phi$ by rerouting as
    follows is an in-forest in $\Gamma$ whose associated monomial is less than
    or equal w.r.t.\ $u$ than the monomial of $\Phi$ and of the same degree.

    \begin{center}
    \begin{tikzpicture}
      \tikzstyle{Grey Vertex}=[fill=lightgray, draw=black, shape=circle, scale=0.6]
      \tikzstyle{Sink}=[fill=darkgray, text=white, draw=black, shape=rectangle, scale=0.9]

      \node [style=Grey Vertex] (1) at (0,0) {$u$};
      \node (2) at (1.5,0) {$\dotso$};
      \node [style=Sink] (3) at (3,0) {$i$};
      \node [style=Grey Vertex] (4) at (4.5,0) {$v$};
      \node [style=Grey Vertex] (5) at (6,0) {$\phantom x$};
      \node  (6) at (7.5,0) {$\dotso$};
      \node [style=Grey Vertex] (7) at (9,0) {$z$};

      \draw[->, red] (1) to (2);
      \draw[->, red] (2) to (3);
      \draw[dashed] (3) to (4);
      \draw[dashed] (4) to (5);
      \draw[->, red] (5) to (6);
      \draw[->, red] (6) to (7);

      \draw[->,red] (4) to[out=150, in=30] (1);
      \draw[->,red] (7) to[out=-135, in=-45] (4);
    \end{tikzpicture}
    \end{center}
\end{rem}

\section{Animations of joins of graphs}
\label{s:plumbing}

Let $\Gamma_1$ and $\Gamma_2$ be loopless graphs.
Let $\Gamma = \Gamma_1 \join \Gamma_2$ be their join.
Let $V$ be the vertex set of $\Gamma$.
Theorem~\ref{thm:animations}\ref{thm:animations-}
suggests that in order to prove Theorem~\ref{thm:join}, we should
relate the nilpotent animations of $\Gamma$ to those of $\Gamma_1$ and $\Gamma_2$.
In this section, we accomplish just that.
Recall that by Proposition~\ref{prop:ask_Gamma_integral}, we may express
$W^-_\Gamma(q,q^{-s})$ in terms of the integral
$\int_{(\fO V)^\times \times \fP}\Gamma^-(s)$.
When considering the integrand in \eqref{eq:Gamma_integral},
for each $(x,z) \in (\fO V)^\times \times \fP$,
there exists some vertex $u \in V$ whose associated coordinate $x_u$ is a unit.
As a major ingredient of our proof of Theorem~\ref{thm:join},
instead of characterising all nilpotent animations of~$\Gamma$,
in this section, we exhibit a subset of \itemph{$u$-centred} animations (see
Definition~\ref{d:centred}) relative to an arbitrary but fixed vertex $u \in
V$, corresponding to a unit coordinate as above.
For each choice of $u$, the key features of $u$-centred animations are as
follows.
\begin{itemize}
\item
  Let $u$ belong to $\Gamma_i$. 
  Then $u$-centred animations of $\Gamma$ arise very explicitly from nilpotent
  animations of $\Gamma_i$.
  In particular, the monomial associated with a $u$-centred animation can be explicitly
  described in terms of the monomial associated with an associated animation
  of $\Gamma_i$ (Example~\ref{ex:small_centred} and Proposition~\ref{prop:large_centred_minors}).
\item
  Every nilpotent animation is ``dominated'' by a $u$-centred one of the same
  degree (Theorem~\ref{thm:centred_main}).
  This allows us to focus on $u$-centred animations only.
\end{itemize}

Let $V_i$ be the vertex set of $\Gamma_i$.
Clearly,
\begin{equation}
  \label{eq:3part_int}
  \int\limits_{(\fO V)^\times \times \fP}
  \!\!\!\!
  \Gamma^-(s)
  =
  \int\limits_{(\fO V_1)^\times \times (\fO V_2)^\times \times \fP}
  \!\!\!\!
  \Gamma^-(s)
  +
  \int\limits_{(\fO V_1)^\times \times \fP  V_2 \times \fP}
  \!\!\!\!
  \Gamma^-(s)
  +
  \int\limits_{\fP V_1 \times (\fO V_2)^\times \times \fP}
  \!\!\!\!
  \Gamma^-(s).
\end{equation}

Among the summands on the right-hand side of \eqref{eq:3part_int},
the first is the easiest to analyse since it can be computed explicitly in
terms of $n_1$ and $n_2$; see Lemma~\ref{lem:units_everywhere_integral}.
Our proof of Theorem~\ref{thm:join} in \S\ref{s:proof_join} will rely heavily
on an analysis of the second and third summand in~\eqref{eq:3part_int} using
the machinery surrounding $u$-centred animations developed in the following.

\subsection{Setup, centred animations, and main result}
\label{ss:plumbing_setup}

Let $\Gamma_1 = (V_1,E_1)$ and $\Gamma_2 = (V_2,E_2)$ be loopless graphs.
We assume that $V_1 \cap V_2 = \emptyset$ and
$V_1\not= \emptyset \not= V_2$.
Write $n_i = \card{V_i}$ and $V = V_1 \sqcup V_2$.
Let $\Gamma = \Gamma_1 \join \Gamma_2$ be the join of $\Gamma_1$ and
$\Gamma_2$.

\begin{defn}
  \label{d:centred}
  Let $\alpha \in \Nilpotents_k(\Gamma)$.
  Let $u \in V$.
  Write $\{ 1, 2\} = \{ i, j\}$ with $u\in V_i$.
  We say that $\alpha$ is \emph{$u$-centred} if $\card{V_j \cap \Dom(\alpha)}
  = \min(k, n_j)$ and $v^\alpha = u$ for all $v \in V_j \cap \Dom(\alpha)$.
\end{defn}

\begin{rem}
  \label{rem:centred}
  \quad
  \begin{enumerate}[(i)]
  \item \label{rem:centred1}
    Whether $\alpha$ is $u$-centred generally depends on the specific
    representation of $\Gamma$ as a join  $\Gamma_1 \join \Gamma_2$ of
    subgraphs $\Gamma_1$ and $\Gamma_2$.
    These decompositions are far from unique,
    as the example of a complete graph $\mathrm K_n$ with $n\ge 4$ shows.
  \item \label{rem:centred2}
    We expand Definition~\ref{d:centred} as follows.
    Let $\alpha \in \Nilpotents_k(\Gamma)$.
    If $k\le n_j$, then $\alpha$ is $u$-centred if and only if $\Dom(\alpha)
    \subset V_j$ and $v^\alpha = u$ for all $v\in \Dom(\alpha)$.
    If $k\ge n_j$, then $\alpha$ is $u$-centred if and only if
    $V_j \subset \Dom(\alpha)$ and $v^\alpha = u$ for all $v\in V_j$.
  \end{enumerate}
\end{rem}

To avoid having to carry around the indices $i$ and $j$ all the time,
in the following, we simply assume that $u \in V_1$
so that $(i,j) = (1,2)$.

We will see in \S\ref{ss:minors_centred} that the minors associated with $u$-centred
nilpotent animations arise, in an explicit fashion, from minors associated
with nilpotent animations of $\Gamma_i$.
The following is the main result of this section.

\begin{thm}
  \label{thm:centred_main}
  Let $\Gamma = \Gamma_1 \join \Gamma_2$ and $u\in V$ as above.
  Let $\alpha \in \Nilpotents_k(\Gamma)$.
  Then there exists $\beta \in \Nilpotents_k(\Gamma)$ such that $\beta$ is
  $u$-centred and $\mon\beta \lte_u\mon\alpha$.
\end{thm}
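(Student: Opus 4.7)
The plan is to construct $\beta$ from $\alpha$ by a finite sequence of local surgeries drawn from Lemmas~\ref{lem:redefine_nilpotent}, \ref{lem:reroute_nilpotent}, and~\ref{lem:unit_bypass_nilpotent}, each preserving degree and nilpotency while weakly decreasing the monomial in $\lte_u$. Without loss of generality let $u \in V_1$, and for $\gamma \in \Nilpotents_k(\Gamma)$ set
\[
  f(\gamma) = \#\{ v \in V_2 \cap \Dom(\gamma) : v^\gamma = u\},
  \qquad
  g(\gamma) = \card{V_2 \cap \Dom(\gamma)}.
\]
Then $f(\gamma) \le g(\gamma) \le \min(k,n_2)$, and $\gamma$ is $u$-centred precisely when $f(\gamma) = g(\gamma) = \min(k,n_2)$. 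I will raise $f$ step by step, stopping when this target is reached.

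In \emph{Phase~1} (clearing $V_2$-defects), while some $v \in V_2 \cap \Dom(\alpha)$ has $v^\alpha \neq u$, I split into cases based on the position of $v$ in $\lte_\alpha$. If $u \notlte_\alpha v$, then Lemma~\ref{lem:redefine_nilpotent} applied with $w = u$ (using $u\sim v$ via the join) yields $\alpha[v \subs u] \in \Nilpotents_k(\Gamma)$, with monomial $(X_u / X_{v^\alpha}) \mon\alpha \lte_u \mon\alpha$ by Proposition~\ref{prop:ordering_monomials}\ref{prop:ordering_monomials1}. Otherwise $u \lte_\alpha v$; since $v\in\Dom(\alpha)$ and $\alpha$ is nilpotent, $v\lt_\alpha z$ where $z := \last_\alpha(v)$. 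If $z \in V_2$, apply Lemma~\ref{lem:reroute_nilpotent} (the required $u\sim z$ comes from the join); if $z \in V_1$, apply Lemma~\ref{lem:unit_bypass_nilpotent} with $i$ the (unique) $\alpha$-predecessor of $v$ (both $u\sim v$ and $v\sim z$ holding via the join). A direct check shows $f$ gains one in each case, so Phase~1 terminates with $g(\alpha) = f(\alpha)$.

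In \emph{Phase~2} (balancing the partition), assume $g(\alpha) = f(\alpha) < \min(k, n_2)$, so there exist $v_2 \in V_2\setminus\Dom(\alpha)$ and $v_1 \in V_1 \cap \Dom(\alpha)$. If $u \notlte_\alpha v_2$, then $\alpha[v_1 \subs \udef, v_2 \subs u]$ lies in $\Nilpotents_k(\Gamma)$ by Lemma~\ref{lem:redefine_nilpotent} applied once $v_1$ has been removed from the domain. If $u \lte_\alpha v_2$, then $v_2 = \last_\alpha(u)$; the chain $u \cover_\alpha c_1 \cover_\alpha \dotsb \cover_\alpha c_r = v_2$ cannot contain an interior vertex $c_i \in V_2$, for Phase~1's invariant would force $c_i^\alpha = u$, i.e.\ $c_{i+1} = u$, repeating $u$ in a cycle forbidden by nilpotency. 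Hence all interior $c_i$ lie in $V_1 \cap \Dom(\alpha)$; apply Lemma~\ref{lem:reroute_nilpotent} with $v = c_1$ and $z = v_2$ when $r\ge 2$, or with $v = u$ and $z = v_2$ when $r = 1$ (which removes $u$ itself from $\Dom$, consistent with the $u$-centred condition imposing no constraint on $V_1$). Phase~2 preserves Phase~1's invariant and raises $f$ by one.

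The main technical obstacle is the Phase~1 dichotomy based on the type of $\last_\alpha(v)$: one must verify that for \emph{either} $z\in V_1$ or $z\in V_2$, the adjacency hypotheses of the appropriate surgery lemma are supplied for free by the join structure of $\Gamma$, and that this forces $f$ to strictly increase in every case. Given this, monotonicity of $f$ (bounded above by $\min(k, n_2)$) forces termination in at most $\min(k, n_2)$ iterations, and transitivity of $\lte_u$ (Proposition~\ref{prop:vertex_order}\ref{prop:vertex_order1}) yields $\mon\beta \lte_u \mon\alpha$ for the final $u$-centred $\beta$.
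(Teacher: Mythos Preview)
Your proof is correct and follows the same surgery strategy as the paper, with one organisational difference worth noting. The paper breaks the reduction into three lemmas: first drive $R(\alpha)=\#\{v\in V_2:v^\alpha\in V_2\}$ to zero (invoking Lemma~\ref{lem:bypass_nilpotent} in the $z\in V_1$ sub-case, which keeps the monomial \emph{exactly} fixed), then drive $L_u(\alpha)=\#\{v\in V_2:v^\alpha\in V_1\setminus\{u\}\}$ to zero (now via Lemma~\ref{lem:unit_bypass_nilpotent}), and finally maximise $M(\alpha)=f(\alpha)$. You collapse the first two steps into your Phase~1 by handling every $V_2$-defect uniformly with Lemma~\ref{lem:unit_bypass_nilpotent}; this is a mild streamlining that dispenses with Lemma~\ref{lem:bypass_nilpotent} entirely, trading the paper's occasional exact equality $\mon\beta=\mon\alpha$ for the weaker $\mon\beta\lte_u\mon\alpha$, which is all the theorem needs. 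Your Phase~2 is essentially the paper's final lemma. Two minor remarks: in Phase~2 the split into $r\ge2$ and $r=1$ is unnecessary, since Lemma~\ref{lem:reroute_nilpotent} already permits $v=u$ for all $r\ge1$ (this is what the paper does); and ``the (unique) $\alpha$-predecessor of $v$'' should read ``the predecessor of $v$ on the $\lte_\alpha$-chain from $u$'', as $v$ may well have several $\alpha$-preimages.
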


\subsection{Minors of centred animations}
\label{ss:minors_centred}

Let $\Gamma = \Gamma_1 \join \Gamma_2$ as in \S\ref{ss:plumbing_setup}.
Without loss of generality, let $u \in V_1$.
We show the following.
\begin{enumerate}[(a)]
\item
  Nilpotent $u$-centred animations of $\Gamma$ of degree at most $n_2$
  (``small'') can be
  described explicitly.
  The associated minors are simply powers of $X_u$;
  see Example~\ref{ex:small_centred}.
\item
  Nilpotent $u$-centred animations of $\Gamma$ of degree $k\ge n_2$
  (``large'') arise explicitly from nilpotent animations of
  $\Gamma_1$.
  The associated minors are precisely of the form $X_u^{n_2} m \cdot \mon{\alpha'}$,
  where $\alpha' \in \Nilpotents(\Gamma_1)$ and $m$ is a monomial in
  $X_{V_2}$;
  see Proposition~\ref{prop:large_centred_minors}.
\end{enumerate}

\begin{ex}[Small $u$-centred animations and their minors]
  \label{ex:small_centred}
  Let $0 \le k \le n_2$.
  Let $v_1,\dotsc,v_k\in V_2$ be distinct.
  Define $\alpha\in\Nilpotents_k(\Gamma)$ via
  $\Dom(\alpha) = \{ v_1,\dotsc,v_k\}$ and $v_i^\alpha = u$.
  Then $\alpha$ is $u$-centred and $\mon\alpha = X_u^k$.
  Conversely, every $u$-centred nilpotent animation of degree $k \le n_2$ is
  of this form.
\end{ex}

As $\Gamma$ is connected and loopless with $n := n_1+n_2$ vertices,
Corollary~\ref{cor:rank_C-_loopless} shows that $\Nilpotents_{n-1}(\Gamma)
\not= \emptyset = \Nilpotents_{n}(\Gamma)$.

\begin{prop}[Minors of large $u$-centred animation]
  \label{prop:large_centred_minors}
  Let $k \ge n_2$.
  \begin{enumerate}
  \item
    Let $\alpha \in \Nilpotents_k(\Gamma)$ be $u$-centred.
    (The existence of $\alpha$ forces $k \le n_1 + n_2 - 1$.)
    Let $V_1' = \{ v \in V_1 : v^\alpha \in V_1\}$.
    We view $\alpha' = \alpha \restriction V_1'$ as an element of $\Nilpotents(\Gamma_1)$.
    Then
    \[
      \mon\alpha = X_u^{n_2}\, \prod_{v\in V_2^{\alpha^*}} X_{v^\alpha} \cdot \mon{\alpha'}.
    \]
  \item
    Conversely, let $k = n_2 + \ell + d \le n_1 + n_2 - 1$ for $\ell, d \ge 0$.
    Let $\alpha' \in \Nilpotents_\ell(\Gamma_1)$ and let $m$ be a monomial of
    degree $d$ in $X_{V_2}$.
    Then there exists a $u$-centred animation $\alpha \in
    \Nilpotents_k(\Gamma)$ with
    \[
      \mon\alpha = X_u^{n_2} \, m\cdot \mon{\alpha'}.
    \]
  \end{enumerate}
\end{prop}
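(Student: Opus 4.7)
The plan is to address each part separately, with the main content of the proof being for part~(2).

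For part~(1), I would first note that, because $\alpha$ is $u$-centred with $k\ge n_2$, the entire set $V_2$ lies in $\Dom(\alpha)$ and is sent identically onto $u$. The complementary set $V_1\cap\Dom(\alpha)$ partitions according to whether vertices are mapped into $V_1$ or into $V_2$, i.e.\ into $V_1'$ and $V_2^{\alpha^*}$, respectively. Substituting this threefold decomposition into the defining product $\mon\alpha = \prod_{v\in\Dom(\alpha)}X_{v^\alpha}$ yields the claimed factorisation, the $V_2$-block producing exactly $X_u^{n_2}$, the $V_1'$-block $\mon{\alpha'}$, and the $V_2^{\alpha^*}$-block the middle factor. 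The auxiliary claim that $\alpha'\in\Nilpotents(\Gamma_1)$ requires observing that edges within $V_1$ in $\Gamma$ are precisely edges of the induced subgraph $\Gamma_1$ (so $v\sim_{\Gamma_1}v^{\alpha'}$ for $v\in V_1'$), and that $\alpha'$-iterates coincide with $\alpha$-iterates as long as they stay in $V_1'$, so nilpotency of $\alpha$ transfers to $\alpha'$.

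For part~(2), I would construct $\alpha$ explicitly. Write $m = X_{w_1}\dotsb X_{w_d}$ with $w_1,\dotsc,w_d\in V_2$ (repetitions allowed). Build $\alpha$ on three disjoint parts of $V$: set $v^\alpha = u$ for every $v\in V_2$; set $v^\alpha = v^{\alpha'}$ for every $v\in\Dom(\alpha')\subset V_1$; and pick $d$ distinct auxiliary vertices $v_1,\dotsc,v_d\in V_1\setminus(\Dom(\alpha')\cup\{u\})$ and set $v_i^\alpha = w_i$. The degree count $|\Dom(\alpha)| = n_2 + \ell + d = k$, the fact that every required edge either lies inside $\Gamma_1$ or crosses $V_1\leftrightarrow V_2$ (hence belongs to $\Gamma_1\join\Gamma_2$), $u$-centredness, and the identity $\mon\alpha = X_u^{n_2}\, m\cdot \mon{\alpha'}$ are then all immediate from the construction.

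The main obstacle is verifying nilpotency of $\alpha$. The only mechanism by which a cycle could appear is through the crossing edges, and the dangerous scenario is a length-two cycle $u \to w \to u$ with $w\in V_2$, which would occur precisely if $u$ itself were among the auxiliary vertices $v_i$. I would therefore exclude $u$ from the pool of auxiliary choices; this is feasible because the hypothesis $n_2+\ell+d\le n_1+n_2-1$ rewrites as $\ell + d \le n_1 - 1$, whence $|V_1\setminus(\Dom(\alpha')\cup\{u\})| \ge n_1 - \ell - 1 \ge d$. With $u$ so excluded, any $\alpha$-trajectory either stays in $V_1$ and is absorbed by the nilpotent action of $\alpha'$, or first passes through $V_2$ via $v_i\mapsto w_i\mapsto u$ and then is absorbed in $V_1$ by $\alpha'$; no periodic orbit can form, so $\alpha\in\Nilpotents_k(\Gamma)$.
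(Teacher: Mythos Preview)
Your argument for part~(1) is correct and matches the paper's.

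Your construction in part~(2) has a genuine gap in the nilpotency verification. You exclude $u$ from the auxiliary vertices $v_1,\dotsc,v_d$, reasoning that the only dangerous cycle is a two-cycle $u\to w\to u$. This is not the only danger. The $\alpha'$-trajectory starting at $u$ terminates at the unique $\lte_{\alpha'}$-maximal element $\last_{\alpha'}(u)\in V_1\setminus\Dom(\alpha')$, and if \emph{that} vertex is chosen as some $v_i$, you obtain a longer cycle. Concretely: take $V_1=\{u,a,b\}$, $V_2=\{w\}$, $\alpha'$ given by $u^{\alpha'}=a$ (so $\ell=1$), and $d=1$ with $m=X_w$. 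Your pool $V_1\setminus(\Dom(\alpha')\cup\{u\})=\{a,b\}$ permits $v_1=a$, but then $\alpha$ sends $w\mapsto u\mapsto a\mapsto w$, a $3$-cycle.

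The fix is to exclude $\last_{\alpha'}(u)$ rather than $u$; since $\last_{\alpha'}(u)\in V_1\setminus\Dom(\alpha')$, the set $V_1\setminus(\Dom(\alpha')\cup\{\last_{\alpha'}(u)\})$ has exactly $n_1-\ell-1\ge d$ elements, so your counting still works. With this correction your trajectory-tracing argument goes through: once a trajectory reaches $u$, it follows $\alpha'$ until it hits $\last_{\alpha'}(u)$, which is now guaranteed not to be one of the $v_i$ and hence maps to~$\udef$. This is precisely what the paper does: it selects $d+1$ distinct $\lte_{\alpha'}$-maximal elements $x(1),\dotsc,x(d+1)$ with $u\lte_{\alpha'}x(d+1)$, uses $x(1),\dotsc,x(d)$ as the auxiliary vertices, and invokes Lemma~\ref{lem:redefine_nilpotent} repeatedly to certify nilpotency.
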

\begin{proof}
  \quad
  \begin{enumerate}
  \item
    Let $v\in \Dom(\alpha)$.
    Then $v \in V_1$ or $v\in V_2$.
    If $v \in V_2$, then $v^\alpha = u$ and since $\card{V_2\cap \Dom(\alpha)}
    = n_2$, this contributes the factor $X_u^{n_2}$.
    If $v\in V_1$, then either $v \in V_1'$ (if $v^\alpha\in V_1$) or $v\in
    V_2^{\alpha^*}$ (if $v^\alpha \in V_2$).
  \item
    Let $y(1),\dotsc,y(d) \in V_2$ (not necessarily distinct) with
    $m = X_{y(1)}\dotsb X_{y(d)}$.
    By Lemma~\ref{lem:nilpotent_number_maximals},
    the number of $\lte_{\alpha'}$-maximal elements of $V_1$ is $n_1 - \ell$.
    Since $k = n_2 + \ell + d \le n_1 + n_1 - 1$ we have $n_1 - \ell \ge
    d+1$.
    Hence, there are distinct $\lte_{\alpha'}$-maximal elements
    $x(1),\dotsc,x(d+1) \in V_1$.
    We may assume that $u \lte_{\alpha'} x(d+1)$.
    Let $\alpha_0 \in \Nilpotents_{n_2+\ell}(\Gamma)$ be defined by
    \[
      v^\alpha = \begin{cases} v^{\alpha'}, & \text{if }v\in V_1,\\
        u, & \text{if }u \in V_2.\end{cases}
    \]
    Note that $x(1),\dotsc,x(d+1)$ are $\lte_{\alpha_0}$-maximal with
    $y(i) \lte_{\alpha_0} u \lte_{\alpha_0} x(d+1)$ for $i=1,\dotsc,d$.
    Let $\alpha = \alpha_0[x(1)\subs y(1),\dotsc,x(d)\subs y(d)]$.
    Repeated application of Lemma~\ref{lem:redefine_nilpotent} shows that
    $\alpha \in \Nilpotents_k(\Gamma)$.
    By construction, $\mon\alpha = X_u^{n_2} m \mon{\alpha'}$.
    \qedhere
  \end{enumerate}
\end{proof}

\subsection{Proof of Theorem~\ref{thm:centred_main}}

Let $\Gamma = (V,E)$ be a loopless graph.
Let $u \in V$.
Suppose that $V = V_1 \sqcup V_2$ such that $V_1\not= \emptyset\not= V_2$ and
$v_1 \sim v_2$ for all $v_1 \in V_1$ and $v_2 \in V_2$.
Without loss of generality, suppose that $u \in V_1$.
Write $n_i = \card{V_i}$.
For $\alpha \in \Nilpotents(\Gamma)$, let
\begin{align*}
  R(\alpha) & = \#\{v \in V_2 : v^\alpha \in V_2\}, \\
  L_u(\alpha) & = \#\{v \in V_2 : v^\alpha \in V_1 \setminus \{ u \} \},
                \text{ and}\\
  M(\alpha) & = \#\{v \in V_2 : v^\alpha = u\}.
\end{align*}

Given $\alpha \in \Nilpotents(\Gamma)$ and $v\in V$, recall that
$\last_\alpha(v)$ denotes the unique $\lte_\alpha$-maximal element of $V$ with
$v \lte_\alpha \last_\alpha(v)$ (see \S\ref{ss:animations_order_vertices}).
As we will see below, Theorem~\ref{thm:centred_main} will follow from an
explicit procedure which, starting with an initial animation $\alpha \in
\Nilpotents_k(\Gamma)$,  minimises $R(\alpha)$ and $L_u(\alpha)$ and which
maximises $M(\alpha)$.
It is based on the following three lemmas.

\begin{lemma}
  \label{lem:minimise_R}
  Let $\alpha \in \Nilpotents_k(\Gamma)$.
  Then there exists $\beta \in \Nilpotents_k(\Gamma)$
  with $\mon\beta \lte_u \mon\alpha$ and $R(\beta) = 0$.
\end{lemma}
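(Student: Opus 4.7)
The plan is to induct on $R(\alpha)$. The base case $R(\alpha)=0$ is trivial: take $\beta=\alpha$. For the inductive step, fix $v\in V_2$ with $w:=v^\alpha\in V_2$ and aim to produce $\alpha'\in\Nilpotents_k(\Gamma)$ such that $R(\alpha')<R(\alpha)$ and $\mon{\alpha'}\lte_u\mon\alpha$; the inductive hypothesis applied to $\alpha'$, combined with the transitivity of $\lte_u$, then yields the desired $\beta$. The construction of $\alpha'$ will split on whether $u\lte_\alpha v$.

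If $u\notlte_\alpha v$, I set $\alpha' := \alpha[v\subs u]$. Since $u\sim v$ (the parts $V_1$ and $V_2$ of the join are fully interconnected), Lemma~\ref{lem:redefine_nilpotent} ensures $\alpha'\in\Nilpotents_k(\Gamma)$, while Proposition~\ref{prop:ordering_monomials}\ref{prop:ordering_monomials1} gives $\mon{\alpha'} = X_u X_w^{-1}\mon\alpha \lte_u \mon\alpha$. Only the value at $v$ has changed, and now $v^{\alpha'} = u\in V_1$, so $R(\alpha') = R(\alpha) - 1$.

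Suppose instead $u\lte_\alpha v$, and let $z := \last_\alpha(v)$. If $z\in V_2$, then $u\sim z$ is automatic, and Lemma~\ref{lem:reroute_nilpotent} applied to $u\lte_\alpha v\lt_\alpha z$ produces $\alpha' = \alpha[v\subs\udef,\, z\subs u]$ with the required monomial and nilpotency control; the new values $v^{\alpha'} = \udef$ and $z^{\alpha'} = u$ both lie outside $V_2$, while $z$ already had $z^\alpha = \udef$, so $R$ drops by one. If $z\in V_1$, I invoke Lemma~\ref{lem:unit_bypass_nilpotent} under the correspondence (lemma's $u,i,v,z$) $\leftrightarrow$ (our $u,v,w,z$). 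The required hypotheses $u\lte_\alpha v\cover_\alpha w\lt_\alpha z$, $z^\alpha=\udef$, and $u\sim w\sim z$ are all immediate from $w\in V_2$ together with $u,z\in V_1$. The lemma delivers $\alpha' = \alpha[v\subs\udef,\, w\subs u,\, z\subs w]$ in $\Nilpotents_k(\Gamma)$ with $\mon{\alpha'}\lte_u\mon\alpha$; after rerouting, both $v$ and $w$ have images in $V_1\cup\{\udef\}$, so again $R(\alpha')<R(\alpha)$.

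The main subtlety is the last subcase, where $\last_\alpha(v)\in V_1$: the naive move ``set $v^\alpha:=u$'' is blocked by $u\lte_\alpha v$, and no single swap covered by Lemma~\ref{lem:redefine_nilpotent} or Lemma~\ref{lem:reroute_nilpotent} alone can simultaneously lower $R$, preserve nilpotency, and keep the $\lte_u$-monomial controlled. Lemma~\ref{lem:unit_bypass_nilpotent} was designed for precisely this ``bypass'' configuration, so once the substitution above is identified, the remainder of the argument reduces to the bookkeeping indicated.
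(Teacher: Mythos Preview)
Your proof is correct and follows the same inductive skeleton as the paper: induct on $R(\alpha)$, split on whether $u\lte_\alpha v$, and in the affirmative case split again on whether $z=\last_\alpha(v)\ (=\last_\alpha(w))$ lies in $V_1$ or $V_2$. The one genuine difference is in the final subcase $z\in V_1$: you invoke Lemma~\ref{lem:unit_bypass_nilpotent} (Bypass II) with the substitution $(u,i,v,z)\leftrightarrow(u,v,w,z)$, obtaining $\mon{\alpha'}\lte_u\mon\alpha$, whereas the paper invokes Lemma~\ref{lem:bypass_nilpotent} (Bypass I) with $v\cover_\alpha w\lte_\alpha y\cover_\alpha z$ and $v\sim z\sim w$, obtaining the stronger conclusion $\mon{\alpha'}=\mon\alpha$. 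Both arguments are valid; the paper's version does not use the distinguished vertex $u$ at all in this subcase and preserves the monomial exactly, while yours trades that exactness for avoiding the auxiliary vertex $y$.
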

\begin{proof}
  Suppose that $R(\alpha) > 0$, say $v^\alpha = w$ for $v,w\in V_2$.
  By induction, it suffices to find $\beta \in \Nilpotents_k(\Gamma)$ with
  $\mon\beta \lte_u \mon\alpha$ and $R(\beta) < R(\alpha)$.
  
  If $u \notlte_\alpha v$, then Lemma~\ref{lem:redefine_nilpotent} shows that
  we may simply take $\beta = \alpha[v \subs u] \in \Nilpotents_k(\Gamma)$,
  which indeed satisfies $\mon\beta \lte_u\mon\alpha$ and $R(\beta) <
  R(\alpha)$.
  We may thus assume $u \lte_\alpha v$.

  Let $z = \last_\alpha(w)$.
  Suppose that $z\in V_2$ so that $u \sim z$.
  Let $\beta = \alpha[v \subs \udef, z \subs u]$.
  By Lemma~\ref{lem:reroute_nilpotent},
  $\beta \in \Nilpotents_k(\Gamma)$ and $\mon\beta\lte_u\mon\alpha$.
  By construction, $R(\beta) < R(\alpha)$.
  
  Suppose that $z\in V_1$ so that $w \not= z$.
  In this final case, the distinguished vertex $u$ plays no role.
  Thus, as $w \lt_\alpha z$, there exists $y\in V$ with
  $v \cover_\alpha w \lte_\alpha y \cover_\alpha z$.
  Since $v,w\in V_2$ and $z\in V_1$, we have $v \sim z \sim w$.
  We therefore obtain $\beta = \alpha[v\subs z, z \subs w, y \subs \udef] \in
  \Nilpotents_k(\Gamma)$ as in Lemma~\ref{lem:bypass_nilpotent}.
  In particular, $\mon\alpha = \mon\beta$ and clearly also $R(\beta) < R(\alpha)$.
\end{proof}

\begin{lemma}
  \label{lem:minimise_Lu}
  Let $\alpha \in \Nilpotents_k(\Gamma)$ with $R(\alpha) = 0$.
  Then there exists $\beta \in \Nilpotents_k(\Gamma)$ with
  $\mon\beta \lte_u \mon\alpha$ and such that $L_u(\beta) = R(\beta) = 0$.
\end{lemma}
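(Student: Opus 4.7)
My plan is to induct on $L_u(\alpha)$. The base case $L_u(\alpha) = 0$ is trivial (take $\beta = \alpha$). For the inductive step with $L_u(\alpha) > 0$, I would pick a witness $v \in V_2 \cap \Dom(\alpha)$ whose image $w := v^\alpha$ lies in $V_1 \setminus \{u\}$, and produce an intermediate $\alpha' \in \Nilpotents_k(\Gamma)$ with $R(\alpha') = 0$, $L_u(\alpha') = L_u(\alpha) - 1$, and $\mon{\alpha'} \lte_u \mon\alpha$. Applying the inductive hypothesis to $\alpha'$ and using transitivity of $\lte_u$ then finishes the argument.

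The natural surgery is simply to redirect $v$ from $w$ to $u$, replacing $\alpha$ by $\alpha[v \subs u]$. By Lemma~\ref{lem:redefine_nilpotent} this is a nilpotent animation precisely when $u \notlte_\alpha v$, and in that case Proposition~\ref{prop:ordering_monomials}\ref{prop:ordering_monomials1} immediately yields $\mon{\alpha[v \subs u]} = X_u X_w^{-1} \mon\alpha \lte_u \mon\alpha$; the remaining bookkeeping is trivial since only $v$'s image changes and it now lands in $V_1$.

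The remaining case $u \lte_\alpha v$ will require a more careful surgery. I would introduce $z := \last_\alpha(v)$, well-defined by Proposition~\ref{prop:animation_order}\ref{prop:animation_order5}; from $v^\alpha \ne \udef$ we have $v \lt_\alpha z$ and $z^\alpha = \udef$. If $z \in V_2$, then $u \sim z$ via the join, and Lemma~\ref{lem:reroute_nilpotent} produces the desired $\alpha' = \alpha[v \subs \udef, z \subs u]$: this removes the bad contribution $v \mapsto w$ and installs the harmless contribution $z \mapsto u$. If instead $z \in V_1 \setminus \{u\}$, let $i$ be the immediate predecessor of $v$ on the $\alpha$-path from $u$; since $R(\alpha) = 0$ and $v \in V_2$, the vertex $i$ must lie in $V_1$, and $i \ne z$ because $i^\alpha = v \ne \udef = z^\alpha$. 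Then the chain $u \lte_\alpha i \cover_\alpha v \lt_\alpha z$ together with the adjacencies $u \sim v \sim z$ (both coming from the join) makes Lemma~\ref{lem:unit_bypass_nilpotent} applicable; it returns $\alpha' = \alpha[i \subs \udef, v \subs u, z \subs v]$.

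The main subtlety is this last subcase. In the analogous subcase of Lemma~\ref{lem:minimise_R} both $v$ and $w$ were in $V_2$, so the bypass Lemma~\ref{lem:bypass_nilpotent} handled matters via the direct adjacencies $v \sim z \sim w$ supplied by the join; here $w \in V_1$ and $z \sim w$ is not guaranteed in $\Gamma_1$, so the distinguished vertex $u$ must serve as absorber, forcing the more elaborate rerouting of Lemma~\ref{lem:unit_bypass_nilpotent}. In each of the three branches the verifications for $R$ and $L_u$ are then routine: the only $V_2$-vertex whose $\alpha$-image changes (apart from possibly $z$ when $z \in V_2$) is $v$, and every such new image lies in $\{u\} \subset V_1$, so $R$ remains zero while $L_u$ drops by exactly one.
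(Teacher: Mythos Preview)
Your proposal is correct and follows essentially the same approach as the paper: the same three-way case split (redirect $v\mapsto u$ when $u\notlte_\alpha v$; otherwise set $z=\last_\alpha(v)=\last_\alpha(w)$ and apply Lemma~\ref{lem:reroute_nilpotent} if $z\in V_2$, or Lemma~\ref{lem:unit_bypass_nilpotent} if $z\in V_1$). Your additional observation that $i\in V_1$ (from $R(\alpha)=0$) is not needed to invoke Lemma~\ref{lem:unit_bypass_nilpotent} but is harmless and makes the $R/L_u$ bookkeeping transparent.
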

\begin{proof}
  Suppose that $R(\alpha) = 0$ and $L_u(\alpha) > 0$.
  It suffices to find $\beta \in \Nilpotents_k(\Gamma)$ with
  $\mon\beta \lte_u \mon\alpha$, $R(\beta) = 0$, and
  $L_u(\beta) < L_u(\alpha)$.

  Let $v \in V_2$ and $w \in V_1\setminus\{u\}$ with $v^\alpha = w$.
  Note that $u \sim v$ and $u \not= v$.
  If $u \notlte_\alpha v$, then by Lemma~\ref{lem:redefine_nilpotent}, we may
  simply take $\beta = \alpha[v \subs u]$.
  Thus, suppose that $u \lte_\alpha v$.
  Since $u \not= v$, there exists $i \in V$ with
  $u\lte_\alpha i \cover_\alpha v$.
  Let $z = \last_\alpha(w)$.
  Then
  \[
    u \lte_\alpha i \cover_\alpha v \cover_\alpha w \lte_\alpha z
  \]

  Suppose that $z \in V_2$ so that $u \sim z$.
  Let $\beta = \alpha[v \subs \udef, z \subs u]$.
  By Lemma~\ref{lem:reroute_nilpotent},
  $\beta \in \Nilpotents_k(\Gamma)$ and $\mon\beta\lte_u\mon\alpha$.
  By construction, $R(\beta) = R(\alpha) = 0$ and $L_u(\beta) < L_u(\alpha)$.

  Suppose that $z \in V_1$ so that $u \sim v \sim z$.
  Let $\beta = \alpha[i \subs \udef, v\subs u, z \subs v]$.
  By Lemma~\ref{lem:unit_bypass_nilpotent},
  $\beta \in \Nilpotents_k(\Gamma)$ and $\mon\beta\lte_u\mon\alpha$.
  Clearly, $R(\beta) = R(\alpha) = 0$ and $L_u(\beta) < L_u(\alpha)$.
\end{proof}

\begin{lemma}
  \label{lem:maximise_M}
  Let $\alpha \in \Nilpotents_k(\Gamma)$ with $L_u(\alpha) = R(\alpha) = 0$.
  Then there exists $\beta \in \Nilpotents_k(\Gamma)$ with
  $\mon\beta \lte_u \mon\alpha$, $L_u(\beta) = R(\beta) = 0$, and
  $M(\beta) = \min(k,n_2)$.
\end{lemma}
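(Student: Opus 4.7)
The plan is to induct on the nonnegative quantity $\min(k,n_2) - M(\alpha)$. If this is zero, then $\alpha$ itself already satisfies the conclusion and we may take $\beta = \alpha$. Otherwise, I want to find a single surgery that increases $M$ by one while preserving all other required properties and weakly decreasing the monomial with respect to $\lte_u$, so that the inductive hypothesis applied to the resulting animation yields $\beta$.

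The surgery is as follows. Since $M(\alpha) < n_2$ and, by the hypothesis $R(\alpha) = L_u(\alpha) = 0$, we have $|V_2 \cap \Dom(\alpha)| = M(\alpha)$, there exists $v \in V_2 \setminus \Dom(\alpha)$; since $M(\alpha) < k$ and $|V_1 \cap \Dom(\alpha)| = k - M(\alpha)$, there exists $w \in V_1 \cap \Dom(\alpha)$. I would consider
\[
  \gamma := \alpha[\,w \subs \udef,\; v \subs u\,].
\]
Note that $\alpha[w \subs \udef]$ is a restriction of $\alpha$ to a smaller domain and so is again nilpotent. Because $v \notin \Dom(\alpha[w \subs \udef])$, the vertex $v$ is $\lte_{\alpha[w \subs \udef]}$-maximal by Proposition~\ref{prop:animation_order}\ref{prop:animation_order1}; as $v \neq u$, this forces $u \notlte_{\alpha[w \subs \udef]} v$. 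Since $u \in V_1$ and $v \in V_2$, we have $u \sim v$ in the join $\Gamma$, so Lemma~\ref{lem:redefine_nilpotent} applies and $\gamma \in \Nilpotents_k(\Gamma)$ (the degree is unchanged: one point removed from the domain, one added).

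The invariants are immediate: the only vertex of $V_2$ whose $\gamma$-image differs from its $\alpha$-image is $v$, which now maps to $u \in V_1$, so $R(\gamma) = 0$, $L_u(\gamma) = 0$, and $M(\gamma) = M(\alpha) + 1$. For the monomials,
\[
  \mon{\gamma} \;=\; \frac{X_u}{X_{w^\alpha}} \cdot \mon{\alpha};
\]
applying Proposition~\ref{prop:ordering_monomials}\ref{prop:ordering_monomials1} with $m = X_{w^\alpha}^{-1}\mon{\alpha}$ gives $X_u m \lte_u X_{w^\alpha} m$, i.e.\ $\mon{\gamma} \lte_u \mon{\alpha}$. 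By the inductive hypothesis applied to $\gamma$, there exists $\beta \in \Nilpotents_k(\Gamma)$ with $R(\beta) = L_u(\beta) = 0$, $M(\beta) = \min(k,n_2)$, and $\mon{\beta} \lte_u \mon{\gamma} \lte_u \mon{\alpha}$, using transitivity of $\lte_u$.

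I do not expect a genuine obstacle here: once the right surgery is identified, the argument is essentially a bookkeeping exercise combining Lemma~\ref{lem:redefine_nilpotent} with the elementary monomial comparison of Proposition~\ref{prop:ordering_monomials}. The only point that requires care is confirming that $v$ is indeed $\lte_{\alpha[w \subs \udef]}$-maximal so that the hypothesis of Lemma~\ref{lem:redefine_nilpotent} is met, and that the choice of $v \in V_2\setminus\Dom(\alpha)$ and $w \in V_1\cap \Dom(\alpha)$ is always possible under the strict inequality $M(\alpha) < \min(k,n_2)$.
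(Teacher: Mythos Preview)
Your argument has a genuine gap at the nilpotency step. You claim that because $v$ is $\lte_{\alpha'}$-maximal (where $\alpha' = \alpha[w \subs \udef]$) and $v \neq u$, it follows that $u \notlte_{\alpha'} v$. This inference is invalid: maximality of $v$ constrains what lies \itemph{above} $v$, not what lies below it. Concretely, take $V_1 = \{u, a\}$ with no edge in $\Gamma_1$, $V_2 = \{v\}$, and $\alpha$ given by $u^\alpha = a^\alpha = v$, $v^\alpha = \udef$. Then $R(\alpha) = L_u(\alpha) = M(\alpha) = 0$, $k = 2$, and $\min(k,n_2) = 1$. Your procedure must take this $v$ and some $w \in \{u,a\}$. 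If you choose $w = a$, then $u^{\alpha'} = v$, so $u \lte_{\alpha'} v$; consequently $\gamma = \alpha'[v \subs u]$ contains the $2$-cycle $u \leftrightarrow v$ and is not nilpotent. Lemma~\ref{lem:redefine_nilpotent} does not apply because its hypothesis fails.

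The paper handles exactly this obstruction by a case distinction on whether $u \lte_\alpha z$ (their $z$ is your $v$). When $u \notlte_\alpha z$, an arbitrary $w \in V_1 \cap \Dom(\alpha)$ works, essentially as you argue; when $u \lte_\alpha z$, the paper instead invokes the rerouting Lemma~\ref{lem:reroute_nilpotent} with $u$ itself playing the role of the vertex whose outgoing edge is deleted, yielding $\beta = \alpha[u \subs \udef, z \subs u]$. Your single-surgery approach is easily repaired along the same lines: in the problematic case $u \lte_\alpha v$, simply choose $w = u$ (legitimate since $u \lte_\alpha v$ with $u \neq v$ forces $u \in \Dom(\alpha)$, and $u \in V_1$). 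Then $u$ becomes $\lte_{\alpha'}$-maximal and $u \neq v$, so by Proposition~\ref{prop:animation_order}\ref{prop:animation_order5} we have $\last_{\alpha'}(u) = u \neq v$, whence $u \notlte_{\alpha'} v$ and Lemma~\ref{lem:redefine_nilpotent} applies as you intended. With this one-line fix your unified induction goes through and is slightly cleaner than the paper's separate treatment of the case $k \le n_2$.
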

\begin{proof}
  First suppose that $k\le n_2$.
  By Example~\ref{ex:small_centred}, there exists
  $\beta \in \Nilpotents_k(\Gamma)$ with $\mon\beta = X_u^k$,
  $L_u(\beta) = R(\beta) = 0$ and $M(\beta) = k$.
  Clearly, $\mon\beta \lte_u \mon\alpha$.
  Henceforth, let $k \ge n_2$.
  Suppose that $L_u(\alpha) = R(\alpha) = 0$ but $b := M(\alpha) < n_2$.
  It suffices to find $\beta\in\Nilpotents_k(\Gamma)$ with $\mon\beta \lte_u
  \mon\alpha$, $L_u(\beta) = R(\beta) = 0$, and $M(\beta) > M(\alpha)$.
  As $L_u(\alpha) = R(\alpha) = 0$, we have
  $b = M(\beta) = \card{\Dom(\alpha) \cap V_2}$.
  Let $y_1,\dotsc,y_b \in V_2$ be distinct with $y_i^\alpha = u$ for
  $i=1,\dotsc,b$.
  Since $b < n_2$, there exists $z \in V_2 \setminus\{y_1,\dotsc,y_b\}$.
  Note that $L_u(\alpha) = R(\alpha) = 0$ and $z \not= y_i$ for
  $i=1,\dotsc,b$ together imply that $z^\alpha = \udef$.

  Suppose that $u \notlte_\alpha z$.
  Since $b < n_2 \le k = \deg(\alpha)$, there exists $v \in V_1$ with
  $v^\alpha\not= \udef$.
  Let $\beta = \alpha[v\subs \udef, z\subs u]$.
  By Lemma~\ref{lem:redefine_nilpotent} (applied to $\alpha[v \subs \udef]$),
  $\beta \in \Nilpotents_k(\Gamma)$.
  We clearly have $\mon\beta\lte_u\mon\alpha$, $L_u(\beta) = R(\beta) = 0$ and
  $M(\beta) > M(\alpha)$.

  Suppose that $u \lte_\alpha z$.
  In this case, we may apply Lemma~\ref{lem:reroute_nilpotent} with $u = v$ to
  obtain $\beta = \alpha[u \subs \udef, z \subs u] \in \Nilpotents_k(\Gamma)$
  with $\mon\beta \lte_u \mon\alpha$.
  Clearly, $L_u(\beta) = R(\beta) = 0$ and $M(\beta) > M(\alpha)$.
\end{proof}

\begin{proof}[{Proof of Theorem~\ref{thm:centred_main}}]
  Recall that $\Gamma_i = (V_i,E_i)$.
  Without loss of generality, we may assume that $u\in V_1$.
  (Hence, in the setting of \S\ref{ss:plumbing_setup}, $(i,j) = (1,2)$.)
  By applying Lemmas~\ref{lem:minimise_R}--\ref{lem:maximise_M} in succession,
  we obtain $\beta \in \Nilpotents_k(\Gamma)$
  such that $\mon\beta \lte_u \mon\alpha$, $L_u(\beta) = R(\beta) = 0$, and
  $M(\beta) = \min(k, n_2)$.
  We conclude that if $v\in V_2$, then $v^\beta \in \{ u,\udef\}$
  and $\card{\Dom(\beta) \cap V_2} = \min(k,n_2)$.
  Hence, $\beta$ is $u$-centred.
\end{proof}

\subsection{Bivariate monomials as minors}

Let $\Gamma = \Gamma_1 \join \Gamma_2$
as in \S\ref{ss:plumbing_setup}.
As before, let $V = V_1 \sqcup V_2$ and $n = n_1 + n_2$.

\begin{lemma}
  For $i=1,2$, let $u_i \in V_i$.
  Let $0\le k \le n-1$.
  Then there are $e_1,e_2\ge 0$ with $e_1 + e_2 = k$ such that there exists
  $\alpha \in \Nilpotents_k(\Gamma)$ with $\mon\alpha = X_{u_1}^{e_1}
  X_{u_2}^{e_2}$. 
\end{lemma}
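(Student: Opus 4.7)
The plan is to construct the required animation $\alpha$ explicitly, exploiting the defining property of a join: every vertex of $V_1$ is adjacent to every vertex of $V_2$. In particular, $u_1 \sim v$ for every $v \in V_2$ and $u_2 \sim v$ for every $v \in V_1 \setminus \{u_1\}$. I split into two cases according to whether $k \le n_2$ or $k > n_2$, and in each case exhibit a single pair $(e_1,e_2)$ that works.

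For $0 \le k \le n_2$, take $(e_1,e_2) = (k,0)$. Choose any $k$-subset $S \subset V_2$ and define $\alpha$ by $v^\alpha = u_1$ for $v \in S$ and $v^\alpha = \udef$ otherwise. This is an animation of $\Gamma$ by the join property. Nilpotency is immediate: $u_1 \notin \Dom(\alpha)$, so $\alpha$ vanishes after a single iteration. Clearly $\deg(\alpha) = k$ and $\mon\alpha = X_{u_1}^k$, as required.

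For $n_2 < k \le n-1$, take $(e_1,e_2) = (n_2,\, k-n_2)$. Since $k \le n_1 + n_2 - 1$, we have $k - n_2 \le n_1 - 1$, so there exists a subset $S \subset V_1 \setminus \{u_1\}$ with $\card{S} = k - n_2$. Define $\alpha$ by $v^\alpha = u_1$ for all $v \in V_2$ (in particular $u_2^\alpha = u_1$), $v^\alpha = u_2$ for $v \in S$, and $v^\alpha = \udef$ otherwise. All assignments respect the adjacency relation of $\Gamma$ by the join property. Nilpotency holds because starting from any vertex of $\Dom(\alpha) = V_2 \sqcup S$, at most two applications of $\alpha$ land at $u_1 \notin \Dom(\alpha)$, so $\alpha^3$ is the empty function. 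Then $\deg(\alpha) = n_2 + (k-n_2) = k$ and $\mon\alpha = X_{u_1}^{n_2} X_{u_2}^{k-n_2}$.

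There is no substantive obstacle: the proof is purely constructive. One could equivalently deduce the statement from the $u_1$-centred machinery, combining Example~\ref{ex:small_centred} for the small case and Proposition~\ref{prop:large_centred_minors} for the large case with $\alpha'$ the empty animation of $\Gamma_1$ and $m = X_{u_2}^{k-n_2}$; but the explicit construction above is more direct.
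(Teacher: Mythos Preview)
Your proof is correct and follows essentially the same approach as the paper: both split into the cases $k \le n_2$ and $k > n_2$ and construct the identical animations (sending a $k$-subset of $V_2$ to $u_1$ in the first case, and all of $V_2$ to $u_1$ together with $k-n_2$ vertices of $V_1\setminus\{u_1\}$ to $u_2$ in the second). The paper additionally assumes without loss of generality that $n_1 \le n_2$, but this is not actually used, and your argument proceeds correctly without it.
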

\begin{proof}
  Suppose, without loss of generality, that $n_1 \le n_2$.
  If $k \le n_2$, then we simply choose distinct elements $v_1,\dotsc,v_k \in
  V_2$ and let $\alpha$ be given by $v_i^\alpha = u_1$ for $i=1,\dotsc,k$.
  In this case, $\mon\alpha = X_{u_1}^k$.
  Thus, let $n_2 \le k < n = n_1 + n_2$ so that $k-n_2 < n_1$.
  Let $V_2 = \{ v_1,\dotsc,v_{n_2}\}$ and let $w_1,\dotsc,w_{k-n_2}$ be distinct
  elements of $V_1\setminus\{u_1\}$.
  Define $\alpha \in \Nilpotents_k(\Gamma)$ via
  $v_i^\alpha = u_1$ for $i=1,\dotsc,n_2$ and $w_j^\alpha = u_2$
  for $j=1,\dotsc,k-n_2$.
  Then $\mon\alpha = X_{u_1}^{n_2} X_{u_2}^{k-n_2}$.
\end{proof}

The following observation will be the key to computing the first summand on
the right-hand side of \eqref{eq:3part_int} in
Lemma~\ref{lem:units_everywhere_integral}.

\begin{cor}
  \label{cor:double_unit}
  Let $x\in \fO V$.
  Let $u_i \in V_i$ for $i = 1,2$ and suppose that $x_{u_1}, x_{u_2} \in
  \fO^\times$.
  Then for each $k$ with $0 \le k \le n_1 + n_2 - 1$, there exists
  $\alpha\in\Nilpotents_k(\Gamma)$ with $\mon\alpha(x) \in \fO^\times$.
  \qed
\end{cor}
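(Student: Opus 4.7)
The plan is to deduce this as an essentially immediate consequence of the preceding lemma. Applying that lemma to the chosen vertices $u_1 \in V_1$ and $u_2 \in V_2$ and to the given $k$ with $0 \le k \le n_1 + n_2 - 1$, I obtain nonnegative integers $e_1, e_2$ with $e_1 + e_2 = k$ and an animation $\alpha \in \Nilpotents_k(\Gamma)$ whose associated monomial is
\[
  \mon\alpha = X_{u_1}^{e_1} X_{u_2}^{e_2}.
\]

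Specialising this monomial at $x$ yields $\mon\alpha(x) = x_{u_1}^{e_1} x_{u_2}^{e_2}$. Since $x_{u_1}, x_{u_2} \in \fO^\times$ by hypothesis and $\fO^\times$ is closed under multiplication and taking nonnegative integer powers (with $1\in\fO^\times$ covering the zero-exponent case), we conclude $\mon\alpha(x) \in \fO^\times$, as required. There is no real obstacle here; all the work has already been done in the preceding lemma, and the corollary simply records the consequence of being able to choose a nilpotent animation whose associated monomial only involves the two distinguished variables $X_{u_1}$ and $X_{u_2}$. This is precisely the shape required so that two unit coordinates of $x$ suffice to guarantee that $\mon\alpha(x)$ is itself a unit.
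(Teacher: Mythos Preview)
Your proof is correct and is precisely the intended argument: the paper marks the corollary with \qed in the statement, signalling that it is an immediate consequence of the preceding lemma, exactly as you derive it.
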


\section{Adding generic rows (or columns) to matrices of linear forms}
\label{s:generic_rows}

In this section, we study the effect of adding generic rows to matrices of
linear forms on the minors of their $\circ$-duals.
Our work here will play a crucial role in our proof of Theorem~\ref{thm:join}
in \S\ref{s:proof_join}; see, in particular, \S\ref{ss:survivalism}.
Apart from providing tools to be used in our proof of Theorem~\ref{thm:join},
we also obtain the following result of potential independent interest. 

\begin{thm}
  \label{thm:add_generic_row}
  Let $\fO$ be a compact \DVR{} with residue field of size $q$.
  Let $U$ be a finite set.
  Let $A \in \Mat_{n\times m}(\fO[X_U])$ be a matrix of linear forms.
  Let $\tilde U$ be obtained from $U$ by adding~$m$ further symbols.
  Let $\tilde A \in \Mat_{(n+1)\times m}(\fO[X_{\tilde U}])$ be a matrix of
  linear forms obtained from $A$ by adding a row populated (in some order)
  with the variables attached to the aforementioned symbols from $\tilde U
  \setminus U$.
  Then
  $\Zeta^{\ask}_{\tilde A/\fO}(T) =
    \Zeta^{\ask}_{A/\fO}(T) \cdot \frac{1-q^{n-m}T}{1-q^{n-m+1}T}$.
\end{thm}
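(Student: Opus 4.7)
The plan is to derive an explicit recursion for the coefficients of $\Zeta^{\ask}_{\tilde A/\fO}(T)$ in terms of those of $\Zeta^{\ask}_{A/\fO}(T)$ by a valuation-stratified direct count, and then verify the stated identity by summing a geometric series. Write $S = \fO/\fP^k$, $c_k := \ask_{\fO/\fP^k}(A)$, and gather the new variables of the generic row into $y \in S^m$. Observing that $\tilde v \tilde A(x,y) = vA(x) + wy$ for $\tilde v = (v,w) \in S^n \times S$, the standard identity $\ask_S(B) = |S|^{-|U'|}\sum_{v,\xi}\Iverson{vB(\xi) = 0}$ applied with $B = \tilde A$ and $|U'| = |U|+m$ gives
\begin{equation*}
\ask_S(\tilde A) = \frac{1}{|S|^{|U|+m}} \sum_{v, w, x, y} \Iverson{vA(x) + wy = 0}.
\end{equation*}

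The core step is the $y$-summation for fixed $(v, w, x)$. If $w = \pi^j u$ for a unit $u$ (with $j \in \{0, \dotsc, k-1\}$) or $w = 0$ (the case $j = k$), then multiplication by $w$ on $S^m$ has kernel of size $q^{jm}$ and image $\fP^j S^m$; hence the number of admissible $y$ equals $q^{jm}$ if $vA(x) \in \fP^j S^m$, and zero otherwise. Summing over $w$ of each class (multiplicities $q^{k-j-1}(q-1)$ for $j < k$ and $1$ for $j = k$) and setting $N_j(v) := \#\{x \in S^{|U|} : vA(x) \equiv 0 \bmod \fP^j\}$, we obtain
\begin{equation*}
\ask_S(\tilde A) = \frac{1}{q^{k(|U|+m)}} \sum_v\left[\sum_{j=0}^{k-1} q^{k-j-1}(q-1) q^{jm} N_j(v) + q^{km} N_k(v)\right].
\end{equation*}
Since $N_j(v)$ depends on $v$ and $x$ only modulo $\fP^j$, a fibre-counting argument over lifts from $\fO/\fP^j$ to $S$ yields $\sum_v N_j(v) = q^{k|U| + (k-j)n}\, c_j$, via
$\sum_{v, x \in S}\Iverson{vA(x) \equiv 0 \bmod \fP^j} = q^{(k-j)(n+|U|)} \cdot |\{(\bar v, \bar x) : \bar v A(\bar x) = 0\}| = q^{(k-j)(n+|U|)} \cdot q^{j|U|} c_j.$
Substituting and simplifying the exponent of $q$ attached to $c_j$ to $(k-j)(n-m+1) - 1$, with the reindexing $a = k - j$, the recursion
\begin{equation*}
\ask_{\fO/\fP^k}(\tilde A) = c_k + (1 - q^{-1}) \sum_{a=1}^{k} q^{a(n-m+1)} c_{k-a}
\end{equation*}
falls out.

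Finally, multiplying by $T^k$ and summing over $k \ge 0$, interchange of summation turns the convolution term into $(1-q^{-1})\Zeta^{\ask}_{A/\fO}(T) \cdot \frac{q^{n-m+1}T}{1 - q^{n-m+1}T}$, and combining with the diagonal term $\Zeta^{\ask}_{A/\fO}(T)$ gives, after the numerator simplification $1 - q^{n-m+1}T + (1-q^{-1})q^{n-m+1}T = 1 - q^{n-m}T$,
\begin{equation*}
\Zeta^{\ask}_{\tilde A/\fO}(T) = \Zeta^{\ask}_{A/\fO}(T) \cdot \frac{1 - q^{n-m}T}{1 - q^{n-m+1}T}.
\end{equation*}
The only real obstacle is bookkeeping—carefully stratifying $w$ by valuation (including the $w = 0$ case), verifying the fibre-count identity $\sum_v N_j(v) = q^{k|U|+(k-j)n}c_j$, and simplifying exponents. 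Notably, no structural information about $A$ itself is required, reflecting the complete decoupling of the fresh generic row from the original matrix.
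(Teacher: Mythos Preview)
Your proof is correct and takes a genuinely different route from the paper's. The paper passes through the $\circ$-dual: adding a generic row to $A$ corresponds to appending a block $X_w 1_m$ to the $\circ$-dual $C$, and the associated cokernel $\widetilde M$ is identified as a restriction of scalars of $M = \Coker(C)$. This reduces the problem (via Proposition~\ref{prop:zeta_M}) to an identity between $p$-adic integrals, which is then established by partitioning the domain according to whether $z \mid y$ or $\pi y \mid z$ and performing changes of variables. Your argument, by contrast, stays entirely on the primal side: the identity $\tilde v\,\tilde A(x,y) = vA(x) + wy$ lets you sum out $y$ explicitly after stratifying the scalar $w$ by valuation, and the fibre-count $\sum_v N_j(v) = q^{k|U|+(k-j)n}c_j$ then converts everything into a clean convolution of generating series.

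Your approach is more elementary and self-contained, needing no $\circ$-dual machinery, no module zeta functions, and no $p$-adic integration; it would serve well as a standalone proof. The paper's approach, on the other hand, is embedded in the integral framework used throughout \S\ref{s:generic_rows}--\S\ref{s:proof_join}: the description of $\widetilde C$ and its elementary divisors (Lemma~\ref{lem:edt_tilde_C}, Corollary~\ref{cor:edt_tilde_C_many_variables}) is what actually gets used in the proof of Theorem~\ref{thm:join}, so the detour through duals is not wasted there.
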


\begin{rem}
  Theorem~\ref{thm:add_generic_row} generalises the first part of \cite[Prop.\
  5.24]{cico}, which establishes the special case that $A = \sA_\Eta$ for a
  hypergraph $\Eta$.
\end{rem}

For the sake of completeness, we note that the effect of adding a generic new
\itemph{column} rather than row to a matrix of linear forms is easily deduced
from Theorem~\ref{thm:add_generic_row}.

\begin{cor}
  Let the notation be as in Theorem~\ref{thm:add_generic_row}.
  Let $\undertilde U$ be obtained from~$U$ by adding $n$ symbols.
  Let $\undertilde A \in \Mat_{n \times (m+1)}(\fO[X_{\undertilde U}])$ be
  obtained from $A$ by adding a column containing variables
  attached to symbols from $\undertilde U \setminus U$.
  Then $\Zeta^{\ask}_{\undertilde A/\fO}(T) = \Zeta^{\ask}_{A/\fO}(q^{-1}T)
  \cdot \frac{1-q^{-1}T}{1-T}$.
\end{cor}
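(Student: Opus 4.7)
The proof reduces immediately to Theorem~\ref{thm:add_generic_row} via transposition, exploiting the fact that adding a column to $A$ is the same as adding a row to $A^\top$. The main ingredient is the standard transpose relation for ask zeta functions, which follows from Smith normal form: if $B \in \Mat_{N\times M}(\fO[X_U])$ is a matrix of linear forms, then for each finite quotient $S = \fO/\fP^k$ and each specialisation $x \in S U$, the Smith normal forms of $B(x)$ and $B(x)^\top$ share the same elementary divisors, and counting kernels of the two induced maps $S^N \to S^M$ and $S^M \to S^N$ yields $|\Ker(B(x)^\top)| = q^{k(M-N)}|\Ker(B(x))|$. Averaging over $x$ and summing gives
\[
  \Zeta^{\ask}_{B^\top\!/\fO}(T) \;=\; \Zeta^{\ask}_{B/\fO}(q^{M-N}T).
\]
(This is essentially \cite[Lem.\ 2.4]{ask}, referenced at the start of \S\ref{s:ask_bg}.)

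The plan is to apply this identity twice. First, since $\undertilde{A}$ is $n\times(m+1)$, we obtain
\[
  \Zeta^{\ask}_{\undertilde A/\fO}(T) \;=\; \Zeta^{\ask}_{\undertilde{A}^\top/\fO}(q^{n-m-1}T).
\]
Next, observe that $\undertilde{A}^\top$ is precisely $A^\top$ (which is an $m\times n$ matrix of linear forms over $\fO[X_U]$) with one extra row populated by $n$ fresh algebraically independent variables indexed by $\undertilde{U}\setminus U$. This is exactly the situation of Theorem~\ref{thm:add_generic_row} applied to $A^\top$, giving
\[
  \Zeta^{\ask}_{\undertilde{A}^\top/\fO}(T)
  \;=\; \Zeta^{\ask}_{A^\top/\fO}(T)\cdot\frac{1-q^{m-n}T}{1-q^{m-n+1}T}.
\]
Combining these and substituting $T \mapsto q^{n-m-1}T$ in the correction factor yields
\[
  \Zeta^{\ask}_{\undertilde A/\fO}(T) \;=\; \Zeta^{\ask}_{A^\top/\fO}(q^{n-m-1}T)\cdot\frac{1-q^{-1}T}{1-T}.
\]

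Finally, applying the transpose identity once more, this time to $A$ itself (so $N = n$, $M = m$), gives $\Zeta^{\ask}_{A^\top/\fO}(T) = \Zeta^{\ask}_{A/\fO}(q^{n-m}T)$; substituting $T \mapsto q^{-1}T$ identifies $\Zeta^{\ask}_{A^\top/\fO}(q^{n-m-1}T)$ with $\Zeta^{\ask}_{A/\fO}(q^{-1}T)$, which proves the claim. There is no real obstacle here; the only care needed is in bookkeeping the shifts $q^{n-m}$ and $q^{m-n}$ so that the transpose substitutions compose to the single correction $T \mapsto q^{-1}T$ and the rational factor $(1-q^{m-n}T)/(1-q^{m-n+1}T)$ from Theorem~\ref{thm:add_generic_row} becomes $(1-q^{-1}T)/(1-T)$ after the substitution.
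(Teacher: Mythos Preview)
Your approach is correct and essentially identical to the paper's: both reduce the column case to Theorem~\ref{thm:add_generic_row} applied to $A^\top$ via the transpose identity from \cite[Lem.\ 2.4]{ask}, then transpose back. One small bookkeeping slip: in your final step, your own identity $\Zeta^{\ask}_{B^\top/\fO}(T) = \Zeta^{\ask}_{B/\fO}(q^{M-N}T)$ with $B=A$, $N=n$, $M=m$ gives $\Zeta^{\ask}_{A^\top/\fO}(T) = \Zeta^{\ask}_{A/\fO}(q^{m-n}T)$ (not $q^{n-m}$), and the correct substitution is $T\mapsto q^{n-m-1}T$, which then indeed yields $\Zeta^{\ask}_{A^\top/\fO}(q^{n-m-1}T) = \Zeta^{\ask}_{A/\fO}(q^{-1}T)$.
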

\begin{proof}
  Let $B$ be any $d\times e$ matrix of linear forms over $\fO$.
  Then \cite[Lem.\ 2.4]{ask} yields
  $\Zeta^{\ask}_{B/\fO}(T) = \Zeta^{\ask}_{B^\top\!/\fO}(q^{d-e} T)$.
  Applying Theorem~\ref{thm:add_generic_row} to $A^\top$, we obtain
  $\Zeta^{\ask}_{\widetilde{A^\top}/\fO}(T) =
  \Zeta^{\ask}_{A^\top/\fO}(T) \cdot \frac{1 - q^{m-n}T}{1-q^{m-n+1}T} =
  \Zeta^{\ask}_{A/\fO}(q^{m-n}T) \cdot \frac{1 - q^{m-n}T}{1-q^{m-n+1}T}$.
  We may identify $\widetilde{A^\top} = \undertilde{A}^\top$.
  Thus,
  \[
    \Zeta^{\ask}_{\undertilde A/\fO}(T) =
    \Zeta^{\ask}_{\undertilde A^\top/\fO}(q^{n-m-1}T ) =
    \Zeta^{\ask}_{\widetilde{A^\top}/\fO}(q^{n-m-1}T) 
    = \Zeta^{\ask}_{A/\fO}(q^{-1}T) \cdot \frac{1-q^{-1}T}{1-T}.
    \pushQED{\qed}
    \qedhere
    \]
\end{proof}

\subsection{Some minor matrix manipulations}
\label{ss:matrix_shenanigans}

Let $R$ be a ring.
The group $\GL_n(R)\times \GL_m(R)$ acts on $\Mat_{n\times m}(R)$ via
$A.(U,V) = U^{-1} A V$
for $A\in \Mat_{n\times m}(R)$, $U\in \GL_n(R)$, and $V\in \GL_m(R)$.
Let $\simeq$ denote the corresponding equivalence relation on $\Mat_{n\times
  m}(R)$.

\begin{lemma}
  \label{lem:edt_extra}
  Let $A,B\in \Mat_{n\times m}(R)$ with $A\simeq B$.
  Let $r \in R$.
  Then
  \[
    \begin{bmatrix} A \\ r 1_m\end{bmatrix}
    \simeq
    \begin{bmatrix} B \\ r 1_m\end{bmatrix}.
  \]
\end{lemma}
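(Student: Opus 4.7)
The plan is to prove this by simply exhibiting explicit change-of-basis matrices in block form. Since $A \simeq B$, we may pick $U \in \GL_n(R)$ and $V \in \GL_m(R)$ with $B = U^{-1} A V$. The natural candidate for an equivalence between the two $(n+m) \times m$ matrices is to take $\tilde V = V$ (acting on columns) and to match the row transformation $U^{-1}$ on the top block with a compensating transformation on the bottom block that absorbs the extra factor of $V$ coming from the column action.

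Concretely, I would set $\tilde U = \begin{bmatrix} U & 0 \\ 0 & V \end{bmatrix} \in \GL_{n+m}(R)$, which is invertible because it is block-diagonal with invertible blocks. Then a direct computation gives
\[
\tilde U^{-1} \begin{bmatrix} A \\ r \cdot 1_m \end{bmatrix} \tilde V
= \begin{bmatrix} U^{-1} & 0 \\ 0 & V^{-1} \end{bmatrix} \begin{bmatrix} A \\ r \cdot 1_m \end{bmatrix} V
= \begin{bmatrix} U^{-1} A \\ r V^{-1} \end{bmatrix} V
= \begin{bmatrix} U^{-1} A V \\ r V^{-1} V \end{bmatrix}
= \begin{bmatrix} B \\ r \cdot 1_m \end{bmatrix},
\]
which is exactly what we want. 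There is really no obstacle here: the key observation is that the bottom block $r \cdot 1_m$ commutes with every $m \times m$ matrix up to scalar (in fact equals the scalar), so that the column action of $V$ can be undone on the bottom by the row action of $V^{-1}$, leaving $r \cdot 1_m$ invariant. The proof therefore reduces to this one line of block matrix arithmetic.
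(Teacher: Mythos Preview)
Your proof is correct and essentially identical to the paper's: both exhibit the equivalence via the block-diagonal matrix $\begin{bmatrix} U & 0 \\ 0 & V \end{bmatrix}$ (or its inverse) on the left and $V$ on the right, relying on the fact that $r\cdot 1_m$ absorbs the column action via $V^{-1}V = 1_m$. The only cosmetic difference is that the paper writes $B = UAV$ rather than $B = U^{-1}AV$, which amounts to relabelling $U \leftrightarrow U^{-1}$.
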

\begin{proof}
  Let $B = UAV$ for $U\in \GL_n(R)$, $V\in \GL_m(R)$.
  Then 
  $\left[\begin{smallmatrix} U & 0 \\ 0 & V^{-1}\end{smallmatrix}\right]
  [\begin{smallmatrix} A \\r1_m\end{smallmatrix}]
  V = [\begin{smallmatrix} b \\ r 1_m\end{smallmatrix}]$.
\end{proof}

Recall that for a matrix $A$ over $R$, we write $\Minors_m (A)$ for the ideal of
$R$ generated by the $k\times k$ minors of $A$.

\begin{lemma}
  \label{lem:edt_gauss}
  Let $A \in \Mat_{n\times m}(R)$.
  Let $r\in R$.
  Define
  $\widetilde A = [\begin{smallmatrix}
      A \\ r 1_m
    \end{smallmatrix}]
    \in \Mat_{(n+m) \times m}(R)$. 
  Let $0 \le k \le m$.
  Then
  $\Minors_k(\widetilde A) = \sum\limits_{i=0}^k r^i \Minors_{k-i}(A)$.
\end{lemma}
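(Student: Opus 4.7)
The plan is to compute $k\times k$ minors of $\widetilde A$ directly via Laplace expansion on the rows coming from the $r\cdot 1_m$ block, showing that each such minor is, up to sign, of the form $r^i\cdot (\text{some }(k-i)\times (k-i)\text{ minor of }A)$, and conversely that every such product arises this way.

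For the inclusion $\Minors_k(\widetilde A)\subset \sum_{i=0}^k r^i\Minors_{k-i}(A)$, I would fix a $k\times k$ minor of $\widetilde A$ determined by a choice of rows $R \subset \{1,\dotsc,n+m\}$ with $\card{R}=k$ and columns $C\subset\{1,\dotsc,m\}$ with $\card{C}=k$. Write $R = S\sqcup (n+T)$ with $S\subset\{1,\dotsc,n\}$ indexing rows from $A$ and $T\subset\{1,\dotsc,m\}$ indexing rows from the $r\cdot 1_m$ block; set $i=\card{T}$, so $\card{S}=k-i$. The submatrix of $\widetilde A$ on these rows and columns is block lower triangular up to a row permutation: the $T$-rows, restricted to columns $C$, have entry $r\delta_{jc}$ in position $(j,c)$ for $j\in T$, $c\in C$. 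Laplace expansion along these $i$ rows shows the minor vanishes unless $T\subset C$, and in that case equals $\pm r^i\cdot \det(A[S\mid C\setminus T])$, a $(k-i)\times(k-i)$ minor of $A$ (times $r^i$, with sign depending on the relative ordering of $T$ inside $C$).

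For the reverse inclusion $\sum_{i=0}^k r^i\Minors_{k-i}(A)\subset \Minors_k(\widetilde A)$, I would start with an arbitrary $(k-i)\times(k-i)$ minor of $A$, say $\det(A[S\mid C'])$ with $\card{S}=\card{C'}=k-i$. The assumption $k\le m$ ensures $m-(k-i)\ge i$, so I can pick a subset $T\subset\{1,\dotsc,m\}\setminus C'$ with $\card T = i$; set $C=C'\sqcup T$. The corresponding $k\times k$ minor of $\widetilde A$ built from rows $S\cup(n+T)$ and columns $C$ then equals $\pm r^i \det(A[S\mid C'])$ by the same Laplace-expansion computation, which realises $r^i \det(A[S\mid C'])$ (up to a unit sign) as a $k\times k$ minor of $\widetilde A$.

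The only real subtlety is the sign bookkeeping in the Laplace expansion, which amounts to showing that the sign $\pm$ obtained is a unit in $R$ (which it trivially is); the main combinatorial input is the hypothesis $k\le m$ used in the reverse direction to guarantee that enough columns outside $C'$ remain for the $T$-rows. No further results from the paper are needed; this is a self-contained linear-algebra computation.
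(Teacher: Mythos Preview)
Your proposal is correct and follows essentially the same approach as the paper: both arguments split the row set into rows from $A$ and rows from the $r\cdot 1_m$ block, observe that the minor vanishes unless the selected block-row indices $T$ lie inside the column set $C$, and then factor out $r^{\card T}$ to obtain a $(k-i)\times(k-i)$ minor of $A$; the reverse inclusion is handled identically by choosing $T$ disjoint from a given column set $C'$ using $k\le m$. The only cosmetic difference is that the paper phrases the vanishing step as ``$\widetilde A[I\mid J]$ contains a zero row'' rather than invoking Laplace expansion, and it does not dwell on signs (your remark that the block structure is ``lower triangular up to a row permutation'' is slightly imprecise---a column permutation is also needed---but this is harmless).
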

\begin{proof}
  Let $I\subset \{1,\dotsc,n+m\}$ and $J\subset \{1,\dotsc,m\}$ with $\card I
  = \card J = k$.
  Let $\widetilde A[I \mid J]$ be the submatrix of $\widetilde A$ consisting of the
  rows indexed by elements of $I$ and the columns indexed by elements of $J$;
  we use analogous notation for submatrices of $A$.
  Let $I' = \{i - n : i \in I\} \cap \{ 1,\dotsc,m\}$.
  If $I'\not\subset J$, then $\widetilde A[I\mid J]$ contains a zero row
  whence $\det(\widetilde A[I\mid J]) = 0$.
  Thus, suppose that $I'\subset J$.
  Let $i = \card{I'} \le m$ and note that
  \[
    \det (\widetilde A[I\mid J]) = r^i \det(\widetilde A[I\setminus I' \mid J\setminus
    I']
    = r^i \det(A[I\setminus I'\mid J\setminus I']).
  \]
  This shows that every nonzero $k\times k$ minor of $\widetilde A$ is of the form
  $r^i m$ for $0 \le i \le k$ and a $(k-i)\times (k-i)$ minor $m$ of $A$.
  Conversely, by reversing our reasoning, we find that each such element 
  $r^i m$ arises as a minor of $\widetilde A$.
\end{proof}

\subsection{Reminder: elementary divisors and minors}
\label{ss:edt}

Recall that $\fO$ denotes a compact \DVR{} with maximal ideal $\fP$, residue
field size $q$, uniformiser $\pi \in \fP \setminus \fP^2$,
normalised valuation $\ord$, and field of fractions~$K$.
Let $A \in \Mat_{n\times m}(\fO)$ have rank $r$ over~$K$.
Using the structure theory of modules over the local \PID{} $\fO$,
we obtain a unique $\lambda(A) = (\lambda_1(A),\dotsc,\lambda_r(A))$ such that
$0 \le \lambda_1(A) \le \dotsb \le \lambda_r(A) < \infty$ and
$A \simeq
\left[\begin{smallmatrix} \diag(\pi^{\lambda(A)}) & 0 \\ 0 & 0
  \end{smallmatrix}\right]$,
where $\diag(\pi^{\lambda(A)}) := \diag(\pi^{\lambda_1(A)},\dotsc,\pi^{\lambda_r(A)})$.
The $\pi^{\lambda_i(A)}$ are the elementary divisors (and invariant factors) of $A$.

\begin{lemma}[{Cf.\ \cite[Lemma~4.6(ii)]{ask} or \cite[\S 2.2]{Vol10}}]
  \label{lem:edt_via_minors}
  Let $U$ be a finite set.
  Let $A = A(X_U) \in \Mat_{n\times m}(\fO[X_U])$ have rank $r$ over $K(X_V)$.
  Let $x \in \fO U$ with $\rank_K(A(x)) = r$.
  Let $z \in \fO \setminus \{0\}$.
  Then for $i = 1, \dotsc, r$, we have
  \[
    \frac{
      \norm{\Minors_{i-1}(A(x))}}
    {\norm{\Minors_i(A(x)) \cup z \Minors_{i-1}(A(x))}}
    = q^{\min(\lambda_i(A(x)), \ord(z))}.
  \]
\end{lemma}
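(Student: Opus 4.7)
The strategy is to reduce to the diagonal case via the elementary divisor theorem and then compute both sides explicitly. The first step is to observe that for $A, B \in \Mat_{n \times m}(\fO)$ with $A \simeq B$, the ideals $\Minors_k(A)$ and $\Minors_k(B)$ of $\fO$ coincide for every $k$. Indeed, if $B = U A V$ for $U \in \GL_n(\fO)$ and $V \in \GL_m(\fO)$, then the Cauchy--Binet formula shows that every $k \times k$ minor of $B$ is an $\fO$-linear combination of $k \times k$ minors of $A$, so $\Minors_k(B) \subseteq \Minors_k(A)$; applying the same argument to $A = U^{-1} B V^{-1}$ yields the reverse inclusion. Applying this to $A(x) \simeq \left[\begin{smallmatrix} D & 0 \\ 0 & 0 \end{smallmatrix}\right]$ with $D = \diag(\pi^{\lambda_1},\dotsc,\pi^{\lambda_r})$ (writing $\lambda_j = \lambda_j(A(x))$ for brevity) reduces the computation to this block-diagonal matrix.

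Next, I compute $\Minors_k$ for this normal form. Every nonzero $k \times k$ minor arises by selecting $k$ rows and $k$ columns from the $D$-block, giving a minor of the form $\pi^{\lambda_{j_1} + \dotsb + \lambda_{j_k}}$ for some $1 \le j_1 < \dotsb < j_k \le r$. Since $\lambda_1 \le \dotsb \le \lambda_r$, the smallest valuation occurs when $(j_1,\dotsc,j_k) = (1,\dotsc,k)$, so
\[
  \Minors_k(A(x)) = \pi^{s_k} \fO, \qquad \text{where } s_k := \lambda_1 + \dotsb + \lambda_k,
\]
for $0 \le k \le r$ (with $s_0 = 0$). Taking norms gives $\norm{\Minors_k(A(x))} = q^{-s_k}$.

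Finally, I assemble the ratio. The ideal in the denominator is
\[
  \Minors_i(A(x)) \cup z\, \Minors_{i-1}(A(x))
  = \pi^{s_i} \fO + z \pi^{s_{i-1}} \fO
  = \pi^{s_{i-1}} \bigl(\pi^{\lambda_i} \fO + z \fO\bigr)
  = \pi^{s_{i-1} + \min(\lambda_i, \ord(z))} \fO,
\]
using $s_i = s_{i-1} + \lambda_i$ and the fact that in a \DVR{} the sum of two principal ideals is principal generated by the one of smaller valuation. Its norm is therefore $q^{-s_{i-1} - \min(\lambda_i, \ord(z))}$. Dividing into $\norm{\Minors_{i-1}(A(x))} = q^{-s_{i-1}}$ yields $q^{\min(\lambda_i, \ord(z))}$, as claimed.

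I do not anticipate any real obstacle: the only subtlety is the invariance of minor ideals under the equivalence $\simeq$, which follows from a direct Cauchy--Binet argument, and keeping track of the normalisation that $\norm{\pi^k \fO} = q^{-k}$.
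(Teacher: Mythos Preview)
Your proof is correct. The paper does not actually supply a proof of this lemma; it simply records the statement with the citation ``Cf.\ \cite[Lemma~4.6(ii)]{ask} or \cite[\S 2.2]{Vol10}'' and moves on, so there is nothing in the paper to compare your argument against. Your approach---reduce to Smith normal form via invariance of minor ideals under $\GL_n(\fO)\times\GL_m(\fO)$, then read off the partial sums $s_k=\lambda_1+\dotsb+\lambda_k$ and assemble the ratio---is the standard one and is presumably what the cited references contain. One cosmetic remark: you silently pass from the set-theoretic union $\Minors_i(A(x)) \cup z\,\Minors_{i-1}(A(x))$ to the ideal sum; this is harmless because $\norm{\cdot}$ is a maximum norm and $\norm{I\cup J}=\max(\norm I,\norm J)=\norm{I+J}$ for ideals in~$\fO$, but it might be worth a half-sentence.
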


We note that the condition $\rank_K(A(x)) = r$ on $x$ is satisfied outside of
a null set with respect to the normalised Haar measure $\mu$ on $\fO U$.

\subsection{Adding generic rows: elementary divisors of $\circ$-duals}
\label{ss:generic_row_circ_dual}

Let $R$ be a ring.
Let $U$ and $V$ be finite sets with $\ell$ and $n$ elements, respectively.
Write $U = \{ u_1,\dotsc,u_\ell \}$ and $V = \{ v_1,\dotsc,v_n \}$.
For $1 \le i \le n$, $1 \le j \le m$, and $1 \le k \le \ell$, let
$\alpha_{ijk} \in R$.
Define $A(X_U) \in \Mat_{n\times m}(R[X_U])$ and
$C(X_V) \in \Mat_{\ell \times m}(R[X_V])$
via $A(X_U)_{ij} = \sum_{k=1}^\ell \alpha_{ijk} X_{u_k}$
and $C(X_V)_{kj} = \sum_{i=1}^n \alpha_{ijk} X_{v_i}$
as in \eqref{eq:A_circ_C}.
Hence, $A(X_U)$ and $C(X_V)$ are $\circ$-duals of each other.

Let $d \ge 1$.
Let $\widetilde U = U \sqcup \{ g_{rs} : 1\le r \le d, 1\le j \le m\}$, where
the $g_{rj}$ are distinct.
Let $\widetilde V = V \sqcup \{ w_1,\dotsc,w_d\}$, where the the $w_s$ are
distinct.
Write $G(d,m) = [X_{g_{rj}}] \in \Mat_{d\times m}(R[X_{\widetilde U}])$;
hence, $G(d,m)$ is a generic $d\times n$ matrix in variables distinct from the
$X_u$ ($u\in U$).
Define
\begin{align*}
  \widetilde A(X_{\widetilde U}) & =
  \left[
    \begin{array}{c}
      A(X_U) \\
      \hline
      G(d,m)
    \end{array}
  \right]
  \in \Mat_{(n+d)\times m}(R[X_{\widetilde U}]) \quad\text{and}\\
  \widetilde C(X_{\widetilde V}) & =
  \left[
    \begin{array}{c}
      C(X_V) \\
      \hline
      X_{w_1} 1_{m} \\
      \vdots \\
      X_{w_d} 1_{m}
    \end{array}
  \right]
  \in \Mat_{(\ell + dm)\times m}(R[X_{\widetilde V}]).
\end{align*}

\begin{lemma}
  $\widetilde C(X_{\widetilde V})$ is a $\circ$-dual of $C(X_V)$.
\end{lemma}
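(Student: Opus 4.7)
The plan is to verify the $\circ$-dual relationship by producing the underlying trilinear coefficient data explicitly. Recall from \eqref{eq:A_circ_C} that $\circ$-duality is encoded by a family $\alpha_{ijk} \in R$ indexed by $(i,j,k) \in V \times \{1,\dotsc,m\} \times U$, from which both partners of the $\circ$-dual pair are reconstructed by symmetric formulae. To prove the lemma, I will exhibit an extension $\widetilde{\alpha}$ of the family $(\alpha_{ijk})$ defining $C(X_V)$, now indexed by $\widetilde{V} \times \{1,\dotsc,m\} \times \widetilde{U}$, and show that $\widetilde{C}$ is precisely the matrix the ``$C$-side'' formula produces from $\widetilde{\alpha}$. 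In particular, this exhibits $\widetilde{C}$ as a $\circ$-dual whose defining coefficient data refines the data defining $C(X_V)$.

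The extension is forced upon us. I set $\widetilde{\alpha}_{ijk} = \alpha_{ijk}$ whenever $(i,j,k) \in V \times \{1,\dotsc,m\} \times U$, and declare the only further nonzero values to be $\widetilde{\alpha}_{w_r, s, g_{rs}} = 1$ for $r \in \{1,\dotsc,d\}$ and $s \in \{1,\dotsc,m\}$. To justify these specific new values, I first check that the ``$A$-side'' formula applied to $\widetilde{\alpha}$ reproduces $\widetilde{A}$: for $1 \le i \le n$ the entry $\sum_{\tilde k \in \widetilde U}\widetilde{\alpha}_{ij\tilde k} X_{\tilde k}$ collapses to $A(X_U)_{ij}$, while for $i = w_r$ it collapses to $X_{g_{rj}}$, exactly the $r$th new generic row.

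Next, I apply the ``$C$-side'' formula to the \emph{same} data $\widetilde{\alpha}$ and check that the result is $\widetilde{C}$ as displayed. The rows are indexed by $\widetilde{U}$. For the row indexed by $u_k$ with $1 \le k \le \ell$, the $(u_k,j)$-entry simplifies to $\sum_{i \in V}\alpha_{ijk} X_{v_i} = C(X_V)_{kj}$, since no new coefficient contributes unless the third index is of the form $g_{rs}$. For the row indexed by $g_{rs}$, the $(g_{rs},j)$-entry simplifies to $\Iverson{j=s}\, X_{w_r}$; grouping these rows by $r$ as $s$ ranges over $\{1,\dotsc,m\}$ produces the $m \times m$ diagonal block $X_{w_r} 1_m$. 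Stacking the contributions in this order recovers exactly the matrix $\widetilde{C}$ from the statement.

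The only genuine bookkeeping subtlety is the choice of total orderings on $\widetilde{U}$ and $\widetilde{V}$: different orders would permute rows and columns of the constructed $\circ$-dual. However, as recalled in \S\ref{s:ask_bg}, the $\circ$-dual of a matrix of linear forms is only well-defined up to the equivalence coming from the action of $\GL(R\widetilde{U}) \times \GL_{\ell+dm}(R) \times \GL_m(R)$, which absorbs such row/column permutations and variable relabellings. Thus, with the orderings in which the elements of $U \subset \widetilde{U}$ come first and are followed by the $g_{rs}$ ordered lexicographically in $(r,s)$, the construction above yields $\widetilde{C}$ on the nose, and the lemma follows: $\widetilde{C}(X_{\widetilde V})$ is a $\circ$-dual built from the extended coefficient data refining the $\alpha_{ijk}$ that define $C(X_V)$, and the attendant ``$A$-side'' partner is $\widetilde{A}(X_{\widetilde U})$.
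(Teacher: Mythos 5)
Your proposal is correct and is essentially the paper's own argument made explicit: the paper fixes the same orderings on $\widetilde U$ and $\widetilde V$ (elements of $U$ and $V$ first, then the $g_{rs}$ lexicographically and the $w_r$) and declares the claim to follow by inspection, while you carry out that inspection by writing down the extended coefficient family $\widetilde\alpha$ and checking both formulae of \eqref{eq:A_circ_C}. You also correctly read the statement as asserting that $\widetilde C(X_{\widetilde V})$ is a $\circ$-dual of $\widetilde A(X_{\widetilde U})$ (the partner actually used later in the proof of Theorem~\ref{thm:add_generic_row}), which is the intended content.
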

\begin{proof}
  Ordering the elements of $\widetilde U$ and $\widetilde V$ as
  $u_1,\dotsc,u_\ell,g_{11},\dotsc,g_{1m}, \dotsc, g_{d1},\dotsc,g_{dm}$
  and
  $v_1,\dotsc,v_n,w_1,\dotsc,w_d$,
  respectively, the claim follows by inspection.
\end{proof}

It turns out that the elementary divisors of specialisations of $\widetilde
C(X_{\widetilde V})$ can be easily expressed in terms of those of specialisations
of $C(X_V)$.

\begin{lemma}
  \label{lem:edt_tilde_C}
  Let $\fO$ be a compact \DVR{} with an $R$-algebra structure.
  Let $r$ be the rank of $C(X_V)$ over $K$.
  Let $(x,y) \in \fO \widetilde V = \fO V \oplus \fO (\widetilde V\setminus V)$
  with $\rank_K(C(x)) = r$ and $\prod\limits_{i=1}^d y_i \not= 0$.
  Then
  \[
    \lambda_i(\widetilde C(x,y)) =
    \begin{cases}
      \min\bigl(\lambda_i(C(x)), \ord(y_{w_1}),\dotsc,\ord(y_{w_d})\bigr), & \text{for }
      i=1,\dotsc,r,\\
      \min\bigl(\ord(y_{w_1}),\dotsc,\ord(y_{w_d})\bigr), & \text{for } i=r+1,\dotsc,m.
    \end{cases}
  \]
\end{lemma}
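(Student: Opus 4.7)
The plan is to reduce $\widetilde C(x,y)$, with respect to the equivalence relation $\simeq$ (left/right multiplication by invertible matrices over $\fO$), to a diagonal form whose entries are patently the claimed elementary divisors. Since $\simeq$ preserves the invariant factors, this suffices.

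First I would collapse the scalar blocks. Choose $s_0$ such that $v := \ord(y_{w_{s_0}}) = \min_r \ord(y_{w_r})$. Because $y_{w_r}/y_{w_{s_0}} \in \fO$ for every $r$, we may left-multiply $\widetilde C(x,y)$ by an element of $\GL_{\ell+dm}(\fO)$ to subtract $y_{w_r}/y_{w_{s_0}}$ times each row of the $y_{w_{s_0}} 1_m$-block from the corresponding row of the $y_{w_r} 1_m$-block whenever $r \neq s_0$. This zeroes out every scalar block except the $s_0$-th one, giving (after a further permutation of rows)
\[
\widetilde C(x,y) \simeq
\begin{bmatrix} C(x) \\ y_{w_{s_0}} 1_m \\ 0 \end{bmatrix},
\]
with $(d-1)m$ trailing zero rows.

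Next I diagonalize the $C(x)$ block. By the structure theory of modules over the \DVR{} $\fO$, we have $C(x) \simeq \bigl[\begin{smallmatrix} D & 0 \\ 0 & 0 \end{smallmatrix}\bigr]$ with $D = \diag(\pi^{\lambda_1(C(x))}, \dotsc, \pi^{\lambda_r(C(x))})$. By Lemma~\ref{lem:edt_extra}, this equivalence extends to the augmented matrix:
\[
\begin{bmatrix} C(x) \\ y_{w_{s_0}} 1_m \end{bmatrix}
\simeq
\begin{bmatrix} D & 0 \\ 0 & 0 \\ y_{w_{s_0}} 1_r & 0 \\ 0 & y_{w_{s_0}} 1_{m-r} \end{bmatrix}.
\]
After permuting rows (again a $\simeq$-operation), the right-hand side becomes block-diagonal with blocks $\bigl[\begin{smallmatrix}\pi^{\lambda_i(C(x))} \\ y_{w_{s_0}}\end{smallmatrix}\bigr]$ for $i=1,\dotsc,r$, one block equal to the scalar matrix $y_{w_{s_0}} 1_{m-r}$, and a number of zero rows (which do not affect the elementary divisors).

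Each block $\bigl[\begin{smallmatrix}\pi^{\lambda_i(C(x))} \\ y_{w_{s_0}}\end{smallmatrix}\bigr]$ is reduced by the Euclidean algorithm in the \PID{} $\fO$ to $\bigl[\begin{smallmatrix}\pi^{\min(\lambda_i(C(x)),\,v)} \\ 0\end{smallmatrix}\bigr]$, and the scalar block $y_{w_{s_0}} 1_{m-r}$ contributes $m-r$ elementary divisors, each equal to $\pi^v$. Since $\lambda_1(C(x)) \le \dotsb \le \lambda_r(C(x))$, the sequence $\bigl(\min(\lambda_i(C(x)), v)\bigr)_{i=1}^r$ is nondecreasing and bounded above by $v$, so the complete list of elementary divisors, already sorted in nondecreasing order, matches the claimed formula.

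There is no serious obstacle; the only mild care required is in the initial collapsing step, where one must ensure that the row operations needed to eliminate $d-1$ of the scalar blocks are indeed realised by left multiplication by a single element of $\GL_{\ell+dm}(\fO)$. This is straightforward because each operation adds a multiple (by an element of $\fO$) of one row to another.
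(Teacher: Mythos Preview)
Your proof is correct and follows essentially the same approach as the paper's: reduce $\widetilde C(x,y)$ to an explicit diagonal form via $\simeq$-operations, invoking Lemma~\ref{lem:edt_extra} to pass to the Smith normal form of the $C(x)$ block while preserving the scalar block(s), then clear the remaining entries by elementary row operations. The only cosmetic difference is that you collapse the $d$ scalar blocks down to one at the outset, whereas the paper carries all $d$ blocks through the Smith-form step and reduces at the end; both routes arrive at the same diagonal matrix.
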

\begin{proof}
  Note that $\widetilde C(X_{\widetilde V})$ has rank $m$ over $K(X_{\widetilde V})$.
  For $(x,y)$ satisfying the conditions in the lemma,
  we have
  $\rank_K(\widetilde C(x,y)) = m$,
  \begin{align*}
    C(x) & \simeq \begin{bmatrix}
      \diag(\pi^{\lambda(C(x))}) & 0 \\ 0 & 0
    \end{bmatrix} \in \Mat_{\ell \times m}(\fO),
  \text{ and}\\
  \widetilde C(x,y) & \simeq \begin{bmatrix}
      \diag(\pi^{\lambda(\widetilde C(x,y))}) & 0 \\ 0 & 0
    \end{bmatrix} \in \Mat_{(\ell+dm)\times m}(\fO).
  \end{align*}
  
  On the other hand,
  by Lemma~\ref{lem:edt_extra}, we may apply
  elementary row operations to the $(\ell + dm)\times m$ matrix $\widetilde
  C(x,y)$ to obtain

  {\small
  \begin{align*}
    & \widetilde C(x,y) \simeq
    \left[
      \begin{array}{c}
      \begin{array}{cc}
        \diag(\pi^{\lambda(C(x))})
        & 0 \\
        0 & 0
      \end{array}\\
        \hline
        y_{w_1} 1_m\\
        \vdots \\
        y_{w_d} 1_m
      \end{array}
    \right]
    \simeq
    \left[
    \begin{array}{c|c}
      \diag(\pi^{\min(\lambda_1(C(x)),e)}, \dotsc,\pi^{\min(\lambda_r(C(x)),e)})
      \\\hline
        &
          \pi^e 1_{m-r}
      \\\hline
      0 & 0
      \end{array}
    \right],
  \end{align*}}%
  where $e = \min(\ord(y_{w_1}),\dotsc,\ord(y_{w_d}))$.
  The claim follows from the uniqueness of the $\lambda_i(\widetilde C(x,y))$;
  note that the exponents along the diagonal entries of the preceding matrix
  are nondecreasing.
\end{proof}

\begin{cor}
  \label{cor:edt_tilde_C_many_variables}
  Let the assumptions be as in Lemma~\ref{lem:edt_tilde_C}.
  Let $z\in \fO \setminus \{0\}$.
  Let $w\in \fO$ with $\abs w = \norm{y_{w_1},\dotsc,y_{w_d}, z}$.
  Then
  \[
    \frac
    {\norm{\Minors_{i-1}(\widetilde C(x,y))}}
    {\norm{\Minors_i(\widetilde C(x,y)) \cup z \Minors_{i-1}(\widetilde C(x,y))}}
    =
    \begin{cases}
      \frac{\norm{\Minors_{i-1}(C(x))}}
      {\norm{\Minors_i(C(x)) \, \cup \, w \Minors_{i-1}(C(x))}}, & \text{for }
      i=1,\dotsc,r, \\
      \abs{w}^{-1}, & \text{for } i=r+1,\dotsc, m.
    \end{cases}
  \]
  Moreover, if $\abs w = 1$, then
  $\frac {\norm{\Minors_{i-1}(\widetilde C(x,y))}}
  {\norm{\Minors_i(\widetilde C(x,y)) \cup z \Minors_{i-1}(\widetilde C(x,y))}} = 1$.
\end{cor}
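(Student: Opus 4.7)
The plan is to deduce Corollary~\ref{cor:edt_tilde_C_many_variables} by combining the elementary divisor formula for $\widetilde C(x,y)$ from Lemma~\ref{lem:edt_tilde_C} with the minor-theoretic expression of elementary divisors from Lemma~\ref{lem:edt_via_minors}. Since $\widetilde C(X_{\widetilde V})$ has $m$ rows of the form $X_{w_s} 1_m$ stacked below $C(X_V)$, and the assumption $\prod_{s=1}^d y_{w_s} \neq 0$ makes $\rank_K(\widetilde C(x,y)) = m$, Lemma~\ref{lem:edt_via_minors} applies uniformly in $i = 1,\dotsc,m$.

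First, I would apply Lemma~\ref{lem:edt_via_minors} to $\widetilde C(x,y)$ and $z$ to rewrite the left-hand side as $q^{\min(\lambda_i(\widetilde C(x,y)), \ord(z))}$. Next, inserting the formula for $\lambda_i(\widetilde C(x,y))$ from Lemma~\ref{lem:edt_tilde_C} and noting that $\ord(w) = \min(\ord(y_{w_1}), \dotsc, \ord(y_{w_d}), \ord(z))$ by the definition $\abs w = \norm{y_{w_1}, \dotsc, y_{w_d}, z}$, the exponent simplifies: for $i \le r$ it becomes $\min(\lambda_i(C(x)), \ord(w))$, and for $i > r$ it becomes $\ord(w)$.

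For the range $i = 1, \dotsc, r$, I would then apply Lemma~\ref{lem:edt_via_minors} in the reverse direction, now to $C(x)$ with the element $w$ in place of $z$ (valid since $w \neq 0$ as both $y_{w_s} \neq 0$, and $\rank_K(C(x)) = r$ by hypothesis), yielding the desired ratio of minor norms. For $i = r+1,\dotsc,m$, the identity $q^{\ord(w)} = \abs{w}^{-1}$ immediately gives the claim.

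The final assertion is then automatic: if $\abs w = 1$, then $\ord(w) = 0$, and since $\lambda_i(C(x)) \ge 0$, the exponent $\min(\lambda_i(C(x)), 0)$ vanishes for $i \le r$, while $\abs{w}^{-1} = 1$ for $i > r$. There is no real obstacle here—the entire statement is a bookkeeping consequence of the two preceding lemmas, with the only point requiring care being the consistent interpretation of $\ord(w)$ as the minimum over $\ord(y_{w_s})$ and $\ord(z)$, so that the $\ord(z)$ appearing in Lemma~\ref{lem:edt_via_minors} (applied to $\widetilde C$) gets absorbed into $\ord(w)$ when transferring the statement back to $C(x)$.
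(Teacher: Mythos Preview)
Your proposal is correct and takes essentially the same approach as the paper: apply Lemma~\ref{lem:edt_via_minors} to $\widetilde C(x,y)$ to express the left-hand side as $q^{\min(\lambda_i(\widetilde C(x,y)),\ord(z))}$, substitute the elementary divisors from Lemma~\ref{lem:edt_tilde_C}, and then, for $i\le r$, apply Lemma~\ref{lem:edt_via_minors} again to $C(x)$ with $w$ in place of $z$. The paper's proof is just a more compressed version of exactly this computation.
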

\begin{proof}
  Let $\Lambda = \min(\lambda_i(\widetilde C(x,y), \ord(z))$.
  By Lemma~\ref{lem:edt_via_minors}, the left-hand side
  of the displayed equation in Corollary~\ref{cor:edt_tilde_C_many_variables} 
  is $q^\Lambda$.
  By combining Lemmas~\ref{lem:edt_via_minors}
  and \ref{lem:edt_tilde_C}, we find that $q^\Lambda$ coincides with the
  right-hand side.
\end{proof}

\subsection[Proof of Theorem 10.1]{Proof of Theorem~\ref{thm:add_generic_row}}

While it is possible to prove Theorem~\ref{thm:add_generic_row}
by combining Corollary~\ref{cor:edt_tilde_C_many_variables} 
and the integrals in Proposition~\ref{prop:proj_circ_int},
a cleaner derivation is obtained using the zeta functions attached to
modules over polynomial rings from \cite[\S 2.6]{cico}.

Let $V$ be a finite set.
For $x\in \fO V$, we write $\fO_x$ for $\fO$ endowed with the
$\fO[X_V]$-module structure $X_v r = x_v r$ ($v\in V$, $r\in \fO$).
For a finitely generated $\fO[X_V]$-module $M$, define
\[
  \zeta_M(s) = \int\limits_{\fO V \times \fO}
  \abs{z}^{s-1} \cdot \card{M_x \otimes_\fO \fO/z} \,
  \dd\!\mu(x,z).
\]

\begin{prop}[{\cite[Cor.\ 2.15]{cico}}]
  \label{prop:zeta_M}
  Let $U$ and $V$ be finite sets with $\card{U} = \ell$
  and $\card{V} = n$.
  Let $A(X_U) \in \Mat_{n\times m}(\fO[X_U])$ be a matrix of linear forms with
  $\circ$-dual $C(X_V) \in \Mat_{\ell\times m}(\fO[X_V])$.
  Then
  $\zeta^{\ask}_{A(X_U)/\fO}(s) = (1-q^{-1})^{-1} \zeta_{\Coker(C(X_V))}(s-n+m)$.
\end{prop}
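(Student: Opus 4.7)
The plan is to prove the stated identity by a Fubini-style manipulation that exploits the $\circ$-duality of $A(X_U)$ and $C(X_V)$ at the level of finite specialisations, then to convert the resulting sum over finite-residue-ring points into a $\fP$-adic integral that matches the definition of $\zeta_M$.

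More concretely, I would first fix $k \ge 0$, set $S = \fO/\fP^k$, and write out
\[
q^{k\ell} \,\ask_S(A(X_U)) = \sum_{x \in SU} \#\bigl\{y \in SV : yA(x) = 0\bigr\} = \#\bigl\{(x,y) \in SU \times SV : yA(x) = 0\bigr\}.
\]
The defining identity \eqref{eq:A_circ_C} of the $\circ$-dual gives the symmetric pairing $yA(x) = xC(y)$ (checked by expanding both sides as $\sum_{i,j,k}\alpha_{ijk} y_i x_k$), so the right-hand side equals $\sum_{y\in SV}\#\bigl\{x\in SU : xC(y) = 0\bigr\}$. This is the key step that ``transposes'' the computation from $A$ to $C$.

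Next, for each fixed $y$ I would apply elementary-divisor theory to the map $C(y)\colon \fO^{\ell} \to \fO^{m}$ (reduced mod $\fP^k$). If $C(y)$ has $\fO$-rank $r$ and elementary-divisor exponents $\lambda_1,\dotsc,\lambda_r$, then both its mod-$\fP^k$ kernel on the left and its mod-$\fP^k$ cokernel decompose block-diagonally, yielding
\[
\#\Ker\bigl(x \mapsto xC(y) \bmod \fP^k\bigr) = q^{k(\ell-m)} \cdot \#\bigl(\Coker(C(y)) \otimes_\fO \fO/\fP^k\bigr).
\]
Substituting back into the previous display and using that $M_y \otimes_\fO \fO/\fP^k$ depends only on $y \bmod \fP^k$, a standard averaging argument gives
\[
\ask_{\fO/\fP^k}(A(X_U)) = q^{k(n-m)} \int_{\fO V} \#\bigl(M_y \otimes_\fO \fO/\pi^k\bigr)\, \dd\mu(y).
\]

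Finally, I would assemble $\zeta^{\ask}_{A(X_U)/\fO}(s) = \sum_{k\ge 0} \ask_{\fO/\fP^k}(A) q^{-ks}$ and compare with the definition of $\zeta_M(s')$. Decomposing the $z$-integral in $\zeta_M$ according to the value of $\ord(z)$ (each level set $\{\ord(z) = k\}$ has measure $q^{-k}(1-q^{-1})$, and $M_y \otimes \fO/z$ depends only on $\ord(z)$) converts $\zeta_M(s')$ into $(1-q^{-1}) \sum_k q^{-ks'} \int_{\fO V} \#(M_y \otimes \fO/\pi^k)\, \dd\mu(y)$. Setting $s' = s - n + m$ and matching term-by-term yields the claimed identity. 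The main obstacle, and the only step requiring genuine care, is the elementary-divisor bookkeeping that relates mod-$\fP^k$ kernels to mod-$\fP^k$ cokernels with the precise exponent shift $k(\ell-m)$; everything else is a formal manipulation of Haar integrals and generating series.
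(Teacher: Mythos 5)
Your proposal is correct, and it is essentially a complete proof: the duality $yA(x)=xC(y)$ from \eqref{eq:A_circ_C}, the kernel--cokernel count $\#\Ker\bigl(C(y)\bmod\fP^k\bigr)=q^{k(\ell-m)}\,\#\bigl(\Coker(C(y))\otimes_\fO\fO/\pi^k\bigr)$, the passage from the sum over $(\fO/\fP^k)V$ to an integral over $\fO V$, and the slicing of the $z$-integral in $\zeta_M$ by $\ord(z)=k$ (level sets of measure $q^{-k}(1-q^{-1})$) all check out and assemble to exactly the stated shift $s\mapsto s-n+m$ with the factor $(1-q^{-1})^{-1}$. Note, however, that the paper itself gives no proof of this proposition: it is quoted verbatim from \cite[Cor.\ 2.15]{cico}, so there is no in-paper argument to compare against; your write-up is the natural self-contained derivation in the spirit of the kernel-counting arguments of \cite{ask}. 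Two small remarks: the elementary-divisor bookkeeping you single out as the delicate step can be avoided entirely, since for the finite modules $S^\ell\to S^m$ one has $\#\Ker\cdot\#\mathrm{Im}=q^{k\ell}$ and $\#\Coker=q^{km}/\#\mathrm{Im}$, which gives the exponent shift $k(\ell-m)$ directly; and you should state explicitly the (routine) identification $M_y=\Coker(C(X_V))\otimes_{\fO[X_V]}\fO_y\cong\Coker(C(y))$, which is base change of a cokernel and is what licenses replacing $M_y\otimes_\fO\fO/\pi^k$ by $\Coker(C(y))\otimes_\fO\fO/\pi^k$ and the claim that this only depends on $y\bmod\fP^k$.
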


\begin{proof}[Proof of Theorem~\ref{thm:add_generic_row}]
  We work in the setting of \S\ref{ss:generic_row_circ_dual} with $d = 1$.
  Dropping a superscript, we write $\widetilde V = V \sqcup \{ w\}$ so that
  $\widetilde C(X_{\widetilde V}) = \left[\begin{smallmatrix} C(X_V) \\ X_w
    1_m\end{smallmatrix}\right] \in \Mat_{(\ell + m)\times m}(\fO[X_{\widetilde V}])$
  is a $\circ$-dual of $\widetilde A(X_{\widetilde U})$.
  Let $M = \Coker(C(X_V))$ and $\widetilde M = \Coker(\widetilde C(X_{\widetilde V}))$.
  By Proposition~\ref{prop:zeta_M}, it suffices to show that
  \begin{equation}
    \label{eq:zeta_M}
    \zeta_{\widetilde M}(s) = \frac{1-q^{-1-s}}{1-q^{-s}} \cdot \zeta_M(s+1).
  \end{equation}

  We may view $\widetilde M$ as the restriction of scalars of $M$ along the
  ring map $\fO[X_{\widetilde V}] \to \fO[X_V]$ which sends $X_w$ to $0$ and which
  fixes each $X_v$ ($v\in V$).
  We identify $\fO \widetilde V = \fO V\times \fO$, with the factor $\fO$
  corresponding to the direct summand $\fO \std w$ of $\fO \widetilde V$.
  The key observation is that
  for $(x,y) \in \fO \widetilde V$ and $z\in \fO$, we have
  $\widetilde M_{(x,y)} \otimes_\fO \fO/z \approx_{\fO} M_x \otimes_\fO
  \fO/\langle y,z\rangle$
  and thus
  \[
    \zeta_{\widetilde M}(s) =
    \int\limits_{\fO V \times \fO \times \fO}
    \abs{z}^{s-1}
    \card{M_x \otimes_\fO \fO/\langle y,z\rangle} \,\dd\!\mu(x,y,z).
  \]

  We partition $\fO V \times \fO \times \fO = W_1 \sqcup W_2$, where
  \begin{align*}
    W_1 & = \{ (x,y,z) \in \fO V \times \fO \times \fO : z \mid y\} \text{ and
          }\\
    W_2 &= \{ (x,y,z) \in \fO V \times \fO \times \fO : \pi y \mid z\}.
  \end{align*}

  To evaluate our integral over $W_1$, we perform a change of variables $y =
  zy'$ with $\abs{\dd\! y} = \abs{z} \cdot \abs{\dd\! y'}$.
  We thus find that
  \begin{align*}
    \int\limits_{W_1}
    \abs{z}^{s-1}
    \card{M_x \otimes_\fO \fO/\langle y,z\rangle} \,\dd\!\mu(x,y,z)
    & = \int\limits_{\fO V \times \fO \times \fO} \abs{z}^s \cdot \card{M_x
      \otimes_{\fO} \fO/z} \,\dd\!\mu(x,y',z) \\
    & = \zeta_M(s+1).
  \end{align*}
  Integrating over $W_2$ and changing variables via $z = \pi y z'$ and
  $\abs{\dd\! z} = q^{-1} \abs{y} \cdot \abs{\dd\! z'}$, using
  the Fubini-Tonelli theorem, we find that
  \begin{align*}
    \int\limits_{W_2}
    \abs{z}^{s-1}
    \card{M_x \otimes_\fO \fO/\langle y,z\rangle} \,\dd\!\mu(x,y,z)
    & =
      q^{-1}
      \int\limits_{\fO V \times \fO \times \fO}
      \abs{\pi y z'}^{s-1}
      \card{M_x \otimes_\fO \fO/y}
      \,
      \abs{y}
      \dd\!\mu(x,y,z') \\
    & = q^{-s}
      \int\limits_{\fO} \abs{z}^{s-1} \dd\!\mu(z)
      \cdot
      \zeta_M(s+1).
  \end{align*}
  It is well known (and easy to prove) that $\int\limits_{\fO}\abs{z}^s
  \dd\!\mu(z) = \frac{1-q^{-1}}{1-q^{-1-s}}$.
  Hence, we find that
  \[
    \zeta_{\widetilde M}(s) = \zeta_M(s+1) \left(1 + \frac{q^{-s}(1-q^{-1})}{1-q^{-s}}\right)
    = \frac{1-q^{-1-s}}{1-q^{-s}}\cdot \zeta_M(s+1),
  \]
  as claimed.
\end{proof}

\section{Proof of Theorem~\ref{thm:join} (and a new proof of Theorem~\ref{thm:cmt})}
\label{s:proof_join}

As always, $\fO$ denotes a compact \DVR{} with maximal ideal $\fP$ and residue
field size $q$.
We write $t = q^{-s}$.
In the following, we assume that $s$ is arbitrary but fixed and that
$\Real(s)$ is sufficiently large with respect to the graphs involved.

\subsection[Summary]{Summary: $W^-_\Gamma$ as an integral for a loopless graph
  $\Gamma$}

Let $\Gamma = (V,E)$ be a loopless graph with $n$ vertices and $c$ connected
components.
Prior to outlining the strategy of our proof of Theorem~\ref{thm:join},
we now summarise various results from \cite{cico} and the present article 
which provide us with a formula for $W^-_\Gamma$ in terms of
$\fP$-adic integrals.
First, by Theorem~\ref{thm:uniformity}\ref{thm:uniformity3} and
Proposition~\ref{prop:ask_Gamma_integral},

\begin{equation}
  \label{eq:loopless_WGamma_as_int}
  (1-t) W^-_\Gamma(q,t) = 1 + (1-q^{-1})^{-1}
  \int\limits_{(\fO V)^\times \times \fP}
  \!\!\!\! \Gamma^-(s).
\end{equation}

Next, by \eqref{eq:Gamma_integral} and
Corollary~\ref{cor:rank_C-_loopless}, for $W\subset \fO V \times \fO$, we have
\begin{equation}
  \label{eq:loopless_int}
  \int\limits_W \Gamma^-(s)
  =
  \int\limits_W \abs{z}^{s-c-1}
  \,
  \prod_{k=1}^{n-c}
  \frac
  {\norm{\Minors_{k-1}^-\Gamma(x)}}
  {\norm{\Minors_k^-\Gamma(x) \cup z \Minors_{k-1}^-\Gamma(x)}}
  \dd\!\mu(x,z).
\end{equation}

Corollary~\ref{cor:loopless_minus_minors} shows that
for $x\in \fO V$, we have
\begin{equation}
  \label{eq:loopless_minors}
  \norm{\Minors^-_k\Gamma(x)} = \norm{\mon\alpha(x) : \alpha \in \Nilpotents_k(\Gamma)}.
\end{equation}

Finally, Corollary~\ref{cor:rank_C-_loopless}
shows that $\Minors_k^-\Gamma$ is nonempty if and only if $0 \le k \le n-c$.

\subsection{Setup and strategy}

For the remainder of this section, let
$\Gamma_1 = (V_1,E_1)$ and $\Gamma_2 = (V_2,E_2)$ be loopless graphs and let
$\Gamma = \Gamma_1 \join \Gamma_2$ as in~\S\ref{ss:plumbing_setup}.
In particular, $\Gamma_i$ has $n_i$ vertices and $n = n_1 + n_2$ is the number
of vertices of $\Gamma$.
Note that $\Gamma$ is necessarily connected.
We identify $\fO V = \fO V_1 \times \fO V_2$ and $X_V = (X_{V_1},X_{V_2})$.
Our proof of Theorem~\ref{thm:join} is based on a series of auxiliary lemmas
and claims.
The first of these provides an equivalent form of Theorem~\ref{thm:join} in
terms of $\fP$-adic integrals.

\begin{lemma}
  \label{lem:join_via_int}
  Equation~\eqref{eq:join} holds for $\Gamma = \Gamma_1 \join \Gamma_2$ if and
  only if
  \begin{align}
    \nonumber
    \int\limits_{(\fO V)^\times \times \fP}
    \!\!\!\!\!\! \Gamma^-(s)
    & =
      \frac{(1-q^{-1})(1-q^{-n_1})(1-q^{-n_2}) qt}{1-qt}
    \\
    \nonumber
    & \quad + \frac{1-q^{1-n_2}t}{1-qt}\int\limits_{(\fO V_1)^\times\times \fP}
      \!\!\!\!\!\!\Gamma_1^-(s+n_2)
    \\
    & \quad + \frac{1-q^{1-n_1}t}{1-qt} \int\limits_{(\fO V_2)^\times\times \fP}
      \!\!\!\!\!\! \Gamma_2^-(s+n_1)
      \label{eq:join_integral_form}
  \end{align}
  holds for $\Real(s) \gg 0$.
\end{lemma}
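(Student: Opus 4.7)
The plan is to derive the equivalence by a direct algebraic manipulation. The substitution $X=q$, $T=t=q^{-s}$, $z_i=q^{-n_i}$ converts $W^-_{\Gamma_i}(X,z_iT)$ into $W^-_{\Gamma_i}(q,q^{-n_i-s})$, and by applying~\eqref{eq:loopless_WGamma_as_int} to $\Gamma_i$ at the shifted argument $s+n_i$ one gets
\[
(1-z_it)W^-_{\Gamma_i}(q,z_it) = 1+(1-q^{-1})^{-1}\!\!\!\int\limits_{(\fO V_i)^\times\times\fP}\!\!\!\Gamma_i^-(s+n_i).
\]
The same identity applied to $\Gamma$ itself expresses $(1-t)W^-_\Gamma(q,t)$ in terms of $\int_{(\fO V)^\times\times\fP}\Gamma^-(s)$. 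Both expressions are well-defined rational functions of $t$ that agree for $\Real(s)\gg 0$, so it suffices to check the equivalence as a formal identity in the variables $q^{-1}$, $t$.

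First I would multiply~\eqref{eq:join} by the common denominator $(1-t)(1-qt)$, then substitute the above integral expressions for $(1-t)W^-_\Gamma$ and for $(1-z_it)W^-_{\Gamma_i}$. The non-integral contributions on the right-hand side assemble into
\[
q^{1-n}t - 1 + (1-q^{1-n_1}t) + (1-q^{1-n_2}t),
\]
from which one subtracts the $(1-qt)\cdot 1$ coming from the left-hand side. Using the elementary identity $(1-q^{-n_1})(1-q^{-n_2}) = 1 - q^{-n_1} - q^{-n_2} + q^{-n}$, the $t$-coefficient collapses to $q(1-q^{-n_1})(1-q^{-n_2})$ and the constant coefficient vanishes, so the scalar part contributes exactly $qt(1-q^{-n_1})(1-q^{-n_2})$. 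After dividing by $(1-q^{-1})^{-1}(1-qt)$, the scalar part produces the first summand of~\eqref{eq:join_integral_form}, while the two integral terms produce the remaining two summands with prefactors $(1-q^{1-n_j}t)/(1-qt)$. Every step is reversible, yielding the ``if and only if''.

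Since the calculation is a straightforward telescoping, there is no genuine obstacle; the only care required is notational, namely tracking the shift $s\mapsto s+n_i$ induced by the substitution $T\mapsto z_iT$ and verifying the four-term collapse to the factored form $qt(1-q^{-n_1})(1-q^{-n_2})$. No analytic issues intervene because the manipulation takes place within the region of absolute convergence stipulated by the hypothesis $\Real(s)\gg 0$.
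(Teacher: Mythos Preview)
Your approach is correct and is precisely what the paper does: substitute the integral expression \eqref{eq:loopless_WGamma_as_int} for $W^-_\Gamma$ and for each $W^-_{\Gamma_i}$ (at the shifted argument), then simplify; the paper's proof simply says ``by inspection'' while you carry out the algebra explicitly. One small notational slip: in \eqref{eq:join} the argument of $W^-_{\Gamma_i}$ is $z_jT$ with $j\ne i$, so the displayed identity you want is $(1-z_jt)W^-_{\Gamma_i}(q,z_jt)=1+(1-q^{-1})^{-1}\int_{(\fO V_i)^\times\times\fP}\Gamma_i^-(s+n_j)$ rather than the version with $z_i$ and $n_i$---your later computation (with prefactors $(1-q^{1-n_j}t)/(1-qt)$ and the collapsed scalar part) already uses the correct indices, so this does not affect the substance.
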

\begin{proof}
  Using \eqref{eq:loopless_WGamma_as_int}, we may express
  $W^-_\Gamma(q,t)$ in terms of
  $\int\limits_{(\fO V)^\times\times\fP}\Gamma^-(s)$ and analogously for
  $W^-_{\Gamma_i}(q,t)$.
  The equivalence of \eqref{eq:join_integral_form} and \eqref{eq:join} then
  follows by inspection.
\end{proof}

Our proof of Theorem~\ref{thm:join} uses
\S\S\ref{s:plumbing}--\ref{s:generic_rows} 
to show that the three summands on the right-hande side of
\eqref{eq:3part_int} exactly match those on the right-hand side
of \eqref{eq:join_integral_form}.

For $(x,y,z) \in \fO V \times \fO = \fO V_1 \times \fO V_2 \times \fO$,
whenever the following fraction is defined, write
\begin{equation}
  \label{eq:Fk}
  F_k(x,y,z) := 
  \frac{\norm{\Minors^- _{k-1} \Gamma (x,y)}}
  {\norm{\Minors^-_k \Gamma(x,y) \cup z \Minors^-_{k-1} \Gamma(x,y)}}.
\end{equation}
For $k=1,\dotsc,n-1$,
by Lemma~\ref{lem:edt_via_minors}, we have $1 \le F_k(x,y,z) \le \abs{z}^{-1}$
for almost all $(x,y) \in \fO V$ and all nonzero $z\in \fO$.
Using \eqref{eq:loopless_minors}, in the following, will express $F_k(x,y,z)$
in terms of animations of $\Gamma$.

\subsection{The first summand}

We now show that the first summands on the right-hand sides of
\eqref{eq:3part_int} and \eqref{eq:join_integral_form} coincide.

\begin{lemma}
  \label{lem:units_everywhere_integral}
  $\displaystyle
  \int\limits_{(\fO V_1)^\times \times \,(\fO V_2)^\times \times \fP}
  \Gamma^-(s) =
  (1-q^{-1})(1-q^{-n_1})(1-q^{-n_2}) \frac{qt}{1-qt}$.
\end{lemma}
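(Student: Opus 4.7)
The plan is to evaluate the integrand of $\Gamma^-(s)$ pointwise on the integration domain and reduce the integral to a product of elementary factors. Since $\Gamma = \Gamma_1 \join \Gamma_2$ is connected, Corollary~\ref{cor:rank_C-_loopless} gives $c = 1$, so by \eqref{eq:loopless_int} and \eqref{eq:Fk} the integrand is $|z|^{s-2} \prod_{k=1}^{n-1} F_k(x,y,z)$.

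The key observation is that on $(\fO V_1)^\times \times (\fO V_2)^\times$ one can pick $u_1 \in V_1$ and $u_2 \in V_2$ with $x_{u_1}, y_{u_2} \in \fO^\times$. Corollary~\ref{cor:double_unit} then supplies, for each $k$ with $0 \le k \le n-1$, a nilpotent animation $\alpha \in \Nilpotents_k(\Gamma)$ whose monomial evaluates to a unit at $(x,y)$. Combining this with the identity \eqref{eq:loopless_minors}, we obtain $\norm{\Minors_k^-\Gamma(x,y)} = 1$ for all such $k$. Since $z \in \fP$, it follows that $\norm{\Minors_k^-\Gamma(x,y) \cup z\,\Minors_{k-1}^-\Gamma(x,y)} = 1$, and therefore $F_k(x,y,z) = 1$ for every $k = 1,\dotsc,n-1$.

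The integral thus collapses to a product of independent factors:
\[
  \int\limits_{(\fO V_1)^\times \times (\fO V_2)^\times \times \fP}\!\!\!\! |z|^{s-2}\,\dd\!\mu(x,y,z)
  \;=\; \mu\bigl((\fO V_1)^\times\bigr)\cdot \mu\bigl((\fO V_2)^\times\bigr)\cdot \int_{\fP} |z|^{s-2}\,\dd\!\mu(z).
\]
The Haar measure of $(\fO V_i)^\times = (\fO V_i) \setminus \fP V_i$ equals $1 - q^{-n_i}$, and a standard geometric series computation (using $|z| = q^{-\ord(z)}$ and $t = q^{-s}$) gives $\int_\fP |z|^{s-2}\dd\!\mu(z) = (1-q^{-1})\,qt/(1-qt)$. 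Multiplying these three factors yields the desired formula. No obstacle is anticipated: all the heavy lifting has already been done in Corollary~\ref{cor:double_unit}, and what remains is just the pointwise triviality of the $F_k$'s together with a routine measure-theoretic computation.
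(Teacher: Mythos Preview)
Your proposal is correct and follows essentially the same approach as the paper: both invoke Corollary~\ref{cor:double_unit} to see that each $F_k(x,y,z)=1$ on the domain, then reduce the integral to $\mu((\fO V_1)^\times)\,\mu((\fO V_2)^\times)\int_\fP |z|^{s-2}\dd\!\mu(z)$ and evaluate the factors. Your write-up is slightly more explicit about the measure and geometric-series computations, but there is no substantive difference.
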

\begin{proof}
  Let $0 \le k \le n-1$.
  Corollary~\ref{cor:double_unit} shows that for almost all $(x,y,z) \in (\fO
  V_1)^\times \times (\fO V_2)^\times \times \fP$, the ideal
  $\Minors_k^-\Gamma$ contains a monomial $m$ such that $m(x,y) \in
  \fO^\times$.
  Hence, $F_k(x,y,z) = 1$ for $k=1,\dotsc,n-1$.
  The claim follows from
  $\int\limits_{\fP} \abs{z}^s \dd\!\mu(z) = (1-q^{-1})q^{-1}t/(1-q^{-1}t)$
  and
  $\int\limits_{(\fO V_1)^\times \times \,(\fO V_2)^\times \times \fP}
  \Gamma^-(s)
  =
  \int\limits_{(\fO V_1)^\times \times \,(\fO V_2)^\times \times \fP}
  \abs{z}^{s-2} \dd\!\mu(x,y,z)
  = \mu((\fO V_1)^\times) \mu((\fO V_2)^\times) \int\limits_{\fP}
  \abs{z}^{s-2} \dd\!\mu(z)$.
\end{proof}

\subsection{Towards the second summand: preparation}
\label{ss:survivalism}

We derive a series of auxiliary facts which will then help us show that
$\int\limits_{(\fO V_1)^\times \times \fP V_2 \times \fP} \Gamma^-(s)$ coincides
with the second summand on the right-hand side of
\eqref{eq:join_integral_form}.

We call $(x,y) \in \fO V_1\times \fO V_2$ \emph{strongly nonzero} if $x_v \not= 0
\not= y_w$ for all $v\in V_1$ and $w\in V_2$.
Almost all $(x,y) \in \fO V_1\times \fO V_2$ are strongly nonzero.

\begin{claim}
  \label{c:unit_Fk}
  Let $1 \le k \le n_2$ and
  let $(x,y) \in (\fO V_1)^\times \times \fO V_2$ be strongly nonzero.
  Then $F_k(x,y) = 1$.
\end{claim}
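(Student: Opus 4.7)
The plan is to exploit the explicit generators for $\Minors^-_k\Gamma$ given by nilpotent animations, together with the $u$-centred animations of small degree described in Example~\ref{ex:small_centred}. First I would choose $u \in V_1$ such that $x_u \in \fO^\times$; such a $u$ exists precisely because $x \in (\fO V_1)^\times = \fO V_1 \setminus \fP V_1$. The key observation is that for any $j$ with $1 \le j \le n_2$ we can pick distinct vertices $v_1,\dotsc,v_j \in V_2$ and define $\alpha \in \Animations(\Gamma)$ by $\Dom(\alpha) = \{v_1,\dotsc,v_j\}$ and $v_i^\alpha = u$; this is well-defined since every vertex of $V_2$ is adjacent to every vertex of $V_1$ in $\Gamma = \Gamma_1 \join \Gamma_2$, and clearly $\alpha \in \Nilpotents_j(\Gamma)$ with $\mon\alpha = X_u^j$.

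Next I would combine this with the parameterisation of minors. By Corollary~\ref{cor:loopless_minus_minors} and~\eqref{eq:loopless_minors}, for every $(x,y)$ we have
\[
  \norm{\Minors^-_j\Gamma(x,y)} = \norm{\mon\beta(x,y) : \beta \in \Nilpotents_j(\Gamma)}.
\]
Applying this with the animation $\alpha$ constructed above yields $\mon\alpha(x,y) = x_u^j \in \fO^\times$, so $\norm{\Minors^-_j\Gamma(x,y)} = 1$ for each $1 \le j \le n_2$.

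From here the claim is immediate. For $1 \le k \le n_2$ the numerator of $F_k(x,y,z)$ is $\norm{\Minors^-_{k-1}\Gamma(x,y)}$, which equals $1$: when $k = 1$ because $\Minors^-_0\Gamma = \langle 1\rangle$, and when $k \ge 2$ by the preceding paragraph applied with $j = k-1 \le n_2 - 1$. Moreover, since $\Minors^-_k\Gamma(x,y)$ itself contains a unit, the ideal $\Minors^-_k\Gamma(x,y) \cup z\Minors^-_{k-1}\Gamma(x,y)$ also contains a unit and thus has norm~$1$. Hence $F_k(x,y,z) = 1$.

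There is essentially no obstacle here: the content of the argument is the existence of high-degree $u$-centred nilpotent animations whose associated monomial is a pure power of $X_u$, which is exactly what Example~\ref{ex:small_centred} supplies. The hypothesis $k \le n_2$ is used only to guarantee that enough distinct vertices of $V_2$ are available to build such an $\alpha$; the strong nonzero hypothesis on $(x,y)$ is not needed for this claim, but is retained because it enters in later claims in the same series.
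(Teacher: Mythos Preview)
Your proof is correct and follows essentially the same approach as the paper: both use Example~\ref{ex:small_centred} to produce, for each $0 \le k \le n_2$, a nilpotent animation whose monomial $X_u^k$ evaluates to a unit at $(x,y)$ once $u \in V_1$ is chosen with $x_u \in \fO^\times$. Your write-up is more detailed than the paper's two-line proof, and your closing remark that the strongly nonzero hypothesis is not actually used in this particular claim is a valid observation.
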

\begin{proof}
  Example~\ref{ex:small_centred} shows that
  as long as all components $x_v$ of $x$ are nonzero, the ideal $\Minors^-_k
  \Gamma(x,y)$ contains a unit for $0 \le k \le n_2$.
\end{proof}

\begin{claim}
  \label{c:reduce_to_Gamma_1}
  Let $0 \le e \le n_1 - 1$.
  Then for all strongly nonzero $(x,y) \in (\fO V_1)^\times \times \fO V_2$,
  we have
  \begin{align*}
    \Minors^-_{n_2 + e}\Gamma(x,y) =
    \Bigl\langle
    y^b \cdot \mon\alpha(x) :\,\, & 0 \le d \le e, \,\, b \in \NN_0 V_2 \text{ with } \sum
    b = d, \,\, \alpha \in \Nilpotents_{e-d}(\Gamma_1)
    \Bigr\rangle.
  \end{align*}
\end{claim}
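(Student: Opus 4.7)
My plan is to prove the equality of ideals by establishing both containments, translating throughout between minors of $\sC^-_\Gamma$ and nilpotent animations of $\Gamma$ via Corollary~\ref{cor:loopless_minus_minors}, and exploiting the $u$-centering machinery of \S\ref{s:plumbing}. A crucial preliminary observation is that, because $(x,y) \in (\fO V_1)^\times \times \fO V_2$, any vertex $u \in V_1$ satisfies $\ord(x_u) = 0 \le \ord(\xi)$ for every coordinate $\xi$ of $(x,y)$. I would therefore fix such a $u \in V_1$ once and for all; the asymmetric role of $V_1$ and $V_2$ in the claim stems entirely from this choice.

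For the $\supseteq$ containment, given $\alpha \in \Nilpotents_{e-d}(\Gamma_1)$ and $b \in \NN_0 V_2$ with $\sum b = d \le e$, I would set $m = \prod_{w \in V_2} X_w^{b_w}$ and invoke the second part of Proposition~\ref{prop:large_centred_minors} (with $\ell = e - d$ and $k = n_2 + e$) to produce a $u$-centered $\gamma \in \Nilpotents_{n_2+e}(\Gamma)$ with $\mon\gamma = X_u^{n_2}\, m\, \mon\alpha$. Specializing at $(x,y)$ gives $\mon\gamma(x,y) = x_u^{n_2}\, y^b\, \mon\alpha(x)$; since $x_u^{n_2}$ is a unit in $\fO$, the element $y^b\, \mon\alpha(x)$ lies in the ideal generated by $\mon\gamma(x,y)$, hence in $\Minors^-_{n_2+e}\Gamma(x,y)$.

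For the $\subseteq$ containment, Corollary~\ref{cor:loopless_minus_minors} shows that the left-hand side is generated by the specializations $\mon\gamma(x,y)$ with $\gamma \in \Nilpotents_{n_2+e}(\Gamma)$. For each such $\gamma$, Theorem~\ref{thm:centred_main} produces a $u$-centered $\beta \in \Nilpotents_{n_2+e}(\Gamma)$ with $\mon\beta \lte_u \mon\gamma$, and then Proposition~\ref{prop:ordering_monomials}\ref{prop:ordering_monomials2} (applied with $u \in V_1$ realizing minimum valuation) gives $\mon\beta(x,y) \mid \mon\gamma(x,y)$ in $\fO$. The first part of Proposition~\ref{prop:large_centred_minors}, applied to the $u$-centered $\beta$, yields
\[
\mon\beta = X_u^{n_2} \prod_{v \in V_2^{\beta^*}} X_{v^\beta} \cdot \mon{\beta'}, \qquad \beta' := \beta \restriction V_1' \in \Nilpotents(\Gamma_1),
\]
where $V_1' = \{v \in V_1 : v^\beta \in V_1\}$. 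Since $\beta$ is $u$-centered with $u \in V_1$, every $v \in V_2^{\beta^*}$ lies in $V_1$ and satisfies $v^\beta \in V_2$, so the middle factor is a monomial of degree $d := \lvert V_2^{\beta^*}\rvert$ in $X_{V_2}$. The degree count $n_2 + d + \deg(\beta') = n_2 + e$ forces $\beta' \in \Nilpotents_{e-d}(\Gamma_1)$. Thus $\mon\beta(x,y)$ equals the unit $x_u^{n_2}$ times $y^b\, \mon{\beta'}(x)$ for some $b \in \NN_0 V_2$ with $\sum b = d$, so $\mon\gamma(x,y)$ lies in the right-hand ideal.

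The main delicacy, rather than a deep obstacle, is keeping the direction of the $u$-preorder straight: Proposition~\ref{prop:ordering_monomials}\ref{prop:ordering_monomials2} requires $u$ to realize the minimum valuation among all coordinates of $(x,y)$, which is exactly why $u$ must be chosen in $V_1$. This is also the reason that nilpotent animations of $\Gamma_1$ (rather than of $\Gamma_2$) appear in the generating set; the symmetric statement with the roles of $V_1$ and $V_2$ interchanged is of course what will be used in a subsequent claim to handle the third summand of~\eqref{eq:3part_int}.
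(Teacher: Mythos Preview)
Your argument is correct and follows precisely the route the paper indicates (combining Proposition~\ref{prop:ordering_monomials}, Theorem~\ref{thm:centred_main}, and Proposition~\ref{prop:large_centred_minors}, together with Corollary~\ref{cor:loopless_minus_minors}). One small slip: it is not true that \itemph{any} vertex $u\in V_1$ has $\ord(x_u)=0$; rather, since $x\in(\fO V_1)^\times$ there \itemph{exists} such a $u$, and that is the vertex you fix---your later remarks make clear you understand this, so it is only a wording issue.
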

\begin{proof}
  Combine Proposition~\ref{prop:ordering_monomials},
  Theorem~\ref{thm:centred_main}, and
  Proposition~\ref{prop:large_centred_minors}.
\end{proof}

Write $V_2 = \{ w(1),\dotsc,w(n_2)\}$.
Let $\Gamma_i$ have $m_i$ edges and $c_i$ connected components.
Then $\sC_{\Gamma_i}^- \in \Mat_{m_i \times n_i}(\ZZ[X_{V_i}])$ has rank
$n_i- c_i$ by Proposition~\ref{prop:rank_C-}.
Define
\[
  \widetilde \sC_{\Gamma_1} = \begin{bmatrix}
    \sC_{\Gamma_1} \\
    Y_{w(1)} 1_{n_1} \\
    \vdots \\
    Y_{w(n_2)} 1_{n_1}
  \end{bmatrix}
  \in \Mat_{(m_1 + n_1n_2)\times n_1}(\ZZ[X_V]).
\]

\begin{claim}
  \label{c:shifted_minors}
  Let $0 \le e \le n_1 - 1$.
  Then $\Minors^-_{n_2 + e}\Gamma(x,y) = \Minors_e(\widetilde \sC_{\Gamma_1}(x,y))$
  for all strongly nonzero $(x,y) \in (\fO V_1)^\times \times \fO V_2$.
\end{claim}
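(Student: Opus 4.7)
My plan is to identify both ideals by computing $\Minors_e(\widetilde\sC_{\Gamma_1})$ explicitly inside $\ZZ[X_V]$ via Lemma~\ref{lem:edt_gauss}, specialising at $(x,y)$, and then matching the resulting description with that supplied by Claim~\ref{c:reduce_to_Gamma_1}. No strongly nonzero hypothesis is required for the first two of these steps; it only enters when Claim~\ref{c:reduce_to_Gamma_1} is invoked at the end.

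First, I would observe that $\widetilde\sC_{\Gamma_1}$ is obtained from $\sC_{\Gamma_1}$ by appending, in turn, the $n_2$ blocks $Y_{w(1)} 1_{n_1}, \dotsc, Y_{w(n_2)} 1_{n_1}$. Iterating Lemma~\ref{lem:edt_gauss} inside $\ZZ[X_V]$ (with $r = Y_{w(j)}$ at the $j$th step), a straightforward induction on $n_2$ yields
\[
  \Minors_e(\widetilde\sC_{\Gamma_1}) \;=\; \sum_{d=0}^{e}\sum_{\substack{b\,\in\,\NN_0 V_2\\ \sum b = d}} Y^b\cdot \Minors_{e-d}(\sC_{\Gamma_1}),
\]
where $Y^b := \prod_{w\in V_2} Y_w^{b_w}$. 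The base case $n_2 = 0$ is trivial, and the inductive step expresses the ideal after appending one more block $Y_{w(j)} 1_{n_1}$ as $\sum_{i=0}^{k} Y_{w(j)}^i \Minors_{k-i}(\cdot)$ applied to the ideal of minors produced by the previous stage.

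Next, I specialise at $(x,y) \in \fO V_1 \times \fO V_2$. By Lemma~\ref{lem:minor_specialisation}, the ideal of $k\times k$ minors commutes with specialisation, so the displayed identity descends to
\[
  \Minors_e(\widetilde\sC_{\Gamma_1}(x,y)) \;=\; \sum_{d=0}^{e}\sum_{\substack{b\,\in\,\NN_0 V_2\\ \sum b = d}} y^b\cdot \Minors_{e-d}(\sC_{\Gamma_1}(x)).
\]
Finally, since $\Gamma_1$ is loopless, Corollary~\ref{cor:loopless_minus_minors} identifies $\Minors_{e-d}(\sC_{\Gamma_1}(x))$ with the $\fO$-ideal $\langle \mon\alpha(x) : \alpha \in \Nilpotents_{e-d}(\Gamma_1)\rangle$. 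Substituting this back and comparing with Claim~\ref{c:reduce_to_Gamma_1}, which under the strongly nonzero assumption supplies exactly the same presentation of $\Minors^-_{n_2 + e}\Gamma(x,y)$, delivers the desired equality. I do not anticipate any genuine obstacle: the only calculation is the inductive bookkeeping in the iteration of Lemma~\ref{lem:edt_gauss}, and the rest is a direct invocation of results already established in the excerpt.
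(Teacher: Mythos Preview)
Your proposal is correct and follows essentially the same approach as the paper's proof, which tersely states that the claim follows from Claim~\ref{c:reduce_to_Gamma_1} and $n_2$ applications of Lemma~\ref{lem:edt_gauss}. You have simply made explicit the iterated formula, the specialisation step, and the translation via Corollary~\ref{cor:loopless_minus_minors} into the language of nilpotent animations needed to match Claim~\ref{c:reduce_to_Gamma_1}.
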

\begin{proof}
  This follows from Claim~\ref{c:reduce_to_Gamma_1} and $n_2$ applications of
  Lemma~\ref{lem:edt_gauss}, one for each block $Y_{w(j)} 1_{n_1}$ within
  $\widetilde \sC_{\Gamma_1}$.
\end{proof}

\begin{claim}
  \label{c:prod_F_Ctilde_minors}
  For all strongly nonzero $(x,y) \in (\fO V_1)^\times \times \fO V_2$ and nonzero
  $z\in \fO$, we have
  \begin{align*}
    \prod\limits_{k=1}^{n_1 + n_2 - 1} F_k(x,y,z)
    & =
      \prod_{e=1}^{n_1-1}
      \frac{\norm{\Minors_{e-1}(\widetilde \sC_{\Gamma_1}(x,y))}}
      {\norm{\Minors_e(\widetilde \sC_{\Gamma_1}(x,y)) \cup z
      \Minors_{e-1}(\widetilde \sC_{\Gamma_1}(x,y))}}.
  \end{align*}
\end{claim}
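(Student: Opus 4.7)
The plan is to split the product on the left-hand side at $k=n_2$, show the initial block collapses to $1$, and identify the remainder with the right-hand side via the shifted-minor identity of Claim~\ref{c:shifted_minors}.

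First, since $\Gamma_1\join\Gamma_2$ is connected and has $n=n_1+n_2$ vertices, the number of connected components is $c=1$ and hence, by Corollary~\ref{cor:rank_C-_loopless}, the product $\prod_{k=1}^{n-c} F_k(x,y,z)$ of interest runs from $k=1$ to $k=n_1+n_2-1$. I would partition this as
\[
\prod_{k=1}^{n_1+n_2-1} F_k(x,y,z) \;=\; \prod_{k=1}^{n_2} F_k(x,y,z)\cdot \prod_{e=1}^{n_1-1} F_{n_2+e}(x,y,z).
\]

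For the first factor, I would invoke Claim~\ref{c:unit_Fk}: for any strongly nonzero $(x,y)\in(\fO V_1)^\times\times\fO V_2$ and any $1\le k\le n_2$, Example~\ref{ex:small_centred} produces a nilpotent animation $\alpha\in\Nilpotents_k(\Gamma)$ with $\mon\alpha=X_u^k$ for some (hence any) $u\in V_1$, so that $\mon\alpha(x,y)=x_u^k\in\fO^\times$. Thus $\Minors^-_k\Gamma(x,y)=\fO$ for all $0\le k\le n_2$. Both numerator and denominator of $F_k(x,y,z)$ then equal $1$, yielding $F_k(x,y,z)=1$ for every $1\le k\le n_2$, so the first sub-product collapses to $1$.

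For the second factor, I would reindex via $k=n_2+e$ with $1\le e\le n_1-1$ and apply Claim~\ref{c:shifted_minors} to both the numerator (with parameter $e-1$, valid since $0\le e-1\le n_1-2$) and the denominator (with parameter $e$). This gives
\[
F_{n_2+e}(x,y,z) \;=\; \frac{\norm{\Minors_{e-1}(\widetilde\sC_{\Gamma_1}(x,y))}}{\norm{\Minors_e(\widetilde\sC_{\Gamma_1}(x,y))\cup z\,\Minors_{e-1}(\widetilde\sC_{\Gamma_1}(x,y))}}.
\]
Taking the product over $e=1,\dotsc,n_1-1$ and combining with the trivial first block yields the claimed identity.

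The one spot that needs a little care is the transition at $k=n_2$, i.e.\ checking that the boundary case $e=0$ of Claim~\ref{c:shifted_minors} is compatible with the collapse of the first block. Here $\Minors_0(\widetilde\sC_{\Gamma_1}(x,y))=\fO$ by convention (the empty product generates the unit ideal), which exactly matches $\Minors^-_{n_2}\Gamma(x,y)=\fO$ established in the previous step; hence nothing is dropped or double-counted when the two sub-products are concatenated. No serious obstacle arises: every input has already been prepared in Claims~\ref{c:unit_Fk}--\ref{c:shifted_minors}, and the argument is essentially a reindexing together with a boundary sanity check.
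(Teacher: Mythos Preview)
Your proof is correct and follows exactly the approach of the paper: the paper's proof is the single sentence ``This follows from Claims~\ref{c:unit_Fk} and~\ref{c:shifted_minors} and the definition of $F_k(x,y,z)$,'' and you have simply spelled out the underlying split-and-reindex argument in detail, including the (harmless) boundary check at $e=0$.
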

\begin{proof}
  This follows from Claims \ref{c:unit_Fk} and \ref{c:shifted_minors} and the
  definition of $F_k(x,y,z)$ in \eqref{eq:Fk}.
\end{proof}

Let
\[
  G_e(x,z) := 
  \frac{\norm{\Minors^- _{e-1} \Gamma_1 (x)}}
  {\norm{\Minors^-_e \Gamma_1(x) \cup z \Minors^-_{e-1} \Gamma_1(x)}}.
\]

Analogously to the case of $F_k(x,y,z)$,
for $e=1,\dotsc,n_1-c_1$,
by Lemma~\ref{lem:edt_via_minors}, we have $1 \le G_e(x,z) \le \abs{z}^{-1}$
for almost all $x \in \fO V_1$ and all nonzero $z\in \fO$.

\begin{claim}
  \label{c:Gamma_int_Gk}
  For each measurable set $W\subset (\fO V_1)^\times \times \fP V_2 \times \fP$,
  we have
  \[
    \int\limits_W
    \Gamma^-(s)
    =
    \int\limits_W
    \abs{z}^{s-2}
    \norm{y;z}^{1-c_1}
    \!\!
    \prod\limits_{e=1}^{n_1-c_1}
    G_e(x, \gcd(y;z))
    \,
    \dd\!\mu(x,y,z).
  \]
\end{claim}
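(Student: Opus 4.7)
The plan is to combine three ingredients already assembled: the integral representation \eqref{eq:loopless_int} of $\int_W \Gamma^-(s)$, Claim~\ref{c:prod_F_Ctilde_minors} (which repackages $\prod_k F_k$ as a product of minor-ratios of $\widetilde\sC_{\Gamma_1}$), and Corollary~\ref{cor:edt_tilde_C_many_variables} (which converts these ratios back into the $G_e(x,\cdot)$ attached to $\sC_{\Gamma_1}$ plus a tail). The whole argument is essentially a bookkeeping exercise once these tools are put side by side.

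Concretely, first I would invoke \eqref{eq:loopless_int} with $c=1$ (since $\Gamma = \Gamma_1 \join \Gamma_2$ is connected): for almost all $(x,y,z)\in W$,
\[
  \int\limits_W \Gamma^-(s) = \int\limits_W \abs{z}^{s-2} \prod_{k=1}^{n_1+n_2-1} F_k(x,y,z)\,\dd\!\mu(x,y,z).
\]
Since $W \subset (\fO V_1)^\times \times \fP V_2 \times \fP$, for almost all $(x,y)\in (\fO V_1)^\times \times \fO V_2$ the pair is strongly nonzero, so Claim~\ref{c:prod_F_Ctilde_minors} rewrites the product $\prod_{k=1}^{n_1+n_2-1} F_k(x,y,z)$ as the product over $e=1,\dotsc,n_1-1$ of the minor-ratios of $\widetilde\sC_{\Gamma_1}(x,y)$.

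Next I would apply Corollary~\ref{cor:edt_tilde_C_many_variables} with $C = \sC_{\Gamma_1}$ (so $r = n_1 - c_1$ by Proposition~\ref{prop:rank_C-}), $m = n_1$, and with the $n_2$ extra rows given by the blocks $y_{w(j)} 1_{n_1}$. By definition of the maximum norm, $|\gcd(y;z)| = \norm{y;z}$, so the corollary (applied with $z$ in the role of the auxiliary uniformiser and $w := \gcd(y;z)$) tells us that the $e$-th ratio equals $G_e(x,\gcd(y;z))$ for $e = 1,\dotsc,n_1-c_1$, and equals $\norm{y;z}^{-1}$ for $e = n_1-c_1+1,\dotsc,n_1-1$.

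Finally, I would collect the two contributions. Since the product stops at $e = n_1-1$ (not $n_1$), the ``tail'' range contains exactly $(n_1-1) - (n_1-c_1+1) + 1 = c_1 - 1$ indices, each contributing a factor of $\norm{y;z}^{-1}$; together they produce $\norm{y;z}^{1-c_1}$. Plugging this back into the integrand yields the claimed formula. The only delicate point — and what I would identify as the main potential pitfall — is getting the ranges right: it is essential that the product from Claim~\ref{c:prod_F_Ctilde_minors} stops at $n_1-1$ rather than $n_1$, so that precisely $c_1-1$ tail terms survive; an off-by-one here would distort the exponent of $\norm{y;z}$ and break the eventual match with \eqref{eq:join_integral_form} in Lemma~\ref{lem:join_via_int}.
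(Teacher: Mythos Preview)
Your proposal is correct and follows essentially the same route as the paper's own proof: invoke \eqref{eq:loopless_int} with $c=1$, apply Claim~\ref{c:prod_F_Ctilde_minors}, then use Corollary~\ref{cor:edt_tilde_C_many_variables} with $C=\sC_{\Gamma_1}$, $r=n_1-c_1$, $m=n_1$ to split the product into the $G_e$-part and a tail of $c_1-1$ factors of $\norm{y;z}^{-1}$. Your explicit count of the tail indices is in fact more carefully spelled out than in the paper's version.
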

\begin{proof}
  Recall that $\Gamma = \Gamma_1 \join \Gamma_2$.
  In particular, $\Gamma$ is connected.
  Using \eqref{eq:loopless_int} (with $c = 1$) and
  Claim~\ref{c:prod_F_Ctilde_minors},
  we obtain
  \[
    \int\limits_W \Gamma^-(s)
    = \int\limits_W \abs{z}^{s-2}
    \prod\limits_{e=1}^{n_1-c_1}
    \frac{\norm{\Minors_{e-1}(\widetilde \sC_{\Gamma_1}(x,y))}}
    {\norm{\Minors_e(\widetilde \sC_{\Gamma_1}(x,y)) \cup z
        \Minors_{e-1}(\widetilde \sC_{\Gamma_1}(x,y))}}
    \,
    \dd\!\mu(x,y,z).
  \]
  We rewrite the $e$-indexed factors in the preceding integrand by
  applying Corollary~\ref{cor:edt_tilde_C_many_variables} with
  $C = \sC_{\Gamma_1}$, $r = n_1 - c_1$, $d = n_2$, and $\widetilde C = \widetilde
  \sC_{\Gamma_1}$.
  Writing $g = \gcd(y;z)$ so that $\abs g = \norm{y;z}$, this yields
  \begin{align*}
    &
      \frac{\norm{\Minors_{e-1}(\widetilde \sC_{\Gamma_1}(x,y))}}
      {\norm{\Minors_e(\widetilde \sC_{\Gamma_1}(x,y)) \cup z \Minors_{e-1}(\widetilde
      \sC_{\Gamma_1}(x,y))}}
    \\
    & \quad
      =
      \begin{cases}
        \frac{\norm{\Minors_{e-1}(\sC_{\Gamma_1}(x))}}
        {\norm{\Minors_e(\sC_{\Gamma_1}(x)) \, \cup \, g \Minors_{e-1}(\sC_{\Gamma_1}(x))}}, & \text{for }
        e=1,\dotsc,n_1-c_1, \\
        \abs{g}^{-1}, & \text{for } e=n_1-c_1+1,\dotsc, n_1-1
      \end{cases}
  \end{align*}
  for almost all $(x,y)$ and all nonzero $z$.
  The claim then follows readily.
\end{proof}

We also require the following technical and elementary lemma.

\begin{lemma}
  \label{lem:repeated_dummy_variables}
  Let $g\colon [0,\infty) \to [0,\infty)$ be measurable.
  Suppose that for some $N\ge 0$ and all nonzero $y\in \fP$, we have $g(\abs{y})
  \le \abs{y}^{-N}$.
  For $d \ge 1$, let $F_d(s) =
  \int\limits_{\fP^d}\norm{y}^sg(\norm{y})\dd\!\mu(y)$.
  Then for $s\in \CC$ with $\Real(s) \ge N$,
  we have
  $F_d(s) = \frac{ 1-q^{-d}}{1-q^{-1}}F_1(s + d-1)$.
\end{lemma}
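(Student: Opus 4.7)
The plan is to evaluate $F_d(s)$ by stratifying the domain $\fP^d$ according to the value of $\norm y$. For each integer $k\ge 1$, set $S_k = \{y\in\fP^d : \norm y = q^{-k}\}$, so that $\fP^d \setminus \{0\}$ is the disjoint union of the $S_k$ (and the complement has measure zero). On each $S_k$ the integrand $\norm y^s g(\norm y)$ is the constant $q^{-ks}\, g(q^{-k})$, so only the measures $\mu(S_k)$ need to be computed.

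Since $\{y \in \fO^d : \norm y \le q^{-k}\} = (\pi^k \fO)^d$ has measure $q^{-dk}$, a subtraction yields $\mu(S_k) = q^{-dk} - q^{-d(k+1)} = q^{-dk}(1-q^{-d})$. Summing over $k$ gives the closed form
\[
F_d(s) = (1-q^{-d}) \sum_{k=1}^\infty q^{-k(s+d)} g(q^{-k}).
\]
Specialising to $d=1$ and then shifting $s \mapsto s + d - 1$ produces
\[
F_1(s+d-1) = (1-q^{-1}) \sum_{k=1}^\infty q^{-k(s+d)} g(q^{-k}),
\]
and the claimed identity follows by taking the ratio.

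The one point that requires attention is absolute convergence of the series, which I would verify at the end. The bound $g(\abs y) \le \abs y^{-N}$ translates to $g(q^{-k}) \le q^{kN}$ for each $k\ge 1$, so each summand satisfies $|q^{-k(s+d)} g(q^{-k})| \le q^{-k(\Real(s) + d - N)}$. Under the hypothesis $\Real(s) \ge N$ this exponent is at least $d \ge 1$, so the geometric bound $q^{-kd}$ delivers summability, and the interchange of summation and integration implicit in the stratification argument is legitimate. There is no substantive obstacle here: the lemma is a direct evaluation of a max-norm $\fP$-adic integral, and the main purpose of writing it out carefully is to justify the reduction of the $d$-fold integrals appearing in \S\ref{s:proof_join} to single-variable ones.
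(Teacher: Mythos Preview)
Your proof is correct, but it takes a different route from the paper's. The paper proceeds by induction on $d$: it splits $\fP^{d+1}$ into the region where $\norm{y_1,\dotsc,y_d}\le\abs{y_{d+1}}$ and its complement, applies the change of variables $y_i=y_{d+1}y_i'$ on the first region and $y_{d+1}=\pi\gcd(y_1,\dotsc,y_d)y_{d+1}'$ on the second, and then combines the resulting expressions $F_1(s+d)$ and $q^{-1}F_d(s+1)$ via the inductive hypothesis. Your level-set stratification by $\norm y=q^{-k}$ is more direct and avoids induction entirely; it reduces the identity to the elementary computation $\mu(S_k)=q^{-dk}(1-q^{-d})$ and a comparison of two series that manifestly agree up to the constant factor. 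The paper's approach has the virtue of matching the change-of-variables style used throughout \S\ref{s:proof_join}, but yours is shorter and more transparent for this particular lemma.
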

\begin{proof}
  We proceed by induction on $d$, the case $d = 1$ being clear.
  Suppose that the claim holds for some value of $d$.
  We partition the domain of integration $\fP^{d+1}$ in the definition of
  $F_{d+1}(s)$ as $R\sqcup S$, where
  $R = \{ y\in \fP^{d+1} : \norm{y_1,\dotsc,y_d} \le \abs{y_{d+1}}\}$
  and $S = \{ y\in \fP^{d+1} : \abs{y_{d+1}} < \norm{y_1,\dotsc,y_d}\}$.

  On $R$, we may write $y_i = y_{d+1} y_i' $ for $y_i'\in \fO$ and
  $i=1,\dotsc,d$.
  A change of variables
  using $\abs{\dd\!y_i} = \abs{y_{d+1}} \abs{\dd\!{y_i'}}$ for $i=1,\dotsc,d$ yields
  \begin{align*}
    \int\limits_R \norm{y}^s g(\norm y) \dd\!\mu(y)
    & =
    \int\limits_R \abs{y_{d+1}}^s g(\abs{y_{d+1}}) \dd\!\mu(y) \\
    & =
    \int\limits_{\fO^d \times \fO} \abs{y_{d+1}}^{s+d} g(\abs{y_{d+1}})
    \dd\!\mu(y_1',\dotsc,y_d',y_d)
    = F_1(s+d).
  \end{align*}

  On $S$, write $y_{d+1} = \pi \gcd(y_1,\dotsc,y_d) y_{d+1}'$ for
  $y_{d+1}'\in \fO$.
  (Recall that $\pi\in \fP\setminus\fP^2$ denotes a uniformiser of $\fO$.)
  A change of variables using $\abs{\dd\!{y_{d+1}}} = q^{-1}
  \norm{y_1,\dotsc,y_d} \abs{\dd\!{y_{d+1}'}}$ yields
  \begin{align*}
    \int\limits_S \norm{y}^s g(\norm y) \dd\!\mu(y)
    & =
      \int\limits_S \norm{y_1,\dotsc,y_d}^s g(\norm{y_1,\dotsc,y_d})
      \dd\!\mu(y)\\
    & = q^{-1} \int\limits_{\fO^d\times \fO}\norm{y_1,\dotsc,y_d}^{s+1}
      g(\norm{y_1,\dotsc,y_d}) \dd\!\mu(y_1,\dotsc,y_d,y_{d+1}')
    \\
    & =
      q^{-1} F_d(s+1) = q^{-1} \frac{1-q^{-d}}{1-q^{-1}} F_1(s+d).
  \end{align*}
  Hence,
  $F_{d+1}(s) = (1 + q^{-1} \frac{1-q^{-d}}{1-q^{-1}}) F_1(s+d) =
  \frac{1-q^{-(d+1)}}{1-q^{-1}}F_1(s+d)$ as claimed.
\end{proof}

\subsection{The second and third summand and a proof of Theorem~\ref{thm:join}}

We now deal with the second (and, by symmetry, the third) summand in
\eqref{eq:3part_int}.

\begin{lemma}
  \label{lem:left_unit_integral}
  $\displaystyle
  \int\limits_{(\fO V_1)^\times \times \,\fP V_2 \times \fP} \Gamma^-(s)
  = \frac{1-q^{1-n_2}t}{1-qt}\int\limits_{(\fO V_1)^\times\times \fP}
  \!\!\!\!\!\!\Gamma_1^-(s+n_2)
  $.
\end{lemma}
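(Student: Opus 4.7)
The plan is to reduce the claimed identity, via Claim~\ref{c:Gamma_int_Gk} on the left and \eqref{eq:loopless_int} applied to $\Gamma_1$ with argument $s+n_2$ on the right, to an identity of inner integrals, pulling the $x$-integration outside by Fubini. Set $h_x(|w|) := |w|^{1-c_1}\prod_{e=1}^{n_1-c_1}G_e(x,w)$; this is well-defined since each $G_e(x,\cdot)$ depends on its second argument only through its absolute value. Then Claim~\ref{c:Gamma_int_Gk} identifies the LHS with $\int_{(\fO V_1)^\times} I(x)\,\dd\!\mu(x)$ for $I(x) := \int_{\fP V_2 \times \fP} |z|^{s-2}\, h_x(\norm{y;z})\,\dd\!\mu(y,z)$, while \eqref{eq:loopless_int} identifies the RHS with $\frac{1-q^{1-n_2}t}{1-qt}\int_{(\fO V_1)^\times} J(x)\,\dd\!\mu(x)$ for $J(x) := \int_\fP |z|^{s+n_2-2}\, h_x(|z|)\,\dd\!\mu(z)$. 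It therefore suffices to verify $I(x) = \frac{1-q^{1-n_2}t}{1-qt}\,J(x)$ for almost all $x \in (\fO V_1)^\times$.

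To compute $I(x)$, I would split $\fP V_2 \times \fP = W_1 \sqcup W_2$, where $W_1 = \{(y,z) : \norm y \le |z|\}$ and $W_2 = \{(y,z) : \norm y > |z|\}$. On $W_1$ we have $\norm{y;z} = |z|$, and the substitution $y = z y'$ with $y'\in \fO V_2$ (Jacobian $|z|^{n_2}$) collapses the $y'$-variable to a factor $\mu(\fO V_2) = 1$ and yields $\int_{W_1}(\cdots) = J(x)$ directly. On $W_2$ we have $\norm{y;z} = \norm y$, so by Fubini the $z$-integration can be carried out first. Using the elementary identity $\int_{|z| < q^{-m}} |z|^{s-2}\,\dd\!\mu(z) = \frac{(1-q^{-1})\,qt\cdot q^{-m(s-1)}}{1-qt}$ together with the measure decomposition $\mu\{y\in \fP V_2 : \norm y = q^{-m}\} = (1-q^{-n_2})\,q^{-mn_2}$ for $m\ge 1$, and comparing with the series expansion $J(x) = (1-q^{-1})\sum_{m\ge 1} q^{-m(s+n_2-1)}h_x(q^{-m})$, one obtains $\int_{W_2}(\cdots) = \frac{qt(1-q^{-n_2})}{1-qt}\,J(x)$. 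Adding the two contributions and using $1 + \frac{qt(1-q^{-n_2})}{1-qt} = \frac{1-q^{1-n_2}t}{1-qt}$ completes the verification.

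The main technical obstacle is that the factor $|z|^{s-2}$ coming from the projective form of the $\fP$-adic integral treats $z$ asymmetrically from the coordinates of $y$, so Lemma~\ref{lem:repeated_dummy_variables} does not apply directly to $I(x)$. The explicit split into $W_1$ and $W_2$—in the spirit of the decomposition used in the proof of Theorem~\ref{thm:add_generic_row}—is the device that decouples the $z$-variable from the $y$-variables and lets us match the single-variable integral $J(x)$; all subsequent steps are routine bookkeeping with geometric series in $q^{-s}$.
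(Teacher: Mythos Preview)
Your argument is correct. Both your proof and the paper's proof start from Claim~\ref{c:Gamma_int_Gk}, pull out the $x$-integration, and split $\fP V_2\times\fP$ according to whether $\norm{y}$ or $\abs{z}$ dominates; the difference lies in how the two pieces are handled. The paper assigns the tie $\norm{y}=\abs{z}$ to the region where $\norm{y;z}=\norm{y}$, substitutes $z=\gcd(y)z'$ there to separate off a scalar $z'$-integral, and then invokes Lemma~\ref{lem:repeated_dummy_variables} to collapse the remaining $n_2$-variable $y$-integral to a one-variable integral; on the complementary region it substitutes $y_v=\pi z y_v'$. You instead assign the tie to the region where $\norm{y;z}=\abs{z}$, collapse that piece directly via $y=zy'$, and compute the other piece by a bare-hands level-set sum over $\norm{y}=q^{-m}$. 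Your route bypasses Lemma~\ref{lem:repeated_dummy_variables} entirely and is slightly more self-contained; the paper's route packages the multi-variable reduction into a reusable lemma. The final bookkeeping $1+\frac{qt(1-q^{-n_2})}{1-qt}=\frac{1-q^{1-n_2}t}{1-qt}$ matches the paper's $\frac{1-q^{-n_2}}{1-qt}+q^{-n_2}=\frac{1-q^{1-n_2}t}{1-qt}$, just with the boundary contribution absorbed into the opposite summand.
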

\begin{proof}
  Consider the partition $\fP V_2 \times \fP = R \sqcup S$, where
  $R = \{(y,z) \in \fP V_2 \times \fP : \abs z \le \norm y\}$
  and $S = \{(y,z) \in \fP V_2 \times \fP : \norm y < \abs z\}$.
  On $R$, we may write $z = \gcd(y) z'$ for $z'\in \fO$.
  By taking $W = (\fO V_1)^\times \times R$ in Claim~\ref{c:Gamma_int_Gk}
  and performing a change of variables
  using $\abs{\dd\!z} = \norm y \abs{\dd\!z'}$,
  we obtain
  \begin{align*}
    \int\limits_
    {(\fO V_1)^\times \times R } \Gamma^-(s) & =
    \int\limits_{(\fO V_1)^\times \times \fP V_2 \times \fO}
    \abs{z'}^{s-2}
    \norm{y}^{s-c_1}
    \prod_{k=1}^{n_1-c_1}
    G_k(x,\gcd(y))\,
    \dd\!\mu(x,y,z')\\
    & =
      \int\limits_\fO \abs{z'}^{s-2}\dd\!\mu(z') \cdot
      \int\limits_{(\fO V_1)^\times \times \fP V_2}
      \norm{y}^{s-c_1}
      \prod_{e=1}^{n_1-c_1}
      G_e(x,\gcd(y))\,
      \dd\!\mu(x,y)
    \\ &=
         \frac{1-q^{-1}}{1-qt}
         \cdot
      \int\limits_{(\fO V_1)^\times \times \fP V_2}
      \norm{y}^{s-c_1}
      \prod_{e=1}^{n_1-c_1}
      G_e(x,\gcd(y))\,
      \dd\!\mu(x,y).
  \end{align*}

  For fixed $x$, we may view $\prod_{e=1}^{n_1-c_1}G_e(x,\gcd(y))$ as a function of $\norm y$ to
  which Lemma~\ref{lem:repeated_dummy_variables} is applicable.
  Using the Fubini-Tonelli theorem and \eqref{eq:loopless_int} (applied to $\Gamma_1$),
  we thus find that
  \begin{align}
    \nonumber
    \int\limits_{(\fO V_1)^\times \times R } \Gamma^-(s)
    & =
    \frac{1-q^{-1}}{1-qt}
    \cdot
    \frac{1-q^{-n_2}}{1-q^{-1}}
    \int\limits_{(\fO V_1)^\times \times \fP}
      \abs{y}^{s+n_2-c_1-1}
      \prod_{k=1}^{n_1-c_1}
      G_k(x,y)\,
      \dd\!\mu(x,y)
    \\&=
    \frac{1-q^{-n_2}}{1-qt} \int\limits_{(\fO V_1)^\times \times \fP}\Gamma_1^-(s+n_2).
    \label{eq:integral_R}
  \end{align}

  On $S$, we may write $y_v = \pi z y_v'$ for $v\in V_2$ and $y_v'\in \fO$.
  By taking $W = (\fO V_1)^\times \times S$ in Claim~\ref{c:Gamma_int_Gk}
  and performing a change of variables
  using $\abs{\dd\! y_v} = q^{-1} \abs{z} \abs{\dd\!y_v}$, we obtain
  \begin{align}
    \nonumber
    \int\limits_
    {(\fO V_1)^\times \times S} \Gamma^-(s) & =
    q^{-n_2}                                              
    \int\limits_{(\fO V_1)^\times \times \fO V_2 \times \fP}
    \abs{z}^{s+n_2 - c_1 - 1} \prod_{k=1}^{n_1-c_1} G_k(x,z) \dd\!\mu(x,y',z)
    \\ & =
    q^{-n_2} \int\limits_{(\fO V_1)^\times\times \fP}\Gamma_1^-(s+n_2).         
    \label{eq:integral_S}
  \end{align}

  Together, \eqref{eq:integral_R} and \eqref{eq:integral_S} yield
  \begin{align*}
    \int\limits_{(\fO V_1)^\times \times \fP V_2 \times \fP} \Gamma^-(s)
    &=
    \frac{1-q^{-n_2}}{1-qt} \int\limits_{(\fO V_1)^\times \times
      \fP}\Gamma_1^-(s+n_2)
    + q^{-n_2} \int\limits_{(\fO V_1)^\times\times \fP}\Gamma_1^-(s+n_2)
    \\&= \frac{1-q^{1-n_2}t}{1-qt} \int\limits_{(\fO V_1)^\times\times \fP}\Gamma_1^-(s+n_2),
  \end{align*}
  as claimed.
\end{proof}

\begin{cor}
  \label{cor:right_unit_integral}
  $\displaystyle
  \int\limits_{\fP V_1 \times (\fO V_2)^\times \times \fP} \Gamma^-(s)
  = \frac{1-q^{1-n_1}t}{1-qt}\int\limits_{(\fO V_2)^\times\times \fP}
  \!\!\!\!\!\!\Gamma_2^-(s+n_1)
  $.
\end{cor}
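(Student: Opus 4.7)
The plan is to derive Corollary~\ref{cor:right_unit_integral} from Lemma~\ref{lem:left_unit_integral} by exploiting the fact that the join operation on graphs is symmetric: $\Gamma_1 \join \Gamma_2$ and $\Gamma_2 \join \Gamma_1$ are isomorphic loopless graphs on $n = n_1 + n_2$ vertices. Since the matrices $\sA^-_\Gamma$ and, by Corollary~\ref{cor:loopless_minus_minors}, the ideals $\Minors^-_k \Gamma$ depend only on $\Gamma$ up to the choice of total order on the vertices (which only affects minors up to sign), both the integrand of $\int_W \Gamma^-(s)$ in \eqref{eq:Gamma_integral} and the rank $r_\Gamma[-] = n - 1$ are unaffected by swapping the two factors. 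Consequently, the measure-preserving ``flip'' $\fO V_1 \times \fO V_2 \times \fP \to \fO V_2 \times \fO V_1 \times \fP$, $(x,y,z) \mapsto (y,x,z)$, carries $\Gamma^-(s)$ (viewed for $\Gamma_1 \join \Gamma_2$) to the same integrand viewed for $\Gamma_2 \join \Gamma_1$.

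Concretely, I would apply Lemma~\ref{lem:left_unit_integral} to the pair $(\Gamma_2, \Gamma_1)$ in place of $(\Gamma_1, \Gamma_2)$. This yields
\[
  \int\limits_{(\fO V_2)^\times \times \fP V_1 \times \fP} (\Gamma_2 \join \Gamma_1)^-(s) = \frac{1-q^{1-n_1}t}{1-qt}\int\limits_{(\fO V_2)^\times\times \fP} \Gamma_2^-(s+n_1).
\]
Using the isomorphism $\Gamma_2 \join \Gamma_1 \cong \Gamma_1 \join \Gamma_2$ and the invariance just discussed, the left-hand side equals $\int_{\fP V_1 \times (\fO V_2)^\times \times \fP} \Gamma^-(s)$, which is exactly the desired identity.

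The only step that requires any care is checking that the symmetry really does identify the two integrands; this reduces to observing that the combinatorial parameterisation of nonzero minors of $\sC^-_\Gamma$ via $\Nilpotents(\Gamma)$ (Corollary~\ref{cor:loopless_minus_minors}) depends only on the isomorphism class of $\Gamma$, and that the Haar measure on $\fO V = \fO V_1 \oplus \fO V_2$ is the product of the Haar measures on each factor. No genuine obstacle is anticipated: the result is a direct transcription of Lemma~\ref{lem:left_unit_integral} under the symmetry $(\Gamma_1,\Gamma_2) \leftrightarrow (\Gamma_2,\Gamma_1)$.
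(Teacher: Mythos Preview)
Your proposal is correct and matches the paper's approach exactly: the paper's proof of Corollary~\ref{cor:right_unit_integral} is the single line ``Interchange the roles of $\Gamma_1$ and $\Gamma_2$ in Lemma~\ref{lem:left_unit_integral}.'' Your additional remarks about why this interchange is legitimate (invariance of the ideals $\Minors^-_k\Gamma$ under relabelling of vertices, and the product structure of the Haar measure) are sound elaborations of this same idea.
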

\begin{proof}
  Interchange the roles of $\Gamma_1$ and $\Gamma_2$ in
  Lemma~\ref{lem:left_unit_integral}.
\end{proof}

\begin{proof}[Proof of Theorem~\ref{thm:join}]
  By Lemma~\ref{lem:join_via_int},
  the conclusion of Theorem~\ref{thm:join} holds for $\Gamma = \Gamma_1 \join
  \Gamma_2$ if and only if \eqref{eq:join_integral_form} holds.
  We then write $\int_{(\fO V)^\times \times \fP} \Gamma^-(s)$ as a sum
  of three integrals as in~\eqref{eq:3part_int}.
  The three summands in \eqref{eq:3part_int} agree with those in
  \eqref{eq:join_integral_form} by Lemma~\ref{lem:units_everywhere_integral},
  Lemma~\ref{lem:left_unit_integral}, and Corollary~\ref{cor:right_unit_integral}.
\end{proof}

For a cograph $\Gamma$, the following was previously spelled out in
\cite[Prop.\ 8.4]{cico}.

\begin{cor}
  Let $\Gamma$ be a loopless graph.
  Then $W^-_{\Gamma\join\bullet}(X,T) = \frac{1 - X^{-1}T}{1-XT{\phantom .}} \cdot
  W^-_\Gamma(X,X^{-1}T)$.
\end{cor}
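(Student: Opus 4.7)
The plan is to derive this corollary as an immediate specialisation of Theorem~\ref{thm:join} to the case $\Gamma_1 = \Gamma$, $\Gamma_2 = \bullet$, where $\bullet$ denotes the (loopless) graph on a single vertex with no edges.

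The first step is to compute $W^-_\bullet(X,T)$. Since $\bullet$ has no edges, $\sA^-_\bullet$ is the $1\times 1$ zero matrix in zero variables, so for each $k\ge 0$ the kernel of the zero map on $\fO/\fP^k$ has size $q^k$. Summing yields $\Zeta^{\ask}_{\sA^-_\bullet/\fO}(T) = 1/(1-qT)$ and hence $W^-_\bullet(X,T) = 1/(1-XT)$.

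Next, with $n_1 = n$, $n_2 = 1$, $z_1 = X^{-n}$, and $z_2 = X^{-1}$, I would substitute into \eqref{eq:join}. The contribution of the $W^-_{\Gamma_2}$ summand to the numerator simplifies as
\[
W^-_\bullet(X, X^{-n}T)(1-X^{-n}T)(1-X^{1-n}T) = \frac{(1-X^{-n}T)(1-X^{1-n}T)}{1-X^{1-n}T} = 1 - X^{-n}T,
\]
which precisely cancels the leading term $z_1 z_2 XT - 1 = X^{-n}T - 1$ in the numerator of \eqref{eq:join}.

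After this cancellation, only the $W^-_{\Gamma_1}$ summand survives in the numerator, contributing $W^-_\Gamma(X, X^{-1}T)(1-X^{-1}T)(1-T)$. Dividing by the denominator $(1-T)(1-XT)$ and cancelling the common factor $(1-T)$ yields the claimed identity. The computation is purely bookkeeping and poses no real obstacle: Theorem~\ref{thm:join} does all the work, and the only ingredients needed are the value $W^-_\bullet = 1/(1-XT)$ and the elementary algebraic simplification above.
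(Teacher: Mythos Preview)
Your proposal is correct and follows essentially the same approach as the paper: compute $W^-_\bullet(X,T) = 1/(1-XT)$ and then apply Theorem~\ref{thm:join}. The only difference is cosmetic---the paper cites \cite[Table~1]{cico} for $W^-_\bullet$ rather than computing it directly, and leaves the algebraic simplification implicit.
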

\begin{proof}
  We know from \cite[Table~1]{cico} that $W^-_\bullet(X,T) = 1/(1-XT)$.
  Now apply Theorem~\ref{thm:join}.
\end{proof}

\subsection{A new proof of the Cograph Modelling Theorem (Theorem~\ref{thm:cmt})}
\label{ss:proof_cmt}

By combining Theorem~\ref{thm:join} and properties of the rational functions
$W_\Eta$ from \cite[\S 5]{cico}, we obtain a new (and quite short) proof of
Theorem~\ref{thm:cmt}.
This new proof does not use any results from \cite[\S\S 6--7]{cico}, the key
ingredients of the first proof of Theorem~\ref{thm:cmt}.

We first recall terminology and results from \cite[\S 5]{cico}.
Let $\Eta_1$ and $\Eta_2$ be hypergraphs.
Following \cite[\S 3.1]{cico}, the \emph{complete union} $\Eta_1 \circledast
\Eta_2$ is obtained from the disjoint union $\Eta_1\oplus \Eta_2$ by
adjoining all vertices of $\Eta_1$ to each hyperedge of $\Eta_2$ and all
vertices of $\Eta_2$ to each hyperedge of $\Eta_1$.
The following results from \cite{cico} explain the effects of disjoint
and complete unions on the rational functions $W_\Eta$.

\begin{prop}[{\cite[Prop.\ 5.12]{cico}}]
  \label{prop:Hadamard_Eta}
  $W_{\Eta_1\oplus\Eta_2} = W_{\Eta_1} *_T W_{\Eta_2}$ (Hadamard product in~$T$).
\end{prop}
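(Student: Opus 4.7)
The plan is to deduce the proposition from an elementary factorisation of ask zeta functions for block diagonal matrices of linear forms, combined with the Uniformity Theorem (Theorem~\ref{thm:uniformity}\ref{thm:uniformity1}).

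First, I would note that the linearised incidence matrix of $\Eta_1 \oplus \Eta_2$ is block diagonal: by the construction recalled in \S\ref{ss:hypergraphs}, up to equivalence of matrices of linear forms (\S\ref{s:ask_bg}), we have
\[
  \sA_{\Eta_1 \oplus \Eta_2}
  \simeq
  \begin{bmatrix} \sA_{\Eta_1} & 0 \\ 0 & \sA_{\Eta_2} \end{bmatrix},
\]
where, crucially, the two diagonal blocks involve disjoint sets of variables $X_{F_1}$ and $X_{F_2}$ (one set for each summand $\Eta_i$ with flag set $F_i = \Flags(\Eta_i)$).

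Second, I would establish a general factorisation principle: whenever $A \in \Mat_{n_1 \times m_1}(\ZZ[X_U])$ and $B \in \Mat_{n_2\times m_2}(\ZZ[X_V])$ are matrices of linear forms over disjoint variable sets, the specialisation of $\diag(A, B)$ at any $(x, y) \in (\fO/\fP^k) U \times (\fO/\fP^k) V$ is block diagonal, and therefore
\[
  \Ker\bigl(\diag(A,B)(x,y)\bigr) = \Ker(A(x)) \oplus \Ker(B(y)).
\]
Taking cardinalities and averaging over $(x,y)$ by Fubini yields
\[
  \ask_{\fO/\fP^k}(\diag(A, B)) = \ask_{\fO/\fP^k}(A) \cdot \ask_{\fO/\fP^k}(B)
\]
for every $k \geq 0$. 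In terms of generating functions, this is precisely
\[
  \Zeta^{\ask}_{\diag(A,B)/\fO}(T) = \Zeta^{\ask}_{A/\fO}(T) *_T \Zeta^{\ask}_{B/\fO}(T),
\]
with the Hadamard product taken coefficient-wise over $\QQ$.

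Third, I would apply this to $A = \sA_{\Eta_1}$ and $B = \sA_{\Eta_2}$. By Theorem~\ref{thm:uniformity}\ref{thm:uniformity1}, the resulting identity reads
\[
  W_{\Eta_1 \oplus \Eta_2}(q, T) = W_{\Eta_1}(q, T) *_T W_{\Eta_2}(q, T)
\]
for every prime power $q$ (i.e.\ for residue field sizes of compact \DVR{}s). Writing $W_{\Eta_i}(X, T) = \sum_{k \geq 0} a^{(i)}_k(X)\, T^k$ with $a^{(i)}_k(X) \in \QQ(X)$, this says that the $T^k$-coefficient of $W_{\Eta_1 \oplus \Eta_2}(X, T)$, evaluated at infinitely many $X = q$, coincides with $a^{(1)}_k(q)\, a^{(2)}_k(q)$. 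As equality of rational functions at infinitely many integer points forces equality in $\QQ(X)$, we conclude that $W_{\Eta_1 \oplus \Eta_2} = W_{\Eta_1} *_T W_{\Eta_2}$ in $\QQ(X)[\![T]\!]$, as desired.

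There is no real obstacle here; the whole argument is essentially the observation that direct sums of linear maps have multiplicative kernel sizes, packaged against the uniformity statement. The only mildly subtle point is bookkeeping: making sure the variable sets in the two blocks are genuinely disjoint (so that the averaging over $(x,y)$ decouples), which is immediate from the definition of $\sA_\Eta$ in \S\ref{ss:ask_hypergraph} where one free variable is introduced per flag.
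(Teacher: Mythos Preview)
Your proof is correct. The paper does not supply its own proof of this proposition but simply quotes it from \cite[Prop.~5.12]{cico}; your argument---block-diagonal structure of $\sA_{\Eta_1\oplus\Eta_2}$ with disjoint variable sets, multiplicativity of kernel sizes under direct sum, and passage from all prime powers $q$ to $\QQ(X)$ via Theorem~\ref{thm:uniformity}\ref{thm:uniformity1}---is precisely the natural elementary proof and matches what the paper alludes to in \S\ref{ss:hadamard} for the analogous graph case.
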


\begin{prop}[{\cite[Prop.\ 5.18]{cico}}]
  \label{prop:freep_Eta}
  Let $\Eta_i$ have $n_i$ vertices and $m_i$ hyperedges.
  For $i = 1, 2$, write $y_i = X^{n_i}$ and $z_i = X^{-m_i}$.
  Then:
  {\small
  \[
    W_{\Eta_1\circledast\Eta_2}    =
    \frac{z_1 z_2 T - 1
      + W_{\Eta_1}(X, \! z_2 T)(1\!-\!z_2 T)(1\! - \! y_1 z_1 z_2 T)
      + W_{\Eta_2}(X, \! z_1 T)(1\!-\!z_1 T)(1\! - \! y_2 z_1 z_2
      T)}{(1-T)(1-y_1y_2 z_1z_2T)}.
    \]
  }
\end{prop}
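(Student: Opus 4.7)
My plan is to derive the formula directly from Theorem~\ref{thm:hypergraph_master}, applied to $\Eta := \Eta_1 \circledast \Eta_2$, by carefully reorganising the resulting sum over flags in $\WOhat(V)$, where $V := V_1 \sqcup V_2$.

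The first task is to unpack $\pth U$ in $\Eta$ for each $U \subset V$. In the complete union, each hyperedge $e \in E_i$ keeps its original $\Eta_i$-incidences and additionally becomes incident with every vertex in $V_j$ for $j \ne i$. Writing $U_i := U \cap V_i$, the quantity $\pth U$ taken in $\Eta$ therefore splits into four cases: it equals $\emptyset$ if $U = \emptyset$; equals $\pth{U_1} \sqcup E_2$ (with $\pth{U_1}$ now understood in $\Eta_1$) if $U_2 = \emptyset \ne U_1$; symmetrically, $E_1 \sqcup \pth{U_2}$ if $U_1 = \emptyset \ne U_2$; and equals $E_1 \sqcup E_2$ if both $U_i$ are nonempty. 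Each case yields an explicit value of $|U| - |\pth U|$ in terms of the analogous quantity in $\Eta_i$ and the counts $m_1, m_2$, which in particular explains the emergence of $z_1 z_2$ and $y_i z_1 z_2$ in the statement.

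Next, I would partition the sum in Theorem~\ref{thm:hypergraph_master} over $y \in \WOhat(V)$ according to the types of subsets appearing in the flag. Since the members of $y$ form a chain under inclusion, $y$ necessarily consists of an initial segment of subsets contained entirely in some $V_i$ followed by a (possibly empty) tail of ``straddling'' subsets meeting both $V_1$ and $V_2$. Every straddling member contributes the factor $\frac{z_1 z_2 T}{1 - z_1 z_2 T}$, so the straddling tail resums as a geometric series whose denominator is precisely $(1 - z_1 z_2 T)$. The purely-$V_i$ initial segment, after summing over all choices in $\WOhat(V_i)$, assembles into $W_{\Eta_i}(X, z_j T)$: the $T$-argument is rescaled by $z_j$ because each purely-$V_i$ set contributes $|U| - |\pth U| = |U_i| - |\pth{U_i}| - m_j$, and this extra $-m_j$ exponent in $X$ is exactly what the substitution $T \mapsto z_j T$ accomplishes.

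The main obstacle lies in cleanly combining these ingredients: one must separately collect the contributions from flags consisting only of purely-$V_1$ subsets, only of purely-$V_2$ subsets, only of straddling subsets (including the empty flag), and mixtures with a purely-$V_i$ initial segment joined to a straddling tail. The prefactor $(1 - X^{-1})^{|\sup y|}$ depends only on the top element of $y$, so its distribution among the cases requires care, particularly when the top is a straddling subset (in which case $|\sup y| = |U_1| + |U_2|$ factors as a product). Once this bookkeeping is done, the resulting identity reduces to a direct algebraic rearrangement yielding the claimed rational function---a rearrangement that, tellingly, parallels the proof of the loopless-join formula in Theorem~\ref{thm:join} and makes the structural similarity between the two formulas transparent.
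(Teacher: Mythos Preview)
The paper does not prove this proposition at all: it is quoted verbatim from \cite[Prop.~5.18]{cico} and used as a black box in \S\ref{ss:proof_cmt}. So there is no ``paper's own proof'' to compare against; your proposal stands or falls on its own merits.

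Your overall strategy---apply Theorem~\ref{thm:hypergraph_master} to $\Eta_1\circledast\Eta_2$ and split the flag sum according to whether the members of a flag lie in $V_1$, in $V_2$, or straddle both---is the natural one and is, in outline, how the result is obtained in \cite{cico}. Your computation of $\pth U$ is correct, and the observation that the purely-$V_i$ part of a flag contributes (after the shift $T\mapsto z_jT$) exactly the summands of $W_{\Eta_i}(X,z_jT)$ is right.

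The gap is in your treatment of the straddling tail. You assert that every straddling $U$ contributes the \emph{same} factor $\frac{z_1z_2T}{1-z_1z_2T}$, so that the tail resums geometrically with denominator $1-z_1z_2T$. This is false: a straddling $U$ has $\card{\pth U}=m_1+m_2$, hence contributes
\[
\frac{X^{\card U - m_1 - m_2}T}{1-X^{\card U - m_1 - m_2}T}
= \frac{X^{\card U}z_1z_2T}{1-X^{\card U}z_1z_2T},
\]
which depends on $\card U$. Indeed $1-z_1z_2T$ is not a factor of the target denominator at all; the two denominator factors $(1-T)$ and $(1-y_1y_2z_1z_2T)$ correspond to $U=\emptyset$ and $U=V$ respectively, not to a geometric sum over intermediate straddling sets. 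Handling the straddling tail therefore requires a genuine identity for sums of the shape
\[
\sum_{y}(1-X^{-1})^{\card{\sup y}}\prod_{U\in y}\frac{X^{\card U}T'}{1-X^{\card U}T'}
\]
over flags of subsets of a finite set (equivalently, the computation of $W_\Eta$ for a ``block'' hypergraph in which every vertex meets every hyperedge). This is precisely the kind of auxiliary identity developed in \cite[\S5]{cico}, and without it your bookkeeping cannot close. Once you invoke such an identity the rest of your plan goes through, but as written the crucial step is missing.
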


For a hypergraph $\Eta$, as in \cite[\S 5.4]{cico}, let
$\Eta^{\mathbf 1}$ be obtained from $\Eta$ by adding a
single new hyperedge which contains all vertices.
Let $\Eta^{\mathbf 0}$ be obtained from $\Eta$ by adding a new hyperedge which
does not contain any vertices.

\begin{prop}[{\cite[Prop.\ 5.24]{cico}}]
  \label{prop:01_Eta}
  $W_{\Eta^{\mathbf 1}} = \frac{1-X^{-1}T}{1-T} W_\Eta(X,X^{-1}T)$
  and $W_{\Eta^{\mathbf 0}} = W_{\Eta}$.
\end{prop}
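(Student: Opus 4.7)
The plan is to reduce both identities to results already established earlier in the paper, specifically the corollary following Theorem~\ref{thm:add_generic_row} that treats the addition of a generic column to a matrix of linear forms. The key observation is that, up to equivalence, both $\sA_{\Eta^{\mathbf 0}}$ and $\sA_{\Eta^{\mathbf 1}}$ are obtained from $\sA_\Eta$ by appending a single column, and this column is either identically zero (in the $\mathbf 0$ case) or fully generic in $n := \card V$ fresh variables (in the $\mathbf 1$ case).

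For $W_{\Eta^{\mathbf 0}} = W_\Eta$: the hyperedge adjoined to form $\Eta^{\mathbf 0}$ has no incident vertex, so it contributes no new flag and hence no new variable. Thus $\sA_{\Eta^{\mathbf 0}}$ is (up to column reordering) the matrix obtained from $\sA_\Eta$ by appending a zero column. Right-multiplying a row vector by the resulting matrix imposes no constraints beyond those already imposed by $\sA_\Eta$, so $\Ker(v \mapsto v \sA_{\Eta^{\mathbf 0}}(x)) = \Ker(v\mapsto v\sA_\Eta(x))$ for every specialisation $x$. Since no additional variable has been introduced, the averaging over $x$ is identical as well. Hence $\Zeta^{\ask}_{\sA_{\Eta^{\mathbf 0}}/\fO} = \Zeta^{\ask}_{\sA_\Eta/\fO}$ for every compact \DVR{} $\fO$, and Theorem~\ref{thm:uniformity}\ref{thm:uniformity1} yields $W_{\Eta^{\mathbf 0}} = W_\Eta$.

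For $W_{\Eta^{\mathbf 1}}$: the hyperedge $\mathbf 1$ is incident with all $n$ vertices of $\Eta$, so it contributes exactly $n$ new flags $(v,\mathbf 1)$ and correspondingly $n$ algebraically independent new variables $X_{(v,\mathbf 1)}$. Directly from the definition of $\sA_\Eta$ in \S\ref{ss:ask_hypergraph}, we see that $\sA_{\Eta^{\mathbf 1}}$ is (up to an irrelevant reordering of hyperedges) the matrix obtained from $\sA_\Eta$ by appending a single column populated by these $n$ fresh variables, one per row. This is precisely the setup of the corollary following Theorem~\ref{thm:add_generic_row}, which gives
\[
\Zeta^{\ask}_{\sA_{\Eta^{\mathbf 1}}/\fO}(T) = \Zeta^{\ask}_{\sA_\Eta/\fO}(q^{-1}T) \cdot \frac{1-q^{-1}T}{1-T}.
\]
Translating this to $W$-language through Theorem~\ref{thm:uniformity}\ref{thm:uniformity1} and substituting $X = q$ yields the claimed identity.

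I do not expect a genuine obstacle: the hard work has already been carried out in Theorem~\ref{thm:add_generic_row}, and the only verifications required are the combinatorial identifications of the newly adjoined column as zero in the $\mathbf 0$ case and as fully generic in the $\mathbf 1$ case, both of which are immediate from the definitions of the constructions $\Eta \mapsto \Eta^{\mathbf 0}, \Eta^{\mathbf 1}$ and of the flag set used in \S\ref{ss:ask_hypergraph}. The mildest care needed is to confirm that the $n$ variables appearing in the new column for $\Eta^{\mathbf 1}$ are indeed distinct from and algebraically independent of those already present in $\sA_\Eta$, which is automatic since each flag $(v,\mathbf 1)$ yields its own fresh variable.
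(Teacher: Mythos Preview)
The paper does not supply its own proof of this proposition: it is quoted from \cite{cico}, as the citation in the header indicates. So there is no argument in the paper to compare yours against.

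That said, your derivation is correct, and the paper itself signals that this route is available. The remark immediately following Theorem~\ref{thm:add_generic_row} points out that \cite[Prop.~5.24]{cico} is the special case $A = \sA_\Eta$ of the generic-row/column results of \S\ref{s:generic_rows}. Your identification of $\sA_{\Eta^{\mathbf 1}}$ as $\sA_\Eta$ with one generic column adjoined is exactly right (the new hyperedge is incident with all $n$ vertices, producing $n$ fresh flag variables), so the corollary to Theorem~\ref{thm:add_generic_row} applies verbatim. Your treatment of $\Eta^{\mathbf 0}$ is likewise correct and elementary: the empty hyperedge contributes no flags, hence no new variables, and the appended zero column leaves all kernels unchanged. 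There is no circularity, since Theorem~\ref{thm:add_generic_row} is proved via Proposition~\ref{prop:zeta_M} (which is \cite[Cor.~2.15]{cico}) and does not invoke Proposition~\ref{prop:01_Eta}. In effect you have supplied, within the framework of the present paper, a self-contained proof of a result the paper is content merely to cite.
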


\begin{proof}[New proof of Theorem~\ref{thm:cmt}]
  We show that for each cograph $\Gamma$ on $n$ vertices, there exists a
  hypergraph $\Eta$ with $n$ vertices and $n-1$ hyperedges such that $W^-_\Gamma = W_\Eta$.
  We proceed by structural induction.
  Recall that beginning with a single isolated vertex, cographs are
  constructed by repeatedly taking disjoint unions and joins of smaller
  cographs.
  
  For the base case, if $\Gamma$ consists of a single vertex, then
  clearly $W^-_\Gamma = 1/(1-XT) = W_\Eta$, where $\Eta$ is the
  hypergraph on a single vertex and without hyperedges.

  Let $\Gamma_1$ and $\Gamma_2$ be (co)graphs on $n_1$ and $n_2$ vertices,
  respectively.
  Suppose that $\Eta_1$ and $\Eta_2$ are hypergraphs such that $\Eta_i$ has
  $n_i$ vertices and $n_i-1$ hyperedges and such that $W^-_{\Gamma_i} =
  W_{\Eta_i}$.
  Then Proposition~\ref{prop:Hadamard_Eta} and Proposition~\ref{prop:01_Eta} yield
  \[
    W_{(\Eta_1 \oplus\Eta_2)^{\mathbf 0}} =
    W_{\Eta_1 \oplus\Eta_2} =
    W_{\Eta_1} *_T W_{\Eta_2} =
    W^-_{\Gamma_1} *_T W^-_{\Gamma_2} =
    W^-_{\Gamma_1 \oplus \Gamma_2}.
  \]
  Noting that ${(\Eta_1 \oplus\Eta_2)^{\mathbf 0}}$ has $n_1 + n_2$ vertices and
  $n_1 + n_2 - 1$ hyperedges,
  our claim thus holds for $\Gamma_1 \oplus \Gamma_2$.
  To show that it also holds for $\Gamma_1 \join \Gamma_2$,
  let $\Eta = (\Eta_1 \circledast \Eta_2)^{\mathbf 1}$.
  This hypergraph too has $n_1 + n_2$ vertices and $n_1 + n_2 - 1$ hyperedges.
  Let $n = n_1 + n_2$.
  By Proposition~\ref{prop:freep_Eta} (with $z_i = X^{1-n_i}$), we have
  \begin{align*}
    W_{\Eta_1\circledast\Eta_2} & =
                                  \Bigl(X^{2 - n} T - 1 \\
    & \quad
      + W_{\Eta_1}(X, X^{1-n_2} T)(1- X^{1-n_2} T)(1 - X^{2-n_2} T) \\
                                & \quad
      + W_{\Eta_2}(X, X^{1-n_1} T)(1- X^{1-n_1} T)(1 - X^{2-n_1} T) 
                                  \Bigr)/\bigl((1-T)(1-X^2T)\bigr).
  \end{align*}
  Using Proposition~\ref{prop:01_Eta} and Theorem~\ref{thm:join}, we thus find
  that
  $W_\Eta = W^-_{\Gamma_1 \join \Gamma_2}$ whence our claim holds for
  $\Gamma_1 \join \Gamma_2$.
\end{proof}

\section{Fundamental properties of $W^\sharp_\Gamma$}
\label{s:Wsharp}

In this final section, we derive an explicit graph-theoretic formula for
$W^\sharp_\Gamma$ (Proposition~\ref{prop:Wsharp_formula}) and derive analytic consequences
(Proposition~\ref{prop:Wsharp_poles}).
We also show that the rational functions $W^\sharp_\Gamma$ are well behaved
under joins of graphs (Proposition~\ref{prop:Wsharp_join}).
Finally, we collect formulae for $W^\sharp_\Gamma$ for all graphs on at most
four vertices (Table~\ref{tab:graphs4}).
Recall that $\hat\Gamma$ denotes the reflexive closure of $\Gamma$.
Since $W^\sharp_\Gamma = W^\sharp_{\hat\Gamma}$, we may assume that $\Gamma$
is loopless.

\paragraph{An explicit formula for $W^\sharp_\Gamma$.}
Let $\Gamma = (V,E)$ be a (loopless) graph.
The following notation matches that from \cite{csp}.
For $U\subset V$, let $\Nbh_\Gamma[U]\subset V$
consist of all vertices from $U$ as well as all vertices adjacent to some
vertex from $U$.
Let $\nbhcnt_\Gamma(U) = \card{\Nbh_\Gamma[U]\setminus U}$, the
number of vertices in $V\setminus U$ with a neighbour in $U$.
Recall that $\WOhat(V)$ denotes the poset of flags of subsets of $V$.

\begin{prop}
  \label{prop:Wsharp_formula}
  Let $\Gamma = (V,E)$ be a (loopless) graph.
  Then
    \begin{equation}
      \label{eq:Wsharp_master}
          W^\sharp_\Gamma(X^{-1},T) = \sum_{y \in \WOhat(V)}(1-X)^{\card{\sup(y)}}
          \prod_{U\in y} \frac{X^{\nbhcnt_\Gamma(U)}T}{1-X^{\nbhcnt_\Gamma(U)}T}.
    \end{equation}
\end{prop}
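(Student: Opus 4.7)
The plan is to derive Proposition~\ref{prop:Wsharp_formula} as an immediate application of Theorem~\ref{thm:hypergraph_master} to the adjacency hypergraph $\Eta := \Adj(\hat\Gamma)$. By Definition~\ref{d:Wsharp} we have $W^\sharp_\Gamma = W_{\Adj(\hat\Gamma)} = W_\Eta$, so it suffices to rewrite the master formula for $\Eta$ in graph-theoretic terms relative to $\Gamma$ itself and then substitute $X \mapsto X^{-1}$.

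The only nontrivial step is the combinatorial identification of the quantity $\pth U$ from Theorem~\ref{thm:hypergraph_master} with the closed-neighbourhood invariant $\nbhcnt_\Gamma$. Concretely, $\Eta = (V,V,\sim_{\hat\Gamma})$, so for $U \subset V$,
\[
\pth U = \{ e \in V : \exists u \in U,\, u \sim_{\hat\Gamma} e\}.
\]
Since $\hat\Gamma$ is reflexive, $u \sim_{\hat\Gamma} u$ for every $u$, giving $U \subset \pth U$. For $e \in V \setminus U$ the condition $u \sim_{\hat\Gamma} e$ with $u \in U$ forces $u \neq e$, and loops are irrelevant; thus it is equivalent to $u \sim_\Gamma e$, i.e.\ to $e$ having a neighbour in $U$ inside $\Gamma$. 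Hence $\pth U = \Nbh_\Gamma[U]$ and
\[
\card U - \card{\pth U} = -\nbhcnt_\Gamma(U).
\]

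Substituting this identity into \eqref{eq:hypergraph_master} yields
\[
W^\sharp_\Gamma(X,T) = \sum_{y \in \WOhat(V)} (1-X^{-1})^{\card{\sup(y)}} \prod_{U \in y} \frac{X^{-\nbhcnt_\Gamma(U)} T}{1-X^{-\nbhcnt_\Gamma(U)} T},
\]
and replacing $X$ by $X^{-1}$ produces precisely \eqref{eq:Wsharp_master}. Since both ingredients—Definition~\ref{d:Wsharp} identifying $W^\sharp_\Gamma$ with a hypergraph ask zeta function, and Theorem~\ref{thm:hypergraph_master} giving its explicit form—are already in place, there is no genuine obstacle: the entire content of the proposition is the reflexivity-based identification $\pth U = \Nbh_\Gamma[U]$.
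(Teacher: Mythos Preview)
Your proof is correct and follows essentially the same route as the paper: apply Theorem~\ref{thm:hypergraph_master} to $\Eta = \Adj(\hat\Gamma)$, use reflexivity to identify $\pth U = \Nbh_\Gamma[U]$, and substitute. The only difference is that you spell out the neighbourhood identification in more detail than the paper does.
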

\begin{proof}
  Let $\Eta = \Adj(\hat\Gamma)$ so that $W^\sharp_\Gamma = W_\Eta$.
  Using the notation from Theorem~\ref{thm:hypergraph_master},
  as $\hat\Gamma$ is reflexive, for each $U\subset V$, we have
  $\pth{U} = \Nbh_\Gamma[U]$.
  Now apply Theorem~\ref{thm:hypergraph_master}.
\end{proof}

\begin{rem}
  In \cite{csp}, the cardinalities of the set $\Nbh_\Gamma[U]$ featured
  crucially in an explicit formula \cite[Cor.~B]{csp} for the coefficient of
  $T$ in $W^-_\Gamma(X,T)$.
  At present, no explicit combinatorial formula for $W^-_\Gamma$ akin to
  \eqref{eq:Wsharp_master} is known; see \cite[Question~1.8(iii)]{cico}.
\end{rem}



\paragraph{Local poles.}
Let $\Eta$ be a hypergraph.
Theorem~\ref{thm:hypergraph_master} shows that
$W_\Eta$ can be written in the form
\begin{equation}
  \label{eq:local_pole_factorisation}
  W^\sharp_\Gamma = \frac{f(X,T)}{\prod_{i=1}^N (1-X^{a_i} T)}
\end{equation}
for $f(X,T) \in \ZZ[X^{\pm 1}, T]$ and $a_1,\dotsc,a_N\in \ZZ$.
For any graph $\Gamma$, the same conclusion holds for $W^\sharp_\Gamma$.
By Theorem~\ref{thm:cmt}, it also holds for $W^-_\Gamma$ if $\Gamma$ is a cograph.
In any case, we may assume that $f(X,X^{-a_i}) \not= 0$ for $i=1,\dotsc,N$.
It is then easy to see that the representation in
\eqref{eq:local_pole_factorisation} is unique up to the order of the $a_i$.
We refer to the integers $a_1,\dotsc,a_N$ as the \emph{local poles} 
of~$W_\Eta$; multiplicities of local poles are understood in the evident way.
The local poles of $W_\Eta$ are precisely the real parts of the poles
of the meromorphic function $W_\Eta(q,q^{-s})$, where $q > 1$
is arbitrary.

Even for cographs~$\Gamma$, the local poles of $W^-_\Gamma$
remain mysterious.
In particular, positive and negative local poles can arise and
no single number appears as a universal local pole of all $W^-_\Gamma$;
see \cite[Table~2]{cico}.
In contrast, the $W^\sharp_\Gamma$ are much better behaved.

\begin{prop}
  \label{prop:Wsharp_poles}
  Let $\Gamma = (V,E)$ be a (loopless) graph.
  Then:
  \begin{enumerate}[(a)]
  \item
    \label{prop:Wsharp_poles1}
    Each local pole of $W^\sharp_\Gamma$ is nonpositive.
  \item
    \label{prop:Wsharp_poles2}
    Let $\Gamma$ have $c$ connected components.
    Then $W^\sharp_\Gamma$ has a pole of order $c+1$ at $T = 1$.
    That is, $(1-T)^{c+1}W^\sharp_\Gamma$ is regular at $T = 1$
    and $(1-T)^{c+1}W^\sharp_\Gamma \Biggm\vert_{T \gets 1} \not= 0$.
  \end{enumerate}
\end{prop}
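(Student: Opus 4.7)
The plan is to apply Proposition~\ref{prop:Wsharp_formula} directly. Substituting $X \mapsto X^{-1}$ yields
\begin{equation}\label{eq:Wsharp_plan}
  W^\sharp_\Gamma(X,T) = \sum_{y \in \WOhat(V)} (1 - X^{-1})^{\card{\sup(y)}} \prod_{U \in y} \frac{X^{-\nbhcnt_\Gamma(U)} T}{1 - X^{-\nbhcnt_\Gamma(U)} T}.
\end{equation}
Every denominator factor appearing on the right-hand side is of the form $1 - X^{-\nbhcnt_\Gamma(U)} T$ with $\nbhcnt_\Gamma(U) \ge 0$. Combining the summands over a common denominator, every potential local pole is therefore of the form $-\nbhcnt_\Gamma(U) \le 0$, and those that survive cancellation with the numerator remain nonpositive. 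This proves~\ref{prop:Wsharp_poles1}.

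For~\ref{prop:Wsharp_poles2}, I first identify which factors in \eqref{eq:Wsharp_plan} contribute a pole at $T = 1$: the factor indexed by $U$ does so precisely when $\nbhcnt_\Gamma(U) = 0$, which holds if and only if no vertex outside $U$ is adjacent to any vertex in $U$, i.e.\ $U \subset V$ is a (possibly empty) union of connected components of $\Gamma$. Such subsets form a Boolean sublattice $\mathcal{U} \subset 2^V$ of size $2^c$, isomorphic to $2^{\{1,\dotsc,c\}}$. Setting $N(y) := \#\{U \in y : \nbhcnt_\Gamma(U) = 0\}$, the summand indexed by $y$ in \eqref{eq:Wsharp_plan} contributes a pole of order $N(y)$ at $T = 1$; since chains in $\mathcal U$ contain at most $c+1$ elements, $N(y) \le c + 1$ uniformly, giving the upper bound on the pole order of $W^\sharp_\Gamma$ at $T = 1$.

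The main obstacle, and the only nontrivial step, is to show that this bound is attained, that is, that no cancellation kills the leading term. For this I will evaluate
\[
  \lim_{T \to 1} (1-T)^{c+1} W^\sharp_\Gamma(X,T)
\]
using \eqref{eq:Wsharp_plan}: only flags $y$ with $N(y) = c+1$ survive, and these are precisely the $c!$ maximal chains $\emptyset = S_0 \subsetneq S_1 \subsetneq \dotsb \subsetneq S_c = V$ through $\mathcal U$ (one for each ordering of the connected components), each satisfying $\sup(y) = V$. For every such $y$, each factor $\frac{T}{1-T}$ in the product becomes $1$ after multiplication by $(1-T)$ and passing to the limit, while the prefactor contributes $(1 - X^{-1})^n$. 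The leading coefficient is therefore $c!\,(1 - X^{-1})^n$, which is a nonzero element of $\QQ(X)$, and summands with $N(y) < c+1$ contribute nothing since they are killed by $(1-T)^{c+1}$. This completes the proof.
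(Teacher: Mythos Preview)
Your argument for part~\ref{prop:Wsharp_poles1} and the upper bound on the pole order in part~\ref{prop:Wsharp_poles2} match the paper's proof. The gap is in your computation of the leading coefficient. You claim that the flags with $N(y) = c+1$ are \emph{precisely} the $c!$ maximal chains in $\mathcal U$, but this is false: a flag $y$ with $N(y) = c+1$ must \emph{contain} a maximal chain in $\mathcal U$, yet it may also contain additional sets $U \notin \mathcal U$ (with $\nbhcnt_\Gamma(U) > 0$) interleaved between consecutive members of that chain. Each such flag contributes a nonzero term to the limit, so your asserted value $c!\,(1-X^{-1})^n$ is wrong. For instance, when $\Gamma = \CG_2$ (so $c=1$, $n=2$), the table gives $(1-T)^2 W^\sharp_\Gamma\big|_{T=1} = 1 - X^{-2}$, not $(1-X^{-1})^2$; the discrepancy comes from the two flags $\emptyset \subsetneq \{v_i\} \subsetneq V$ that you omitted.

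Once you include all flags in $\mathcal F = \{y : N(y) = c+1\}$, the residue becomes a sum of many terms of varying signs (in $\QQ(X)$), and nonvanishing is no longer obvious. The paper handles this by specialising $X$ to a concrete value (namely $X = 2$, i.e.\ $X \mapsto 1/2$ in the formula of Proposition~\ref{prop:Wsharp_formula}): then $(1-X)^{|\sup(y)|} = 2^{-n} > 0$ for every $y \in \mathcal F$, and for $\nbhcnt_\Gamma(U) > 0$ each factor $\frac{X^{\nbhcnt_\Gamma(U)}T}{1 - X^{\nbhcnt_\Gamma(U)}T}$ has nonnegative power-series coefficients in $T$, so the sum over $\mathcal F$ is a power series in $T$ with nonnegative coefficients and at least one positive coefficient, hence nonzero.
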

\begin{proof}
  The first part follows from Proposition~\ref{prop:Wsharp_formula}
  since $\nbhcnt_\Gamma(U) \ge 0$ for each $U\subset V$.
  It is easy to see that $\nbhcnt_\Gamma(U) = 0$ is equivalent to $U$ being a disjoint
  union of (zero or more) connected components of $\Gamma$.
  In particular, if $U_0 \subset \dotsb \subset U_r \subset V$ with
  $\nbhcnt_\Gamma(U_i) = 0$ for $i=0,\dotsc,r$, then $r \le c$.
  This proves that $(1-T)^{c+1} W^\sharp_\Gamma$ is regular at $T = 1$.
  Let $\mathcal F\subset \WOhat(V)$ consist of those flags $y = (U_0 \subsetneq
  U_1\subsetneq \dotsb \subsetneq U_r)$ of subsets of $V$ such that precisely
  $c+1$ of the $U_i$ satisfy $\nbhcnt_\Gamma(U_i) = 0$.
  Such a flag $y$ necessarily satisfies $U_0 = \emptyset$ and $\sup(y) = U_r = V$,
  in addition to $r\ge c$.
  The elements of $\mathcal F$ are precisely the flags that contribute nonzero
  summands to $(1-T)^{c+1} W^\sharp_\Gamma \Biggm\vert_{T\gets 1}$.
  It remains to rule out possible cancellations.
  Write $n = \card{V}$.
  Evaluating at $X = 1/2$,
  we obtain
  \begin{equation}
    \label{eq:pole_c+1}
    (1-T)^{c+1} W^\sharp_\Gamma(2,T) \Biggm\vert_{T \gets 1}
    = \sum_{y\in \mathcal F}
    2^{-n}
    \prod_{\substack{U\in y\\ \nbhcnt_\Gamma(U) > 0}} \frac{2^{-\nbhcnt_\Gamma(U)}T}{1-2^{-\nbhcnt_\Gamma(U)}T}.
  \end{equation}
  Given $y \in \mathcal F$, let $f(y) = \#\{ U \in y :
  \nbhcnt_\Gamma(U) > 0\} \ge 0$.
  Viewed as a power series in $T$, the coefficient of $T^{f(y)}$ of the
  summand corresponding to $y$ on the right-hand side of \eqref{eq:pole_c+1}
  is positive and all other coefficients are nonnegative.
  We conclude that the right-hand side of \eqref{eq:pole_c+1} is nonzero.
\end{proof}

\begin{rem}
The description of the rational number given in \eqref{eq:pole_c+1} is
reminiscent of the definition of the constant $c_d$
in \cite[(6.1)]{SVV}. The latter constant occurs as a special value of the
reduced and topological subgroup zeta functions of the free class-$2$-nilpotent
groups of rank~$d$; cf.\ \cite[Thms.~6.8 and~6.11]{SVV}. In both
contexts, we lack a conceptual interpretation of these rational
numbers: a group-theoretic one in the case of $c_d$, a graph-theoretic one in
the current case.
\end{rem}

\paragraph{Joins and disjoint unions.}
\begin{proof}[Proof of Proposition~\ref{prop:Wsharp_join}]
  Recall that $\hat\Gamma$ denotes the reflexive closure of a graph $\Gamma$.
  Clearly, $\widehat{\Gamma_1 \oplus \Gamma_2} = \hat \Gamma_1 \oplus \hat
  \Gamma_2$ and $\widehat{\Gamma_1 \join \Gamma_2} = \hat \Gamma_1 \join \hat
  \Gamma_2$.
  Using the notation from \cite[\S 3.1]{cico}, we thus have
  $\Adj(\widehat{\Gamma_1 \oplus \Gamma_2}) = \Adj(\hat\Gamma_1) \oplus
  \Adj(\hat\Gamma_2)$
  and
  $\Adj(\widehat{\Gamma_1 \join \Gamma_2}) = \Adj(\hat\Gamma_1) \circledast
  \Adj(\hat\Gamma_2)$ (complete union).
  The first claim follows from
  Proposition~\ref{prop:Hadamard_Eta}
  and the second from \cite[Cor.\ 5.18]{cico} with $m_i = n_i$.
\end{proof}

\paragraph{Formulae for small graphs.}
Table~\ref{tab:graphs4} lists the rational functions $W^\sharp_\Gamma$ for
all loopless graphs on at most four vertices.
Table~\ref{tab:graphs4} is a sequel to \cite[Table~1]{cico} which lists
$W^+_\Gamma$ and~$W^-_\Gamma$ for the same class of graphs.
Formulae for $W^\sharp_\Gamma$ for all $1252$ graphs on at most seven vertices
are available on the first author's home page.
These functions were computed using Proposition~\ref{prop:Wsharp_formula}.
For graphs on at most six vertices, they agree with computations performed
using \textsf{Zeta}~\cite{Zeta}.
(For some graphs on seven vertices, using the algorithms from~\cite{cico}
implemented in \textsf{Zeta} to compute $W^\sharp_\Gamma$ requires a significant
amount of memory which renders these computations impractical on a typical
desktop computer.)

\paragraph{Some infinite families of graphs.}
Some of the rational functions in Table~\ref{tab:graphs4} were previously
known in the sense that they follow from existing results in the literature.
First, let $\CG_n$ and $\DG_n$ denote the complete and edgeless graph on $n$
vertices, respectively.
By \cite[Prop.\ 1.5]{ask}, we have $W^\sharp_{\CG_d} = \frac{1-X^{-d} T}{(1-T)^2}$.
Using a result due to Brenti~\cite[Thm~3.4]{Bre94}, \cite[Cor.\ 5.17]{ask}
provides an explicit formula for $W^\sharp_{\DG_n}$ in terms of permutation
statistics on the hyperoctahedral group
$\mathrm B_n = \{ \pm 1\} \wr \mathrm S_n$.
More generally, \cite[Cor.\ 5.11]{CMR24b} provides an explicit formula for
$W^\sharp_{\CG_{d_1} \oplus \dotsb \oplus \CG_{d_n}}$ in terms of permutations
statistics on $(n+1)$-coloured permutations on $n$ letters.
This includes the aforementioned known formulae for $W^\sharp_{\CG_d}$ ($n=1$,
$d_1 = d$) and $W^\sharp_{\DG_n}$ ($d_1 = \dotsb = d_n = 1$) as special cases.
In this way, $10$ of the $18$ formulae in Table~\ref{tab:graphs4} are
in fact explained by \cite{CMR24b}.

\phantomsection
\addcontentsline{toc}{section}{Acknowledgements}
\subsection*{Acknowledgements}

This work has emanated from research
funded by the Deutsche Forschungsgemeinschaft
(DFG, German Research Foundation) — SFB-TRR 358/1 2023 — 491392403 and
Taighde Éireann – Research Ireland under grant number
22/FFP-P/11449.
We also acknowledge support by the
International Centre for Theoretical Sciences (ICTS) through the
programme \itemph{Combinatorial Methods in Enumerative Algebra} (code:
ICTS/cmea2024/12).

{
  \def\emph{\itemph}
  \bibliographystyle{abbrv}
  \bibliography{plumbing}
  \def\emph{\bfempf}
}

\vspace*{2em}
\noindent
{\footnotesize
\begin{minipage}[t]{0.60\textwidth}
  Tobias Rossmann\\
  School of Mathematical and Statistical Sciences \\
  University of Galway \\
  Galway \\
  Ireland \\
  \quad\\
  E-mail: \href{mailto:tobias.rossmann@universityofgalway.ie}{tobias.rossmann@universityofgalway.ie}
\end{minipage}
\hfill
\begin{minipage}[t]{0.38\textwidth}
  Christopher Voll\\
  Fakult\"at f\"ur Mathematik\\
  Universit\"at Bielefeld\\
  D-33501 Bielefeld\\
  Germany\\
  \quad\\
  E-mail: \href{mailto:C.Voll.98@cantab.net}{C.Voll.98@cantab.net}
\end{minipage}
}

\begin{landscape}
  \begin{table}
    \centering
    \small
    \begin{tabular}{m{3cm} | l}
      $\Gamma$ & $W^\sharp_\Gamma(X,T)$ \\
      \hline
      \begin{tikzpicture}[scale=0.2]
        \tikzstyle{Black Vertex}=[draw=black, fill=black, shape=circle, scale=0.2]
        \tikzstyle{Solid Edge}=[-]
        \node [style=Black Vertex] (0) at (0, 0) {0};
      \end{tikzpicture}
      & $\frac{1 - X^{-1} T}{(1 - T)^2}$
      \\
      
      \begin{tikzpicture}[scale=0.2]
        \tikzstyle{Black Vertex}=[draw=black, fill=black, shape=circle, scale=0.2]
        \tikzstyle{Solid Edge}=[-]
        \node [style=Black Vertex] (0) at (0, 0) {0};
        \node [style=Black Vertex] (1) at (3, 0) {1};
      \end{tikzpicture}
      & $\frac{1 + X^{-2} T - 4X^{-1} T + T  + X^{-2}T^2}{(1-T)^3}$
      \\

      \begin{tikzpicture}[scale=0.2]
        \tikzstyle{Black Vertex}=[draw=black, fill=black, shape=circle, scale=0.2]
        \tikzstyle{Solid Edge}=[-]
        \node [style=Black Vertex] (0) at (0, 0) {0};
        \node [style=Black Vertex] (1) at (3, 0) {1};
        \draw (0) to (1);
      \end{tikzpicture}
      & $\frac{1-X^{-2}T}{(1-T)^2}$
      \\
      
      \begin{tikzpicture}[scale=0.2]
        \tikzstyle{Black Vertex}=[draw=black, fill=black, shape=circle, scale=0.2]
        \tikzstyle{Solid Edge}=[-]
        \node [style=Black Vertex] (0) at (0, 0) {0};
        \node [style=Black Vertex] (1) at (3, 0) {1};
        \node [style=Black Vertex] (2) at (6, 0) {2};
      \end{tikzpicture}
      & $\frac{1 - X^{-3}T + 6 X^{-2}T - 12X^{-1}T + 4 T + T^2 - 4X^{-3}T^2 + 12 X^{-2} T^2 - 6 X^{-1}T^2 - X^{-3}T^3}{(1 - T)^4}$
      \\

      \begin{tikzpicture}[scale=0.2]
        \tikzstyle{Black Vertex}=[draw=black, fill=black, shape=circle, scale=0.2]
        \tikzstyle{Solid Edge}=[-]
        \node [style=Black Vertex] (0) at (0, 0) {0};
        \node [style=Black Vertex] (1) at (3, 0) {1};
        \node [style=Black Vertex] (2) at (6, 0) {2};
        \draw (0) to (1);
      \end{tikzpicture}
      & $\frac{1 + X^{-3}T - 2X^{-2}T - 2X^{-1}T + T  + X^{-3} T^2 }{(1-T)^3}$
      \\

      \begin{tikzpicture}[scale=0.2]
        \tikzstyle{Black Vertex}=[draw=black, fill=black, shape=circle, scale=0.2]
        \tikzstyle{Solid Edge}=[-]
        \node [style=Black Vertex] (0) at (0, 0) {0};
        \node [style=Black Vertex] (1) at (3, 0) {1};
        \node [style=Black Vertex] (2) at (6, 0) {2};
        \draw (0) to (1);
        \draw (1) to (2);
      \end{tikzpicture}
      & $\frac{1 + X^{-3}T - 4X^{-2}T + X^{-1}T + X^{-4}T^2}{(1 - X^{-1}T)(1 - T)^2}$
      \\

      \begin{tikzpicture}[scale=0.2]
        \tikzstyle{Black Vertex}=[draw=black, fill=black, shape=circle, scale=0.2]
        \tikzstyle{Solid Edge}=[-]
        \node [style=Black Vertex] (0) at (0, 0) {0};
        \node [style=Black Vertex] (1) at (-3, -3) {2};
        \node [style=Black Vertex] (2) at (3, -3) {3};
        \draw (0) to (1);
        \draw (0) to (2);
        \draw (1) to (2);
      \end{tikzpicture}
      & $\frac{1-X^{-3}T}{(1-T)^2}$
      \\
      
      \begin{tikzpicture}[scale=0.2]
        \tikzstyle{Black Vertex}=[draw=black, fill=black, shape=circle, scale=0.2]
        \tikzstyle{Solid Edge}=[-]
        \node [style=Black Vertex] (0) at (0, 0) {0};
        \node [style=Black Vertex] (1) at (3, 0) {1};
        \node [style=Black Vertex] (2) at (6, 0) {2};
        \node [style=Black Vertex] (3) at (9, 0) {3};
      \end{tikzpicture}
      & $\bigl(1 + X^{-4} T - 8 X^{-3} T + 24 X^{-2} T - 32 X^{-1} T + 11 T + 11 X^{-4} T^2 - 56 X^{-3} T^2 + 96 X^{-2} T^2 - 56 X^{-1} T^2$
      \\
      & \,\, $+ 11 T^2 + 11 X^{-4} T^3 - 32 X^{-3} T^3 + 24 X^{-2} T^3 - 8 X^{-1} T^3 + T^3+ X^{-4} T^4\bigr)/(1- T)^5$
      \\

      \begin{tikzpicture}[scale=0.2]
        \tikzstyle{Black Vertex}=[draw=black, fill=black, shape=circle, scale=0.2]
        \tikzstyle{Solid Edge}=[-]
        \node [style=Black Vertex] (0) at (0, 0) {0};
        \node [style=Black Vertex] (1) at (3, 0) {1};
        \node [style=Black Vertex] (2) at (6, 0) {2};
        \node [style=Black Vertex] (3) at (9, 0) {3};
        \draw (0) to (1);
      \end{tikzpicture}
      & $\frac{1 - X^{-4} T + 4 X^{-3} T - 2X^{-2} T - 8 X^{-1} T + 4 T - 4X^{-4}T^2 + 8X^{-3} T^2 + 2 X^{-2} T^2 - 4 X^{-1} T^2 + T^2 - X^{-4} T^3}{(1 - T)^4}$
      \\

      \begin{tikzpicture}[scale=0.2]
        \tikzstyle{Black Vertex}=[draw=black, fill=black, shape=circle, scale=0.2]
        \tikzstyle{Solid Edge}=[-]
        \node [style=Black Vertex] (0) at (0, 0) {0};
        \node [style=Black Vertex] (1) at (3, 0) {1};
        \node [style=Black Vertex] (2) at (6, 0) {2};
        \node [style=Black Vertex] (3) at (9, 0) {3};
        \draw (2) to (3);
        \draw (0) to (1);
      \end{tikzpicture}
      & $\frac{1 + X^{-4}T - 4X^{-2}T + T  + X^{-4}T^2}{(1 - T)^3}$
      \\

      \begin{tikzpicture}[scale=0.2]
        \tikzstyle{Black Vertex}=[draw=black, fill=black, shape=circle, scale=0.2]
        \tikzstyle{Solid Edge}=[-]
        \node [style=Black Vertex] (0) at (0, 0) {0};
        \node [style=Black Vertex] (1) at (3, 0) {1};
        \node [style=Black Vertex] (2) at (6, 0) {2};
        \node [style=Black Vertex] (3) at (9, 0) {3};
        \draw (0) to (1);
        \draw (1) to (2);
      \end{tikzpicture}
      & $\frac{1 - X^{-4} T + 6 X^{-3} T - 10 X^{-2} T + T - 2 X^{-5} T^2 + X^{-4} T^2 + 8 X^{-3} T^2 + X^{-2} T^2 - 2 X^{-1} T^2 + X^{-6} T^3 - 10 X^{-4} T^3 + 6 X^{-3} T^3 - X^{-2} T^3 + X^{-6} T^4}{(1 - X^{-1}T)^2 (1 - T)^3}$
      \\
      
      \begin{tikzpicture}[scale=0.2]
        \tikzstyle{Black Vertex}=[draw=black, fill=black, shape=circle, scale=0.2]
        \tikzstyle{Solid Edge}=[-]
        \node [style=Black Vertex] (0) at (0, 0) {0};
        \node [style=Black Vertex] (1) at (3, 0) {1};
        \node [style=Black Vertex] (2) at (6, 0) {2};
        \node [style=Black Vertex] (3) at (9, 0) {3};
        \draw (0) to (1);
        \draw (1) to (2);
        \draw (2) to (3);
      \end{tikzpicture}
      & $\frac{1 + X^{-4} T - 6 X^{-2} T + 2 X^{-1} T - 2 X^{-5} T^2 + 6 X^{-4} T^2 - X^{-2} T^2 - X^{-6} T^3}{(1 - X^{-1}T)^2(1 - T)^2}$
      \\
      
      \begin{tikzpicture}[scale=0.2]
        \tikzstyle{Black Vertex}=[draw=black, fill=black, shape=circle, scale=0.2]
        \tikzstyle{Solid Edge}=[-]
        \node [style=Black Vertex] (0) at (0, -2) {0};
        \node [style=Black Vertex] (1) at (4, -2) {1};
        \node [style=Black Vertex] (2) at (4, 2) {2};
        \node [style=Black Vertex] (3) at (0, 2) {3};
        \draw (0) to (1);
        \draw (1) to (2);
        \draw (2) to (3);
        \draw (3) to (0);
      \end{tikzpicture}
      & $\frac{1 + 3 X^{-4} T  - 8 X^{-3} T + 3 X^{-2} T + X^{-6} T^2}{(1 - X^{-2} T)(1 - T)^2}$
      \\
      
      \begin{tikzpicture}[scale=0.2]
        
        \tikzstyle{Black Vertex}=[draw=black, fill=black, shape=circle, scale=0.2]
        \tikzstyle{Solid Edge}=[-]
        \node [style=Black Vertex] (0) at (0, 0) {0};
        \node [style=Black Vertex] (1) at (4, 0) {1};
        \node [style=Black Vertex] (2) at (4, 4) {2};
        \node [style=Black Vertex] (3) at (0, 4) {3};
        \draw (0) to (1);
        \draw (0) to (2);
        \draw (1) to (2);
        \draw (2) to (3);
        \draw (3) to (0);
      \end{tikzpicture}
      & $\frac{1 + X^{-4} T - 4 X^{-3} T + X^{-2} T  + X^{-6}T^2}{(1 - X^{-2} T) (1 - T)^2}$
      \\
      
      \begin{tikzpicture}[scale=0.2]
        \tikzstyle{Black Vertex}=[draw=black, fill=black, shape=circle, scale=0.2]
        \tikzstyle{Solid Edge}=[-]
        \node [style=Black Vertex] (0) at (0, 0) {0};
        \node [style=Black Vertex] (1) at (0, 6) {1};
        \node [style=Black Vertex] (2) at (4, 3) {2};
        \node [style=Black Vertex] (3) at (8, 3) {3};
        \draw (0) to (1);
        \draw (0) to (2);
        \draw (1) to (2);
      \end{tikzpicture}
      & $\frac{1 + X^{-4} T - 2 X^{-3} T - 2 X^{-1} T + T + X^{-4}T^2}{(1 - T)^3}$
      \\
      
      \begin{tikzpicture}[scale=0.2]
        \tikzstyle{Black Vertex}=[draw=black, fill=black, shape=circle, scale=0.2]
        \tikzstyle{Solid Edge}=[-]
        \node [style=Black Vertex] (0) at (0, 0) {0};
        \node [style=Black Vertex] (1) at (0, 6) {1};
        \node [style=Black Vertex] (2) at (4, 3) {2};
        \node [style=Black Vertex] (3) at (8, 3) {3};
        \draw (0) to (1);
        \draw (0) to (2);
        \draw (1) to (2);
        \draw (2) to (3);
      \end{tikzpicture}
      & $\frac{1 + X^{-4}T - 2X^{-3}T - 2 X^{-2}T + X^{-1} T + X^{-5}T^2}{(1 - X^{-1}T)(1 - T)^2}$
      \\
      
      \begin{tikzpicture}[scale=0.2]
        \tikzstyle{Black Vertex}=[draw=black, fill=black, shape=circle, scale=0.2]
        \tikzstyle{Solid Edge}=[-]
        \node [style=Black Vertex] (0) at (0, 0) {0};
        \node [style=Black Vertex] (1) at (0, 4) {1};
        \node [style=Black Vertex] (2) at (-3, -3) {2};
        \node [style=Black Vertex] (3) at (3, -3) {3};
        \draw (0) to (1);
        \draw (0) to (2);
        \draw (0) to (3);
      \end{tikzpicture}
               &
                 $\frac{
                 1
                 - X^{-4}T
                 + 6X^{-3}T
                 - 12 X^{-2}T
                 + 4 X^{-1} T
                 - 4X^{-5}T^2
                 + 12X^{-4}T^2
                 - 6X^{-3}T^2
                 + X^{-2} T^2 
                 - X^{-6}T^3
                 }{(1 - X^{-1}T)^2(1 - T)^2}$
      \\
      
      \begin{tikzpicture}[scale=0.2]
        \tikzstyle{Black Vertex}=[draw=black, fill=black, shape=circle, scale=0.2]
        \tikzstyle{Solid Edge}=[-]
        \node [style=Black Vertex] (0) at (-1, 0) {0};
        \node [style=Black Vertex] (1) at (-1, 5) {1};
        \node [style=Black Vertex] (2) at (-5, -4) {2};
        \node [style=Black Vertex] (3) at (3, -4) {3};
        \draw (0) to (1);
        \draw (0) to (2);
        \draw (0) to (3);
        \draw (1) to (2);
        \draw (1) to (3);
        \draw (2) to (3);
      \end{tikzpicture}
      & $\frac{1 - X^{-4} T}{(1-T)^2}$
    \end{tabular}
    \caption{$W^\sharp_\Gamma$ for all graphs on at most four vertices}
    \label{tab:graphs4}
  \end{table}
\end{landscape}

\end{document}